\documentclass[11pt,a4paper,reqno]{amsart}

\usepackage{geometry}
 \geometry{
 a4paper,
 left=29mm,
 right=29mm,
 top=32.2mm,
 bottom=30.1mm,
 }

\usepackage{amssymb,amsmath,amsthm,mathrsfs}
\usepackage{graphicx}
\usepackage{float}
\usepackage{colortbl}
\usepackage{enumerate}
\usepackage{upref}
\usepackage{stackrel}
\usepackage{enumitem}
\usepackage{bbm}


\usepackage[bookmarks=false]{hyperref}
\usepackage[T2A,T1]{fontenc}
\DeclareSymbolFont{cyrillic}{T2A}{cmr}{m}{n}
\DeclareMathSymbol{\D}{\mathalpha}{cyrillic}{196}

\usepackage{accents}
    
\usepackage[colorinlistoftodos]{todonotes}

\usepackage{amsfonts} 
\DeclareMathSymbol{\shortminus}{\mathbin}{AMSa}{"39}

\usepackage{tikz}
\usetikzlibrary{arrows}
\usetikzlibrary{decorations.pathreplacing}

\usepackage{chngcntr}
\counterwithin{figure}{section}

\parindent=0mm
\parskip=1.2mm

\theoremstyle{plain}
\newtheorem{theorem}{Theorem}[section]
\newtheorem*{theorem*}{Theorem}
\newtheorem{lemma}[theorem]{Lemma}

\newtheorem{proposition}[theorem]{Proposition}
\newtheorem{corollary}[theorem]{Corollary}

\newtheorem*{claim}{Claim}

\theoremstyle{definition}
\newtheorem{definition}[theorem]{Definition}

\newtheorem{example}[theorem]{Example}
\newtheorem*{condition}{Condition}

\theoremstyle{remark}
\newtheorem{remark}[theorem]{Remark}

\usepackage{enumitem}
\makeatletter
\def\namedlabel#1#2{\begingroup
   #2%
 \def\@currentlabel{#2}%
   \phantomsection\label{#1}\endgroup
}

\def\R{\ensuremath{\mathbb R}}
\def\N{\ensuremath{\mathbb N}}
\def\Q{\ensuremath{\mathbb Q}}
\def\Z{\ensuremath{\mathbb Z}}
\def\I{\ensuremath{{\mathbbm{1}}}}
\def\e{{\ensuremath{\rm e}}}

\def\C{\ensuremath{\mathcal C}}

\def\leb{{\rm Leb}}

\def\p{\ensuremath{\mathbb P}}
\def\F{\ensuremath{\mathcal F}}
\def\O{\ensuremath{\text{O}}}
\def\oo{\ensuremath{\text{o}}}

\def\A{\ensuremath{A^{(q_n)}}}
\def\n{\ensuremath{n}}

\def\X{\mathbf{X}}
\def\x{\mathbf{x}}
\def\z{\mathbf{z}}
\def\w{\mathbf{w}}
\def\V{\mathcal{V}}

\def\TT{\sigma}

\def\1{{\bf 1}}

\def\V{\mathcal{V}}
\def\W{\ensuremath{\mathscr W}}

\def\ux{\underline{x}}
\def\uy{\underline{y}}

\renewcommand{\d}[1]{\ensuremath{\operatorname{d}\!{#1}}}

\def\ie{{\em i.e.}, }
\def\iid{{i.i.d.\ }}

\def\E{\mathbb E}

\def\dist{\ensuremath{\text{dist}}}

\def\eps{\varepsilon}

\def\proj{p}
\def\Proj{P}
\def\h{\upsilon}

\numberwithin{equation}{section}

\setcounter{secnumdepth}{5}

\begin{document}

\title[Enriched limits for dynamical systems]{Enriched functional limit theorems for dynamical systems}

\author[A. C. M. Freitas]{Ana Cristina Moreira Freitas}
\address{Ana Cristina Moreira Freitas\\ Centro de Matem\'{a}tica \&
Faculdade de Economia da Universidade do Porto\\ Rua Dr. Roberto Frias \\
4200-464 Porto\\ Portugal} \email{\href{mailto:amoreira@fep.up.pt}{amoreira@fep.up.pt}}
\urladdr{\url{http://www.fep.up.pt/docentes/amoreira/}}

\author[J. M. Freitas]{Jorge Milhazes Freitas}
\address{Jorge Milhazes Freitas\\ Centro de Matem\'{a}tica \& Faculdade de Ci\^encias da Universidade do Porto\\ Rua do
Campo Alegre 687\\ 4169-007 Porto\\ Portugal}
\email{\href{mailto:jmfreita@fc.up.pt}{jmfreita@fc.up.pt}}
\urladdr{\url{http://www.fc.up.pt/pessoas/jmfreita/}}

\author[M. Todd]{Mike Todd}
\address{Mike Todd\\ Mathematical Institute\\
University of St Andrews\\
North Haugh\\
St Andrews\\
KY16 9SS\\
Scotland \\} \email{\href{mailto:m.todd@st-andrews.ac.uk }{m.todd@st-andrews.ac.uk }}
\urladdr{\url{https://mtoddm.github.io/}}

\date{\today}

\keywords{Functional limit theorems, point processes, L\'evy processes with decorations, extremal processes, records, clustering, hitting times} 
\subjclass[2010]{37A25, 37A50, 37B20,  60F17,  60G55, 60G70}


\begin{abstract}

We prove functional limit theorems for dynamical systems in the presence of  clusters of large values which, when summed and suitably normalised, get collapsed in a jump of the limiting process observed at the same time point.  To keep track of the clustering information, which gets lost in the usual Skorohod topologies in the space of c\`adl\`ag functions, we introduce a new space which generalises the already more general spaces introduced by Whitt.  Our main applications are to hyperbolic and non-uniformly expanding dynamical systems with heavy-tailed observable functions maximised at dynamically linked maximal sets (such as periodic points).  We also study limits of extremal processes and record times point processes for observables not necessarily heavy tailed.  
The applications studied include hyperbolic systems such as Anosov diffeomorphisms, but also non-uniformly expanding maps such as maps with critical points of Benedicks-Carleson type or indifferent fixed points such as Pomeau-Manneville or Liverani-Saussol-Vaienti maps.  The main tool is a limit theorem for point processes with decorations derived from a bi-infinite sequence called the transformed anchored tail process.

\end{abstract}

\maketitle

\tableofcontents

\section{Introduction}

The Donsker functional limit theorem gives an invariance principle for the sum of independent and identically distributed (i.i.d.) random variables with finite second moments. The functional limit is a Brownian motion which lives in the space $C$ of continuous $\R^d$-valued functions defined on a subinterval of the real line, equipped with the uniform norm. Invariance principles such as this have been proved for a large class of dynamical systems with good mixing properties (\cite{HK82,DP84,MT02,FMT03,MN05,HM07,BM08,MN09a,G10a}). Note that weak convergence to a Brownian motion implies the existence of a Central Limit Theorem (CLT) for the ergodic sums of such systems. 

When i.i.d.\ random variables have infinite second moment, the heavy-tailed case, the sums of the variables can become dominated by a few very large values, leading to failure of the CLT and jumps/discontinuities in the functional limit, as well as a connection with maximal processes and Extreme Value Laws (the Type II/Fr\'echet case).  If the tail of the distribution is sufficiently regular the CLT is replaced by an $\alpha$-stable law and Brownian motion is replaced by an $\alpha$-stable L\'evy process.     
In the dynamical setting, in \cite{G08} Gou\"ezel showed that the same regime applies to ergodic sums of unbounded heavy tailed observables for the doubling map.  Later, in \cite{T10}, Tyran-Kami\'nska showed a functional limit theorem for heavy tailed ergodic sums of essentially uniformly expanding maps. The functional limit was an $\alpha$-stable L\'evy process, which lives in the space $D$ of c\`adl\`ag functions, \ie right continuous  $\R$-valued functions with left limits defined, equipped with Skorohod's $J_1$ topology.

The problem we address in this paper is when there are clusters of large $\R^d$-valued observations, so rather than a simple jump, the functional limit would see a sequence of jumps, possibly in different directions: this can be seen when the function is maximised at periodic points, eg \cite{FFT12, FFT13}, or more generally, eg \cite{AFFR16, AFFR17, FFM18, FFM20, PS20}.  The $J_1$ topology is not equipped to handle convergence here.  Indeed, even in quite simple cases, as we note further in an example below, Skorohod's $M_1$ and $M_2$ topologies also fail to give convergence.  
We also mention recent work \cite{CF19}, which gives a very flexible topology, allowing for convergence well beyond the Skorohod topologies, but does not give any information on what may happen in a jump. 
 \cite{BPS18}, which considered certain stochastic processes, gives a route to encoding clustering through point processes decorated with a bi-infinite sequence based on the tail process (introduced in \cite{BS09}) and projection of these to generalised c\`adl\`ag functions which keep some of this information.  
Here, we obtain enriched functional limit theorems for dynamical systems by considering  a new space of functions, denoted by $F'$, to capture the details of the clustering behaviour, which are recorded in a decoration device we call the \emph{transformed anchored tail process}, which is based on the tail process introduced in \cite{BS09}. 
We remark that the design of the transformed anchored tail process is quite general encompassing not only the clustering information but also the extremal information observed `during induced periods', which allows us to apply this theory to non-uniformly hyperbolic systems admitting a nice induced map, such as, the polynomially mixing, Pomeau-Manneville (\cite{PM80}) or Liverani-Saussol-Vaienti (\cite{LSV99}) maps.  We note that in context of this paper, we always assume that we have finite mean cluster size, which is identified by an Extremal Index strictly larger than 0. (We recall that the Extremal Index is a parameter between 0 and 1, which in most cases could be thought of as the reciprocal of the mean cluster size -- see \cite{AFF20}). This assumption is vital for the transformed anchored tail process to be well defined.
 
Extreme Value Laws for heavy-tailed observables with clustering, and laws for ergodic averages of these have significant potential in applications to the theory of climate dynamics (see for example \cite{SKFG16,MCF17,MCBF18,CFMV19}). Moreover, we also believe the space $F'$ will be a very useful tool to obtain functional limits in other contexts. In fact, not only can convergence in $F'$ be proved in situations precluded by the Skorohod's  topologies ($J1,J2,M1,M2$), for which the collapse in the limit of jumps with oscillations and overshooting can bring problems, the information regarding the excursions performed by the finite time processes is still captured by the respective limits.

One possible set of future applications is a related, but different, set of problems in dynamical systems which have received attention recently (see \cite{G04a,T10,MZ15,MV20,JPZ20,JMPV21}): these are systems with bounded observables and modelled by Young towers (\cite{Y98, Y99}) with \emph{return time functions} that are not square integrable (such as the Pomeau-Manneville or Liverani-Saussol-Vaienti  maps or billiards with cusps). These correspond to a situation where the Extremal Index is 0  so that, although the observable is bounded, we get a stacking of numerous observations which add up to create a heavy tailed contribution for the ergodic sums.  These problems are partly relevant to this paper since they present similar problems with finding appropriated Skorohod topologies for convergence, but we can not directly apply the theory here since, as noted above, the transformed anchored tail process is not well defined in these situations.

\subsection{The setting and $F'$}
We consider cases of vector-valued heavy tailed observables for systems modelled by Young towers, and in the presence of general clusters of extremal observations which get collapsed in the same jump of the limiting L\'evy process. The limits that we will consider have discontinuous sample paths and therefore live in the space $D$ of c\`adl\`ag functions. However, this space is not sufficiently rich to keep a record of the fluctuations occurring during the clusters of high observations.  
The height of the limiting jump accounts for the aggregate effect of all the cluster observations. However, the oscillations observed during the cluster may exceed the height of the jump (an `overshoot'), for example. In counter to solve this loss of information, in \cite{W02}, Whitt proposed a new space that he called $E$, which decorates each discontinuity of the limit process in $D$ with an excursion corresponding to a connected set describing the maximum and minimum  fluctuations observed. In fact, when $d=1$, the excursion at the discontinuity time $t^*$ of the limiting process $V(t)$ is decorated with an interval bounded by the smallest and largest values achieved by the process during the collapsed cluster, which must contain $V(t^{*-})$ and $V(t^*)$, where $V(t^{*-})$ denotes the lefthand limit of the c\`adl\`ag function $V$ at $t^*$. 

Nevertheless, most of the information during the cluster is lost and only the maximum oscillations are recorded in $E$, while the intermediate fluctuations are completely disregarded. In  \cite{W02}, Whitt also proposed the space $F$, which keeps track in particular of the ordering in which points are visited within an excursion in $E$. However, the space $F$ still disregards information because while it keeps track of all the changes of direction during the excursions, it does not keep record of the intermediate jumps observed in the same direction. One of our main goals is to consider a space where no information collapsed into a jump is lost. For that purpose we introduce a new space that we will call $F'$. We will endow it with a metric and discuss some of its properties. Then we will use this space to study sums of heavy tailed observables, general extremal processes and records, for which we will obtain enriched functional limit theorems, all of which will be carried out with very minimal loss of information.  Another key goal is to build a theory flexible enough to handle a large class of non-uniformly hyperbolic systems modelled by Young towers.

Though we have not yet fully defined our space $F'$, here we sketch a very simple example (for more details, see Example~\ref{example:oscillatory}) to show some of the features of our theory.
Define $T:[0,1]\to[0,1]$ by $T(x)=3x \mod 1$ and let $\mu=\text{Leb}|_{[0,1]}$ be our invariant measure.  Set $\psi(x)=|x-1/8|^{-2}-|x-3/8|^{-2}$ and define the one-dimensional stochastic process $X_0,X_1,\ldots$ by $X_j=\psi\circ T^j(x)$. Note that $T(1/8)=3/8$ and $T(3/8)=1/8$ and that $\psi$ is regularly varying with index $\alpha=1/2$ at points $1/8$ and $3/8$.  Then the sequence, simulated in Figures~\ref{fig:3xmod1-osc-p2} and \ref{fig:3xmod1-osc-p2-blowup},
$$S_n(t) =\frac1{16n^2}\sum_{i=1}^{\lfloor nt\rfloor}\psi\circ T^i$$
converges in our space $F'$, in such a way that the pattern of jumps seen when $x$ is close to $1/8$ and $3/8$ are recorded by the limit point in $F'$.  The way these jumps oscillate creates overshoots in the limit (most easily seen in Figure~\ref{fig:3xmod1-osc-p2-completed} and also in the form of the sum in \eqref{eq:simple_exc}), which cannot be handled by the Skorohod spaces $J_1, M_1$ or $M_2$.  While convergence is possible in Whitt's spaces $E$ and $F$, the patterns seen here would not be recorded in $E$, and the jump sizes would not be recorded in $F$ (it would not distinguish between a single jump upwards and a string of multiple jumps upwards).   The space $F'$, on the other hand, records the patterns: essentially it adds in something like the function seen in Figure~\ref{fig:3xmod1-osc-p2-blowup} as a decoration around the corresponding jump.  
\begin{figure}[t]
\centering
\includegraphics[width=0.6\textwidth]{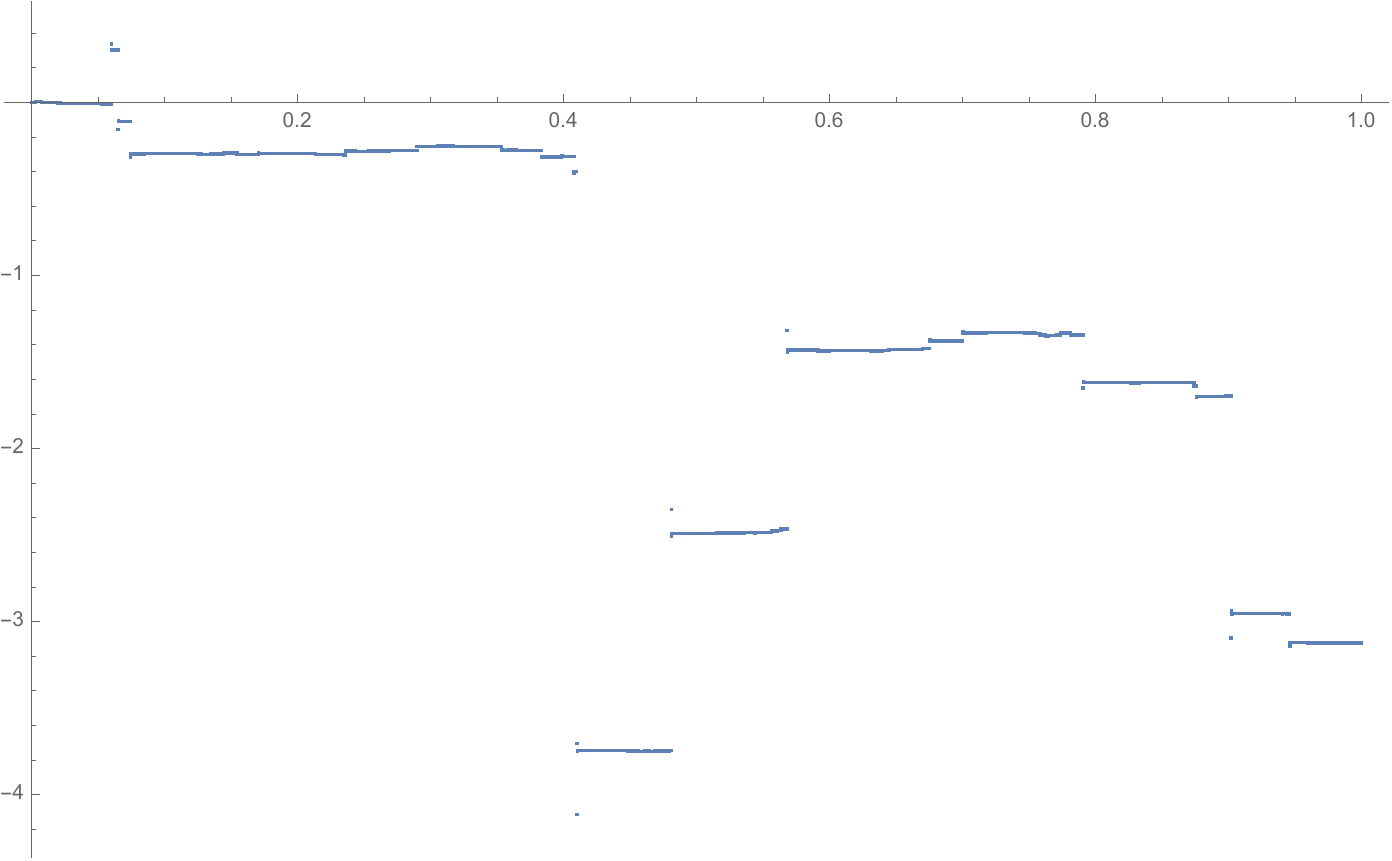}
\caption{Plot of a finite sample simulation of $S_n(t)$ with $n=5000$, where $X_j=\psi\circ T^j(x)$, where $T(x)=3x \mod 1$, $\psi(x)=|x-1/8|^{-2}-|x-3/8|^{-2}$.  
}
    \label{fig:3xmod1-osc-p2}
    \end{figure}
    
\begin{figure}[t]
\centering
\includegraphics[width=0.6\textwidth]{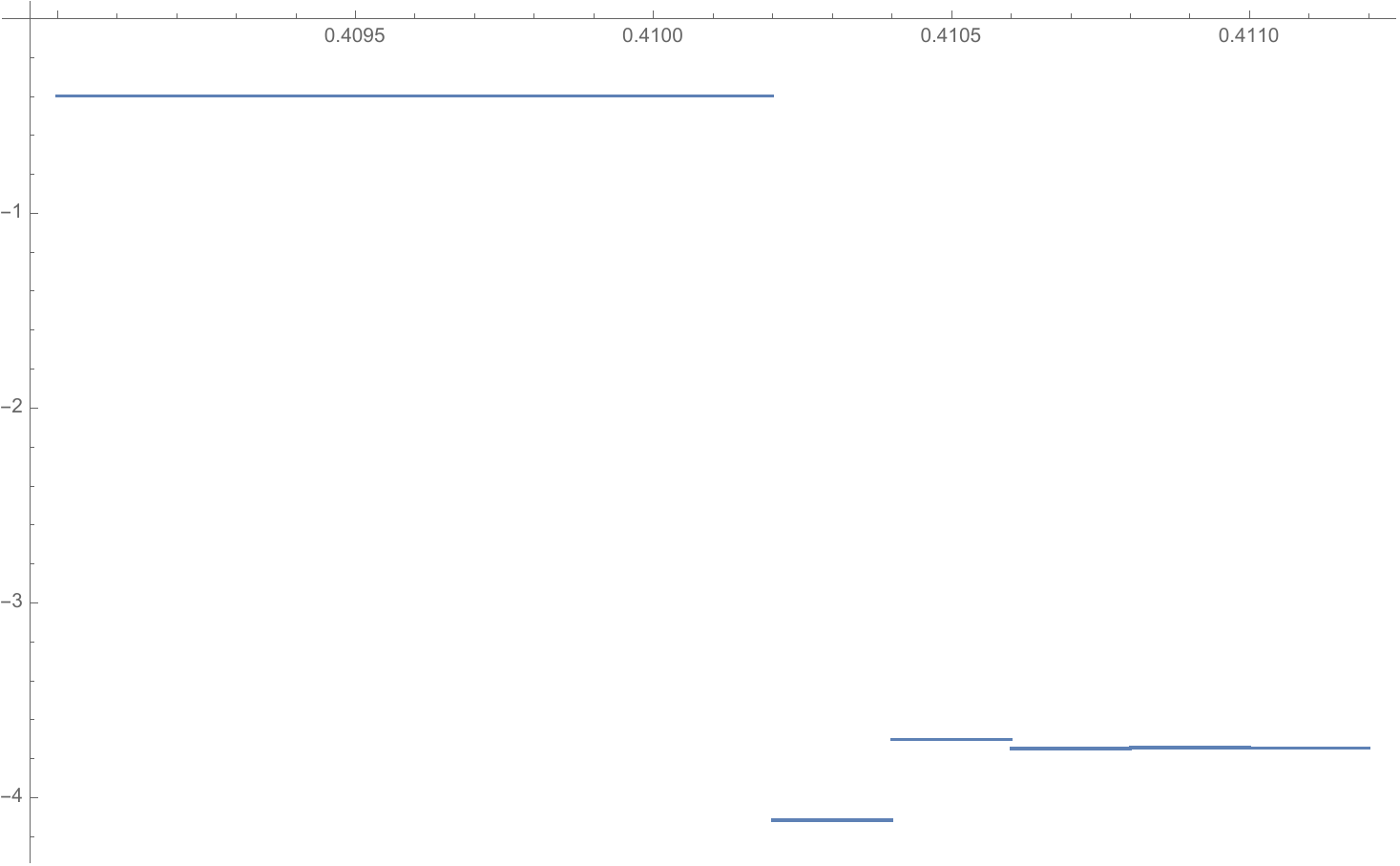}
\caption{Blowup of the previous graph at the jump observed near 0.4: asymptotically the four jumps seen here happen instantaneously, necessitating an appropriate space for convergence.}
    \label{fig:3xmod1-osc-p2-blowup}
    \end{figure}

\begin{figure}[t]
\centering
\includegraphics[width=0.6\textwidth]{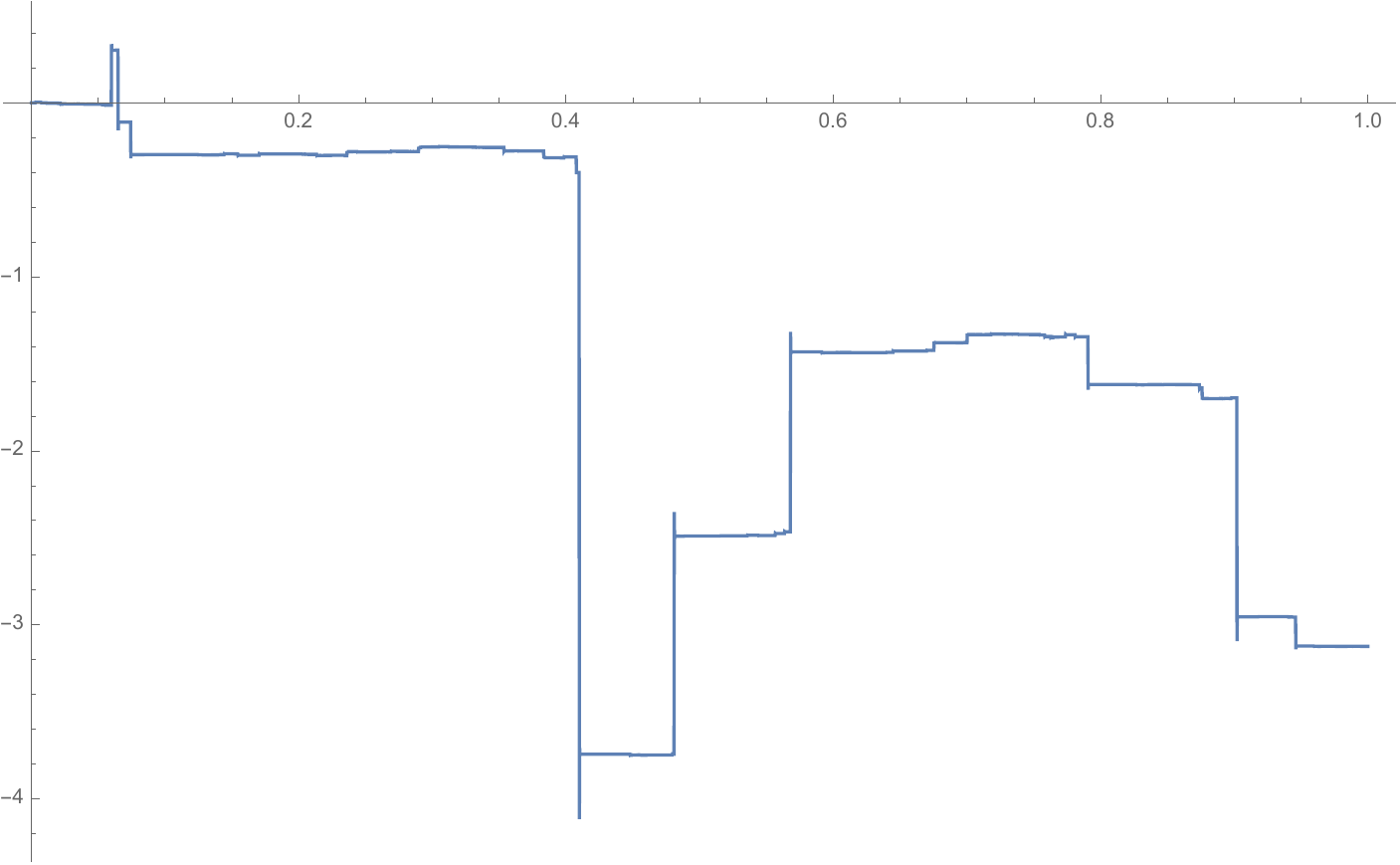}
\caption{Completed graph version of the plot in Figure~\ref{fig:3xmod1-osc-p2}, to give a different view of the jumps.  
}
    \label{fig:3xmod1-osc-p2-completed}
    \end{figure}

\subsection{Point processes, weak mixing conditions and inducing}
Point processes have been used successfully to prove functional limit theorems in $D$ (\cite{R87}). Here, since we need to keep control of the information during the excursions, we will use a new type of point processes introduced in \cite{BPS18}, where the authors considered point processes in non-locally compact spaces, designed to maintain the ordering of the cluster observations collapsed into the same instant of time, and proved their convergence, under some conditions, for stationary jointly regularly varying sequences. In \cite{BPS18}, the authors then applied this convergence of point processes to obtain convergence of sums of jointly regularly varying sequences of random variables in the space $E$. In order to prove convergence in $F'$ we will first generalise their results to more general stationary processes and under weaker conditions, in particular, under a much weaker mixing condition, which is essential to apply to processes arising from dynamical systems.   We then push this further using inducing methods, which essentially means that only the induced system needs to satisfy the mixing conditions.  In this framework, each sequence for the induced system has an attached sequence which corresponds to what happens in the uninduced case.  The point processes then have a $\Z^2$ component at each $t^*$.  These can then be projected to $F'$, incorporating all the information from the uninduced system into the limit functionals. 
We observe that the limit point processes in \cite{PS20}, in common with  \cite{FFM20}, do not record the order of observations in the cluster so our theory is more general than these previous results, even in the simple example given above.
  These generalisations may have an interest on its own, in the more classical probabilistic setting.

\subsection{Organisation}
This paper develops a large suite of new tools which apply in a wide variety of contexts.  We have tried to put our main results and applications as early as practicable in the text in order to motivate the reader and give the theory a more clear context.  This sometimes necessitates leaving full explanations and definitions for some of the results to later in the paper, but these are signposted.

In Section~\ref{sec:limit-theorems} we outline the theory from the point of view of dynamical systems, though the theory extends beyond that: we give the relevant observables, focussing on the heavy tailed cases, and briefly describe some of the basic dynamical systems models to which the theory applies.   We also define our functional spaces, culminating in the new space $F'$, and are then able to state our first limit theorem on convergence to L\'evy processes.  We then put the theory into the context of Extreme Value Theory and show convergence to the relevant extremal process.  This naturally leads to a theorem on the convergence of record point processes.

In Section~\ref{sec:point-processes} we focus on the convergence of point processes for general stochastic processes and rather general observables (not only heavy tailed). 
Conditions $\D_{q_n}$ and $\D'_{q_n}$ are given and the appropriate sequence spaces to record our exceedances are defined.  The transformed tail process is then defined which then leads to the definition of the transformed anchored tail process.  With all of these conditions met and the transformed anchored tail process existing, we then prove complete convergence of the point process to a Poisson point process.  We also show the connection to our setting of jointly regularly varying sequences.

In Section~\ref{sec:proof-FLT} we show how the results from the previous section can be applied in the dynamical context and then prove the functional limit theorems.  We conclude by developing the theory for induced systems.  In the appendices we cover some of the required background for the results here: Appendix~\ref{sec:completeness-separability} contains completely new theory for our space $F'$; Appendices~\ref{sec:app_weak_conv} and ~\ref{appendix:convergence-point-processes} give some adaptations of classical theory for sequence spaces and point processes; 
and Appendix~\ref{Appendix:dyn} contains the remaining arguments to show that the theory in this paper applies to the dynamical systems models claimed.

\section*{Acknowledgements}
ACMF, JMF were partially financed by Portuguese public funds through FCT  -- Funda\c{c}\~ao para a Ci\^encia e a Tecnologia, I.P., in the framework of the projects PTDC/MAT-PUR/28177/2017, PTDC/MAT-PUR/4048/2021, 2022.07167.PTDC and CMUP's project with reference UIDB/00144/2020. 
We would like to thank Ian Melbourne for insightful discussions on Skorohod spaces and for various suggestions for improvement of this text. We thank also the referees for useful suggestions which improved the exposition.

\section{Enriched functional limit theorems for non-uniformly hyperbolic dynamics}
\label{sec:limit-theorems}

In this section we start by introducing the setting and in particular the new space $F'$ and its properties. Then we state the main results regarding the enriched functional limits for sums of vector-valued heavy tailed observables, extremal processes and record point processes. We emphasise that the results in Section~\ref{sec:point-processes} hold well beyond the dynamical setting.

\subsection{Dynamically defined stochastic processes}

 Let $(\mathcal X,\mathcal B_{\mathcal X}, \mu, T)$ be a discrete time dynamical system, where $\mathcal X$ is a compact manifold equipped with a norm $\|\cdot\|$ (for definiteness, whenever it makes sense and unless specified otherwise, we assume that $\|\cdot\|$ is the usual Euclidean norm), 
 $\mathcal B_{\mathcal X}$ is its Borel $\sigma$-algebra, $T:{\mathcal X}\to{\mathcal X}$ is a measurable map and $\mu$ is a $T$-invariant probability measure, \ie $\mu(T^{-1}(B))=\mu(B)$ for all $B\in\mathcal B_{\mathcal X}$. Let $\Psi:{\mathcal X}\to\R^d$ be an observable (measurable) function and define the stochastic process $\X_0, \X_1,\ldots$ given by
\begin{equation}
\label{eq:dynamics-SP}
\X_n=\Psi\circ T^n,\qquad\text{for every $n\in\N_0$}.
\end{equation}
High values of $\|\Psi(\cdot)\|$ will correspond to entrances in a neighbourhood of a zero measure maximal set $\mathcal M$, which we express in the following way. Let $\mathcal M\subset \mathcal X$ be such that $\mu(\mathcal M)=0$ and let $g:[0,\infty)\to\R\cup\{+\infty\}$ be such that $0$ is a global maximum, where we allow $g(0)=+\infty$, and $g$ is a strictly decreasing bijection in a neighbourhood of $0$.
We assume that, on a neighbourhood of $\mathcal M$, 
\begin{equation}
\label{eq:dynamics-SP-norm}
\|\Psi(x)\|=g(\dist(x,\mathcal M)), \quad\text{where $\dist(x,\mathcal M)=\inf\{\dist(x,\zeta)\colon \;\zeta\in\mathcal M\}$.}
\end{equation}
where $g$ has one of the three types of behaviour: 
\begin{enumerate}
\item[Type $g_1$:] there exists some strictly positive function\footnote{A possible choice for $h$ is given in \cite[Chapter~4.2.1]{LFFF16}.} 
$h:W\to\R$ such that for all $y\in\R$
\begin{equation}\label{eq:def-g1}\displaystyle \lim_{s\to
g_1(0)}\frac{g_1^{-1}(s+yh(s))}{g_1^{-1}(s)}=\e^{-y};
\end{equation}
\item [Type $g_2$:] $g_2(0)=+\infty$ and there exists $\alpha>0$ such that
for all $y>0$
\begin{equation}\label{eq:def-g2}\displaystyle \lim_{s\to+\infty}
\frac{g_2^{-1}(sy)}{g_2^{-1}(s)}=y^{-\alpha};\end{equation}
\item [Type $g_3$:] $g_3(0)=D<+\infty$ and there exists $\gamma>0$ such
that for all $y>0$
\begin{equation}\label{eq:def-g3}\lim_{s\to0}
\frac{g_3^{-1}(D-sy)}{g_3^{-1}(D-s)}= y^\gamma.
\end{equation}
\end{enumerate}

We remark that these three types of limit behaviours for $g$ are directly connected with the three types of Extreme Value Laws given by the Fisher-Tippett-Gnedenko Theorem. We refer to \cite[Section~4.2.1]{LFFF16} for further details on this relationship.

Most of the results regarding hitting times and extreme values for dynamical systems were obtained when $\mathcal M$ is reduced to a single point $\zeta$. Recent results have considered $\mathcal M$ to be a countable set (\cite{AFFR17}), submanifolds (\cite{FGGV18, CHN21}) and fractal sets (\cite{MP16,FFRS20}). Our results can be applied to general maximal sets $\mathcal M$ and general $\Psi$, under the assumption that the transformed anchored tail process is well defined, which will be verified and illustrated for the more common case where $\mathcal M=\{\zeta\}$, for some hyperbolic point $\zeta\in\mathcal X$, and where $\Psi$, on neighbourhood of $\zeta$, can be written as 
\begin{equation}
\label{eq:dynamics-particular-observable}
\Psi(x)=g(\dist(x,\zeta))\frac{\Phi^{-1}_\zeta(x)}{\|\Phi^{-1}_\zeta(x)\|}\I_W(x),
\end{equation}
where  $\Phi_\zeta:V\to W$ denotes a diffeomorphism, defined on an open ball $V$ around zero in $T_\zeta\mathcal X$, the tangent space at $\zeta$, onto a neighbourhood $W$ of $\zeta$ in $\mathcal X$, such that $\Phi_\zeta(E^{s,u}\cap V)=W^{s,u}(\zeta)\cap W$.
\begin{remark}
If $g$ is of type $g_2$ (for example, $g(x)=x^{-1/\alpha}$), the measure is sufficiently regular and the geometry of the maximal set $\mathcal M$ is simple (for example, $\mathcal M=\{\zeta\}$, for some $\zeta\in \mathcal X$), then the distribution of $\X_0$ is regularly varying (see \cite[Section~4]{LFFF16}, for example). 
\end{remark}

\subsection{Applications to specific systems}
\label{subsec:systems}

The theory developed in this paper applies to general stochastic processes, but the applications we focus on are dynamical systems.  In this subsection we give some preliminary examples of such applications ranging from systems with good exponential mixing properties to some with poor mixing behaviour, leaving further applications to future works.  Since some aspects of the proofs of the facts used here require the establishment of some new tools, we postpone these to Section~\ref{Appendix:dyn} (see also the discussion following Theorem~\ref{thm:point-process-convergence}).

\subsubsection{Non-uniformly expanding systems}
\label{sssec:non-invertible}
\hfill\\
\textbf{Uniformly expanding systems.}
We first list some well-behaved dynamical systems which are essentially uniformly expanding, though they need not have Markov, or compactness, properties.  More details about the required properties are at the beginning of Section~\ref{sec:proof-FLT}.

\begin{itemize}
\item Uniformly expanding continuous maps of the interval/circle;
\item Markov maps;
\item Piecewise expanding maps of the interval with countably many branches such as Rychlik maps;
\item Saussol's class of higher-dimensional expanding maps

\end{itemize}

Here we are always assuming that $\mu$ is an absolutely continuous (with respect to Lebesgue) invariant probability measure or \emph{acip} since these are a very natural class, though the theory extends beyond these.

Non-trivial examples of observables on these systems to which we can apply the theory 
include those maximised at a repelling periodic point, first studied in \cite{FFT12}, but there is huge scope to study further clustering behaviour such as that shown in  \cite{AFFR16, AFFR17, FFRS20} for example.  We require that the density $\frac{d\mu}{\text{Leb}}$, where $\text{Leb}$ is Lebesgue, is bounded at the periodic point so that the conformal properties of $T$ there are reflected in the measure $\mu$ as well as $\text{Leb}$, but this is automatic in the Rychlik case.  

In the heavy-tail applications we will restrict ourselves to $0<\alpha<1$ for simplicity, but observe that techniques to prove \eqref{eq:small-jumps}, required in the $1\le \alpha<2$ case, are provided in \cite{T10}: these immediately apply to some of the simpler cases above. Conditions  \eqref{eq:Q-condition->1} and \eqref{eq:Q-condition-=1} are easy computations in the periodic case.

Finally for this introductory discussion on dynamical applications, we note that there is a condition on $(Q_j)_j$ in Theorem~\ref{thm:record-theorem} which is easily satisfied in all the dynamical examples, see Appendix~\ref{Appendix:dyn}.

\textbf{Benedicks-Carleson quadratic maps.} 
Here we provide a class of maps which are far from uniformly expanding, indeed there are critical points.  

Here we set $I=[-1, 1]$ and for $a\in (0, 2]$ define $f_a:I\to I$ by $f_a(x) = 1-ax^2$.  This map satisfies the \emph{Benedicks-Carleson} conditions (here $a$ is close to 2) if there exists $c, \gamma>0$ ($c$ should be close to $\log 2$ and $\gamma$ is small) such that 
$$|Df_a^n(f(0))|\ge e^{cn} \text{ for } n\in \N_0 \text{ and } |f_a^n(0)|\ge e^{-\gamma\sqrt n}\text{ for } n\in \N.$$
It is known that there is a positive Lebesgue measure set $BC$ of $a$ such that $f_a$ satisfies these conditions.

 When for definiteness we consider that our observable is maximised at a periodic point of $f_a$, we note that the fact that the density at our periodic point is bounded is more delicate than above, but the existence of suitable periodic points for (a positive Lebesgue measure set of) maps in $BC$ is shown in \cite[Section 6]{FFT13}.

\textbf{Manneville-Pomeau maps.}
Our previous examples all have exponential decay of correlations, but our final interval map example shows that this is not necessary.  This is what is often referred to as the Liverani-Saussol-Vaienti (LSV) version of the Manneville-Pomeau map: for $\gamma\in (0,1)$, define $T_\gamma:[0, 1]\to [0, 1]$ by
\begin{equation}
\label{eq:LSV}
T_\gamma(x):= \begin{cases} x(1+2^\gamma x^\gamma) &\text{ if } x\in [0, 1/2),\\
2x-1&\text{ if } x\in [1/2, 1].\end{cases}
\end{equation}
This has an acip $\mu=\mu_\gamma$ and (sharp) polynomial decay of correlations for H\"older observables against observables in $L^\infty(\mu)$, but does not have decay of correlations for observables on some Banach space against all observables in $L^1(\mu)$. (We recall that, as seen in \cite[Theorem~B]{AFLV11}, summable decay of correlations against all $L^1(\mu)$ observables is a strong property which in particular would imply the existence of exponential decay of correlations). To handle this particular case, we introduce in Section~\ref{subsec:inducing} a new type of point processes encompassing the idea of inducing. This will allow us to obtain the same functional limit theorems in the full range $\gamma\in(0,1)$, for observables of the type $\varphi\colon [0,1]\to\R$, where $\varphi(x)=g(\dist(x,\mathcal M))$, $g(x)=x^{-1/\alpha}$ and $\mathcal M$ is finite set of points, possibly periodic. For simplicity, we consider again that $0<\alpha<1$ .

\subsubsection{Invertible hyperbolic systems}\label{subsec:anosov}

We also consider hyperbolic invertible systems consisting of Anosov linear diffeomorphism,  $T:\mathbb T^2\to \mathbb T^2$, defined on the flat torus  $\mathbb{T}^2=\mathbb{R}^2/\mathbb{Z}^2$. We can associate $T$ to a $2\times2$ matrix, $L$, with integer entries, determinant $1$,  and without eigenvalues of absolute value $1$. 
As the determinant of $L$ is equal to $1$, the Riemannian structure induces a Lebesgue measure on $\mathbb{T}^2$ which is invariant by $T$. These systems are Bernoulli and have exponential decay of correlations with respect to H\"older observables.

\subsection{The functional spaces}
\label{subsec:functional-spaces}

Let $D=D([0, 1], \R^d)$ be the space of c\`adl\`ag functions defined on $[0,1]$. 
One can define several metrics in $D$. The most usual is the so-called $J_1$ Skorohod's metric, which generalises the uniform metric by allowing a small deformation of the time  scale.  In this metric a jump in the limit function must be matched by a similar one in the approximating functions. In order to establish limits with unmatched jumps, Skorohod introduced the $M_1$ and $M_2$ topologies which use completed  graphs 
of the functions. We will use a metric motivated by $M_2$, which considers the Hausdorff distance between compact sets. We refer to \cite{W02} for precise definitions and properties.

In order to keep some of the information collapsed in limit jumps, and to broaden the class of convergent sequences, Whitt introduced the space $E=E([0, 1], \R^d)$ (see  \cite[Sections~15.4 and 15.5]{W02}), as the space of excursion triples
$$
\left(x,S^x,\{I(s)\}_{s\in S^x}\right),
$$
where $x\in D$, $S^x$ is a countable set containing the discontinuities of $x$, denoted by $disc(x)$, \ie $disc(x)\subset S^x$, and, for each $s\in S^x$,  $I(s)$ is a compact connected subset of $\R^d$ containing at least $x(s^-)$ and $x(s)$. We may identify each element of $E$ with the set-valued function
\begin{equation}
\label{eq:set-function}
\hat x(t)=\begin{cases}
    I(t)& \text{if } t\in S^x\\
    \{x(t)\}              & \text{otherwise}
\end{cases}, 
\end{equation}
and its graph $\Gamma_{\hat x}=\{(t,z)\in [0,1]\times\R^d\colon\; z\in \hat x(t)\}$.  Letting $p_\ell:\R^d\to\R$ denote the projection onto the $\ell$-th coordinate for $\ell=1, \ldots, d$, we define $\hat x^\ell(t)=p_\ell(\hat x(t))$ and $\Gamma^\ell_{\hat x}=\{(t,z)\in [0,1]\times\R\colon\; z\in \hat x^\ell(t)\}$.

We embed $D$ into $E$, in the following way. For $a,b\in\R^d$, we define the product segment
$$
[[a,b]]:=[a^1,b^1]\times\cdots\times[a^d,b^d],
$$
where the one-dimensional segment $[a^\ell,b^\ell]$ coincides with the interval\footnote{Recall the notation, used throughout this paper, $x\wedge y=\min\{x, y\}$ and $x\vee y=\max\{x, y\}$} $[a^\ell\wedge b^\ell,a^\ell\vee b^\ell]$. If we have $a^\ell=b^\ell$, then $[a^\ell,b^\ell]=\{a^\ell\}=\{b^\ell\}$. We identify $x\in D$ with the element of $E$
$$
\left(x, disc(x), \{I(s)\}_{s\in disc(x)}\right),\quad \text{where $I(s)=[[x(s^-),x(s)]]$}.
$$

We use the Hausdorff metric to define a metric on $E$. Namely, recall that for compact sets $A,B\subset \R^d$, the Hausdorff distance between $A$ and $B$ is given as
$$
m(A,B)=\max\left\{\sup_{x\in A}\left\{\inf_{y\in B}\|x-y\|\right\}, \sup_{y\in B}\left\{\inf_{x\in A}\|x-y\|\right\} \right\}.
$$
For $A\subset \R^d$, let $d(A)=\sup_{x,y\in A}\{\|x-y\|\}$ be the diameter of $A$. In order to use the Hausdorff metric, we assume that the elements of $E$ satisfy the condition:
\begin{equation}
\label{eq:finitebigjumps} 
 \text{for all $\epsilon >0$, there exist only finitely many $s\in S^x$ such that $d(I(s))>\epsilon$.} 
 \end{equation}
 This guarantees that for each element of $E$, the associated graph $\Gamma_{\hat x}$ is a compact set. This way, we endow $E$ with the Hausdorff metric simply by establishing that
\begin{equation}
\label{eq:me}
m_E(\hat x, \hat y)=\max_{\ell=1, \ldots, d}m(\Gamma^\ell_{\hat x},\Gamma^\ell_{\hat y}).
\end{equation}
Endowed with this metric $E$ is separable but not complete. Alternatively, we can define the stronger metric, called uniform metric given by
\begin{equation}
\label{eq:me*}
m_E^*(\hat x, \hat y)=\max_{\ell=1, \ldots, d}\sup_{t\in [0,1]}  m(\hat x^\ell(t), \hat y^\ell(t)).
\end{equation}
When endowed with the metric $m_E^*$, the space $E$ is complete but not separable. We refer to \cite{W02} for further details about $E$ and its properties.

The space $E$ only records the maximal oscillations when information collapses into a jump in the limit. In order to keep a closer track of the fluctuations during the excursions, Whitt introduced the space $F$,  in \cite[Section~15.6]{W02}, which corresponds to the set of equivalence classes of the set of all the parametric representations of the graphs $\Gamma_{\hat x}$ of elements $\hat x$ of $E$, by setting that two parametrisations (continuous functions from $[0,1]$ into $\Gamma_{\hat x}$)  $(r_1,u_1)$ and $(r_2,u_2)$ are equivalent if there exist continuous nondecreasing onto functions $\lambda_1,\lambda_2:[0, 1]\to [0, 1]$ such that  $(r_1,u_1)\circ\lambda_1=(r_2,u_2)\circ\lambda_2$. This means that, in particular, the two functions $u_1,u_2$ in the two equivalent parametric representations of $\Gamma_{\hat x}$ visit all the points of $\Gamma_{\hat x}$ in each $I(s)$ the same number of times and in the same order. However, note that $F$ still misses some fluctuations such as intermediate jumps in the same direction which give rise to a big jump in the limit. In $F$, for example, we do not distinguish between a big jump or a collection of smaller jumps which aggregate to perform an excursion with the same big jump, in the limit. See the excursions given in \eqref{eq:excursions-F-F'} and the discussion that follows it.  

\subsubsection{The new functional space $F'$ recording all fluctuations}
\label{subsec:F'}

In order to keep track of all the fluctuations without missing information we introduce the space $F'=F'([0,1])$. We start by considering $\tilde D= \tilde D([0, 1], \R^d)= D([0, 1], \R^d)/{\sim}$ where $x\sim y$ if there exists a reparametrisation $\lambda:[0, 1]\to [0, 1]$, i.e., a continuous strictly increasing bijection such that $x\circ \lambda=y$.  Denote the equivalence class of $x$ by $[x]$.  We define
$$d_{\tilde D}([x], [y]) = \inf_{\lambda}\|x\circ \lambda- y\|,$$
where $\|\cdot\|$ is the supremum norm and $\lambda$ is the set of continuous strictly increasing bijections of $[0,1]$ to itself (this could be thought of as the induced metric from the $J_1$ metric on $\tilde D$).

We abuse notation within $\tilde D$ by writing $x$ to refer to both a representative of its equivalence class $[x]$ and the equivalence class itself.

Now define 
$$F':=\left\{\ux=\left(x, S^x, \{e_x^s\}_{s\in S^x}\right)\right\},$$
where $x\in D([0,1], \R^d)$, $S^x\subset [0,1]$ is an at most countable set containing $disc(x)$, the discontinuities of $x$ and $e_x^s\in \tilde D([0,1], \R^d)$  is the excursion at $s\in S^x$, which is such that $e_x^s(0)=x(s^-)$ and $e_x^s(1)=x(s)$.

We can embed 
$D$ into $F'$, by associating to $x\in D([0,1],\R^d)$ the element 
\begin{equation}
\label{eq:embedding-F}
\ux=\left(x, disc(x), \{e_x^s\}_{s\in disc(x)}\right),
\end{equation} where $e_x^s\in \tilde D([0,1], \R^d)$ is the equivalence class represented, for example, by $$e_x^s(t)=x(s^-)+(x(s)-x(s^-))\I_{[1/2,1]}(t),\quad t\in[0,1].$$

We project $F'$ into Whitt's space $E$ and into $\tilde D$, which will give us a metric and a space with more information than Whitt's space $F$.  

Let $\pi_E(\ux)=\ux^E:=\left(x, S^x, \{I(s)\}_{s\in S^x}\right)$ where
$$
I(s)=\left[\inf_{t\in [0,1]} e_x^{s,1}(t), \sup_{t\in [0,1]} e_x^{s,1}(t)\right]\times\cdots\times \left[\inf_{t\in [0,1]} e_x^{s,d}(t), \sup_{t\in [0,1]} e_x^{s,d}(t)\right], 
$$
with $e_x^{s,\ell}(t)=p_\ell(e_x^{s}(t))$.
We project $F'$ into $\tilde D$ as follows.
 Suppose that $S^x$ is countable  and write $S=\{s_i\}_{i=1}^\infty$, since the finite case follows more straightforwardly.
Let $0=a_1<a_2<\cdots <1$ be such that $a_i\to 1$ as $i\to \infty$ (the choice of $(a_n)_n$ really is arbitrary).  We insert the intervals $[a_i, a_{i+1}]$ at the points $s\in S^x$.  This is a simple idea, but since $S$ may be complicated, we need some notation.  Define for $i\in \N$, 
$$\bar a_i:= \sum_{s_j\le s_i} (a_{j+1}-a_j), \; c_i:=s_i+\bar a_i-(a_{i+1}-a_i), \; d_i:=s_i+\bar a_i, \; \bar t:=\sup\{\bar a_i:s_i<t\}.$$
Thus $[c_i, d_i]$ is the interval corresponding to $s_i$ and $\bar t$ is the accumulated length of the intervals inserted before $t$, so we can think of $t$ being shunted to $t+\bar t$.  Note also that  our time line is now of length 2, so we will need to rescale back to a length 1 interval. We then define a representative of the equivalence class $\tilde\pi(\ux)$ by
\begin{equation}
\label{eq:projection-Dtilde}
\ux^{\tilde D}(t)= \begin{cases} x(2t-\bar t) &\text{ if } 2t\notin \cup_i[c_i, d_{i}],\\
e_x^{s_i}\left(\frac{2t-c_i}{d_i-c_i}\right) &\text{ if } 2t\in [c_i, d_{i}].
\end{cases}
\end{equation}

Define
\begin{equation}
\label{eq:dF-prime}
d_{F'}(\ux, \uy)=d_E(\pi_E(\ux), \pi_E(\uy))+ d_{\tilde D}(\tilde \pi(\ux), \tilde\pi(\uy)),
\end{equation}
where $d_E$ denotes $m_E$ in \eqref{eq:me}.  Note that we could have chosen $m_E^*$ defined in \eqref{eq:me*} rather than $m_E$ here (see Proposition~\ref{prop:compsep} and discussions in \cite[Section 5.4]{W02} and \cite[Section 4.1]{BPS18}), but we fix $m_E$ for definiteness and a more direct comparison with \cite{BPS18}.

Note that we could also project into Whitt's $F$ space here (using $\pi_E$, but with $ \{e_x^s\}_{s\in S^x}$ to give the order of the parametrisation in $I(s)$), and that convergence in $F'$ implies convergence in $F$.  In $F'$ we keep the information of the  displacement in $\R^d$ of all (including intermediate) jumps in the discontinuities, while $F$ only keeps the order and information on the `local range', \ie it only captures local extrema.  To see this in a 1d example note that the excursions denoted
\begin{equation} 
\label{eq:excursions-F-F'}
e_x(t)= \begin{cases} 0 & \text{ if } 0\le t<\frac13,\\
 \frac12& \text{ if } \frac13 \le t<\frac23,\\
 1 & \text{ if } \frac23\le t\le 1,\\
\end{cases}
\qquad e_y(t)= \begin{cases} 0 & \text{ if } 0\le t<\frac13,\\
 1 & \text{ if } \frac13\le t\le 1,\\
\end{cases}
\end{equation}
yield the same representations as part of $F$: namely the line $[0,1]$ with any parametrisation which is an orientation preserving homeomorphism.  However, as components of $F'$ they are distinct with $d_{\tilde D}(e_x, e_y)=\frac12$.  Indeed, if $\ux$ and $\uy$ differ only by having a discontinuity $s$ having $e_x$ and $e_y$ as the corresponding excursion respectively, then $d_{F'}(\ux, \uy) =\frac12$.

\begin{figure}[h]
\begin{tikzpicture}[thick, scale=0.5]

\draw[ -> ] (0,0) -- (17,0);

\draw[ dashed] (1, 3) -- (3,3);
\draw[ -o ] (3, 3) -- (4,3);
\fill (3.8,6) circle (5.5pt);
\draw[ -o ] (4, 6) -- (6,6);

\fill (5.8,7.5) circle (5.5pt);
\draw[ -o ] (6, 7.5) -- (8,7.5);

\fill (7.8,8.5) circle (5.5pt);
\draw[ -o ] (8, 8.5) -- (10,8.5);

\fill (9.8,9) circle (5.5pt);
\draw[ -] (10, 9) -- (11,9);
\draw[ dashed] (11, 9) -- (13,9);

\draw[-] (4, 0)--(4, -0.3);
\draw (4, -0.3) node[below] { \small $j$};

\draw[-] (6, 0)--(6, -0.3);
\draw (6, -0.3) node[below] {\small  $j+1$};

\draw[-] (8, 0)--(8, -0.3);
\draw (8, -0.3) node[below] {\small $j+2$};

\draw[-] (10, 0)--(10, -0.3);
\draw (10, -0.3) node[below] {\small $j+3$};

\end{tikzpicture}

\caption{A piece of $S_n(t)$ (see \eqref{eq:sum})  in the Gou\"ezel example \cite{G08} where there is a close approach to zero at time $1\le j\le n-3$.}
\label{fig:Gou}

\end{figure}
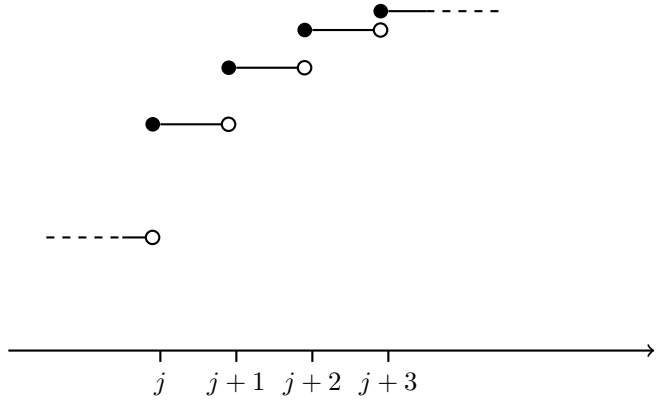

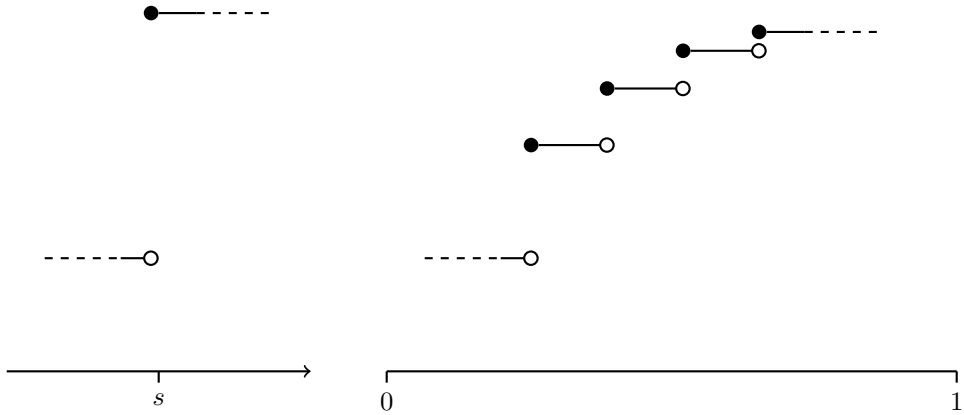
\begin{figure}[h]
\begin{tikzpicture}[thick, scale=0.5]

\draw[ - >] (0,0) -- (8,0);

\draw[ dashed] (1, 3) -- (3,3);
\draw[ -o ] (3, 3) -- (4,3);

\fill (3.8,9.5) circle (5.5pt);
\draw[ -] (4, 9.5) -- (5,9.5);
\draw[ dashed] (5, 9.5) -- (7,9.5);

\draw[-] (4, 0)--(4, -0.3);
\draw (4, -0.3) node[below] { \small $s$};

\draw[ -] (10,0) -- (25,0);
\draw[-] (10, 0)--(10, -0.3);
\draw (10, -0.3) node[below] {\small $0$};
\draw[-] (25, 0)--(25, -0.3);
\draw (25, -0.3) node[below] {\small $1$};

\draw[ dashed] (11, 3) -- (13,3);
\draw[ -o ] (13, 3) -- (14,3);
\fill (13.8,6) circle (5.5pt);
\draw[ -o ] (14, 6) -- (16,6);

\fill (15.8,7.5) circle (5.5pt);
\draw[ -o ] (16, 7.5) -- (18,7.5);

\fill (17.8,8.5) circle (5.5pt);
\draw[ -o ] (18, 8.5) -- (20,8.5);

\fill (19.8,9) circle (5.5pt);
\draw[ -] (20, 9) -- (21,9);
\draw[ dashed] (21, 9) -- (23,9);

\end{tikzpicture}

\caption{A sequence of jumps as in the previous figure, after rescaling in time and space, may converge to a part of $(x, S, \{e^s\}_{s\in S})\in F'$.  We sketch the $x$ part on the left and a representative of the corresponding $e^s$ on the right (this belongs to an equivalence class up to time rescaling).  Clearly this convergence cannot take place in $J_1$, and, while convergence does hold in  $M_1$ or $M_2$ (and indeed in $E$ and $F$), the height of the individual (rather than cumulative) jumps will not be recorded.}
\label{fig:Gou-blow}

\end{figure}

\begin{remark}
\label{rem:non-compact-domain}
The definition of the space and topology can be generalised trivially to other compact time domains such as $[t_1,t_2]$, with $0\leq t_1<t_2$. In order to consider a notion of convergence on non-compact domains such as   $F'((0,\infty), \R^d)$, we say that $\underline x_n \to \ux$, in $F'((0,\infty), \R^d)$ if the same holds for the respective restrictions to $F'([t_1,t_2], \R^d)$, for all  $0\leq t_1<t_2$ such that $t_1,t_2\notin S^x$.
\end{remark}

We illustrate schematic versions of a sequence of elements in $F'$ in Figure~\ref{fig:Gou} which converge to the element of $F'$ in Figure~\ref{fig:Gou-blow}.  The jumps stack up to one big jump, but the jumping behaviour is recorded in $F'$.  Figures~\ref{fig:3xmod1-osc-p2}, \ref{fig:3xmod1-osc-p2-blowup} show more complex behaviours in simulations.
 
We discuss some properties such as completeness and separability of the space $F'$ in Appendix~\ref{sec:completeness-separability}.

\subsection{Rare events and point processes}
\label{subsec:rare-events-PP}

Since \cite{LWZ81, D83}, it has been known that the behaviour of the mean of heavy tailed processes is determined by the extremal observations. Hence, as in the context of extremal processes and records, we are lead to the study of rare events corresponding to the occurrence of abnormally large observations. In particular, this means that we need to impose some regularity of the tails of the distributions.

\subsubsection{Normalising threshold functions}
\label{subsec:normalising}

We assume that the stationary sequence of random variables $\X_0, \X_1, \ldots$ has proper tails, in the sense that there exists a normalising sequence of threshold functions $(u_n)_{\n\in\N}:\R^+\to \R^+$, where $\R^+=(0,+\infty)$, satisfying the following properties (see \cite{H87}):
\begin{enumerate}

\item For each $n$, the function $u_n$ is nonincreasing, left continuous and such that
$$
\lim_{\tau_1\to0,\tau_2\to\infty}\p\left(u_n(\tau_2)<\|\X_0\|<u_n(\tau_1)\right)=1;
$$

\item For each, $\tau\in\R^+$,
\begin{equation}
\label{un}
\lim_{n\to\infty} n\p\left(\|\X_0\|>u_n(\tau)\right)= \tau.
\end{equation}

\end{enumerate}
We observe that \eqref{un} requires that the average number of exceedances of $u_n(\tau)$, \ie events of the type $\|\X_j\|\geq u_n(\tau)$, for $j=0,\ldots,n-1$, is asymptotically constant and equal to $\tau>0$, which can be interpreted as the asymptotic frequency of exceedances.  The nonincreasing nature of $u_n$ reflects the fact that the higher the frequency $\tau$ of observed exceedances, the lower the corresponding threshold $u_n(\tau)$ should be. 

For every $z\in\R^+$, we define
\begin{equation}
\label{un-1}
u_n^{-1}(z)=\sup\{\tau>0\colon\; z\leq u_n(\tau)\}
\end{equation}
Observe that for each value $z$ in the range of the r.v. $\|\X_0\|$, the function $u_n^{-1}$ returns the asymptotic frequency $\tau=u_n^{-1}(z)$ that corresponds to the average number of exceedances of a threshold placed at the value $z$, among the $n$ \iid observations of the r.v. $\|\X_0\|$.

Also note that for all $\tau,s\in\R^+$, 
\begin{equation}
\label{eq:un-1-property}
u_n^{-1}(s)<\tau\quad\mbox{if and only if}\quad s>u_n(\tau).
\end{equation}

\subsubsection{Point processes of rare events}
Multidimensional point processes are a powerful tool to record information regarding rare events (see for example \cite{P71,R87}), which can then be used to study record times (\cite{R75,R87}), 
extremal processes \cite{D64, L64, P71, R75, HT19}, sums of heavy tailed random variables \cite{D83,DH95,T10,T10a, BKS12,BPS18}. In particular, they are very useful to keep track of the information within the clusters \cite{M77,H87,DH95,N02, BKS12,BPS18, FFM20, PS20}. More specifically, the Mori-Hsing characterisation tells us that in nice situations the limiting process can essentially be described as having two components: a Poisson process determining the occurrences of clusters and an ``orthogonal'' point process describing the clustering structure. The description of the clustering component can be accomplished in different ways. In \cite{N02} a natural generalisation of a compound Poisson point process is used. In \cite{FFM20}, the authors use an outer measure to describe the piling of observations at the base cluster points of the Poisson process, which in the dynamical systems setting can be computed based on the action of the derivative of the map generating the dynamics. In \cite{BS09}, the authors introduced the \emph{tail process}, which is a mechanism to describe the clusters and was subsequently used in \cite{BKS12,BPS18,KS20}, for example. 

Here, we are going to use a description provided by the \emph{transformed anchored tail process}, 
 which is an adaptation 
 of the tail process introduced in \cite{BS09} (see Section~\ref{subsec:regular-variation} for the relation between the two).  Devices of this kind are very well understood in the probabilistic literature, and we refer in particular to a recent work \cite{BP21} (and references therein) where the authors consider anchored tail processes in a more general framework of processes indexed over integer lattices (in our setting the index is restricted to $\Z$). 
 We will show how the transformed anchored tail process relates with the outer measure of \cite{FFM20} and compute it in the context of dynamical systems.\footnote{Due to this piling phenomenon, in an earlier preprint version of this paper, we also referred to the anchored tail process as `piling process'.}

Although we defer the formal definitions of point processes and point processes of rare events designed to keep all clustering information  to Section~\ref{subsubsec:complete-convergence} and  Appendix~\ref{appendix:convergence-point-processes}, we give here a brief description of the latter, which again has two components. The first is an underlying Poisson process on $\R_0^+\times\R_0^+$ with intensity measure $\leb\times\theta\leb$, which can be represented by  
$$M=\sum_{i=1}^\infty\delta_{(T_i,U_i)}, \quad\text{where $\delta_x$ denotes the Dirac measure at $x\in \R_0^+\times\R_0^+$},$$
so that for any measurable disjoint sets $A, A_1, \ldots, A_\ell \subset \R_0^+\times\R_0^+$, we have that $M(A)$ is a Poisson distributed random variable of intensity $\leb\times\theta\leb(A)$ and $M(A_1),\ldots, M(A_\ell)$ are mutually independent. The parameter $\theta$ is defined formally in Section~\ref{subsubsec:EI} and can be thought of as the reciprocal of the average number of exceedances in a cluster.  The sequence $(T_i)_i$ in the first component are points in time and the second component is the angular component of the spectral decomposition of the transformed anchored tail process, which will be defined in Section~\ref{subsubsec:piling-process}. For each time $T_i$ there is a bi-infinite sequence $(Q_{i, j})_{j\in\Z}$ which will decorate the second coordinate of the mass point of $M$ at time $T_i$ and is such that $\|Q_{i, j}\|\to\infty$, as $|j|\to\infty$, 
$\min_{j\in\Z} \|Q_{i,j}\|=1$.  For a given $i$, the sequence $(Q_{j})_{j\in\Z}= (Q_{i, j})_{j\in\Z}$ is given in \eqref{eq:Z-polar}. The sequences $(Q_{i,j})_{i,\in\N,j\in\Z}$ are mutually independent and also independent of the sequences $(T_i)_i$ and $(U_i)_i$.  The distribution of each transformed anchored tail process, which  corresponds to a sequence $(U_iQ_{i,j})_{j\in\Z}$ at $T_i$,  is designed to capture the behaviour of the observations within a cluster of exceedances, which was initiated at (`vertical') time $j=0$ and whose most severe exceedance has a corresponding asymptotic frequency given by $U_i$, in the sense of the interpretation we provided for the $u_n^{-1}$ function given in \eqref{un-1} (recall that the larger the exceedance, the smaller the corresponding asymptotic frequency).

\begin{remark}
\label{rem:Qj-periodic-point}
 In order to have some intuition regarding the sequence $(Q_{j})_{j\in\Z}$, we mention that in the case $\Psi$ has the form \eqref{eq:dynamics-particular-observable}, with $\mathcal M$ reduced to a repelling fixed point $\zeta$ where the invariant density is sufficiently regular, then, in the non-invertible case, $Q_{0}$ has a uniform distribution on $\mathbb S^{d-1}$, the unit sphere in $\R^d$, and for all $j\in\N$, 
 \begin{equation}
 \label{eq:def-Q-periodic}
Q_{j}=\|(DT_\zeta)^{j}(Q_0)\|^d\frac{(DT_\zeta)^{j}(Q_0)}{\|(DT_\zeta)^{j}(Q_0)\|}, 
 \end{equation}
where $DT_\zeta$ denotes the derivative of $T$ at $\zeta$, $(DT_\zeta)^j$ its $j$-fold product and the norm is just the usual Euclidean norm in $\R^d$. For all negative $j$ we have $Q_{j}=\infty$ a.s. (note that here `$\infty$' can be thought of as any point in the completion of $\R^d$ which is not contained in $\R^d$: this has infinite norm). Note that, for $d=1$, if $\chi:=|DT_\zeta|$ then  $(Q_{j})_{j\in\Z}$ is such that $Q_j=\chi^j$, for all $j\geq0$ and $Q_{j}=\infty$ for all $j<0$.
\end{remark}

\begin{remark}
\label{rem:bidimensional-leb}
We observe that in line with \cite{BPS18} we have placed the $\theta$ in the second component of the intensity measure ($\leb\times\theta\leb$) of the point process $M$.  However, as can be seen, for example, from Corollary~\ref{cor:average-convergence}, we could have put it in the first coordinate which would be in line with \cite[Equation~(2.9)]{FFM20} or \cite[Remark~7.3.2]{KS20}, for example. One could also express it as $\theta$ times bidimensional Lebesgue measure as in \cite[Corollary~3.7]{H87}.
\end{remark}

\subsection{Functional limit theorems for heavy tailed dynamical sums}
\label{subsec:heavy-tail-limits}

Throughout this section we assume that the process $\X_0,\X_1,\ldots$ is obtained from a system as described in \eqref{eq:dynamics-SP} and \eqref{eq:dynamics-SP-norm}, where $g$ is of type $g_2$, which together with some regular behaviour of the invariant measure $\mu$ in the vicinity of the maximal set $\mathcal M$, guarantees that there exists a sequence of positive real numbers $(a_n)_{n\in\N}$, such that 
\begin{equation}
\label{eq:heavy-normalisation}
\lim_{n\to\infty}n\p(\|\X_0\|>ya_n)=y^{-\alpha}.
\end{equation}
We refer to \cite[Chapter~4]{LFFF16} on how \eqref{eq:heavy-normalisation} can be verified and to \cite{FFT12, GHN11, HNT12, FFT13, CFFH15,AFFR16, FFRS20,CHN21} for several examples of particular dynamical systems and maximal sets satisfying the regularity conditions. 
Hence, taking $\tau=x^{-\alpha}$ and $u_n(\tau)=a_n\tau^{-\frac1\alpha}$, 
equation \eqref{un} holds. 

We are also going to assume that the transformed anchored tail process given in Definition~\ref{def:piling-process} exists and is well defined, which implies that existence of the sequences $(Q_j)_j$ as above.   Section~\ref{subsec:systems} provides examples of systems satisfying all our requirements.

The L\'evy-It\^o representation gives a nice way to describe the L\'evy process as a functional of Poisson point process, whose intensity measure gives the L\'evy measure that determines the process (see \cite{S13}). 
In the case of an $\alpha$-stable L\'evy process, it is usually identified through a limit of a Poisson integral of a Poisson point process $M_\alpha=\sum_{i=1}^\infty\delta_{(T_i,P_i)}$ with intensity measure $\leb\times\nu_{\alpha}$, where the L\'evy measure $\nu_\alpha$ is such that $\nu_\alpha(\{x:\|x\|>y \})= y^{-\alpha}$. Namely, when there is no clustering, for example, the limiting L\'evy process can be written as 
$$
V(t)=\lim_{\eps\to 0}\left(\sum_{T_i\leq t} P_i\I_{\{\|P_i\|>\eps\}} -\int_{\eps<\|x\|\leq1} x \d\nu_{\alpha}(x)\right).
$$
Hence, in this case, as explained in more detail in Section~\ref{subsec:regular-variation}, we consider a transformed version of the general rare events point processes mentioned earlier so that the limit has a Poisson component which can be written as $M_\alpha=\sum_{i=1}^\infty\delta_{(T_i,P_i)}=\sum_{i=1}^\infty\delta_{(T_i,U_i^{-1/\alpha})}$, with intensity measure $\leb\times \theta\,\nu_\alpha$, with $\nu_\alpha(y)=\d(-y^{-\alpha})$, while the decorations, which we denote by $(\mathcal Q_{j})_{j\in\Z}$, in this case, are related to the $(Q_j)_{j\in\Z}$ above through \eqref{eq:Q-relation}, in Section~\ref{subsec:regular-variation}.

Consider the partial sum process in $D([0,1],\R^d)$ defined by:
\begin{equation}
\label{eq:sum}
S_n(t)=\sum_{i=0}^{\lfloor nt\rfloor-1} \frac{1}{a_n}\X_i-tc_n,\qquad t\in[0,1],
\end{equation}
where the sequence $(c_n)_{n\in\N}$ is such that $c_n=0$ if $0<\alpha<1$ and 
$$
c_n=\frac{n}{a_n} \E\left(\X_0\I_{\|\X_0\|\leq a_n}\right), \qquad \text{for } 1\leq\alpha<2.
$$
Our main goal in this section is to establish an invariance principle for $S_n$ which keeps record of all the fluctuations during a cluster of high values which are responsible for a jump of $S_n$. As usual, for $1\leq\alpha<2$, we need that the small contributions for the sum are close to the respective expectation, namely, for all $\delta>0$
\begin{equation}
\label{eq:small-jumps}
\lim_{\varepsilon\to0}\limsup_{n\to\infty}\p\left(\max_{1\leq k\leq n}\left\|\sum_{j=1}^k\left(\X_j\I_{\|\X_j\|\leq \varepsilon a_n}\right)-\E\left(\X_j\I_{\|\X_j\|\leq \varepsilon a_n}\right)  \right\|\geq \delta a_n\right)=0.
\end{equation}
In order to describe the limit, we assume the existence of the transformed anchored tail process, as in Definition~\ref{def:piling-process}. For $1<\alpha<2$, we will also need to assume that the sequence $(Q_j)_{j\in\Z}$, obtained from the spectral decomposition of the transformed anchored tail process, satisfies the assumption
\begin{equation}
\label{eq:Q-condition->1}
\E\left(\left(\sum_{j\in\Z}\|\mathcal{ Q}_j)\|\right)^\alpha\right)<\infty,
\end{equation}
which in the case $\alpha=1$ should be replaced by
\begin{equation}
\label{eq:Q-condition-=1}
\E\left(\sum_{j\in\Z}\|\mathcal{Q}_j\|\log\left(\|\mathcal{Q}_j\|^{-1}\sum_{i\in\Z}\|\mathcal{Q}_j\|\right)\right)<\infty.
\end{equation}

In order to describe the excursions during the clusters, we will need an orientation preserving bijection from $[0, 1]$ to $[-\infty, \infty]$, continuous on $(0, 1)$. For definiteness, we take:
\begin{equation}
\label{eq:upsilon-def}
\h(t):= \tan\left(\pi\left(t-\frac12\right)\right)
\end{equation}

We can now state our main theorem regarding the behaviour of sums of heavy tailed observables.

\begin{theorem}
\label{thm:sums-heavy-tails}
Let $T:\mathcal X \to \mathcal X$ be a dynamical system as described in Section~\ref{subsec:systems}. Let $\X_0,\X_1,\ldots$ be obtained from such a system as described in 
\eqref{eq:dynamics-SP-norm}, where $g$ is of type $g_2$ and condition \eqref{eq:heavy-normalisation} holds. Assume also that the transformed anchored tail process given in Definition~\ref{def:piling-process} is well defined. Consider the continuous time process $S_n$ given by \eqref{eq:sum}. For $1\leq\alpha<2$ assume further that condition \eqref{eq:small-jumps} holds and, for $1<\alpha<2$, assume also \eqref{eq:Q-condition->1}, while for $\alpha=1$ assume \eqref{eq:Q-condition-=1}, instead. Then $S_n$ converges in $F'$ to $\underline V:=(V, disc(V),\{e_{V}^s\}_{s\in disc(V)})$, where $V$ is an $\alpha$-stable L\'evy process on $[0,1]$ which can be written as
\begin{align}
\label{eq:alpha-stable-limit}
V(t)&=\sum_{T_i\leq t}\sum_{j\in\Z} U_{i}^{-\frac1\alpha}\mathcal{Q}_{i,j},
\end{align}
 for $0<\alpha<1$ and 
\begin{align*}
V(t)&=\lim_{\varepsilon\to 0}\Bigg(\sum_{T_i\leq t}\sum_{j\in\Z} U_{i}^{-\frac1\alpha}\mathcal{Q}_{i,j}\I_{\{\|U_{i}^{-\frac1\alpha}\mathcal{Q}_{i,j}\|>\varepsilon\}}\nonumber\\
&\hspace{5cm}
-t\theta\int_{0}^{+\infty}\E\Bigg(y\sum_{j\in\Z}\mathcal{Q}_{j}\I_{\{\eps< y\|\mathcal{Q}_{j}\|\leq 1\}}\Bigg)d(\shortminus y^{\shortminus\alpha}) \Bigg)
\end{align*}
for $1\le \alpha<2$; and the excursions  can be represented by
$$
e_{V}^{T_i}(t)=V(T_i^-)+U_{i}^{-\frac1\alpha}\sum_{j\leq\left\lfloor \h(t) \right\rfloor}\mathcal{Q}_{i,j}, \qquad t\in[0,1],
$$
where $(T_i)_{i\in\N}$, $(U_i)_{i\in\N}$ are as described above (see also \eqref{eq:limit-process}),  $(\mathcal{Q}_{i,j})_{i\in\N, j\in\Z}$ is such that $\mathcal{Q}_{i,j}=\xi(Q_{i,j})$, where $(Q_{i,j})_{i\in\N, j\in\Z}$ are as in \eqref{eq:Z-polar}  and $\xi$ as in \eqref{eq:xi}.
\end{theorem}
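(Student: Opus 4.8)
The plan is to deduce the functional limit from the complete convergence of the point process of rare events together with continuous-mapping arguments, following the strategy of \cite{T10, MZ15, BPS18} but refined to target the space $F'$. First I would introduce the point process $N_n = \sum_{i} \delta_{(i/n,\, \X_i/a_n)}$ on $[0,1]\times(\R^d\cup\{\infty\})\setminus\{0\}$, restricted to exceedances, and invoke the complete convergence theorem proved in Section~\ref{sec:point-processes} (under conditions $\D_{q_n}$, $\D'_{q_n}$ and existence of the piling process, all of which are among the hypotheses via Section~\ref{subsec:systems}) to obtain convergence to a decorated Poisson point process $N = \sum_i \sum_{j\in\Z} \delta_{(T_i,\, U_i Q_{i,j})}$. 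Applying the map $\xi$ from \eqref{eq:xi} coordinatewise and the continuous mapping theorem then converts this into the Poisson process $M_\alpha = \sum_i \delta_{(T_i, U_i^{-\alpha})}$ decorated by $(\mathcal Q_{i,j})_{j\in\Z}$, which is exactly the object that should generate the L\'evy process $V$ via the L\'evy--It\^o representation.

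Next I would build, from the limiting decorated point process, the candidate limit $\ux = \underline V = (V, disc(V), \{e_V^s\}_{s\in disc(V)}) \in F'$ by the explicit formulas in the statement: the c\`adl\`ag part $V$ is the (compensated, for $\alpha\ge1$) sum of the cluster jumps $\sum_{j} U_i^{-1/\alpha}\mathcal Q_{i,j}$ placed at the $T_i$, and the excursion at $T_i$ is the partial-sum path through the ordered cluster, reparametrised from $\Z$ to $[0,1]$ by $j\mapsto \tfrac1\pi\arctan(\cdot)+\tfrac12$ (equivalently the $\tan$ formula). One must check $\underline V$ is a.s.\ a genuine element of $F'$: that $disc(V)$ is countable, that each $e_V^s\in\tilde D$ with the correct endpoints $e_V^s(0)=V(s^-)$, $e_V^s(1)=V(s)$, and that \eqref{eq:finitebigjumps} holds for $\pi_E(\underline V)$ --- this uses \eqref{eq:Q-condition->1}/\eqref{eq:Q-condition-=1} (or, for $0<\alpha<1$, summability of $\sum_j\|\mathcal Q_j\|$ which follows from $\|Q_j\|\to\infty$ and $\alpha<1$) to control the diameters of the excursion intervals, plus a Borel--Cantelli / Poisson-thinning argument bounding the number of large clusters in $[0,1]$.

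The core step is then to show $S_n \to \underline V$ in the metric $d_{F'}$ of \eqref{eq:dF-prime}, i.e.\ simultaneously $\pi_E(S_n)\to\pi_E(\underline V)$ in $E$ and $\tilde\pi(S_n)\to\tilde\pi(\underline V)$ in $\tilde D$. The standard route is: (i) truncate at level $\varepsilon$, keeping only exceedances of $\varepsilon a_n$; on this truncated process the map from the point process to the partial-sum path (with inserted excursions recording the ordered within-block contributions) is continuous in the appropriate topology, so one gets convergence of the truncated versions $S_n^{(\varepsilon)} \to \underline V^{(\varepsilon)}$ in $F'$ by the continuous mapping theorem; (ii) let $\varepsilon\to0$, showing $\underline V^{(\varepsilon)}\to\underline V$ in $F'$ a.s.; (iii) control the discarded small-jump part: for $0<\alpha<1$ this is a direct summability estimate ($c_n=0$), while for $1\le\alpha<2$ it is precisely condition \eqref{eq:small-jumps} that makes the compensated small-jump contribution uniformly negligible. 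A routine $3\varepsilon$-argument (as in Billingsley, adapted to $d_{F'}$) then closes the convergence. I expect the main obstacle to be step (i): verifying that the summation-with-decoration functional is continuous from the space of decorated point configurations into $F'$ --- one has to be careful that the ordering of the cluster observations is respected (this is where the piling process, rather than a mere outer measure, is essential), that block boundaries $q_n$ do not artificially split clusters in the limit (handled by $\D'_{q_n}$ and the piling-process normalisation $\min_j\|Q_j\|=1$, $\|Q_j\|\to\infty$), and that the reparametrisation $\tilde\pi$ interacts well with the $J_1$-type metric $d_{\tilde D}$ so that convergence of the $x$-part and of each excursion assembles into convergence of $\tilde\pi(S_n)$. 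The $d_E$ half is comparatively soft since $E$ only sees local extrema, but the $d_{\tilde D}$ half requires the full strength of the enriched point-process convergence.
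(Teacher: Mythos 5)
Your overall strategy---complete convergence of a rare-events point process, continuous mapping via $\xi$, truncation at level $\varepsilon$, convergence of the truncated objects, and finally $\varepsilon\to 0$ together with the small-jump control \eqref{eq:small-jumps}---is essentially the route the paper takes in Section~\ref{subsec:proof-heavytails}. There is, however, a genuine gap in the point process you start from. You propose $N_n=\sum_i\delta_{(i/n,\,\X_i/a_n)}$, a point process with atoms in $[0,1]\times\R^d$. In the weak$^\#$ limit, all the exceedances belonging to one cluster pile up at the same time coordinate $T_i$, and what survives is the unordered multiset of values $\{U_i^{-1/\alpha}\mathcal{Q}_{i,j}\}_{j\in\Z}$, \emph{not} their ordering by $j$. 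But the ordering is exactly the datum you need to build the excursion $e_V^s$ in $F'$: without it you can only recover the $E$-component (local extrema), not the $\tilde D$-component. So from your $N_n$ you cannot reconstruct $\underline V$.

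The crucial device that fixes this (from \cite{BPS18}, and formalised in Theorem~\ref{thm:complete-convergence}) is to use instead the block point process $N_n=\sum_i\delta_{(i/k_n,\,\tilde\pi(\mathbb X_{n,i}))}$ whose atoms live in the sequence space $\tilde l_\infty\setminus\{\tilde\infty\}$; each atom is an entire block, so the ordering within a cluster is encoded in the atom itself and is preserved by the weak$^\#$ limit $N=\sum_i\delta_{(T_i,\,U_i\tilde{\mathbf Q}_i)}$. You acknowledge later in your plan that ordering is essential and that the piling process is the tool that supplies it, but the piling process alone cannot restore ordering that has already been lost in the weak limit of your $N_n$---the ordering must be carried by the state space of the point process. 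Once you replace your $N_n$ with the blocked version on $\R_0^+\times\tilde l_\infty\setminus\{\tilde\infty\}$ (and its $\Xi$-transformed version on $\R_0^+\times\tilde l_0\setminus\{\mathbf 0\}$ as in Corollary~\ref{cor:point-process-heavytail}), your steps (i)--(iii) line up with the paper's Steps 1--4; in particular the $E$-component can be discharged directly via \cite[Theorem~4.5]{BPS18}, and only the $\tilde D$-component requires the new argument using the projection $\tilde\pi$ of \eqref{eq:projection-Dtilde}.
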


\begin{remark}\label{rem:general-M}
Recall that when $\Psi$ is as in \eqref{eq:dynamics-particular-observable} and $\mathcal M$ is reduced to an hyperbolic periodic point,  the transformed anchored tail process is well defined and the $(Q_{i,j})_{i\in\N, j\in\Z}$ are as in Remark~\ref{rem:Qj-periodic-point}. 
Also note that when $\mathcal M$ is reduced to a generic point, we have no clustering and then the result holds with the trivial $Q_j=\infty$, for all $j\in\Z\setminus\{0\}$ and $Q_0$ as before. 
\end{remark}

\begin{example}
\label{example:oscillatory}
We illustrate the theorem with a concrete application which was mentioned in the introduction.  We consider the system $T:[0,1]\to[0,1]$, given by $T(x)=3x \mod 1$, which is a uniformly expanding systems as those described in Section~\ref{sssec:non-invertible}. The probability measure $\mu=\text{Leb}|_{[0,1]}$ is invariant and $T$ has exponential decay of correlations against $L^1(\mu)$ (see Definition~\ref{def:DC}, below). We consider the  set $\mathcal M=\{1/8,3/8\}$ which maximises the observable $\psi(x)=|x-1/8|^{-2}-|x-3/8|^{-2}$ and the one-dimensional stochastic process $X_0,X_1,\ldots$ given by $X_j=\psi\circ T^j(x)$. Note that $T(1/8)=3/8$ and $T(3/8)=1/8$.

We observe that $\psi$ is regularly varying with index $\alpha=1/2$. In fact, we take $a_n=16n^2$, then 
$$
\lim_{n\to\infty}n\mu(\{x\in[0,1]\colon\;|\psi(x)|>a_n u\})\sim u^{-1/2}.
$$
We are now ready to describe the limit of $S_n(t)=\sum_{i=0}^{\lfloor nt\rfloor -1}X_i/a_n$, with $t\in[0,1]$ in $F'$. For that purpose, let $\theta=2/3$ and $M=\sum_{i=1}^\infty\delta_{(T_i,U_i)}$ be a Poisson point process defined on $\R_0^+\times\R_0^+$, with intensity measure $\text{Leb}\times\theta\,\text{Leb}$. Let $E_1, E_2,\ldots$ be a sequence of \iid discrete random variables independent of $(T_i)_{i}$ and $(U_i)_{i}$ and such that $\p(E_1=1)=1/2=\p(E_1=-1)$. For each $i\in\N$, we set $Q_{i,j}=(-3)^jE_i$, for all $j\in\N_0$ and $Q_{i,j}=\infty$ for all $j\in\Z\setminus\N_0$. We recall that $\mathcal Q_{i,j}=\xi(Q_{i,j})=(-9)^{-j}E_i$, for all $j\in\N_0$ and $\mathcal Q_{i,j}=\xi(Q_{i,j})=0$, for all $j\in\Z\setminus\N_0$.

Hence, $S_n(t)$ converges in $F'$ to $(V,disc(V),{e^s_V}_{s\in disc(V)})$, where $V$ is given in \eqref{eq:alpha-stable-limit} and the excursions can be easily written as:
\begin{equation}
e^{T_i}_V(t)=V(T_i^-)+U_i^{-2}E_i \sum_{0\leq j\leq\left\lfloor \h(t)\right\rfloor} (-9)^{-j}, \qquad t\in[0,1].
\label{eq:simple_exc}
\end{equation}

Numerical simulations of the finite sample behaviour of $S_n$, for large $n$, are given in Figures~\ref{fig:3xmod1-osc-p2} and \ref{fig:3xmod1-osc-p2-blowup}.  The latter blow-up shows how the stacking of jumps can occur in $S_n$, and the requirement for convergence in $F'$ in the limit.  As noted in the introduction, here we have overshooting in the discontinuities (namely, $\inf_{t\in[0,1]}e^{s}_V(t)<\min\{V(s^-),V(s)\}<\sup_{t\in[0,1]}e^{s}_V(t)$), which means that convergence of $S_n(t)$ in $D$ would be precluded in any of the Skrorohod's topologies.

We leave the proof of the form of the transformed anchored tail process of this example to Appendix~\ref{Appendix:dyn}.

\end{example}

\subsection{Enriched extremal process dynamics in the presence of clustering}

Extremal processes are a very useful tool to study the stochastic behaviour of maxima and records (see \cite{R87}). We define the partial maxima associated to the sequence $\X_0, \X_1, \ldots$ by 
\begin{equation}
\label{eq:Mn}
M_n:=\max\{\|\X_0\|,\ldots,\|\X_{n-1}\|\}=\bigvee_{i=0}^{n-1}\|\X_i\|.
\end{equation}
Finding a distributional limit for $M_n$ is one of the first goals in Extreme Value Theory (see for example \cite{LLR83,EKM97,BGTS04,HF06,FHR11}).
\begin{definition}
We say that we have an \emph{Extreme Value Law} (EVL) for $M_n$ if there is a non-degenerate d.f. $H:\R_0^+\to[0,1]$ with $H(0)=0$ and, for every $\tau>0$, there exists a sequence of thresholds $u_n(\tau)$, $n=1,2,\ldots$, satisfying equation \eqref{un} and for which the following holds:
\begin{equation}
\label{eq:EVL-law}
\p(M_n\leq u_n(\tau))\to \bar H(\tau),\;\mbox{ as $n\to\infty$.}
\end{equation}
where $\bar H(\tau):=1-H(\tau)$ and the convergence is meant at the continuity points of $H(\tau)$.
\end{definition}
It turns out that the limit $H$ allows us to describe the functional limit for associated extremal processes.

In this context, we now consider the continuous time 
process $\{Z_n(t):0\leq t\leq 1\}$ defined by
\begin{equation}
\label{eq.Z-process}
Z_n(t):=u_n^{-1}(M_{\lfloor nt\rfloor}), \qquad t\in(0,\infty)
\end{equation}
Recall that $u_n^{-1}(z)$ gives the asymptotic frequency of exceedances of a threshold placed at $z$ and therefore $Z_n$ is non-increasing.

For each $n\geq 1$, $Z_n(t)$ is a random graph with values in $D((0,\infty),\R))$, which can be embedded into $F'$, as in \eqref{eq:embedding-F}. The process $Z_n$ will be shown to converge, in $F'$, to the process $\underline{Z}$, whose first component in $D$ is $Z_H$, which can be described by the finite-dimensional distributions:
\begin{equation}
\label{eq:extremal-process}
\p(Z_H(t_1)\geq y_1,\ldots, Z_H(t_k)\geq y_k)\!=\!\bar H^{t_1}\!\!\left(\bigvee_{i=1}^k\{y_i\}\!\right)\!\!\bar H^{t_2-t_1}\!\!\left(\bigvee_{i=2}^k\{y_i\}\!\right)\!\!\cdots\bar H^{t_k-t_{k-1}}(y_k),
\end{equation}

with $0\leq t_1<t_2<\cdots<t_k\leq 1$.  By the Kolmogorov extension theorem such a process is well defined and we call it an \emph{extremal process}, although, strictly speaking, this is a transformed version of the original extremal processes studied by Resnick \cite{R87}. The relation between the two is obtained through the connection between the levels $u_n(\tau)$ and the more classical linear normalising sequences $(a_n)_{n\in\N}\subset\R^+$ and $(b_n)_{n\in\N}\subset\R$ such that we can write $u_n=y/a_n+b_n$ and
\[n\p(X_0>u_n)=n\p(a_n(X_0-b_n)>y)\to\tau\]
with $\tau=f(y)$ for some homeomorphism $f$, then, as in \eqref{eq:Mn},
\[\p(M_n\leq u_n)=\p(a_n(M_n-b_n)\leq y)\to G(y)\]
where $G=\bar H\circ f$. Then, if $Y_n(t)=a_n(M_{\lfloor nt\rfloor+1}-b_n)$ and $Y_G$ denotes the respective extremal process obtained in \cite{R87}, we have that $Z_n(t)=f(Y_n(t))$ and $Z_H(t)=f(Y_G(t))$. 
\begin{remark}
\label{rem:homeomorphism}
Depending on the type of limit law that applies, $f(y)$ is of one of the following three types: $f_1(y)=\e^{-y}$ for $y\in{\mathbb R}$, $f_2(y)=y^{-\alpha}$ for $y>0$, and $f_3(y)=(-y)^{\alpha}$ for $y\leq 0$.
\end{remark}

\begin{theorem}
\label{thm:extremal-process}
Let $T:\mathcal X \to \mathcal X$ be a dynamical system  as described in Section~\ref{subsec:systems}. Let $\X_0,\X_1,\ldots$ be obtained from such a system as described in \eqref{eq:dynamics-SP} and assume that the transformed anchored tail process given in Definition~\ref{def:piling-process} is well defined. Consider the continuous time process $Z_n$ defined by \eqref{eq.Z-process}. Then $Z_n$ converges in $F'$ to $(Z_H, disc(Z_H),\{e_{Z_H}^s\}_{s\in disc(Z_H)})$, where $Z_H$ is defined as in \eqref{eq:extremal-process}, with $\bar H(\tau)=\e^{-\theta\tau}$, and the excursions  can be represented by
$$
e_{Z_H}^s(t)=\min\left\{ Z_H(s^-),\inf_{j\leq\left\lfloor \h(t)\right\rfloor}Z_H(s)\cdot Q^s_j\right\}, \quad t\in[0,1]
$$
where each sequence $(Q^s_j)_{j\in\N}$ is independent of $Z_H(s^-)$ and with common distribution given by \eqref{eq:Z-polar}.
Moreover, $Z_H$ can be seen as a Markov jump process with 
$$
\p(Z_H(t+s)\geq y \mid Z_H(s)=z)=\begin{cases}
\e^{-\theta t y}&\text{if $y< z$}\\
0&\text{if $y\geq z$}
\end{cases}, \qquad\text{for $t,s>0$.}
$$
The parameter of the exponential holding time in state $z$ is $\theta z$ and given that a jump is due to occur the process jumps from $z$ to $[0,y)$ with probability
$$
\Pi(z,[0,y))=\begin{cases}
\frac y z&\text{if $y< z$}\\
1&\text{if $y\geq z$}
\end{cases}.
$$

\end{theorem}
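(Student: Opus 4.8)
The plan is to obtain the theorem as a consequence of the complete convergence of the decorated rare events point process established in Section~\ref{sec:point-processes} (Theorem~\ref{thm:point-process-convergence}), by realising $Z_n$ as a fixed measurable functional of that point process and invoking the continuous mapping theorem in $F'$. As the statement lives on the non-compact time domain $(0,\infty)$, by Remark~\ref{rem:non-compact-domain} it suffices to prove convergence of the restrictions to each $[t_1,t_2]$ with $0<t_1<t_2$ and $t_1,t_2\notin disc(Z_H)$, and the latter holds for any fixed $t_1,t_2$ almost surely since the discontinuities of the limit are the points $(T_i)_i$ of a Poisson process on a half-line.

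First I would record how $M_{\lfloor nt\rfloor}$ is seen by the exceedance point process. Since $u_n^{-1}$ is nonincreasing, $u_n^{-1}(M_{\lfloor nt\rfloor})=\min_{0\le j<nt}u_n^{-1}(\|\X_j\|)$, and by \eqref{eq:un-1-property} this is $<\tau$ exactly when some $\X_j$ with $j<nt$ exceeds $u_n(\tau)$; hence, after the change of variables $u_n^{-1}$, the $D$-part of $Z_n$ is the running infimum over rescaled time of the frequencies of the exceedances recorded by $N_n$, while the excursion collapsed into a cluster is the running infimum of those frequencies over the exceedances inside the corresponding block, in the order in which they occur. Applying the same deterministic recipe — running infima on the time line and within clusters, followed by the fixed reparametrisation of $\Z\cup\{\pm\infty\}$ onto $[0,1]$ that is inverse to $t\mapsto\lfloor\tan(\pi(t-\tfrac12))\rfloor$ — to the limit point process, whose Poisson component is $M=\sum_i\delta_{(T_i,U_i)}$ with intensity $\theta\,\leb\times\leb$ and whose mass points carry independent copies of the magnitude part of the piling process, yields precisely $(Z_H,disc(Z_H),\{e_{Z_H}^s\})$ with $Z_H(t)=\inf\{U_i:T_i\le t\}$ and the excursions as displayed in the statement.

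The core step is to show that this functional is, almost surely at the limit, continuous from the topology in which $N_n$ converges to the metric $d_{F'}$ of \eqref{eq:dF-prime}. The extremal setting is more forgiving than the partial-sum setting of Theorem~\ref{thm:sums-heavy-tails}: only the largest observations affect a maximum, so there is no accumulation of small jumps to control, and it is enough, for each $\epsilon>0$, to handle the finitely many clusters whose most severe exceedance has frequency below a fixed level — matching their times (which controls the $d_E$ term, whether $m_E$ or $m_E^*$, in \eqref{eq:dF-prime}) and matching the within-cluster running infima up to time reparametrisation (which controls the $d_{\tilde D}$ term). The delicate point is that the blocks defining $N_n$ have a fixed time scale $q_n=o(n)$, so each cluster collapses to a single limiting instant while its internal ordering is preserved; this is exactly the information kept by the decorations and by $F'$, and verifying that weak convergence of the decorated point processes transfers without loss to $d_{F'}$-convergence, in particular the gluing of the excursions into a genuine $\tilde D$ path, will rely on the structural properties of $F'$ developed in Appendix~\ref{sec:completeness-separability}; I expect this transfer to be the main obstacle.

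Finally I would identify the limit law. The finite-dimensional distributions \eqref{eq:extremal-process} with $\bar H(\tau)=\e^{-\theta\tau}$ follow from the independence of $M$ over disjoint time strips together with $\p(M([0,t]\times[0,y))=0)=\e^{-\theta t y}$; in particular $\p(M_n\le u_n(\tau))\to\e^{-\theta\tau}$, confirming the EVL. The Markov jump description is then read off the Poisson structure: given that $Z_H$ currently sits at level $z$, the waiting time until the next point of $M$ with mark in $[0,z)$ is exponential of rate $\theta z$, which gives the holding parameter $\theta z$, and conditionally on its existence that mark is uniform on $[0,z)$, which gives $\Pi(z,[0,y))=y/z$ for $y<z$ (and $=1$ for $y\ge z$). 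The law \eqref{eq:Z-polar} of the excursion decorations $(Q^s_j)_j$ and their independence of $Z_H(s^-)$ are inherited from the corresponding independence of the piling-process decorations from $(T_i)_i$ and $(U_i)_i$ asserted in Theorem~\ref{thm:point-process-convergence}.
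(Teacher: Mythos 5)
Your overall plan — reduce to the convergence of the cluster point process of Theorem~\ref{thm:point-process-convergence}, push it through a running-infimum functional $\Upsilon$ into $F'$ by the continuous mapping theorem (for which the continuity set can be taken to have full probability for the limiting Poisson configuration), and then read off the finite-dimensional distributions and the Markov jump structure from the intensity $\theta\,\leb\times\leb$ — is the same skeleton as the paper's proof, and your identification of the limit law at the end (Poisson avoidance computation, exponential holding times of rate $\theta z$, uniform jump marks giving $\Pi(z,[0,y))=y/z$, independence of the decorations from $(T_i,U_i)$) is correct and essentially what the paper does.

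There is, however, a genuine gap in the middle step. You write that $Z_n$ is ``a fixed measurable functional'' of $N_n$ and then describe the excursion at a cluster as the running infimum over the within-block observations. That is the description of $\Upsilon(N_n)$, not of $Z_n$ as it appears in the theorem: the process $Z_n$ is a $D$-path, and the theorem embeds it in $F'$ via \eqref{eq:embedding-F}, which gives one \emph{trivial} (single-step) excursion at each of $Z_n$'s discontinuities. In particular $Z_n$ jumps at the original times $j/n$ whenever a new record occurs, whereas $\Upsilon(N_n)$ jumps only at the block times $i/k_n$ and stores all the within-block record beats as a single non-trivial excursion. These are genuinely different elements of $F'$ with different internal clocks. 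Consequently the CMT applied to $\Upsilon$ only gives $\Upsilon(N_n)\Rightarrow\Upsilon(N)$, and one still needs a separate argument — the paper's Lemma~\ref{lem:finite-time-approx} — that $d_{F'}(Z_n,\Upsilon(N_n))\to 0$ in probability, followed by a Slutsky-type conclusion. That comparison is probabilistic (matching the two clocks up to $O(1/n)$, observing that the $\tilde D$-projection of $\Upsilon(N_n)$ reconstructs the within-block jumps so that the $\tilde D$-distance to $Z_n$ is $0$ on interior blocks, and bounding boundary effects by $k_n\,\p(\|\X_0\|>u_n(\tau+\varepsilon))\to 0$); it is not a consequence of the completeness or separability results of Appendix~\ref{sec:completeness-separability}, which is where you route it. So you should introduce $\Upsilon(N_n)$ as an explicit intermediate object and prove the approximation lemma separately; as written, your proposal conflates $Z_n$ with $\Upsilon(N_n)$, and that identification is false.
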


\begin{remark}
Alternatively and similarly to extremal processes that can be described as a projection into $D$ of a point process (see \cite[Equation~(4.20)]{R87}), we can describe the limit of $Z_n$ as the projection into $F'$ of the point process $N=\sum_{i=1}^\infty \delta_{(T_{i}, U_{i}\tilde{\mathbf Q}_i)}$, given in \eqref{eq:limit-process}. Namely, $Z_H(t)=\inf\{U_i\colon \; T_i\leq t\}$, $disc(Z_H)=\{T_i\colon  i\in\N\}$ and 
$$e_{Z_H}^{T_i}(t)=\min\left\{ \inf\{U_j\colon \; T_j<T_i\},\inf_{j\leq\left\lfloor\h(t)\right\rfloor}U_i\cdot Q_{i,j}\right\}, \quad t\in[0,1].$$
\end{remark}

\subsection{Record point processes}  

The study of record times of observational data has important applications in the study of natural phenomena. Consider the original sequences $(\X_n)_{n\in\N_0}$ and $(M_n)_{n\in\N}$ given in \eqref{eq:Mn}, and let $t_1=0$. Define the strictly increasing sequence $(t_k)_{k\in\N}$:
\begin{equation}\label{eq:record-time}
t_k:=\inf\{j>t_{k-1}:\|\X_j\|>M_j\}.
\end{equation}
This sequence $(t_k)_k$ corresponds to the \emph{record times} associated to $M_n$, namely the times where $M_n$ jumps. 
In the presence of clustering, record times may collapse in the limit and it is important to keep track of possible increments on the number of records occurring during the clusters. Two possible approaches are to consider the enriched limits in $F'$ of the extremal processes $Z_H$ or simply to project directly from the point processes $N$, as done in \cite{BPS18}. Since following the former approach, to be able to handle the possibility of having vanishing jump points in the limit we would need to consider more restrictive subspaces to obtain continuity and then apply the Continuous Mapping Theorem (CMT), 
we will use the latter approach so that we can benefit from the work already carried in \cite{BPS18} and reduce the length of the exposition.  

We follow \cite[Section 5]{BPS18} closely, although we make some adjustments, in particular, due to the fact that  we are using a transformed version of processes associated to the asymptotic frequencies given by the normalisation by $u_n^{-1}$. 
The main advantage here is that we obtain the convergence of the record times point processes for stationary vector-valued sequences with much more general distributions rather than regularly varying sequences (or sequences that could be monotonically transformed into regularly varying ones) as in \cite{BPS18}.  The notation and notions of convergence for point processes used here are detailed in Appendix~\ref{appendix:convergence-point-processes}.

In order to count the number of records of the process $\X_0,\X_1,\ldots$, we introduce the record point process
\begin{equation}
\label{eq:Record-n-def}
\mathfrak R_n=\sum_{i=0}^\infty \delta_{\frac i n}\I_{\{\|\X_i\|>M_i\}}.
\end{equation}
\begin{theorem}
\label{thm:record-theorem}
Let $T:\mathcal X \to \mathcal X$ be a dynamical system  as described in Section~\ref{subsec:systems}. Let $\X_0,\X_1,\ldots$ be obtained from such a system as described in \eqref{eq:dynamics-SP} and assume that the transformed anchored tail process given in Definition~\ref{def:piling-process} is well defined and that 
  $$
  \p ( \mbox{all finite } Q_{j}\mbox{'s are mutually different}) = 1 \; .
  $$  
  Then
  $
  \mathfrak R_n$ converges weakly to $\mathfrak R$, in $\mathcal N_{(0,+\infty)}^\#$.
  The limiting process is a compound Poisson process which can be represented as 
  \begin{equation}
  \label{eq:Record-def}
  \mathfrak R = \sum_{i\in\Z} \delta_{\tau_i} \kappa_i \,,
  \end{equation}
  where $\sum_{i\in\Z} \delta_{\tau_i}$ is a Poisson point process on $(0,\infty)$ with intensity
  measure $x^{-1}\; dx$.  Here $(\kappa_i)_i$ is a sequence of \iid random variables independent of $(\tau_i)_i$ with distribution corresponding to the number of record lower values observed in the sequence $\mathbf Q=(Q_j)_j$, which beat (dropped below) the threshold $U^{-1}$, where $U$ is a uniformly distributed random variable independent of  $\mathbf Q$.
\end{theorem}
\begin{remark}
In the periodic point case, the condition regarding the fact that all finite $Q_{j}$'s are a.s. mutually different is trivially satisfied.  
\end{remark}

\section{General complete convergence of multidimensional cluster point processes}
\label{sec:point-processes}

In this section we present our technical tools and various results in the context of general stochastic processes: the shift $\sigma$ marries this with a dynamical point of view, but the work here holds in wide generality.

Let $\V=\R^d$, for some $d\in\N$, where we consider a norm which we denote by $\|\cdot\|$. For definiteness, we may consider the usual Euclidean norm. 
We will be considering the spaces of one-sided and two-sided $\V$-valued sequences, which we will denote, respectively, by $\V^{\N_0}$ and $\V^{\Z}$, where we consider the one-sided and two-sided shift operators defined by $\TT:\V^{\N_0,\Z}\to\V^{\N_0,\Z}$, 
where 
\begin{equation}
\label{eq:shift}
\TT((x)_i)=((x)_{i+1}).
\end{equation}

Consider a stationary sequence of random vectors $\X_0, \X_1, \ldots$, taking values on $\V=\R^d$, which we will identify with the respective coordinate-variable process on $(\V^{\N_0}, \mathcal B^{\N_0}, \p)$, given by Kolmogorov's existence theorem, where $\mathcal B^{\N_0}$ is the $\sigma$-field generated by the coordinate functions $Z_n:\V^{\N_0}\to\mathcal \V$, with $Z_n(x_0,x_1,\ldots)=x_n$, for $n\in\N_0$. 
Note that, under these identifications, we can write:
\[
Z_{i-1}\circ \TT =Z_{i}, \quad \mbox{for all $i\in\N$}.
\]
Since, we assume that the process is stationary, then $\p$ is $\TT$-invariant.  Note that $Z_i=Z_0\circ \TT^i$, for all $i\in\N_0$, where $\TT^i$ denotes the $i$-fold composition of $\TT$, with the convention that $\TT^0$ denotes the identity map on $\V^{\N_0,\Z}$.

In what follows, for every $A\in\mathcal B^{\N_0,\Z}$, we denote the complement of $A$ as $A^c:=\V^{\N_0,\Z}\setminus A$.

\subsection{Identifying clusters}
\label{subsec:blocking}

Our goal is to study the impact of clustering on the convergence of general multidimensional point processes. 
As mentioned earlier, information regarding the observations within the same cluster gets collapsed at the same time point, which makes it hard to recover it from the limiting process. In order to keep track of that information, we need to start by identifying clusters.

There are two main approaches to identify clusters, which  are commonly referred to as declustering procedures. One is the blocking method and the other is the runs declustering procedure. 

We start with the blocking procedure, which, in fact, serves two purposes. Namely, not only will it separate clusters, it will also introduce time gaps between the blocks in order to restore some independence between them. The size of the blocks must be sensitively tuned so that the blocks are neither too long so that they do not separate clusters, nor too small so that they do not split clusters apart. The same applies to time gaps between the blocks, which are created by disregarding observations. 
Following the classical scheme \cite{LLR83}, considering a finite sample of size $n$, we split the data into $k_n\in\N$ blocks of size $r_n:=\lfloor n/k_n\rfloor$ and take time gaps of size $t_n\in\N$. This way, we define sequences $(k_n)_{n\in\N}$, $(r_n)_{n\in\N}$, $(t_n)_{n\in\N}$, which we assume to be such that 
\begin{equation}
\label{eq:kn-sequence}
k_n,r_n,t_n\xrightarrow[n\to\infty]{}\infty\quad \mbox{and}\quad  k_n t_n = o(n).
\end{equation}

Regarding the runs declustering, for a finite sample of size $n$, we set the run length $q_n\in\N$ with the aim that all abnormal observations occurring within a time difference of at most $q_n$ units between each other belong to the same cluster. The sequence $(q_n)_{n\in\N}$ must be chosen so that 
\begin{equation}
\label{eq:qn-sequence}
q_n= o(r_n),
\end{equation}
and also so that it satisfies conditions $\D_{q_n}$, $\D'_{q_n}$, below. Note that $q_n=q$ for all $n\in\N$ and some $q\in\N$ is a possibility here. Namely, when applying to periodic points $q_n$ can be taken as $q$, the period of the point (see \cite{FFT12} and \cite[Section~2.1]{AFF20}).

To summarise, we will define objects motivated by a runs declustering scheme (see \eqref{eq:Aq} below), the dependence conditions that we introduce stem from a blocking procedure, which will eventually determine the identification of clusters when we introduce the point processes of clusters in Section~\ref{subsec:complete-convergence}. The connection between the two approaches is essentially provided by condition $\D'_{q_n}$. See also Remarks~\ref{rem:D-prime-one} and \ref{rem:D-prime-two}.

\subsection{Dependence structure}
\label{subsec:dependence}

In order to prove the main convergence results we need to introduce some conditions on the dependence structure of the stationary processes and therefore introduce the following objects. We follow more or less the notation used in \cite{FFM18, FFM20}.

Let $A\in\mathcal B^{\N_0,\Z}$ be an event, let $J$ be an interval contained in $[0,\infty)$. 
We define 
\begin{equation}
\label{eq:W-def}
\mathscr W_{J}(A):=\bigcap_{i\in J\cap \N_0}\TT^{-i}(A^c),\quad \mathscr W_{J}^c(A):=(\mathscr W_{J}(A))^c=\bigcup_{i\in J\cap \N_0}\TT^{-i}(A).
\end{equation}
Let $i,j\in\N$ and set $\mathscr W_{j}(A):=\mathscr W_{[0,j)}(A)$,  $\mathscr W_{i,j}(A):=\mathscr W_{[i,j)}(A)$, when $i\leq j-1$, $\mathscr W_{i,j}(A):=\V^{\N_0,\Z}$, when $i\geq j$.
Moreover,  $\mathscr W^c_{j}(A)=\left(\mathscr W_{j}(A)\right)^c$ and $\mathscr W^c_{i,j}(A)=\left(\mathscr W_{i,j}(A)\right)^c$ .

For the event $A\in\mathcal B^{\N_0,\Z}$ and $j\in \N$,  
\begin{equation} 
\label{eq:Aq}
A^{(j)}:=A\cap \TT^{-1}(A^c)\cap\cdots\cap\TT^{-j}(A^c),
\end{equation} and for $j=0$ we simply define $A^{(0)}:=A$.

In what follows, for some $a>0$, $y\in\R^d$ and a set $A\subset\R^d$, we set $aA=\{ax:\, x\in A\}$ and $A+y=\{x+y:\, x\in A\}$.
We define the class of sets   
\begin{equation}
\label{eq:A-field}
\mathscr F=\left\{\{(x_j)_j\in \V^{\N_0,\Z}\colon x_{j}\in H_{j},\, j=0,\ldots,m\}\colon  H_{j}\in\mathcal F_\V,\, j=0,\ldots,m,\, m\in\N \right\},
\end{equation}
where $\mathcal F_\V$ denotes the field generated by the rectangles of $\V$ of the form $[e_1,f_1)\times\cdots\times[e_d,f_d)$. Note that $\mathscr F$ is a field.

For each $\ell=1, \ldots,m\in\N$ suppose
\begin{align}
\label{eq:rectangles}
&   A_\ell\in \mathscr F \text{ and } J_\ell=[a_\ell,b_\ell),
\end{align}
where $0\leq a_1<b_1\leq a_2<b_2\leq\cdots\leq a_m<b_m\leq1$. 
Then for each  $n\in\N$ define 
\begin{align}
&\tilde J_{k_n,\ell}:=k_nJ_\ell,\quad J_{n,\ell}:=\big[(\lceil k_na_\ell\rceil-1)r_n,(\lfloor k_n b_\ell\rfloor+1)r_n\big), \label{eq:Jnl-def} \\
&A_{n,\ell}=\left\{\left(u_n^{-1}(\|\X_j\|)\frac{\X_j}{\|\X_j\|}\right)_j\in \V^{\N_0}\colon \; \left(u_n^{-1}(\|\X_j\|)\frac{\X_j}{\|\X_j\|}\right)_j\in A_\ell \right\},
\label{eq:rectangles-n-bi}
\end{align}
where $u_n$ is defined as in \eqref{un}, $k_n$ is as in \eqref{eq:kn-sequence}. 
We discuss this normalisation further in \eqref{eq:block-def} and \eqref{eq:normalisation}.

We introduce a mixing condition which is specially designed for the application to the dynamical setting.

\begin{condition}[$\D_{q_n}$]\label{cond:D} We say that $\D_{q_n}$ holds for the sequence $\X_0,\X_1,\ldots$ if there exist sequences $(k_n)_{n\in\N}$, $(r_n)_{n\in\N}$, $(t_n)_{n\in\N}$ and $(q_n)_{n\in\N}$ satisfying \eqref{eq:kn-sequence} and \eqref{eq:qn-sequence}, such that for every  $m, t,n\in\N$ and every $J_\ell$ and $A_\ell$, with $\ell=1, \ldots, m$, chosen as in \eqref{eq:rectangles}, we have 
\begin{equation}\label{eq:D1}
\left|\p\left(\A_{n,\ell}\cap \bigcap_{i=\ell}^m\W_{J_{n,i}}\left(\A_{n,i}\right) \right)-\p\left(\A_{n,\ell}\right)
  \p\left( \bigcap_{i=\ell}^m\W_{J_{n,i}}\left(\A_{n,i}\right)\right)\right|\leq \gamma(n,t),
\end{equation}
where 
$\min\{J_{n,\ell}\cap\N_0\}\geq t$ and $\gamma(n,t)$ is decreasing in $t$ for each $n$ and 
$\lim_{n\to\infty}n\gamma(n,t_n)=0,$
 where $\A_{n,\ell}$ is given by \eqref{eq:rectangles-n-bi} and \eqref{eq:Aq}.\end{condition}

This mixing condition is much milder than similar conditions used in the literature and is particularly suited for applications to dynamical systems, since it is easily verified for systems with sufficiently fast decay of correlations, see for example the discussion in Section~\ref{subsec:systems}.

\begin{condition}[$\D'_{q_n}$]\label{cond:D'} We say that $\D'_{q_n}$
holds for the sequence $\X_0,\X_1,\X_2,\ldots$ if there exist sequences $(k_n)_{n\in\N}$, $(r_n)_{n\in\N}$ and $(q_n)_{n\in\N}$ satisfying \eqref{eq:kn-sequence} and \eqref{eq:qn-sequence}, such that  for every  $A_1\in \F$, 
we have
\begin{equation*}
\lim_{n\rightarrow\infty}\,n\p\left( \A_{n,1}\cap \mathscr W^c_{[q_n+1,r_n)}\left(A_{n,1}\right)
\right)=0,
\end{equation*}
where $\A_{n,1}$ is given by \eqref{eq:rectangles-n-bi} and \eqref{eq:Aq}.
\end{condition}

\begin{remark}
\label{rem:D-prime-one}
Condition $\D'_{q_n}$ forbids the appearance of new abnormal observations (the occurrence of $A_{n,\ell}$), within the same block, once a run of $q_n$ consecutive non-abnormal observations has been realised. This means that $\D'_{q_n}$ establishes a connection between the two declustering procedures and, in particular, requires that no more than one cluster should occur within one block. 
\end{remark}
\begin{remark}
\label{rem:D-prime-two}
Suppose $(q_n)_{n\in\N}$ and $(\tilde q_n)_{n\in\N}$ are two sequences satisfying \eqref{eq:qn-sequence} and $q_n\leq\tilde q_n$, for all $n\in \N$.  Then $\D'_{q_n}$ implies $\D'_{\tilde q_n}$. As in Section~\ref{subsec:systems}, for nice dynamical systems when clustering is created by a periodic point of prime period $q$, then condition $\D'_{q}$ holds, in the sense that $q_n=q$, for all $n$.
\end{remark}

Next we state a stronger version of $\D'_{q_n}$, which, in some cases may be easier to check.
We set 
\begin{equation}
\label{eq:U}
U(\tau)=\left\{(x_j)_j\in \V^{\N_0, \Z}\colon x_0\in B(0,\tau)\right\}
\end{equation}
where, for $x\in \V$ and $\epsilon>0$, we denote by $B(x,\epsilon)$,
the ball centred at $x$ of radius $\epsilon$. Then, following \eqref{eq:rectangles-n-bi} and \eqref{eq:Aq}, we define: 
\begin{align*}
\label{eq:Un}
U_n(\tau)&=\{\|\X_0\|>u_n(\tau)\}\\ 
U_n^{(q_n)}(\tau)&=\{\|\X_0\|>u_n(\tau), \|\X_1\|\leq u_n(\tau), \ldots,  \|\X_{q_n}\|\leq u_n(\tau)\}.
\end{align*}
\begin{condition}[$\tilde\D'_{q_n}$]\label{cond:barD'} We say that $\tilde \D'_{q_n}$
holds for the sequence $\X_0,\X_1,\X_2,\ldots$ if there exist sequences $(k_n)_{n\in\N}$, $(r_n)_{n\in\N}$, $(t_n)_{n\in\N}$ and $(q_n)_{n\in\N}$ satisfying \eqref{eq:kn-sequence} and \eqref{eq:qn-sequence}, such that  for every  $\tau>0$, we have
\begin{equation}
\label{eq:D'rho-un}
\lim_{n\rightarrow\infty}\,n\p\left(U_n(\tau)\cap \mathscr W^c_{[q_n+1,r_n)}\left(U_n(\tau)\right)\right)=0.
\end{equation}
\end{condition}

For $A_\ell$, as in \eqref{eq:rectangles}, let $\tau^*>\sup\{\|x\|\colon \; x\in H_0\}$. Then, by \eqref{eq:un-1-property}, we have 
$$\left\|u_n^{-1}(\|\X_0\|)\frac{\X_0}{\|\X_0\|}\right\|< \tau^*\Leftrightarrow \|\X_0\|>u_n(\tau^*). $$
Therefore, it is clear that $A_{n,\ell}\subset U_n(\tau^*)$, so if $\tilde\D'_{q_n}$ holds then so does $\D'_{q_n}$.
\begin{remark}
Condition $\tilde\D'_{q_n}$ is already weaker than \cite[Assumption~1.1]{BPS18}, which had been used in previous papers (see, for example, \cite[Equation~(2.8)]{DH95}, \cite[Equation~(3)]{S05}, \cite[Condition~4.1]{BS09}, \cite[Condition~2.1]{BKS12}) and was introduced in \cite{S92}.  We also remark that all these conditions allow for the appearance of clustering which already makes them weaker than conditions $D'$ from \cite{D83} or $\text{LD}(\phi_0)$ from \cite{T10a}, which imply $\D'_{q_n}$ with $q_n=1$ for all $n\in\N$.
\end{remark}

\subsection{Bookkeeping of clusters}

In this section, we introduce a device called the \emph{transformed anchored tail process}, which is designed to keep track of the clustering oscillations. It is an adaptation of the \emph{tail process}, introduced in \cite{BS09}, to a tool more applicable in the dynamical setting.  
In \cite{BS09}  and subsequent papers (for example, \cite{BKS12,BPS18}), the tail process was always defined under the assumption that the original process $\X_0, \X_1,\ldots$ is \emph{jointly regularly varying} (see Definition~\ref{def:jointly-rv}), which is not natural to assume \emph{a priori} in the dynamical systems setting.  This assumption (joint regular variation) together with an assumption on the dependence  structure stronger than $\tilde \D'_{q_n}$ (\cite[Condition~4.1]{BS09}) allowed the authors there to prove the existence of the tail process and several very useful properties about it. Motivated by the applications to dynamical systems, here, 
we will not assume, \emph{a priori}, joint regular variation and since $\D'_{q_n}$ is even weaker than $\tilde \D'_{q_n}$, some of the properties of the tail process will be required as adapted assumptions in the definition of the transformed anchored tail process. One of the advantages is that we obtain very general enriched functional limits for extremal processes and record point processes, for example, without assuming regularly varying tails.

\subsubsection{The normalisation of the blocks}
\label{subsubsec:normalisation-blocks}

We recall that under assumption $\D'_{q_n}$ the information regarding to the structure of the clusters is kept in each block of size $r_n$, which we are going to normalise in the following way, by defining for each 
$i< j\in\{0,\ldots,n\}$
\begin{equation}
\label{eq:block-def}
\mathbb X_{n}^{i,j}=\left({u_n^{-1}(\|\X_{i}\|)}\frac{\X_{i}}{\|\X_{i}\|},\ldots,{u_n^{-1}(\|\X_{j-1}\|)}\frac{\X_{j-1}}{\|\X_{j-1}\|}\right), \quad \mathbb X_{n,i}:=\mathbb X_{n}^{(i-1)r_n,ir_n} 
\end{equation}
so that $\mathbb X_{n,i}$ denotes the $i$-th normalised block. 
We also use the notation $\mathbb X_{n}^{j}=\mathbb X_{n}^{j,j+1}$, for all $j=0,\ldots, n$.

Note that the normalisation used is such that the norm of each normalised variable in each block is equal to the asymptotic frequency $\tau=u_n^{-1}(\|\X_{j}\|)$ corresponding to the mean number of exceedances of a threshold placed at  the value $\|\X_{j}\|$, among the first $n$ observations of the process.

In particular, observe that by \eqref{eq:un-1-property}, for all $\tau>0$, we have
\begin{equation}
\label{eq:normalisation}
\left\|\mathbb X_{n}^{j}\right\|
=\left\|u_n^{-1}(\|\X_{j}\|)\frac{\X_{j}}{\|\X_{j}\|}\right\|<\tau\Leftrightarrow u_n^{-1}(\|\X_{j}\|)<\tau\Leftrightarrow\|\X_{j}\|>u_n(\tau).
\end{equation}

\subsubsection{The Extremal Index}
\label{subsubsec:EI}
Before we characterise the transformed anchored tail process, we define the Extremal Index (EI), denoted by $\theta\in[0,1]$, which was formally introduced by Leadbetter in \cite{L83} and measures the degree of clustering of exceedances. When $\theta=1$ we have no clustering and a small $\theta$ means intense clustering. A common interpretation for the EI is that it is reciprocal of the average cluster size (see \cite{AFF20}). We define the EI following O'Brien's formula (\cite{O87}) and assume that, for all $\tau>0$, we have 
\begin{equation}
\label{eq:EI}
\theta=\lim_{n\to\infty}\frac{\p(U_n^{(q_n)}(\tau))}{\p(U_n(\tau))}.
\end{equation}

\subsubsection{Underlying spaces}
\label{subsubsec:spaces}

Let $\dot{\V}=\overline{\R^d}\setminus\{0\}=\left(\R^d\cup\{\infty\}\right)\setminus\{0\}$, $\V=\R^d$ and recall that $\infty\in \dot{\V}$ can be thought of as any point in the completion of $\R^d$ which is not contained in $\R^d$: this has infinite norm. Define
\begin{align*}
l_\infty&=\left\{\mathbf{x}=(x_j)_j\in \dot{\V}^{\Z}\colon\;\lim_{|j|\to\infty}\|x_j\|=\infty\right\}\qquad
l_0
=\left\{\mathbf{x}=(x_j)_j\in {\V}^{\Z}\colon\;\lim_{|j|\to\infty}\|x_j\|=0\right\},
\end{align*}
where for definiteness we are taking the usual Euclidean norm $\|\cdot\|$ in $\R^d$.
The transformed anchored tail process will be defined to take values in $l_\infty$, while the tail process lives in $l_0$. The space $l_\infty$ will borrow the metric structure of $l_0$ by means of the map $\Proj \colon l_\infty \longrightarrow l_0$ given by $\Proj((x_j)_j)=(\proj(x_j))_j$, where
\begin{align*}
\proj \colon \dot{\V} &\longrightarrow \V \\
x &\mapsto\begin{cases}
    \frac{x}{\|x\|^2}\,,& \text{if } x\neq \infty,\\
    0,              & \text{otherwise}.
\end{cases}
\end{align*}

\begin{lemma}
The map $\Proj$ is invertible.
\label{lem:Pinv} 
\end{lemma}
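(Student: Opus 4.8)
The plan is to show directly that the map $\Proj:l_\infty\to l_0$ is a bijection by exhibiting its inverse. The key observation is that the coordinatewise map $\proj:\dot\V\to\V$ is essentially an inversion through the unit sphere, so it should be its own inverse in an appropriate sense. First I would check that $\proj$ is a bijection from $\dot\V=\left(\R^d\cup\{\infty\}\right)\setminus\{0\}$ onto $\V=\R^d$. For $x\neq\infty$ and $x\neq 0$ we have $\proj(x)=x/\|x\|^2$, which is nonzero and satisfies $\|\proj(x)\|=1/\|x\|$; moreover $\proj(\proj(x)) = \frac{x/\|x\|^2}{\|x/\|x\|^2\|^2} = \frac{x/\|x\|^2}{1/\|x\|^2}=x$. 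Also $\proj(\infty)=0$. So if we set $q:\V\to\dot\V$ by $q(y)=y/\|y\|^2$ for $y\neq 0$ and $q(0)=\infty$, then $q$ and $\proj$ are mutually inverse as maps $\dot\V\leftrightarrow\V$: one checks $\proj\circ q=\mathrm{id}_\V$ and $q\circ\proj=\mathrm{id}_{\dot\V}$ by the same computation, splitting into the cases involving $0$ and $\infty$.

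Next I would lift this to sequences. Define $Q:l_0\to \dot\V^\Z$ by $Q((y_j)_j)=(q(y_j))_j$. The only thing to verify is that $Q$ actually lands in $l_\infty$ and that $\Proj$ actually lands in $l_0$, i.e.\ that the defining limit conditions are preserved. Since $\|\proj(x)\|=1/\|x\|$ (with the conventions $1/\infty=0$, $1/0=\infty$), we have $\|x_j\|\to\infty$ as $|j|\to\infty$ if and only if $\|\proj(x_j)\|\to 0$ as $|j|\to\infty$; hence $\mathbf x\in l_\infty \iff \Proj(\mathbf x)\in l_0$, and symmetrically $\mathbf y\in l_0\iff Q(\mathbf y)\in l_\infty$. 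Therefore $\Proj:l_\infty\to l_0$ and $Q:l_0\to l_\infty$ are well-defined, and since they are applied coordinatewise with $q$ and $\proj$ mutually inverse, $\Proj\circ Q=\mathrm{id}_{l_0}$ and $Q\circ\Proj=\mathrm{id}_{l_\infty}$. Thus $\Proj$ is invertible with $\Proj^{-1}=Q$.

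I do not expect any serious obstacle here; the statement is essentially a bookkeeping check. The only mild subtlety is being careful with the boundary conventions ($\proj(\infty)=0$, and its inverse sending $0\mapsto\infty$) and confirming that the limit conditions defining $l_\infty$ and $l_0$ transform correctly under the inversion $\|x\|\mapsto 1/\|x\|$ — but this is immediate from the identity $\|\proj(x)\|=1/\|x\|$. If desired, one can also note (though it is not asked for) that $\Proj$ is a homeomorphism when $l_\infty$ is given the metric pulled back from $l_0$ via $\Proj$, since in that case $\Proj$ is by definition an isometry.
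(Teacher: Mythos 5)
Your proof is correct and is organised a bit differently from the paper's, though it lands in essentially the same place. The paper separates the argument into injectivity and surjectivity of $\proj$: injectivity is shown by an algebraic manipulation ($\proj(x)=\proj(y)$ forces the scalar $c=\|x\|^2/\|y\|^2$ to equal $1$), and surjectivity is shown by solving $x_j/\|x_j\|^2 = y_j$ for $x_j$, which produces $x_j=\proj(y_j)$. You instead observe directly that $\proj$ is an inversion through the unit sphere and hence (with the $0\leftrightarrow\infty$ convention) is its own inverse, so you can write down the two-sided inverse at once; this avoids the algebraic injectivity step. You also make explicit the point that $\|\proj(x)\|=1/\|x\|$ carries the tail condition $\|x_j\|\to\infty$ to $\|\proj(x_j)\|\to 0$ and back, so $\Proj$ and its inverse really do map $l_\infty$ to $l_0$ and vice versa; the paper leaves this check implicit when it asserts the constructed preimage lies in $l_\infty$. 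Both routes are elementary and correct; yours is arguably a touch cleaner since exhibiting a two-sided inverse subsumes the injectivity argument.
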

\begin{proof}
By definition of $\Proj$, we only need to show that $\proj$ is invertible. First note that $\proj(x)=0$ if and only if $x=\infty$. Let $x,y\in\dot\V\setminus\{\infty\}$. Then $\proj(x)=\proj(y)$ implies that 
\begin{equation}
\label{eq:aux1}
x=\frac{\|x\|^2}{\|y\|^2}\;y.
\end{equation}
 Let $c=\frac{\|x\|^2}{\|y\|^2}$. Then $x=c y$ which means that $\|x\|=c\|y\|$. Substituting back in \eqref{eq:aux1} we obtain $x=c^2y$. Hence, we must have $c y=c^2y$, which implies that $c=1$ and, therefore, $x=y$. Hence $\Proj$ is one-to-one.
 
 To see that $\Proj$ is onto, we let $\mathbf y\in l_0$ and show that there exists $\mathbf x\in l_\infty$ such that $\Proj(\mathbf x)=\mathbf y$. For all the $j\in\Z$ such that $y_j=0$, we set $x_j=\infty$. For all the other $j\in\Z$, we require $x_j/\|x_j\|^2=y_j$, which implies that $\|y_j\|=1/\|x_j\|$. But then, $x_j=\|x_j\|^2y_j=y_j/\|y_j\|^2$, which means that by setting $x_j=\proj(y_j)$, for all such $j\in\Z$, we have defined the desired $\mathbf x\in l_\infty$.
 \end{proof}

As in \cite{BPS18}, in $l_0$, we consider the supremum norm given by
$$
\|\mathbf x\|_\infty=\sup_{j\in\Z}\|x_j\|,
$$
and the complete metric defined on $l_0\setminus \{\mathbf 0\}$, where $\mathbf 0=(0)_j$,
\begin{equation}
\label{eq:d'-metric}
d'(\mathbf x,\mathbf y)=\left(\|\mathbf x-\mathbf y\|_\infty\wedge 1\right)\vee\left|\frac1{\|\mathbf x\|_\infty}-\frac1{\|\mathbf y\|_\infty}\right|.
\end{equation} 
Now, we consider the metric $d$ defined on $l_\infty\setminus \{(\infty)_j\}$ given by
\begin{equation}
\label{eq:metric}
d(\mathbf x,\mathbf y)=d'(\Proj(\mathbf x),\Proj(\mathbf y)).
\end{equation}
Recall that $l_0\setminus \{\mathbf 0\}$ equipped with the metric $d'$ is a complete separable metric space.  Since $\Proj$ is invertible and componentwise continuous, one can easily show that $l_\infty\setminus\{(\infty)_j\}$ equipped with the metric $d$ is a complete separable metric space. 

Note that we can embed $\cup_{n\in\N}\dot \V^n$ ($\cup_{n\in\N} \V^n$) into $l_\infty$ ($l_0$) simply by adding a sequence of $\infty$ (0) before and after the $n$ entrances of any element of  $\dot \V^n$ ($\V^n$). For example, 
$\mathbb X_{n}^{i,j}$ can be seen as an element of $l_\infty$ by identifying it with 
\begin{equation}
\label{eq:embedding}
\left(\ldots,\infty,\infty,u_n^{-1}(\|\X_{i}\|)\frac{\X_{i}}{\|\X_{i}\|},\ldots,{u_n^{-1}(\|\X_{j-1}\|)}\frac{\X_{j-1}}{\|\X_{j-1}\|},\infty,\infty,\ldots\right).
\end{equation}

We define the quotient spaces $\tilde l_\infty=l_\infty/{\sim}$ and $\tilde l_0=l_0/{\sim}$, where $\sim$ is the equivalence relation defined on both $l_\infty$ and $l_0$ by $\mathbf x\sim\mathbf y$ if and only if there exists $k\in\Z$ such that $\TT^k(\mathbf x)=\mathbf y$, where $\TT$ is the shift operator defined in \eqref{eq:shift}. Also let $\tilde \pi$ denote 
the natural projection from $ l_\infty$ ($l_0$) to $\tilde l_\infty$ ($\tilde l_0$), which assigns to each element $\mathbf x$ of $ l_\infty$ ($l_0$) the corresponding  equivalence class $\tilde\pi(\mathbf x)=\tilde{\mathbf x}$ in $ \tilde l_\infty$ ($\tilde l_0$). Given any vector $v$ of $\dot \V^m$ ($\V^m$), for some $m\in\N$, we write $\tilde\pi( v)$ for the projection of the natural embedding of $v$ into $ l_\infty$ ($l_0$) to the quotient space $\tilde l_\infty$ ($\tilde l_0$). Namely,
\begin{equation}
\label{eq:projection-embedding}
\tilde\pi(\mathbb X_{n}^{i,j})=\tilde\pi\left(\left(\ldots,\infty,u_n^{-1}(\|\X_{i}\|)\frac{\X_{i}}{\|\X_{i}\|},\ldots,{u_n^{-1}(\|\X_{j-1}\|)}\frac{\X_{j-1}}{\|\X_{j-1}\|},\infty,\ldots\right)\right).
\end{equation}

Observe that, since $\Proj$ is invertible, we may define $\tilde \Proj:\tilde l_\infty\to \tilde l_0$ so that $\tilde \Proj (\tilde \pi(\mathbf x))=\tilde \pi(\Proj(\mathbf x))$.

Consider the metric $\tilde d'$ in  $\tilde l_0\setminus\{\tilde{\mathbf 0}\}$  given by
\begin{align}
\label{eq:metric-d-prime-tilde}
\tilde d'(\tilde{\mathbf x},\tilde{\mathbf y})&=\inf\{d'(\mathbf x',\mathbf y')\colon\; \mathbf x'\in\pi^{-1}( \tilde{\mathbf x}),\; \mathbf y'\in \pi^{-1}(\tilde{\mathbf y})\}
=\inf\{d'(\TT^k(\mathbf x),\TT^m(\mathbf y))\colon\; k,m\in\Z \}.
\end{align}
This metric makes $\tilde l_0\setminus\{\tilde{\mathbf 0}\}$ a complete separable metric space. (See Lemma~2.1 and Lemma~6.1 of \cite{BPS18}). Accordingly, on $\tilde l_\infty\setminus\{\tilde{\infty}\}$, where $\tilde{\infty}=(\infty)_j$, we define the metric
$$
\tilde d(\tilde{\mathbf x},\tilde{\mathbf y})=\tilde d'(\tilde \Proj(\tilde{\mathbf x}),\tilde \Proj(\tilde{\mathbf y})),
$$
which also gives a complete separable metric space.

\begin{remark}
\label{rem:bounded-sets}
The choice of the metric implies that a set $A\subset \tilde l_\infty\setminus\{\tilde{\infty}\}$ is bounded if and only if there exists $\eps>0$, such that for all $\tilde \x\in A$ we have $\|\tilde \Proj(\tilde \x)\|_\infty>\eps$ or, equivalently, that $\inf_{j\in\Z}\|x_j\|<1/\eps$.
\end{remark}

For $A\in\mathscr F$, as in \eqref{eq:A-field}, we define 
\begin{equation}
\label{eq:A-tilde}
\tilde A=\{\tilde \x\in \tilde l_\infty\colon\; \tilde\pi^{-1}(\tilde\x)\cap A\neq \emptyset\};\qquad \tilde{\mathscr J}=\{\tilde A\colon\;A\in\mathscr F\}.
\end{equation}
Using that $\mathscr F$ is a field, one can show that the class of subsets $\tilde{\mathscr J}$ is closed for unions. Indeed, this follows easily by observing that
$$
\tilde A\cup \tilde B=
\{\tilde \x\in \tilde l_\infty\colon\; \tilde\pi^{-1}(\tilde\x)\cap (A\cup B)\neq \emptyset\}.
$$
Let $\tilde{\mathscr R}$ denote the class of subsets of $\tilde l_\infty,\tilde l_0$ corresponding to the ring generated by $\tilde{\mathscr J}$, which is actually a field because by definition of  $\mathscr F$, we have that $l_\infty, l_0\subset \mathscr F$.

For $A\in\mathscr F$, let 
\begin{align}
\label{eq:bb-A}
\mathbb A&
=\{\x\in l_\infty\colon \tilde\pi(\x)\in \tilde A\}=\left\{\x\in l_\infty, l_0\colon \x\in \bigcup_{j\in\Z}\TT^{-j}(A) \right\},\\
\mathscr J&=\{\mathbb A\colon\;A\in\mathscr F\} \text{ and } \mathscr R\;\text{be the ring generated by}\; \mathscr J. \label{eq:mathscrR}
\end{align}
If we start with some $A_\ell\in \mathscr F$ here, we correspondingly write $\mathbb A_\ell$.
Note that $\TT^{-1}(\mathbb A)=\mathbb A$, which means that both $\mathscr J$ and $\mathscr R$ are $\TT$-invariant classes of subsets of $l_\infty, l_0$. Also observe that $\mathscr R=\tilde\pi^{-1}(\tilde{\mathscr R})$ and $\mathscr J=\tilde\pi^{-1}(\tilde{\mathscr J})$, which is also closed for unions.

\subsubsection{The transformed anchored tail process}
\label{subsubsec:piling-process}
We can now define the transformed tail process, which presupposes  the existence of a process $(Y_j)_{j\in\Z}\in l_\infty$ satisfying the following assumptions: 
\begin{enumerate}

\item \label{Y-def} 
$\mathcal L\left(\frac1\tau \mathbb X_n^{r_n+s, r_n+t}\;\middle\vert\; \|\X_{r_n}\|>u_n(\tau) \right)\xrightarrow[n\to\infty]{}\mathcal L\left((Y_j)_{j=s,\dots,t}\right),$ for all $s<t\in\Z$ and all $\tau>0$;

\item \label{spectral-process} the process $(\Theta_j)_{j\in\Z}$ given by $\Theta_j=\frac{Y_j}{\|Y_0\|}$ is independent of $\|Y_0\|$; 

\item \label{Y-infinity} $\lim_{|j|\to\infty} \|Y_j\|=\infty$ a.s.;

\item \label{positive-EI} $\p\left(\inf_{j\leq -1}\|Y_j\|\geq 1\right)>0$.

\end{enumerate}

Here $(r_n)_n$ is assumed to satisfy \eqref{eq:kn-sequence}: in our applications it is the sequence appearing in $\D_{q_n}$ and $\D'_{q_n}$.
\begin{remark}
Most of the applications given here are to non-invertible discrete dynamical systems, which means that the sequence $X_0, X_1,\ldots$ is one-sided and therefore we needed to recentre by $r_n$ so that we can obtain a bi-infinite sequence which includes the past. The particular role of $r_n$ is not important as long as its is asymptotically larger than $q_n$. Alternatively, we could have considered the natural extension of the system to obtain a two-sided sequence $\ldots, X_{-1}, X_0, X_1,\ldots$ and then condition on $\|\X_{0}\|>u_n(\tau) $, instead. 
\end{remark}
\begin{remark}
\label{rem:Y-sequence-tail-process}
We remark that in the setting of heavy tailed distributions, the process $(Y_j)_{j\in\Z}$ defined here (which lives in $l_\infty$) is a transformed version of the tail process introduced in \cite{BS09}, which takes values in $l_0$ and was used later in \cite{BKS12,BPS18}, for example. The existence of such a sequence for stationary heavy tailed stochastic processes was proved to be equivalent to joint regular variation, which we define in Section~\ref{subsec:regular-variation}, where further details on the relations with the tail process are also given.
\end{remark}
We finally define the transformed anchored tail process by considering the canonical anchor used in \cite{BS09, BPS18}, which corresponds to conditioning on the fact that $Y_0$ is marking the beginning of a new cluster. For more general anchors we refer to \cite{BP21}.
\begin{definition}
\label{def:piling-process}
Assuming the existence of a sequence $(Y_j)_{j\in\Z}$ satisfying conditions \eqref{Y-def}--\eqref{positive-EI}, we define the \emph{transformed anchored tail process} $(Z_j)_{j\in\Z}$ as a sequence of random vectors satisfying 
$$
\mathcal L\left((Z_j)_{j\in\Z} \right)=\mathcal L\left((Y_j)_{j\in\Z}\;\middle\vert\; \inf_{j\leq -1}\|Y_j\|\geq 1\right).
$$
\end{definition}
We consider a polar decomposition of the transformed anchored tail process by defining the random variable $L_Z$ and the process $(Q_j)_{j\in\Z}$ by
\begin{equation}
\label{eq:Z-polar}
L_Z= \inf_{j
\in\Z}\|Z_j\| \qquad Q_j=\frac{Z_j}{L_Z}.
\end{equation}
We carry this polar decomposition to $\tilde l_{\infty}$ by letting $\mathbb S=\{\tilde \x\in \tilde l_\infty\colon \|\tilde \Proj(\tilde \x)\|_\infty=1\}$ and defining the map 
\begin{align}
\hbar \colon \tilde l_\infty\setminus\{\tilde\infty\} &\longrightarrow \R^+\times \mathbb S \nonumber\\
\tilde \x &\mapsto \left(\frac1{\|\tilde \Proj(\tilde \x)\|_\infty}, \frac{\tilde x}{(\|\tilde \Proj(\tilde \x)\|_\infty)^{-1}}\right).
\label{eq:psi.def}
\end{align}
We define $\tilde{\mathbf Q}:=\tilde\pi((Q_j)_{j\in\Z})$ and observe that $\hbar(\tilde\pi((Z_j)_{j\in\Z}))=(L_Z,\tilde{\mathbf Q}).$

In order to illustrate the advantage of considering the transformed version of tail process rather than the original version given by \cite[equation (1.1)]{BS09} (or \cite[equation (1.6)]{BPS18}, \cite[equation (5.2.3)]{KS20}), we consider a concrete dynamical system with an observable which commonly arrises in the study of extremal dynamics.
\begin{example}
 Let $T\colon[0,1\to[0,1]$ be the doubling map: $T(x)=2x\mod 1$. Let $\zeta_1=1/3$ and $\zeta_2=T(\zeta_1)=2/3$ denote the period two orbit of $T$. Define also the observable function $\psi:[0,1]\to\R$ by:
$$
\psi(x)=-\log|x-\zeta_1|+\log|x-\zeta_2|
$$
Consider now the stochastic process $X_0, X_1, \ldots$ given by $X_n=\psi\circ T_\gamma^{n}$.

For this stochastic process, the tail process, $(\tilde Y_j)_{j\in\Z}$, given by \cite[equation (1.1)]{BS09} is ill defined. In fact, it is easy to observe that $\p(|\tilde Y_0|>y)=0$ for all $y>1$, which contrasts with formula \cite[equation (5.2.4)]{KS20} which establishes that $\p(|\tilde Y_0|>y)=(y\vee 1)^{-\alpha}$. In contrast, the transformed tail process is well defined and we easily obtain that the transformed anchored tail process  is equal to (see Appendix~\ref{subappendix:systems-perioidc-points}): 
$$
\left(\ldots,\infty,\infty,U\cdot E,U\cdot E(-1)2, U\cdot E(-1)^22^2, \ldots,U\cdot E(-1)^k2^k,\ldots\right)
$$
where $U$ is a uniformly distributed random variable on $[0,1]$ and the independent random variable $E$ is such that $\p(E=1)=\frac12=\p(E=-1)$.
\end{example}

Note that due to assumption  \eqref{Y-infinity} both $(Y_j)_{j\in\Z}$ and the transformed anchored tail process $(Z_j)_{j\in\Z}$ take values in $l_\infty$.

\subsubsection{Properties of the transformed anchored tail process}
\label{subsubsec:properties-piling-process}

The following lemma is a nice consequence for $\|Y_0\|$ of our transformed anchored tail process being based in $l_\infty$. 
\begin{lemma}
\label{lem:Y0}
The random variable $\|Y_0\|$ is uniformly distributed.
\end{lemma}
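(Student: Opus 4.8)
The plan is to extract the distribution of $\|Y_0\|$ directly from the defining limit relation \eqref{Y-def}, applied with $s=t=0$, together with the normalisation \eqref{un} of the threshold functions. First I would fix $\tau>0$ and observe that, by \eqref{Y-def} with $s=t=0$,
$$
\mathcal L\left(\tfrac1\tau \|\mathbb X_n^{r_n}\| \;\middle\vert\; \|\X_{r_n}\|>u_n(\tau)\right)\xrightarrow[n\to\infty]{} \mathcal L(\|Y_0\|),
$$
where I use that $\mathbb X_n^{r_n}=u_n^{-1}(\|\X_{r_n}\|)\frac{\X_{r_n}}{\|\X_{r_n}\|}$ so that $\|\mathbb X_n^{r_n}\|=u_n^{-1}(\|\X_{r_n}\|)$. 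Hence for any $y\in(0,1)$ (recall $\|Y_0\|<1$ a.s., as noted just before the statement) I would compute, using stationarity,
$$
\p\left(\|Y_0\|\le y\right)=\lim_{n\to\infty}\p\left(u_n^{-1}(\|\X_{0}\|)\le \tau y \;\middle\vert\; u_n^{-1}(\|\X_0\|)<\tau\right)=\lim_{n\to\infty}\frac{\p\left(u_n^{-1}(\|\X_0\|)\le\tau y\right)}{\p\left(u_n^{-1}(\|\X_0\|)<\tau\right)},
$$
provided $\tau y$ and $\tau$ are continuity points, which can be arranged.

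Next I would translate the events $\{u_n^{-1}(\|\X_0\|)\le\tau y\}$ and $\{u_n^{-1}(\|\X_0\|)<\tau\}$ back into exceedance events via \eqref{eq:un-1-property}: $u_n^{-1}(\|\X_0\|)<\tau \iff \|\X_0\|>u_n(\tau)$, and similarly (up to the boundary, handled by monotonicity of $u_n$ and choosing continuity points) $u_n^{-1}(\|\X_0\|)\le\tau y\iff \|\X_0\|\ge u_n(\tau y)$. Then apply \eqref{un}: $n\p(\|\X_0\|>u_n(\tau))\to\tau$ and $n\p(\|\X_0\|>u_n(\tau y))\to\tau y$. Multiplying numerator and denominator by $n$ and taking the limit gives
$$
\p\left(\|Y_0\|\le y\right)=\frac{\tau y}{\tau}=y,
$$
so $\|Y_0\|$ is uniform on $[0,1]$, independently of the choice of $\tau$.

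The main obstacle is purely technical bookkeeping at the boundary: the normalisation \eqref{un} controls $\p(\|\X_0\|>u_n(\tau))$ but the event coming out of the conditional law is of the form $\{u_n^{-1}(\|\X_0\|)\le\tau y\}$, which by \eqref{eq:un-1-property} corresponds to a non-strict inequality $\|\X_0\|\ge u_n(\tau y)$; one must argue that the difference between $\p(\|\X_0\|\ge u_n(\tau y))$ and $\p(\|\X_0\|> u_n(\tau y))$ is asymptotically negligible after multiplying by $n$. This follows because $y\mapsto \p(\|Y_0\|\le y)$ is a distribution function, hence has at most countably many discontinuities, so it suffices to establish the identity for all continuity points $y$, and the monotone limit argument (or a sandwich between $u_n((y-\delta)\tau)$ and $u_n((y+\delta)\tau)$, letting $\delta\to0$) closes the gap. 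One also needs left-continuity of $u_n$ from property (1) of the threshold functions to handle the conditioning event $\{u_n^{-1}(\|\X_0\|)<\tau\}$ cleanly, but this is exactly the content of \eqref{eq:un-1-property}.
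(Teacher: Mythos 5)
Your proposal is correct and follows essentially the same route as the paper: apply \eqref{Y-def} at $j=0$, translate the conditional event via \eqref{eq:un-1-property}, invoke stationarity, and let \eqref{un} produce the ratio $\tau v/\tau=v$. The boundary bookkeeping you flag as ``the main obstacle'' disappears entirely if you work with the strict inequality $\p(\|Y_0\|<v)$ from the start, as the paper does, since \eqref{eq:un-1-property} is an exact equivalence between strict events ($u_n^{-1}(s)<\tau \iff s>u_n(\tau)$) and so no sandwich or continuity-point argument is needed.
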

\begin{proof}
Using stationarity \eqref{un} and \eqref{eq:un-1-property}, it follows that for all $v\in[0,1]$,
\begin{align*}
\p(\|Y_0\|&<v)=\lim_{n\to\infty}\p\left(\left\|\frac{u_n^{-1}(\|\X_{r_n}\|)}{\tau}\frac{\X_{r_n}}{\|\X_{r_n}\|}\right\|<v
\;\middle\vert\; \|\X_{r_n}\|>u_n(\tau)\right)\\
&=\lim_{n\to\infty}\p\left(\|\X_{r_n}\|>u_n(\tau v)
\;\middle\vert\; \|\X_{r_n}\|>u_n(\tau)\right)=
\frac{\tau v}{\tau}=v.
\end{align*}
\end{proof}
Next we show a relation that will provide a connection between the transformed anchored tail process and the outer measure 
used in \cite{FFM20}: this describes clustering by splitting the events into annuli of different cluster lengths.

\begin{proposition}
\label{prop:block}
Let $A_1\in \mathscr F$. Under $\D'_{q_n}$, for $\mathbb X_{n,1}$ as in \eqref{eq:block-def},
$$
\lim_{n\to\infty}\left|k_n\p(\mathbb X_{n,1}\in\mathbb A_{1})-n\p(\A_{n,1})\right|=\lim_{n\to\infty}\left|k_n\p\left(\W_{r_n}^c(A_{n,1})\right)-n\p(\A_{n,1})\right|=0, 
$$
where $A_{n,1}$ is defined in \eqref{eq:rectangles-n-bi} and $\mathbb A_{1}$ is from \eqref{eq:bb-A} applied to $A_1$.
\end{proposition}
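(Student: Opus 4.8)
The plan is to establish the two equalities separately, but both rest on the same mechanism: under $\D'_{q_n}$, within a single block of size $r_n$ all exceedances (or rather all occurrences of the sets $A_{n,1}$, which by the remark following $\tilde\D'_{q_n}$ are contained in some $U_n(\tau^*)$) are concentrated in a run of length at most $q_n$, so each block contributes essentially one cluster, and the event $\mathbb{X}_{n,1}\in\mathbb{A}_1$ is triggered by that cluster's signature regardless of where inside the block it sits.

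First I would unwind the definitions. By \eqref{eq:bb-A}, $\mathbb{A}_1=\{\x\in l_\infty\colon \x\in\bigcup_{j\in\Z}\TT^{-j}(A_1)\}$, so $\{\mathbb{X}_{n,1}\in\mathbb{A}_1\}$ is precisely the event that the normalised block $\mathbb{X}_{n,1}=\mathbb{X}_n^{0,r_n}$, viewed as an element of $l_\infty$ via the embedding \eqref{eq:embedding}, agrees with a shifted copy of $A_1$ somewhere; since $A_1$ depends only on coordinates $0,\dots,m$ and the block has $r_n\gg q_n\ge m$ coordinates (padded by $\infty$'s), this is the event $\W_{r_n-m}^c(A_{n,1})$ up to a boundary discrepancy of at most $m$ shifts near the right edge of the block. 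Thus $\bigl|\p(\mathbb{X}_{n,1}\in\mathbb{A}_1)-\p(\W_{r_n}^c(A_{n,1}))\bigr|\le m\,\p(A_{n,1})$ — a fixed multiple of $\p(A_{n,1})$ — and multiplying by $k_n$ and using $k_n\p(A_{n,1})\le k_n\p(U_n(\tau^*))\sim \tau^* k_n/n \to 0$ (since $k_n=o(n)$ up to the block-size relation $n\approx k_n r_n$, and $n\p(U_n(\tau^*))\to\tau^*$ by \eqref{un}) gives the first equality, $\lim_n|k_n\p(\mathbb{X}_{n,1}\in\mathbb{A}_1)-k_n\p(\W_{r_n}^c(A_{n,1}))|=0$. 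I should double-check here whether stationarity and the precise form of the embedding let me replace ``$\le m\,\p(A_{n,1})$'' cleanly; the honest estimate is a union bound over the $\le m$ offending shift positions near the block boundary, each of which has probability $\le\p(U_n(\tau^*))$ by stationarity.

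Second, for the equality $\lim_n|k_n\p(\W_{r_n}^c(A_{n,1}))-n\p(\A_{n,1})|=0$, recall $\A_{n,1}=A_{n,1}^{(q_n)}=A_{n,1}\cap\TT^{-1}(A_{n,1}^c)\cap\cdots\cap\TT^{-q_n}(A_{n,1}^c)$, i.e.\ the ``first occurrence followed by $q_n$ non-occurrences'' event. The inclusion--exclusion identity
$$
\W_{r_n}^c(A_{n,1})=\bigcup_{i=0}^{r_n-1}\TT^{-i}(A_{n,1})
$$
lets me write $\p(\W_{r_n}^c(A_{n,1}))=\sum_{i=0}^{r_n-1}\p\bigl(\TT^{-i}(A_{n,1})\cap\W_{i}(A_{n,1})\bigr)$ (the first occurrence is at time $i$), which by stationarity I want to compare with $r_n\p(A_{n,1}\cap\W_{[1,q_n]}(A_{n,1}^c)\cap\text{(something)})=r_n\p(\A_{n,1})$, modulo the difference between ``no earlier occurrence in the whole block'' and ``no occurrence in the next $q_n$ steps''. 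The gap between these is controlled exactly by $\D'_{q_n}$: the event that $A_{n,1}$ occurs, then a run of $q_n$ non-occurrences, then $A_{n,1}$ occurs again inside the block, is $A_{n,1}\cap\W^c_{[q_n+1,r_n)}(A_{n,1})$ up to a shift, and $\D'_{q_n}$ says $n$ times its probability vanishes. Summing the per-index errors, each of size $o(1/n)$ after multiplication, over $r_n$ indices and then over $k_n$ blocks, and using $k_n r_n\le n$, yields the claim. This is where I expect the bookkeeping to be most delicate: I need to be careful that the error terms telescope/sum correctly and that the ``boundary'' terms where the $q_n$-window would run past the end of the block (there are only $q_n=o(r_n)$ such indices) are negligible, again because $k_n q_n\p(A_{n,1})\le k_n q_n\p(U_n(\tau^*))\sim q_n\tau^* k_n/n = o(1)$ using $q_n=o(r_n)$ and $k_n r_n\le n$.

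The main obstacle is the second equality, specifically matching the ``running infimum / no earlier exceedance in the block'' bookkeeping against the ``$q_n$-gap'' definition of $\A_{n,1}=A_{n,1}^{(q_n)}$ while keeping all error terms summable to zero after multiplication by $n$ (equivalently by $k_n$, since $n\asymp k_n r_n$). The essential input making this work is precisely condition $\D'_{q_n}$, which guarantees at most one cluster per block asymptotically; the first equality and all the boundary estimates are routine once one is comfortable with the embedding of finite blocks into $l_\infty$ and with the tail normalisation \eqref{un}.
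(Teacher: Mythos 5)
Your treatment of the first equality is sound and in fact slightly more careful than the paper: the paper simply asserts $\p(\mathbb X_{n,1}\in\mathbb A_1)=\p(\W_{r_n}^c(A_{n,1}))$, whereas the exact identity is $\p(\mathbb X_{n,1}\in\mathbb A_1)=\p(\W_{r_n-m}^c(A_{n,1}))$ because of the $\infty$-padding in the embedding; your union bound showing the discrepancy is $\le m\p(A_{n,1})$, hence $o(1/k_n)$, handles this correctly.

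The second equality is where your plan has a genuine gap, and it lies precisely where you flagged the bookkeeping as ``delicate.'' You decompose $\W_{r_n}^c(A_{n,1})$ by the \emph{first} occurrence: the $i$-th term is $\p(\TT^{-i}(A_{n,1})\cap\W_i(A_{n,1}))$, which after a stationarity shift becomes an event of the form ``$A_{n,1}$ at time $0$, no occurrence at times $-1,\dots,-i$'' --- a \emph{past-avoiding} event. But $\A_{n,1}=A_{n,1}\cap\TT^{-1}(A_{n,1}^c)\cap\cdots\cap\TT^{-q_n}(A_{n,1}^c)$ is a \emph{future-avoiding} event, and your ``first occurrence at $i$'' event is not contained in any shift of $\A_{n,1}$. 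The paper decomposes by the \emph{last} occurrence for exactly this reason: the term $\p(\TT^{-j}(A_{n,1})\cap\W_{j+1,r_n}(A_{n,1}))$ shifts under stationarity to $\p(A_{n,1}\cap\W_{1,r_n-j}(A_{n,1}))$, and for $j\le r_n-q_n-2$ this event \emph{is} contained in $\A_{n,1}$; the excess $\p(\A_{n,1})-\p(A_{n,1}\cap\W_{1,r_n-j}(A_{n,1}))=\p(\A_{n,1}\cap\W^c_{q_n+1,r_n-j}(A_{n,1}))$ is precisely of the form controlled by $\D'_{q_n}$ after summing and multiplying by $k_n$. Note also that you misstate the condition: $\D'_{q_n}$ bounds $n\p(\A_{n,1}\cap\W^c_{q_n+1,r_n}(A_{n,1}))$, with the annulus $\A_{n,1}=A_{n,1}^{(q_n)}$ in the first slot, not $A_{n,1}$ itself --- this distinction is exactly what makes the last-occurrence decomposition the right choice. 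There is a salvage route: the purely combinatorial identity $\p(A\cap\W_{1,q+1}(A))=\p(\W_{0,q}(A)\cap\TT^{-q}(A))$ (valid by stationarity for any $A$, as used implicitly in Lemma~\ref{lem:EI-tail}) converts your first-occurrence terms into the paper's last-occurrence terms, so the two decompositions give identical sums term by term; but you would still need to make that identification and then run the paper's argument, so switching to the last-occurrence decomposition from the start is cleaner.
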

\begin{proof}
We start by estimating $\p(\mathbb X_{n,1}\in\mathbb A_{1})=\p(\W_{r_n}^c(A_{n,1}))$, which we do by decomposing $\W_{r_n}^c(A_{n,1})$ according to the last occurrence of event $A_{n,1}$. Namely,
\begin{align*}
\p(\W_{r_n}^c(A_{n,1}))
&=\sum_{j=0}^{r_n-1}\p(\TT^{-j}(A_{n,1}))-\sum_{j=0}^{r_n-1}\p(\TT^{-j}(A_{n,1}), \W^c_{j+1,r_n}(A_{n,1}))).
\end{align*}
It follows that
\begin{align}
\Bigg|\p(\W_{r_n}^c(A_{n,1}))&-\left(\sum_{j=0}^{r_n-q_n-2}\left(\p(\TT^{-j}(A_{n,1}))-\p(\TT^{-j}(A_{n,1}), \W^c_{j+1,r_n}(A_{n,1}))\right)\right)\Bigg|\nonumber\\
&\leq 
\sum_{j=r_n-q_n-1}^{r_n-1}\p(\TT^{-j}(A_{n,1}), \W_{j+1,r_n}(A_{n,1}))) 
\leq (q_n+1)\p(A_{n,1})\label{eq:computation1}
\end{align}
Using stationarity,
\begin{align*}
\sum_{j=0}^{r_n-q_n-2}\p(\TT^{-j}(A_{n,1}), \W^c_{j+1,r_n}(A_{n,1}))&=\sum_{s=q_n+2}^{r_n}\p(A_{n,1}, \W^c_{1,s}(A_{n,1})).
\end{align*}
For $s\geq q_n+2$,
\begin{align*}
\p(A_{n,1}, \W^c_{1,s}(A_{n,1}))-\p(A_{n,1}, \W^c_{1,q_n+1}(A_{n,1}))
&=\p(\A_{n,1},\W^c_{q_n+1,s}(A_{n,1}) )
\end{align*}
and therefore
\begin{align}
\Bigg|\sum_{s=q_n+2}^{r_n}&\p(A_{n,1}, \W^c_{1,s}(A_{n,1}))-\sum_{s=q_n+2}^{r_n}\p(A_{n,1}, \W^c_{1,q_n+1}(A_{n,1}))\Bigg|\nonumber\\&\leq \sum_{s=q_n+2}^{r_n}\p(\A_{n,1},\W^c_{q_n+1,s}(A_{n,1}) )
\leq r_n \p(\A_{n,1},\W^c_{q_n+1,r_n}(A_{n,1}) ).\label{eq:computation2}
\end{align}
Combining \eqref{eq:computation1} and \eqref{eq:computation2} and using stationarity, we obtain
\begin{align*}
\Bigg|\p(\W_{r_n}^c(A_{n,1}))&-\left(\sum_{s=q_n+2}^{r_n}\p(A_{n,1})-\sum_{s=q_n+2}^{r_n}\p(A_{n,1}, \W^c_{1,q_n+1}(A_{n,1}))\right)\Bigg|\nonumber\\
&\leq (q_n+1)\p(A_{n,1})+r_n \p(\A_{n,1},\W^c_{q_n+1,r_n}(A_{n,1}) )
\end{align*}
Noting that the term between big brackets is equal to \\$(r_n-q_n+1)\p(A_{n,1}, \W_{1,q_n+1}(A_{n,1})) =(r_n-q_n+1)\p(\A_{n,1})$ then
multiplying by $k_n$ we obtain 
\begin{align*}
\left|k_n\p(\mathbb X_{n,1}\in \mathbb A_{1})-n\p(\A_{n,1})\right|\leq 2q_nk_n\p(\A_{n,1})+n\p(\A_{n,1},\W^c_{q_n+1,r_n}(A_{n,1}) ).
\end{align*}
The second term on the right vanishes by $\D'_{q_n}$. Since by definition of $A_{n,1}$, we have that
$A_{n,1}\subset \{\|X_0\|>u_n(h_0)\}$, where $h_0=\inf\{\|x\|\colon x\in H_0\}\geq0$,  
then $n\p( A_{n,1})\leq n\p(\|X_0\|>u_n(h_0))\xrightarrow[n\to\infty]{}h_0$. Recalling that $q_n=o(r_n)$, it follows that the first term on right also vanishes. 
\end{proof}

\begin{corollary}
\label{cor:useful-limits}
Under $\D'_{q_n}$,
\begin{equation*}
\lim_{n\to\infty} k_n\p(\W_{r_n}^c(U_n(\tau))=\theta\tau\qquad\text{and}\qquad
\lim_{n\to\infty} \frac{\p(\W_{r_n}^c(U_n(\tau))}{r_n\p(U_n(\tau))}=\theta.
\end{equation*}
\end{corollary}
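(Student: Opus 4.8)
The plan is to deduce both limits directly from Proposition~\ref{prop:block}, together with the defining property \eqref{un} of the threshold functions and the definition \eqref{eq:EI} of the extremal index.

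First I would apply Proposition~\ref{prop:block} with $A_1=U(\tau)$ as in \eqref{eq:U}. With this choice one has $A_{n,1}=U_n(\tau)$, and using the identification $\X_i=Z_0\circ\TT^i$ together with \eqref{eq:Aq}, a short unwinding of definitions shows $\A_{n,1}=(U_n(\tau))^{(q_n)}=U_n^{(q_n)}(\tau)$. Hence Proposition~\ref{prop:block} yields
$$
\lim_{n\to\infty}\left|k_n\p\left(\W_{r_n}^c(U_n(\tau))\right)-n\p\left(U_n^{(q_n)}(\tau)\right)\right|=0.
$$
Next I would evaluate $\lim_{n\to\infty}n\p(U_n^{(q_n)}(\tau))$ by writing it as
$$
n\p\left(U_n^{(q_n)}(\tau)\right)=n\p\left(U_n(\tau)\right)\cdot\frac{\p\left(U_n^{(q_n)}(\tau)\right)}{\p\left(U_n(\tau)\right)},
$$
where the first factor tends to $\tau$ by \eqref{un} (recall $U_n(\tau)=\{\|\X_0\|>u_n(\tau)\}$) and the second tends to $\theta$ by \eqref{eq:EI}. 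Combining the two displays gives the first claimed identity, $k_n\p(\W_{r_n}^c(U_n(\tau)))\to\theta\tau$.

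For the second identity I would pass between the normalisations by $k_n$ and by $r_n$ by writing
$$
\frac{\p\left(\W_{r_n}^c(U_n(\tau))\right)}{r_n\,\p(U_n(\tau))}=\frac{k_n\p\left(\W_{r_n}^c(U_n(\tau))\right)}{k_nr_n\,\p(U_n(\tau))}=\frac{k_n\p\left(\W_{r_n}^c(U_n(\tau))\right)}{\dfrac{k_nr_n}{n}\;n\,\p(U_n(\tau))}.
$$
Since $r_n=\lfloor n/k_n\rfloor$ one has $n-k_n<k_nr_n\le n$, and $k_n=o(n)$ follows from $k_nt_n=o(n)$ together with $t_n\to\infty$; therefore $k_nr_n/n\to1$. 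Consequently the denominator converges to $1\cdot\tau=\tau$ by \eqref{un}, while the numerator converges to $\theta\tau$ by the first part, and as $\tau>0$ the quotient converges to $\theta$.

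There is essentially no serious obstacle here: the substantive work is already contained in Proposition~\ref{prop:block}. The only points needing a little care are the bookkeeping identification $\A_{n,1}=U_n^{(q_n)}(\tau)$ and the elementary estimate $k_nr_n/n\to1$, which is what lets us interchange the normalisations by $n$ and by $k_nr_n$ in the two statements.
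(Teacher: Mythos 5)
Your proof is correct and follows essentially the same route as the paper: apply Proposition~\ref{prop:block} with $A_1=U(\tau)$, then split $n\p(U_n^{(q_n)}(\tau))$ into $n\p(U_n(\tau))\cdot\p(U_n^{(q_n)}(\tau))/\p(U_n(\tau))\to\tau\theta$ using \eqref{un} and \eqref{eq:EI}, and divide through by $r_n\p(U_n(\tau))$ for the second limit. Your treatment is in fact slightly more careful than the paper's at the last step: the paper silently replaces $k_nr_n$ by $n$ in the denominator, whereas you correctly note $k_nr_n/n\to1$ (via $k_n=o(n)$, itself a consequence of $k_nt_n=o(n)$ and $t_n\to\infty$) before taking limits.
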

\begin{proof}
By Proposition~\ref{prop:block}, \eqref{un} and \eqref{eq:EI},
\begin{align*}
\lim_{n\to\infty} k_n\p(\W_{r_n}^c(U_n(\tau))&=\lim_{n\to\infty} n\p(U_n^{q_n}(\tau))=\lim_{n\to\infty}n\p(U_n(\tau)) \frac{\p(U_n^{q_n}(\tau))}{\p(U_n(\tau))}=\tau\theta. 
\end{align*}
It follows that
\begin{align*}
\lim_{n\to\infty}& \frac{\p(\W_{r_n}^c(U_n(\tau))}{r_n\p(U_n(\tau))}=\frac{k_n\p(\W_{r_n}^c(U_n(\tau))}{n\p(U_n(\tau))}=\frac{\tau\theta}{\tau}=\theta. 
\end{align*}
 \end{proof}

\begin{lemma}
\label{lem:EI-tail}
Recalling the definition of the EI given in \eqref{eq:EI}, 
$$
\theta=\p\left(\inf_{j\geq 1}\|Y_j\|\geq 1\right)=\p\left(\inf_{j\leq -1}\|Y_j\|\geq 1\right).
$$
\end{lemma}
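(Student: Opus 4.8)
The plan is to read $\theta$ off the block picture of Corollary~\ref{cor:useful-limits} and match it, term by term, with the piling process via property~\eqref{Y-def}. Fix $\tau>0$, so that $\theta=\lim_n \p(\W_{r_n}^c(U_n(\tau)))/(r_n\p(U_n(\tau)))$ by Corollary~\ref{cor:useful-limits}. I would decompose the event ``the block $\{0,\dots,r_n-1\}$ contains an exceedance of $u_n(\tau)$'' according to the position of its \emph{last} exceedance and, using $\p$-invariance of $\TT$ to shift that exceedance to the origin, obtain $\p(\W_{r_n}^c(U_n(\tau)))=\p(U_n(\tau))\sum_{i=0}^{r_n-1}p^{(n)}_i$, where $p^{(n)}_i:=\p(\|\X_1\|\le u_n(\tau),\dots,\|\X_i\|\le u_n(\tau)\mid\|\X_0\|>u_n(\tau))$ and $p^{(n)}_0=1$; decomposing instead at the \emph{first} exceedance gives the same identity with the backward version $\tilde p^{(n)}_i:=\p(\|\X_{-1}\|\le u_n(\tau),\dots,\|\X_{-i}\|\le u_n(\tau)\mid\|\X_0\|>u_n(\tau))$. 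By \eqref{eq:normalisation}, under the block normalisation \eqref{eq:block-def} the event $\{\|\X_{r_n+j}\|\le u_n(\tau)\}$ becomes the event $\{\|Y_j\|\ge1\}$ in the limit, so \eqref{Y-def} gives, for each fixed $i$, $p^{(n)}_i\to\beta_i:=\p(\inf_{1\le j\le i}\|Y_j\|\ge1)$ and $\tilde p^{(n)}_i\to\tilde\beta_i:=\p(\inf_{-i\le j\le-1}\|Y_j\|\ge1)$, with $\beta_i\downarrow\p(\inf_{j\ge1}\|Y_j\|\ge1)$ and $\tilde\beta_i\downarrow\p(\inf_{j\le-1}\|Y_j\|\ge1)$ by monotone convergence.

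For the upper bound I would use that $i\mapsto p^{(n)}_i$ is non-increasing: for every fixed $T$ and all large $n$, $\frac1{r_n}\sum_{i=0}^{r_n-1}p^{(n)}_i\le \frac{T}{r_n}+\frac{r_n-T}{r_n}p^{(n)}_T$, so letting $n\to\infty$ then $T\to\infty$ yields $\theta\le\p(\inf_{j\ge1}\|Y_j\|\ge1)$; the same estimate with $\tilde p^{(n)}_i$ gives $\theta\le\p(\inf_{j\le-1}\|Y_j\|\ge1)$.

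The matching lower bound is the crux, and is where $\D'_{q_n}$ enters. From $\D'_{q_n}$ together with \eqref{un} (and stationarity, to pass from the forward to the backward formulation) I would first get $\p(\max_{q_n< j\le r_n}\|\X_j\|>u_n(\tau)\mid\|\X_0\|>u_n(\tau))\to0$ and its backward analogue; hence $0\le p^{(n)}_{q_n}-p^{(n)}_i\to0$ uniformly over $q_n\le i\le r_n-1$, and since $q_n=o(r_n)$ the Cesàro average collapses, giving $\theta=\lim_n p^{(n)}_{q_n}$ (which is just \eqref{eq:EI}) and, symmetrically, $\theta=\lim_n\tilde p^{(n)}_{q_n}$. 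The remaining and essential step is to identify these limits, i.e.\ to interchange $\lim_n$ with the truncation $q_n\to\infty$ of the window — this is exactly the content of the lemma and is \emph{not} licensed by monotonicity alone; this is the main obstacle. I would handle it by anchoring the cluster inside the piling process: since $\|Y_j\|\to\infty$ a.s.\ by \eqref{Y-infinity} and $\|Y_0\|<1$ a.s.\ by Lemma~\ref{lem:Y0}, the set $\mathcal E:=\{j\in\Z:\|Y_j\|<1\}$ is a.s.\ finite and contains $0$, so $\min\mathcal E$ and $\max\mathcal E$ are a.s.\ well defined, and $\{\min\mathcal E=0\}=\{\inf_{j\le-1}\|Y_j\|\ge1\}$, $\{\max\mathcal E=0\}=\{\inf_{j\ge1}\|Y_j\|\ge1\}$. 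A time-change identity of the tail process — conditionally on $\{\|Y_j\|<1\}$ the shifted sequence $(Y_{i+j})_i$ has the law of $(Y_i)_i$ conditioned on $\{\|Y_{-j}\|<1\}$, which follows from stationarity of $(\X_i)_i$ through \eqref{Y-def}, together with $\p(\|Y_j\|<1)=\p(\|Y_{-j}\|<1)$ — lets one resum $\p(\min\mathcal E=-m)$ over $m\ge0$ and conclude
\[
\p(\min\mathcal E=0)=\p(\max\mathcal E=0)=\E\!\left[|\mathcal E|^{-1}\right];
\]
the same anchoring, fed back into Corollary~\ref{cor:useful-limits} (writing $\p(\W_{r_n}^c(U_n(\tau)))=\E[\,\sum_{0\le j<r_n,\ \|\X_j\|>u_n(\tau)}N_n^{-1}]$ with $N_n$ the number of exceedances in the block, the contributions of exceedances within $q_n$ of the boundary being negligible since $q_n=o(r_n)$ and by $\D'_{q_n}$), identifies $\theta=\E[|\mathcal E|^{-1}]$. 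Combining, $\theta=\p(\inf_{j\ge1}\|Y_j\|\ge1)=\p(\inf_{j\le-1}\|Y_j\|\ge1)$, the whole delicacy being the interchange of limits / negligibility of boundary effects, controlled by $\D'_{q_n}$ and $q_n=o(r_n)$.
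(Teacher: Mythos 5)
Your route is genuinely different from the paper's, and considerably heavier. The paper's proof of the first equality is essentially one line: starting from $\theta=\lim_n\p(U_n^{(q_n)}(\tau)\mid U_n(\tau))$ (the definition~\eqref{eq:EI}), it shifts the conditioning event to time $r_n$ by stationarity and then reads off $\p(\inf_{j\ge1}\|Y_j\|\ge1)$ from~\eqref{Y-def}. For the second equality, it proves the \emph{pre-limit} identity
$$\p\bigl(\W_{r_n+1,r_n+q_n+1}(U_n(\tau))\cap\TT^{-r_n}U_n(\tau)\bigr)=\p\bigl(\W_{r_n-q_n,r_n}(U_n(\tau))\cap\TT^{-r_n}U_n(\tau)\bigr)$$
by a short telescoping/stationarity computation (write the intersection as a difference of two ``$\mathscr W$'' probabilities, shift by stationarity, and reassemble), which converts a forward run of $q_n$ non-exceedances into a backward one at finite $n$, without ever invoking the structure of the limit object. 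Your proof instead passes through Corollary~\ref{cor:useful-limits}, a last/first-exceedance decomposition of the block, a Ces\`aro argument for the upper bound $\theta\le\beta_\infty$, and then the anchoring formula $\p(\min\mathcal E=0)=\p(\max\mathcal E=0)=\E\bigl[|\mathcal E|^{-1}\bigr]$ for the time reversal. The Ces\`aro upper bound is correct, and the time-change identity you invoke does follow, in this general (non--regularly-varying) setting, from stationarity of $(\X_j)_j$ and~\eqref{Y-def}, as you claim. But the paper's telescoping trick buys a lot: it needs neither~\eqref{Y-infinity} nor finiteness of $\mathcal E$, and it gives the forward/backward symmetry before any limit is taken.

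Two further remarks. First, you are right to single out the interchange of $\lim_n$ with the window truncation $q_n\to\infty$ as the real content of the first equality: the paper is terse here, applying~\eqref{Y-def} with $t=q_n$ growing, which is not what~\eqref{Y-def} literally guarantees. The upper bound $\theta\le\beta_\infty$ is easy (your Ces\`aro argument, or directly $p^{(n)}_{q_n}\le p^{(n)}_t$ for $n$ large). The lower bound is the delicate half. Second, however, your resolution of that lower bound is itself only a sketch and runs into the very issue you are trying to circumvent: identifying $\theta$ with $\E\bigl[|\mathcal E|^{-1}\bigr]$ by writing $\p(\W_{r_n}^c(U_n(\tau)))=\E\bigl[\sum_j\I_{\{\|\X_j\|>u_n(\tau)\}}N_n^{-1}\bigr]$ and ``passing to the limit'' requires you to replace the full-block count $N_n$ by the fixed-window cluster size $|\mathcal E|$, which is exactly a window-truncation/interchange-of-limits step and again needs $\D'_{q_n}$, $q_n=o(r_n)$, and some uniform integrability. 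So the proposal reorganises the difficulty rather than fully dispatching it; it also proves a strictly stronger identity ($\theta=\E\bigl[|\mathcal E|^{-1}\bigr]$) than the lemma requires. Note finally that in the constant case $q_n\equiv q$ — which covers the periodic-point dynamical examples — the interchange issue simply disappears (one then only needs $\beta_q=\beta_\infty$, which is an immediate consequence of $\D'_{q}$ and~\eqref{Y-def}), and the paper's route is complete and much shorter.
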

\begin{proof}
By \eqref{eq:EI}, stationarity and definition of the process $(Y_j)_{j\in\Z}$, given in assumption \eqref{Y-def}, we may write
\begin{align*}
\theta&
=\lim_{n\to\infty} \p\left(\frac1\tau \mathbb X_{n}^{r_n,r_n+q_n} \in  \W_{r_n+1,r_n+q_n+1}\left(U(1)\right)\;\middle\vert\; \|\X_{r_n}\|>u_n(\tau)\right)\\
&=\p\left(\inf_{j\geq 1}\|Y_j\|\geq 1\right).
\end{align*}
The second equality in the statement of the lemma follows easily from stationarity and standard arguments.
\end{proof}

The next result is instrumental because it shows how the transformed anchored tail process can be used to encode the information regarding 
the clustering. Essentially, it says that the joint distribution of the random variables in a block where an exceedance is observed (which makes it a cluster) is given by the transformed anchored tail process.  Recall that by  \eqref{eq:embedding},  $\mathbb X_{n, i}$ can be thought of as lying in $l_\infty$.
\begin{proposition}
\label{prop:piling-property}
Under the assumptions used to define the transformed anchored tail process and condition $\D'_{q_n}$, for every $\tau>0$ and for $\mathbb X_{n,1}$ as in \eqref{eq:block-def},
$$
\mathcal L\left(\tilde\pi\left(\frac1\tau\mathbb X_{n,1}\right)\;\middle\vert\; \mathbb X_{n,1}\in \W^c_{r_n}\left(U(\tau)\right) \right)\longrightarrow \mathcal L\left(\tilde\pi\left((Z_j)_{j\in\Z}\right)\right).
$$
\end{proposition}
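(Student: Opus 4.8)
The plan is to decompose the conditioning event according to the position of the \emph{first} exceedance of $u_n(\tau)$ inside the block, to re-centre each resulting piece at that exceedance, and then to identify the limit by combining assumption \eqref{Y-def} with the bookkeeping of Corollary~\ref{cor:useful-limits} and the very definition of the piling process. Since the spatial component lives in the non-locally-compact space $\tilde l_\infty$, I would follow the strategy of \cite{BPS18} (cf.\ Section~\ref{subsubsec:spaces}) and prove separately the convergence of the finite-dimensional distributions of a suitably \emph{anchored} representative and the tightness of the family. Set $K_n:=\min\{0\le j<r_n\colon \|\X_j\|>u_n(\tau)\}$, which is finite precisely on $\{\mathbb X_{n,1}\in\W^c_{r_n}(U(\tau))\}$; as the anchor of $\tilde\pi(\tfrac1\tau\mathbb X_{n,1})$ take the representative $\TT^{K_n}(\tfrac1\tau\mathbb X_{n,1})$, which has an exceedance at coordinate $0$, norm $\ge1$ at every coordinate $-K_n,\dots,-1$, and $\infty$-padding outside the block. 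Because $\tilde\pi\circ\TT=\tilde\pi$, this does represent the same equivalence class, so the restriction of the anchored representative to a window $[s,t]$ has, under $\p(\cdot\mid\W^c_{r_n}(U(\tau)))$, the law of $\tfrac1\tau\mathbb X_n^{K_n+s,K_n+t+1}$ (up to boundary cases where the window pokes out of the block).

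For the finite-dimensional distributions, fix $s<0<t$ and a bounded continuous $g\ge0$ on $(\R^d)^{t-s+1}$. Splitting $\W^c_{r_n}(U(\tau))=\bigsqcup_i\{K_n=i\}$, restricting $i$ to the bulk range $[q_n,r_n-q_n]$ (the remaining $O(q_n)$ indices contribute, after normalisation, an $O(q_n/r_n)=o(1)$ term), and using stationarity to shift each block by $i$, the quantity $\E[g(\tfrac1\tau\mathbb X_n^{K_n+s,K_n+t+1})\I_{\W^c_{r_n}(U(\tau))}]$ equals, up to $o\big(r_n\p(U_n(\tau))\big)$,
\[
\p(U_n(\tau))\sum_{i\in[q_n,r_n-q_n]}\E\!\left[g\big(\tfrac1\tau\mathbb X_n^{s,t+1}\big)\,\I_{\{\text{no exceedance of }u_n(\tau)\text{ at }-i,\dots,-1\}}\,\middle|\,U_n(\tau)\right].
\]
Dividing by $\p(\W^c_{r_n}(U(\tau)))=\theta r_n\p(U_n(\tau))(1+o(1))$ from Corollary~\ref{cor:useful-limits} (with $\theta>0$ by Lemma~\ref{lem:EI-tail} and \eqref{positive-EI}), we are reduced to showing that the average over $i\in[q_n,r_n-q_n]$ of the conditional expectations above converges to $\theta\,\E[g((Z_j)_{j=s}^{t})]$. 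For each fixed $m$, assumption \eqref{Y-def} — together with $\p(\|Y_j\|=1)=0$, which follows from Lemma~\ref{lem:Y0} and \eqref{spectral-process} since $\|Y_j\|=\|Y_0\|\,\|\Theta_j\|$ with $\|Y_0\|$ uniform and independent of $(\Theta_j)_j$ — gives
\[
\E\!\left[g\big(\tfrac1\tau\mathbb X_n^{s,t+1}\big)\,\I_{\{\text{no exc.\ at }-m,\dots,-1\}}\,\middle|\,U_n(\tau)\right]\xrightarrow[n\to\infty]{}\E\!\left[g\big((Y_j)_{j=s}^{t}\big)\,\I_{\{\inf_{-m\le j\le-1}\|Y_j\|\ge1\}}\right]=:L_m,
\]
and $L_m\downarrow\theta\,\E[g((Z_j)_{j=s}^{t})]$ as $m\to\infty$ by the definition of $(Z_j)_{j\in\Z}$ and Lemma~\ref{lem:EI-tail}. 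Since for $i\ge m$ the event $\{\text{no exc.\ at }-i,\dots,-1\}$ is contained in $\{\text{no exc.\ at }-m,\dots,-1\}$, the average has $\limsup_n\le L_m$; and, subtracting, it has $\liminf_n\ge L_m-\|g\|_\infty\,\limsup_n\rho_n(m)$, where $\rho_n(m):=\p(\text{an exceedance of }u_n(\tau)\text{ occurs in }[-(r_n-q_n),-m-1]\mid U_n(\tau))$. Letting $m\to\infty$ and invoking the anticlustering bound $\lim_{m\to\infty}\limsup_{n\to\infty}\rho_n(m)=0$ pins the limit to $\theta\,\E[g((Z_j)_{j=s}^{t})]$, as required.

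Tightness of $\{\mathcal L(\tilde\pi(\tfrac1\tau\mathbb X_{n,1})\mid\W^c_{r_n}(U(\tau)))\}_n$ in $\tilde l_\infty\setminus\{\tilde\infty\}$ reduces, for the metric of Section~\ref{subsubsec:spaces}, to two uniform-in-$n$ estimates: that the peak norm of the image under $\Proj$ does not escape, which by \eqref{eq:normalisation} and Corollary~\ref{cor:useful-limits} follows from $\p(\W^c_{r_n}(U(\tau/L)))/\p(\W^c_{r_n}(U(\tau)))\to1/L$ as $L\to\infty$; and that coordinates far from the anchor have large norm, which, again via \eqref{eq:normalisation}, is the anticlustering statement that for every $\delta>0$, $\lim_{N\to\infty}\limsup_{n\to\infty}\p(\text{some }j\text{ with }|j-K_n|>N\text{ has }\|\X_j\|>u_n(\tau/\delta)\mid\W^c_{r_n}(U(\tau)))=0$, with \eqref{Y-infinity} guaranteeing that the limiting cluster has a.s.\ finite extent. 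It remains to establish the anticlustering bound itself: writing $[-(r_n-q_n),-m-1]=[-q_n,-m-1]\cup[-(r_n-q_n),-q_n-1]$, the component at distances in $(q_n,r_n)$ is killed by $\tilde\D'_{q_n}$ via stationarity and a decomposition according to the exceedance nearest the anchor (exactly as in the proof of Proposition~\ref{prop:block}), while for distances in $(m,q_n]$ one first lets $n\to\infty$ — where $\sum_{k=m+1}^{\ell}\p(\|\X_k\|>u_n(\tau)\mid U_n(\tau))\to\sum_{k=m+1}^{\ell}\p(\|Y_k\|<1)$ by \eqref{Y-def} — and then $m\to\infty$, using $\sum_{k\ge1}\p(\|Y_k\|<1)<\infty$ (the expected number of exceedances per cluster equals $1/\theta<\infty$ by Corollary~\ref{cor:useful-limits}).

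I expect this anticlustering package to be the main obstacle: one must transfer the a.s.\ finiteness of the cluster from the limiting tail/piling process back to a \emph{uniform-in-$n$} tail control of the pre-limit blocks, and this has to be squeezed out of the deliberately weak conditions $\D'_{q_n}$ and $\tilde\D'_{q_n}$ rather than the stronger mixing assumptions of \cite{BPS18}. Everything else is the averaging/squeeze argument above, the reduction to finitely many coordinates on the quotient space, and the routine check that the various ``no exceedance'' events are continuity sets for the tail process.
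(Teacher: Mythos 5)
Your proposal is a genuinely different route from the paper's, and while the overall plan is reasonable, it contains a real gap that the paper's own argument is carefully designed to avoid.

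The paper does not go through finite-dimensional distributions plus tightness. Instead, it exploits Lemma~\ref{lem:cdc}: $\tilde{\mathscr J}$ is a convergence-determining class for probability measures on $\tilde l_\infty\setminus\{\tilde\infty\}$, so it suffices to check $\p\left(\frac1\tau \mathbb X_{n,1}\in\mathbb A\mid\W^c_{r_n}(U^\tau)\right)\to\p\left((Y_j)_j\in\mathbb A\mid\inf_{j\le -1}\|Y_j\|\ge 1\right)$ for each continuity set $\mathbb A\in\mathscr J$. The crucial feature is that $\mathbb A=\tilde\pi^{-1}(\tilde A)$ is $\TT$-invariant, so after decomposing $\W^c_{r_n}(U^\tau)$ by the position $i$ of the \emph{first} exceedance ($B_{n,i}$), one can forget the past of that exceedance only back to distance $q_n$ (replacing $B_{n,i}$ by $D_{n,i}$). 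The resulting error sums up to $(r_n-q_n)\p\left(U_n^{(q_n)}(\tau)\cap\W^c_{q_n+1,r_n}(U_n(\tau))\right)$, and \emph{this specific quantity} — anchored at $U_n^{(q_n)}(\tau)$ rather than $U_n(\tau)$ — is exactly what $\D'_{q_n}$ controls. No separate anticlustering lemma and no tightness argument are needed.

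The gap in your approach is in the anticlustering package. First, the component at distances in $(q_n,r_n)$ is \emph{not} controlled by the hypothesis: you invoke $\tilde\D'_{q_n}$, but only $\D'_{q_n}$ is assumed, and $\D'_{q_n}$ applied to $U(\tau)$ gives control of $n\p(U_n^{(q_n)}(\tau)\cap\W^c_{q_n+1,r_n}(U_n(\tau)))$, i.e.\ only after discarding the future of the anchor, which is precisely the weakening this paper is built around. You cannot upgrade it to $n\p(U_n(\tau)\cap\W^c_{q_n+1,r_n}(U_n(\tau)))\to 0$ for free. Second, and independently, your control of the $(m,q_n]$ range appeals to $\sum_{k\ge1}\p(\|Y_k\|<1)<\infty$. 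This does not follow from the stated hypotheses: \eqref{Y-infinity} only gives $\lim_{|j|\to\infty}\|Y_j\|=\infty$ almost surely, i.e.\ a.s.\ finitely many sub-unit coordinates, which is strictly weaker than summability of the marginal probabilities. The claim "expected number of exceedances per cluster equals $1/\theta$" is a cluster-size identity that the paper deliberately does \emph{not} assume (it holds under the stronger framework of \cite{BS09, BPS18} but not automatically here), and Corollary~\ref{cor:useful-limits} does not yield it. Moreover, since $q_n\to\infty$, the sum $\sum_{k=m+1}^{q_n}$ has a growing number of terms, so even with summability you would have to justify the interchange of limits rather than letting $n\to\infty$ first with a fixed upper bound. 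Your instinct in the closing paragraph that this is "the main obstacle" is exactly right; the resolution the paper uses is to never pose the anticlustering question, by arranging the decomposition so that all error terms have the $U_n^{(q_n)}(\tau)$ shape covered by $\D'_{q_n}$.
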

\begin{proof}
In what follows we write $U^\tau$ for $U(\tau)$. As in Appendix~\ref{sec:app_weak_conv}, $\tilde{\mathscr J}$ is a convergence determining class and therefore we need to show that for all $A\in\mathscr F$ and corresponding $\tilde A\in\tilde{\mathscr J}$, such that $\p\left(\tilde\pi\left((Z_j)_{j\in\Z}\right)\in\partial \tilde A\right)=0$, we have 
\begin{align*}
\p\left(\tilde\pi\left(\frac1\tau \mathbb X_{n,1}\right) \in \tilde A\;\middle\vert\; \mathbb X_{n,1}\in \W^c_{r_n}\left(U^\tau\right) \right)&\longrightarrow \p\left(\tilde\pi\left((Z_j)_{j\in\Z}\right)\in \tilde A\right),
\end{align*}
which will follow if we show that 
\begin{align*}
\p\left(\frac1\tau \mathbb X_{n,1} \in \mathbb A\;\middle\vert\; \mathbb X_{n,1}\in \W^c_{r_n}\left(U^\tau\right) \right)&\longrightarrow \p\left((Y_j)_{j\in\Z}\in \mathbb A\;\middle\vert\; \inf_{j\leq -1}\|Y_j\|\geq 1\right),
\end{align*}
for all $\mathbb A\in \mathscr J$, such that $\p((Y_j)_{j\in\Z}\in \partial \mathbb A)=0$.
Recall that $\mathbb A$ is $\TT$-invariant, \ie $\TT^{-1}(\mathbb A)=\mathbb A$. 

We start estimating 
$\p(\mathbb X_{n,1} \in \tau \mathbb A\cap \W^c_{r_n}\left(U^\tau\right) )$ by decomposing the event on the right (which essentially says that at least one exceedance of $u_n(\tau)$ has occurred up to time $r_n-1$) with respect to the first time, $i=0,\ldots, r_n-1$, when that exceedance occurs, \ie $\{\mathbb X_{n,1}\in  \TT^{-i} (U^\tau\}=\{\|\X_i\|>u_n(\tau)\}$:
$$
\p\left(\mathbb X_{n,1} \in \tau \mathbb A\cap \W^c_{r_n}\left(U^\tau\right) \right)=\sum_{i=0}^{r_n-1}\p\left(\mathbb X_{n,1} \in \tau \mathbb A\cap \W_{i}\left(U^\tau\right)\cap \TT^{-i} (U^\tau) \right)
$$
Since $B_{n,i}:=\p\left(\mathbb X_{n,1} \in \tau \mathbb A\cap \W_{i}\left(U^\tau\right)\cap \TT^{-i} (U^\tau) \right)\leq \p(\|\X_i\|>u_n(\tau))$, 
\begin{equation}
\label{eq:estimate1}
\left|\p\left(\mathbb X_{n,1} \in \tau \mathbb A\cap \W^c_{r_n}\left(U^\tau\right) \right)-\sum_{i=q_n}^{r_n-1}B_{n,i}\right|\leq 
q_n\p(U_n(\tau))=:I(n)
\end{equation}
For $i\geq q_n$, we use $D_{n,i}:=\p\left(\mathbb X_{n,1} \in \tau \mathbb A\cap \W_{i-q_n,i}\left(U^\tau\right)\cap \TT^{-i} (U^\tau)\right)$ to estimate $B_{n,i}$. Namely,
\begin{align*}
|B_{n,i}-D_{n,i}|&\leq\p\left(\mathbb X_{n,1} \in U^\tau \cap \W_{1,q_n+1}\left(U^\tau\right)\cap \W^c_{q_n+1,i+1}\left(U^\tau\right)\right)
\end{align*}
Therefore,
\begin{align}
\left|\sum_{i=q_n}^{r_n-1}B_{n,i}\right.&\left. -\sum_{i=q_n}^{r_n-1}D_{n,i}\right|\leq (r_n-q_n)\p\left( U_{n}^{(q_n)}(\tau)\cap \mathscr W^c_{q_n+1,r_n}\left(U_n(\tau)\right)\right)=:I\!I(n)\label{eq:estimate2}
\end{align}
By stationarity and because $\mathbb A$ is $\TT$-invariant, for all $i=q_n, \ldots,  r_n-1$,
\begin{align*}
D_{n,i}&=\p\left(\mathbb X_{n,2} \in \tau \mathbb A\cap \W_{r_n+i-q_n,r_n+i}\left(U^\tau\right)\cap \TT^{-(r_n+i)} (U^\tau)\right)\\
&=\p\left(\mathbb X_{n}^{r_n-q_n,2r_n} \in\tau \mathbb A \cap  \W_{r_n-q_n,r_n}\left(U^\tau\right)\cap \TT^{-r_n} (U^\tau)  \right)=: D_{n}
\end{align*}
Then using estimates \eqref{eq:estimate1} and \eqref{eq:estimate2}, we obtain
\begin{equation*}
\Big|\p\left(\mathbb X_{n,1} \in \tau \mathbb A\cap \W^c_{r_n}\left(U^\tau\right) \right)-r_n\p\left(D_{n}\right)\Big|\\ \leq 2I(n)+I\!I(n).
\end{equation*}
Hence, letting $P_n:=\frac{r_n\p(U_n(\tau))}{\p\left(\mathbb X_{n,1}\in \W^c_{r_n}\left(U^\tau\right)\right)}\p\left(D_{n}\;\middle\vert\;\|\X_{r_n}\|>u_n(\tau)\right)$, we can write
\begin{align*}
\Bigg|\p\left(\frac1\tau \mathbb X_{n,1} \in \mathbb A\;\middle\vert\; \mathbb X_{n,1}\in \W^c_{r_n}\left(U^\tau\right) \right)&-
P_n\Bigg|
\leq (2I(n)+I\!I(n))\tfrac1{\p\left(\mathbb X_{n,1}\in \W^c_{r_n}\left(U^\tau\right)\right)}=:E(n).
\end{align*}
Since, by Corollary~\ref{cor:useful-limits}, $\lim_{n\to\infty} \frac{r_n\p(U_n(\tau))}{\p\left(\mathbb X_{n,1}\in \W^c_{r_n}\left(U^\tau\right)\right)}=\theta^{-1}$ and, by definition of $U_n(\tau)$ we have $\lim_{n\to\infty}k_n r_n\p(U_n(\tau))=\lim_{n\to\infty}n\p(U_n(\tau))=\tau$, it follows that for some $C>0$, we have
$$
E(n)\leq C(k_nI(n)+k_nI\!I(n)).
$$
By definition of the sequences $(k_n)_n$ and $(q_n)_n$, we have that $\lim_{n\to\infty}k_nq_n\p(U_n(\tau))=0,$ which means that $\lim_{n\to\infty}k_nI(n)=0$.
Observe also that $\D_{q_n}'$ implies that 
$\lim_{n\to\infty}k_nI\!I(n)=0$ and therefore
$
\lim_{n\to\infty}E(n)=0.
$

In order to get the result we need to check that
$$
\lim_{n\to\infty}
P_n=\p\left((Y_j)_{j\in\Z}\in\mathbb A\;\middle\vert\;\inf_{j\leq -1}\|Y_j\|\geq 1\right):=P.
$$
Since by Corollary~\ref{cor:useful-limits} and Lemma~\ref{lem:EI-tail}, we have  $\lim_{n\to\infty} \frac{\p\left(\mathbb X_{n,1}\in \W^c_{r_n}\left(U^\tau\right)\right)}{r_n\p(U_n(\tau))}=\theta$ and $\theta=\p(\inf_{j\leq -1}\|Y_j\|\geq 1)$, then we need to show that
$$
\lim_{n\to\infty}\p\left(\mathbb X_{n}^{r_n-q_n,2r_n} \in\tau \mathbb A \cap  \W_{r_n-q_n,r_n}\left(U^\tau\right) \;\middle\vert\;\|\X_{r_n}\|>u_n(\tau)\right)=P
$$
Note that $\{(Y_j)_{j\in\Z}\in \partial \W_{-\infty,0}\left(U^1\right)\}\subset \cup_{j\leq1}\{\|Y_j\|=1\}.$ Since, by assumption \eqref{spectral-process}, for every $j\in\Z$, we have $Y_j=\|Y_0\|\Theta_j$, with $\Theta_j$ independent of $\|Y_0\|$ and since the latter is uniformly distributed by Lemma~\ref{lem:Y0}, then $\p(\|Y_j\|=1)=0$, for all $j\in\Z$. 
Then 
the desired limit follows by definition of the sequence $(Y_j)_{j\in\Z}$ given in assumption \eqref{Y-def}.
\end{proof}

\begin{corollary}
\label{cor:spectral-independence}
Under the assumptions of Proposition~\ref{prop:piling-property}, we have that $L_Z$ and the process $(Q_j)_{j\in\Z}$ defined in \eqref{eq:Z-polar} satisfy
\begin{enumerate}

\item $L_Z$ is uniformly distributed on $[0,1]$;

\item $L_Z$ and $\tilde\pi\left((Q_j)_{j\in\Z}\right)$ are independent.

\end{enumerate}

\end{corollary}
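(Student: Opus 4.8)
The plan is to push the statement down to the pre-limit blocks via Proposition~\ref{prop:piling-property} and then exploit a simple but crucial structural feature: the ``angular'' part of the polar decomposition of a block is insensitive to the threshold level. Fix $\tau>0$; for $s>0$ write $\mathrm{cl}_s$ for the event $\{\mathbb X_{n,1}\in\W_{r_n}^c(U(s))\}=\{\max_{0\le j<r_n}\|\X_j\|>u_n(s)\}=\W_{r_n}^c(U_n(s))$ that the first block contains an exceedance of $u_n(s)$, and set $(L_n,\tilde{\mathbf Q}_n):=\psi(\tilde\pi(\tfrac1\tau\mathbb X_{n,1}))$ with $\psi$ as in \eqref{eq:psi.def}, which is continuous on $\tilde l_\infty\setminus\{\tilde\infty\}$ (cf.\ Section~\ref{subsubsec:spaces}). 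From Proposition~\ref{prop:piling-property} and the continuous mapping theorem (using $\psi(\tilde\pi((Z_j)_j))=(L_Z,\tilde{\mathbf Q})$ and $\tilde\pi((Z_j)_j)\neq\tilde\infty$ a.s., since $\|Z_0\|<1$ a.s.) I would first obtain $(L_n,\tilde{\mathbf Q}_n)\mid\mathrm{cl}_\tau\xrightarrow{d}(L_Z,\tilde{\mathbf Q})$. The two facts that make the argument run are: (i) by \eqref{eq:normalisation} and monotonicity of $u_n^{-1}$ one has $L_n=u_n^{-1}(\max_{j<r_n}\|\X_j\|)/\tau$, so \eqref{eq:un-1-property} gives the \emph{exact} event identity $\{L_n<v\}=\mathrm{cl}_{v\tau}$ for every $v\in(0,1)$, with $\mathrm{cl}_{v\tau}\subseteq\mathrm{cl}_\tau$ (as $v\tau<\tau$ and $u_n$ is nonincreasing); and (ii) $\tilde{\mathbf Q}_n$ does not depend on $\tau$ — it is the second coordinate of $\psi(\tilde\pi(\tfrac1s\mathbb X_{n,1}))$ for \emph{every} $s>0$, the scale cancelling in the normalisation by the block minimum.

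For the first assertion I would compute, conditioning on $\mathrm{cl}_\tau$ and using $\{L_n<v\}=\mathrm{cl}_{v\tau}\subseteq\mathrm{cl}_\tau$,
\[
\p(L_n<v\mid\mathrm{cl}_\tau)=\frac{\p(\mathrm{cl}_{v\tau})}{\p(\mathrm{cl}_\tau)}=\frac{k_n\p(\W_{r_n}^c(U_n(v\tau)))}{k_n\p(\W_{r_n}^c(U_n(\tau)))}\xrightarrow[n\to\infty]{}\frac{\theta v\tau}{\theta\tau}=v
\]
by Corollary~\ref{cor:useful-limits}. Since $L_n\mid\mathrm{cl}_\tau\xrightarrow{d}L_Z$, the portmanteau inequalities for the half-lines $(-\infty,v)$ and $(-\infty,v]$ then force $F_{L_Z}(v^-)\le v\le F_{L_Z}(v)$ for all $v\in(0,1)$, and letting $v'\downarrow v$ in $F_{L_Z}(v)\le F_{L_Z}(v'^-)\le v'$ pins down $F_{L_Z}(v)=v$; as $0<L_Z\le\|Z_0\|<1$ a.s., this says $L_Z$ is uniform on $[0,1]$ and, in particular, has no atoms.

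For independence I would fix $v\in(0,1)$ and bounded continuous $g$ on $\mathbb S$, work under $\p(\cdot\mid\mathrm{cl}_\tau)$, and use $\mathrm{cl}_\tau\cap\{L_n<v\}=\mathrm{cl}_{v\tau}$ to write
\[
\E\big[g(\tilde{\mathbf Q}_n)\I_{\{L_n<v\}}\,\big|\,\mathrm{cl}_\tau\big]=\E\big[g(\tilde{\mathbf Q}_n)\,\big|\,\mathrm{cl}_{v\tau}\big]\cdot\p(L_n<v\mid\mathrm{cl}_\tau).
\]
Because $\tilde{\mathbf Q}_n$ is the second coordinate of $\psi(\tilde\pi(\tfrac1{v\tau}\mathbb X_{n,1}))$, Proposition~\ref{prop:piling-property} applied \emph{at level} $v\tau$ (plus continuity of $\psi$) gives $\E[g(\tilde{\mathbf Q}_n)\mid\mathrm{cl}_{v\tau}]\to\E[g(\tilde{\mathbf Q})]$, while the other factor tends to $v$ by the previous step; so the left side tends to $v\,\E[g(\tilde{\mathbf Q})]$. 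Independently, from $(L_n,\tilde{\mathbf Q}_n)\mid\mathrm{cl}_\tau\xrightarrow{d}(L_Z,\tilde{\mathbf Q})$ and the fact that $(a,\tilde q)\mapsto g(\tilde q)\I_{\{a<v\}}$ is bounded with discontinuity set inside $\{a=v\}\times\mathbb S$, which is $(L_Z,\tilde{\mathbf Q})$-null because $\p(L_Z=v)=0$, the same left side tends to $\E[g(\tilde{\mathbf Q})\I_{\{L_Z<v\}}]$. Equating the two limits yields $\E[g(\tilde{\mathbf Q})\I_{\{L_Z<v\}}]=v\,\E[g(\tilde{\mathbf Q})]=\p(L_Z<v)\,\E[g(\tilde{\mathbf Q})]$ for all bounded continuous $g$ and all $v\in(0,1)$; since such $g$ determine laws on $\mathbb S$ and the sets $\{L_Z<v\}$ (together with the whole space) form a $\pi$-system generating $\sigma(L_Z)$, the usual uniqueness argument gives $L_Z\perp\tilde{\mathbf Q}$. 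The main obstacle I anticipate is organising this bootstrap in the right order — the portmanteau step needs $\p(L_Z=v)=0$, which is exactly the output of the uniformity step — together with checking carefully that the higher-threshold cluster event $\{L_n<v\}$ sitting inside $\mathrm{cl}_\tau$ is genuinely $\mathrm{cl}_{v\tau}$, so that Proposition~\ref{prop:piling-property} may be re-invoked there; the measure-zero cases $u_n^{-1}(\|\X_j\|)\in\{0,\infty\}$ are dispatched as in Section~\ref{subsubsec:spaces}.
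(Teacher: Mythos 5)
Your argument is correct and takes essentially the same route as the paper's own proof: both rest on the exact event identity that $\{L_n<v\}$ inside the cluster event at level $\tau$ is the cluster event at level $v\tau$, combined with the observation that $\tilde{\mathbf Q}_n$ is independent of the chosen level (the $\tau$ cancels in the renormalisation by the block minimum), and then Corollary~\ref{cor:useful-limits} and Proposition~\ref{prop:piling-property} are re-invoked at level $v\tau$. The only genuine difference is cosmetic — you use bounded continuous test functions where the paper uses continuity sets from $\tilde{\mathscr J}$ — and you are slightly more explicit in handling the portmanteau step for $\{L_Z<v\}$ by the one-sided squeeze $F_{L_Z}(v^-)\le v\le F_{L_Z}(v)$, a point the paper passes over silently.
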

\begin{proof}
For part $(1)$, note that $L_Z\leq \|Z_0\|\leq \|Y_0\|\leq 1$ a.s. Let $0\leq v\leq 1$. By Proposition~\ref{prop:piling-property}, \eqref{eq:normalisation} and Corollary~\ref{cor:useful-limits}, we may write
\begin{align*}
&\p(L_Z<v)=\lim_{n\to\infty}\p\left( \frac1\tau\mathbb X_{n,1} \in \W^c_{r_n}\left(U(v)\right)\;\middle\vert\; \mathbb X_{n,1}\in \W^c_{r_n}\left(U(\tau)\right)\right)\\
&=\lim_{n\to\infty}\p\left(  \W^c_{r_n}\left(U_n(\tau v)\right)\;\middle\vert\; \mathbb  \W^c_{r_n}\left(U_n(\tau)\right)\right)=\lim_{n\to\infty}\frac{\p\left(  \W^c_{r_n}\left(U_n(\tau v)\right)\right)}{\p\left(\mathbb  \W^c_{r_n}\left(U_n(\tau)\right)\right)}
=\frac{\tau v\theta}{\tau\theta}=v.
\end{align*}
To prove $(2)$, we start by observing that $\frac1{\|\Proj(\x)\|_\infty}=\inf_{j\in\Z}\|x_j\|$ and that the map $\tilde \x\mapsto \left(\tilde \x, \frac1{\|\tilde \Proj(\tilde \x)\|_\infty}\right)$ is continuous on $\tilde l_\infty\setminus\{\tilde \infty\}$. Then, by Proposition~\ref{prop:piling-property} and the CMT 
$$
\mathcal L\left(\tilde\pi\left(\tfrac1\tau\mathbb X_{n,1}\right),\; \tfrac1{\|\tilde \Proj\left(\tilde\pi\left(\tfrac1\tau\mathbb X_{n,1}\right)\right)\|_\infty}\;\middle\vert\; \mathbb X_{n,1}\in \W^c_{r_n}\left(U(\tau)\right) \right)\longrightarrow \mathcal L\left(\tilde\pi\left((Z_j)_{j\in\Z}\right), L_Z\right)
$$
Since the map $(\tilde \x,a)\mapsto \left(\frac{\tilde \x}{b}, a \right)$ is continuous on $\tilde l_\infty\setminus\{\tilde \infty\}\times (0,\infty)$, then  
\begin{multline}
\mathcal L\left({\left\|\tilde \Proj\left(\tilde\pi\left(\tfrac1\tau\mathbb X_{n,1}\right)\right)\right\|_\infty}\tilde\pi\left(\tfrac1\tau\mathbb X_{n,1}\right),\; \tfrac1{\|\tilde \Proj\left(\tilde\pi\left(\tfrac1\tau\mathbb X_{n,1}\right)\right)\|_\infty}\;\middle\vert\; \mathbb X_{n,1}\in \W^c_{r_n}\left(U(\tau)\right) \right)\\ \longrightarrow \mathcal L\left(\tilde\pi\left((Q_j)_{j\in\Z}\right), L_Z\right)
\label{eq:joint-limit}
\end{multline}
Since $\tilde{\mathscr J}$ is a convergence determining class (see Appendix~\ref{sec:app_weak_conv}), the result will follow if we show that for all $A\in\mathscr F$ and corresponding $\tilde A\in\tilde{\mathscr J}$, such that $\p(\tilde\pi\left((Q_j)_{j\in\Z}\right)\in \partial \tilde A)=0$, and all $v\in[0,1]$, we have 
\begin{equation}
\label{eq:independence}
\p\left(\tilde\pi\left((Q_j)_{j\in\Z}\right)\in \tilde A, L_Z<v \right)=\p\left(\tilde\pi\left((Q_j)_{j\in\Z}\right)\in \tilde A\right)\cdot\p(L_Z<v)
\end{equation}
Letting $\mathbb A\in \mathscr J$ be such that $\mathbb A= \tilde \pi^{-1}(\tilde A)$ and $m_{r_n}=\min\{\|\mathbb X_{n}^{0}\|,\ldots,\|\mathbb X_{n}^{r_n-1}\|\}$, by \eqref{eq:joint-limit}, we can write that 
\begin{align*}
&\p\Big((Q_j)_{j\in\Z}\in \mathbb A, L_Z<v \Big)=
\lim_{n\to\infty}\p\left( \tfrac1{m_{r_n}}\mathbb X_{n,1}\in\mathbb A\;\middle\vert\; \mathbb X_{n,1}\in\W^c_{r_n}\left(U(\tau v)\right)\right)\cdot\\
&\cdot\tfrac{\p\left(\W^c_{r_n}\left(U_n(\tau v)\right)\right)}{\p\left(\W^c_{r_n}\left(U_n(\tau)\right)\right)}=\p\left(\tilde\pi\left((Q_j)_{j\in\Z}\right)\in \tilde A\right)\cdot v=\p\left(\tilde\pi\left((Q_j)_{j\in\Z}\right)\in \tilde A\right)\cdot\p(L_Z<v).
\end{align*}
\end{proof}
The next result, an analogue of \cite[Lemma 3.3]{BPS18} in our setting,  formally establishes the convergence of the intensity measures of the cluster point processes we introduce later. We refer to Appendix~\ref{appendix:convergence-point-processes} for the definitions of weak$^\#$ convergence and boundedly finite measures.

\begin{corollary}
\label{cor:average-convergence}
Under the assumptions used to define the transformed anchored tail process and condition $\D'_{q_n}$, the sequence of boundedly finite measures $\eta_n=k_n\p(\tilde\pi(\mathbb X_{n,1})\in \cdot)$ in $\mathcal M_{\tilde l_\infty\setminus\{\tilde\infty\}}^\#$ converges in the $w^\#$ topology to $\eta=\theta(\text{Leb}\times\p_{\tilde {\mathbf Q}} )\circ\hbar$, where $\p_{\tilde{\mathbf Q}}$ is the distribution of $\tilde \pi((Q_j)_{j\in\Z}))$.
\end{corollary}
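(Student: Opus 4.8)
The plan is to prove the $w^\#$-convergence $\mu_n\to\mu$ by testing on a convergence determining class of bounded sets. Both $\mu_n$ (total mass $\le k_n$) and $\mu$ (mass $\le\theta\tau$ on the region $\{\inf_j\|x_j\|<\tau\}$) are boundedly finite, so, by the characterisation of $w^\#$-convergence recalled in Appendix~\ref{appendix:convergence-point-processes} together with the fact (Appendix~\ref{sec:app_weak_conv}) that $\tilde{\mathscr J}$ is convergence determining, it suffices to show $\mu_n(\tilde A)\to\mu(\tilde A)$ for every bounded $\tilde A\in\tilde{\mathscr J}$ (equivalently $\tilde{\mathscr R}$) with $\mu(\partial\tilde A)=0$. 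Fix such a set. By Remark~\ref{rem:bounded-sets} its closure lies in $\{\tilde\x\colon\inf_{j\in\Z}\|x_j\|<\tau\}$ once $\tau$ is taken large enough; fix one such $\tau$ and write $U^\tau=U(\tau)$. By \eqref{eq:normalisation} this forces $\{\tilde\pi(\mathbb X_{n,1})\in\tilde A\}\subseteq\{\mathbb X_{n,1}\in\W^c_{r_n}(U^\tau)\}$, so, using that dilation by $1/\tau$ commutes with $\tilde\pi$ and maps $\mathscr F$ (hence $\tilde{\mathscr J}$) into itself,
\[
\mu_n(\tilde A)=k_n\,\p\big(\W^c_{r_n}(U_n(\tau))\big)\cdot\p\Big(\tilde\pi\big(\tfrac1\tau\mathbb X_{n,1}\big)\in\tfrac1\tau\tilde A\;\Big|\;\mathbb X_{n,1}\in\W^c_{r_n}(U^\tau)\Big).
\]

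Next I would let $n\to\infty$ in each factor. Corollary~\ref{cor:useful-limits} gives $k_n\p(\W^c_{r_n}(U_n(\tau)))\to\theta\tau$, and Proposition~\ref{prop:piling-property} gives that $\mathcal L(\tilde\pi(\tfrac1\tau\mathbb X_{n,1})\mid\mathbb X_{n,1}\in\W^c_{r_n}(U^\tau))$ converges weakly to $\mathcal L(\tilde\pi((Z_j)_{j\in\Z}))$. Provided $\tfrac1\tau\tilde A$ is a continuity set for this limit law, we get $\mu_n(\tilde A)\to\theta\tau\,\p(\tilde\pi((Z_j)_{j\in\Z})\in\tfrac1\tau\tilde A)$. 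Since $\partial(\tfrac1\tau\tilde A)=\tfrac1\tau\partial\tilde A$, the continuity-set condition is $\p(\tilde\pi((Z_j)_{j\in\Z})\in\tfrac1\tau\partial\tilde A)=0$; this will drop out of the computation in the next step, which shows that probability equals a fixed positive multiple of $\mu(\partial\tilde A)=0$ (here one uses $\theta>0$, i.e. assumption \eqref{positive-EI}).

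It then remains to identify $\theta\tau\,\p(\tilde\pi((Z_j)_{j\in\Z})\in\tfrac1\tau\tilde A)$ with $\mu(\tilde A)$, which is where the polar decomposition enters. By Corollary~\ref{cor:spectral-independence}, $\psi(\tilde\pi((Z_j)_{j\in\Z}))=(L_Z,\tilde{\mathbf Q})$ with $L_Z$ uniform on $[0,1]$ and independent of $\tilde{\mathbf Q}$, so the pushforward of $\mathcal L(\tilde\pi((Z_j)_{j\in\Z}))$ under the map $\psi$ of \eqref{eq:psi.def} is $\text{Leb}|_{[0,1]}\times\p_{\tilde{\mathbf Q}}$. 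Because dilating a sequence by $1/\tau$ multiplies $\inf_j\|x_j\|=\|\tilde\Proj(\cdot)\|_\infty^{-1}$ by $1/\tau$ and fixes the $\mathbb S$-component, $\psi$ conjugates dilation by $1/\tau$ to multiplication by $1/\tau$ in the first coordinate; a change of variables in that coordinate, together with $\psi(\tilde A)\subset(0,\tau)\times\mathbb S$ (from the choice of $\tau$), yields
\[
\p\big(\tilde\pi((Z_j)_{j\in\Z})\in\tfrac1\tau\tilde A\big)=\tfrac1\tau\,(\text{Leb}\times\p_{\tilde{\mathbf Q}})(\psi(\tilde A))=\tfrac1{\theta\tau}\,\mu(\tilde A),
\]
so that $\mu_n(\tilde A)\to\mu(\tilde A)$; running the same identity with $\tilde A$ replaced by $\partial\tilde A$ justifies the continuity-set claim used above.

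I expect the main obstacle to be the scaling bookkeeping: checking carefully that dilation by $1/\tau$ is compatible with the quotient $\tilde\pi$, with the generating classes $\mathscr F$, $\tilde{\mathscr J}$, $\tilde{\mathscr R}$ and with the polar map $\psi$ (hence with boundaries), and choosing $\tau$ so that \emph{all} relevant sets — $\tilde A$, $\partial\tilde A$, and their $\psi$-images — lie in the region $\{\inf_j\|x_j\|<\tau\}$ on which the probability measure $\mathcal L(\tilde\pi((Z_j)_{j\in\Z}))$ coincides, up to the factor $\tfrac1{\theta\tau}$, with the infinite-mass measure $\mu$. A secondary technical point, handled in Appendix~\ref{appendix:convergence-point-processes}, is to confirm that the bounded members of $\tilde{\mathscr J}$ (equivalently $\tilde{\mathscr R}$) with $\mu$-null boundary do form a convergence determining class for the $w^\#$ topology on $\mathcal M_{\tilde l_\infty\setminus\{\tilde\infty\}}^\#$.
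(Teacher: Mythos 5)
Your proposal is correct and follows essentially the same route as the paper's proof: reduce to bounded $\mu$-continuity sets in $\tilde{\mathscr J}$ via Lemma~\ref{lem:cdc-hash} and Remark~\ref{rem:bounded-sets}, factor $\mu_n(\tilde A)$ as $k_n\p(\W^c_{r_n}(U_n(\tau)))$ times the conditional probability, apply Corollary~\ref{cor:useful-limits} and Proposition~\ref{prop:piling-property}, and finish with Corollary~\ref{cor:spectral-independence} to evaluate $\p(\tilde\pi((Z_j)_j)\in\tau^{-1}\mathbb A)=\frac{\mu(\tilde A)}{\theta\tau}$. Your version is slightly more explicit on the continuity-set hypothesis needed to invoke Proposition~\ref{prop:piling-property} (which the paper leaves implicit) and phrases the final identification through the pushforward under $\psi$ rather than the paper's explicit disintegration over $L_Z$, but these are the same computation.
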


\begin{proof}
By Lemma~\ref{lem:cdc-hash}, we only need to check the convergence for all bounded $\tilde A \in \tilde{\mathscr J}$ such that $\mu(\partial\tilde A)=0$. By Remark~\ref{rem:bounded-sets}, since $\tilde A$ is bounded, there exists $\tau>0$ such that for all $\tilde \x\in\tilde A$, we have $\inf_{j\in\Z}\|x_j\|< \tau$. Hence, by \ref{eq:normalisation},  if $\tilde\pi(\mathbb X_{n,1})\in \tilde A$, then  $\|\X_j\|>u_n(\tau)$ for $j=0, \ldots, r_n-1$ and hence $\mathbb X_{n,1}\in \W^c_{r_n}\left(U(\tau)\right)$. Let, as before, $\mathbb A=\tilde\pi^{-1}(\tilde A)$. Then
\begin{align*}
&k_n\p(\tilde\pi(\mathbb X_{n,1})\in \tilde A)=k_n\p(\mathbb X_{n,1}\in \mathbb A)=k_n\p\left(\mathbb X_{n,1}\in \mathbb A\cap \W^c_{r_n}\left(U(\tau)\right)\right)\\
&=n\p(\|\X_0\|>u_n(\tau))\tfrac{\p\left(\mathbb X_{n,1}\in \W^c_{r_n}\left(U(\tau)\right)\right)}{r_n\p(\|\X_0\|>u_n(\tau))}\p\left(\frac{\mathbb X_{n,1}}{\tau}\in \tau^{-1}\mathbb A\;\middle\vert\; \mathbb X_{n,1}\in\W^c_{r_n}\left(U(\tau)\right)\right)
\end{align*}
By \eqref{un} and Corollary~\ref{cor:useful-limits}, we have that the first term converges to $\tau$ and the second to $\theta$, as $n\to\infty$. By Proposition~\ref{prop:block}, the third term goes to $\p((Z_j)_{j\in\Z}\in \tau^{-1}\mathbb A)$.
We now use Corollary~\ref{cor:spectral-independence} in order to finish the proof.
\begin{align*}
&\p((Z_j)_{j\in\Z}\in \tau^{-1}\mathbb A)=\p(\tau (Z_j)_{j\in\Z}\in \mathbb A)=\int_0^1\p\left(\tau (Z_j)_{j\in\Z}\in \mathbb A\;\middle\vert\; L_Z=v\right) dv\\
&
= \int_0^1\p\left(\tau v (Q_j)_{j\in\Z}\in \mathbb A\;\middle\vert\; L_Z=v\right) dv =\frac1\tau\int_0^\tau\p\left(s (Q_j)_{j\in\Z}\in \mathbb A\right) ds=\frac{\eta(\tilde A)}{\tau\theta}. \end{align*}
\end{proof}

\subsection{Complete convergence of point processes}\label{subsubsec:complete-convergence}
\label{subsec:complete-convergence}

We define and prove the weak convergence of the point processes that keep all the cluster information. We refer to Appendix~\ref{appendix:convergence-point-processes} for the precise definition of point processes and their weak convergence. Essentially, we consider a random element on the space $\mathcal N_{\R_0^+\times\tilde l_\infty\setminus\{\tilde\infty\}}^\#$ of boundedly finite point measures on $\R_0^+\times\tilde l_\infty\setminus\{\tilde\infty\}$. Namely, similarly to \cite{BPS18}, we define the point processes of clusters by  
\begin{equation}
\label{eq:point-process-def}
N_n=\sum_{i=1}^\infty \delta_{(i/k_n,\tilde\pi(\mathbb X_{n,i}))}.
\end{equation}
Now, we define the point process that will appear as the limit of the cluster point process.  Let $(T_{i})_{i\in\N}$ and  $(U_{i})_{i\in\N}$ be such that $\sum_{i=1}^\infty\delta_{(T_i,U_i)}$ is a bidimensional  Poisson point process on $\R_0^+\times \R_0^+$ with intensity measure $\leb\times\theta\,\leb$.
Also let $(\tilde{\mathbf Q}_i)_{i\in\N}$ be an \iid  sequence of random elements in $\mathbb S$ such that each $\tilde{\mathbf Q}_i$ has a distribution given by \eqref{eq:Z-polar}.
We assume that the sequences $(T_{i})_{i\in\N}$, $(U_{i})_{i\in\N}$ and $(\tilde{\mathbf Q}_i)_{i\in\N}$ are mutually independent. 
We define
\begin{equation}
\label{eq:limit-process}
N=\sum_{i=1}^\infty \delta_{(T_{i}, U_{i}\tilde{\mathbf Q}_i)}.
\end{equation}
\begin{remark}
\label{rem:limit-description}
Note that $N$ above is a Poisson point process on $\R_0^+\times\tilde l_\infty\setminus\{\tilde\infty\}$ with intensity $\text{Leb}\times \eta$, where $\eta$ is as in Corollary~\ref{cor:average-convergence}.  Here $\eta$ describes the distribution of the transformed anchored tail process $\tilde\pi((Z_j)_{n\in\Z})$, which is characterised by means of the spectral decomposition $\hbar$ given in \eqref{eq:psi.def}, which allows us to identify the contribution from each component, namely, the $\theta\,\leb$ part associated to $L_Z$ (related to the second coordinate of the bidimensional  Poisson point process $\sum_{i=1}^\infty\delta_{(T_i,U_i)}$) and the $\p_{\tilde{\mathbf Q}}$ part which is the distribution of $\tilde \pi((Q_j)_{j\in\Z}))$. 
\end{remark}
\begin{remark}
\label{rem:oscillatory-example-Qs}
In order to have some intuition of what is being encoded in each $\tilde{\mathbf Q}_i$, we recall that in the case of Exemple~\ref{example:oscillatory}, then $\tilde{\mathbf Q}_i= \tilde \pi((Q_{i,j})_{j\in\Z}))$, where  $Q_{i,j}=(-3)^jE_i$, for all $j\in\N_0$ and $Q_{i,j}=\infty$ for all $j\in\Z\setminus\N_0$; moreover 
 $E_1, E_2,\ldots$ is an \iid sequence independent of $(T_i)_{i\in\N}$ and $(U_i)_{i\in\N}$, with $\p(E_1=1)=1/2=\p(E_1=-1)$.
\end{remark}

We are now ready to state a general complete convergence result.

\begin{theorem}\label{thm:complete-convergence} 
Let $\X_0, \X_1, \ldots$ be a stationary process of random vectors in $\R^d$ with proper tails, in the sense given in Section~\ref{subsec:normalising}. Assume that the transformed anchored tail process given in Definition~\ref{def:piling-process} is well defined and conditions $\D_{q_n}$ and $\D'_{q_n}$ hold. Then point process $N_n$, given in \eqref{eq:point-process-def} converges weakly in $\mathcal N_{\R_0^+\times\tilde l_\infty\setminus\{\tilde\infty\}}^\#$ to the Poisson point process $N$ given by \eqref{eq:limit-process}.
\end{theorem}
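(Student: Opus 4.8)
The plan is to follow the complete‑convergence scheme of \cite{BPS18}, but working under the weaker hypotheses $\D_{q_n}$ (Condition~\ref{cond:D}) and $\D'_{q_n}$ (Condition~\ref{cond:D'}) and exploiting the properties of the piling process established in Section~\ref{subsubsec:properties-piling-process}. First I would reduce the statement to a Kallenberg‑type criterion for convergence to a Poisson point process on the non‑locally‑compact space $\R_0^+\times\tilde l_\infty\setminus\{\tilde\infty\}$, as set up in Appendix~\ref{appendix:convergence-point-processes} (and used via Lemma~\ref{lem:cdc-hash}): since $N$ in \eqref{eq:limit-process} is, by the remark following that display, exactly the Poisson point process with intensity $\text{Leb}\times\mu$ and $N$ is a.s.\ simple because $\text{Leb}$ is diffuse, it suffices to prove, for every $B$ in a fixed generating ring with $(\text{Leb}\times\mu)(\partial B)=0$,
\begin{equation*}
\E[N_n(B)]\xrightarrow[n\to\infty]{}(\text{Leb}\times\mu)(B)\qquad\text{and}\qquad\p(N_n(B)=0)\xrightarrow[n\to\infty]{}\e^{-(\text{Leb}\times\mu)(B)}.
\end{equation*}
I would take $B$ of the form $\bigcup_{\ell=1}^m J_\ell\times\tilde A_\ell$ with $J_\ell=[a_\ell,b_\ell)$ pairwise disjoint and $\tilde A_\ell\in\tilde{\mathscr J}$ bounded (so, by Remark~\ref{rem:bounded-sets}, there is $\tau_\ell>0$ with $\inf_{j}\|x_j\|<\tau_\ell$ for all $\tilde\x\in\tilde A_\ell$) and $\mu(\partial\tilde A_\ell)=0$, these forming such a ring. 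The intensity identity is the easy half: writing $\mathbb A_\ell=\tilde\pi^{-1}(\tilde A_\ell)$ and using stationarity, $\E[N_n(J_\ell\times\tilde A_\ell)]=\#\{i\colon i/k_n\in J_\ell\}\,\p(\tilde\pi(\mathbb X_{n,1})\in\tilde A_\ell)=\frac{\#\{i\colon i/k_n\in J_\ell\}}{k_n}\,\mu_n(\tilde A_\ell)$, which, since $\#\{i\colon i/k_n\in J_\ell\}/k_n\to\text{Leb}(J_\ell)$, converges to $\text{Leb}(J_\ell)\,\mu(\tilde A_\ell)$ by Corollary~\ref{cor:average-convergence}; adding over $\ell$ gives the first limit.

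For the avoidance probability I would write $\{N_n(B)=0\}=\bigcap_{\ell=1}^m\bigcap_{i\colon i/k_n\in J_\ell}\{\mathbb X_{n,i}\notin\mathbb A_\ell\}$, where $\{\mathbb X_{n,i}\in\mathbb A_\ell\}$ is the event that $A_{n,\ell}$ (from \eqref{eq:rectangles-n-bi}) occurs inside the $i$‑th micro‑block, cf.\ Proposition~\ref{prop:block}. The argument then proceeds in three moves. \textbf{(i)} Delete the last $t_n$ indices of each micro‑block; the induced error on the intersection is at most $\sum_{\ell}\sum_i t_n\,\p(A_{n,\ell})=O(m\,k_nt_n\,\p(\|\X_0\|>u_n(\tau)))=o(1)$ by \eqref{un} and \eqref{eq:kn-sequence}, and this inserts the time gaps required for mixing. \textbf{(ii)} Iterate $\D_{q_n}$ (in its form with $m$ rectangles, using the index ranges $J_{n,\ell}$ of \eqref{eq:Jnl-def}) to decouple the at most $k_n$ relevant micro‑blocks; the total cost is bounded by a multiple of $n\gamma(n,t_n)\to0$, and this factorises the probability into a product over micro‑blocks. \textbf{(iii)} Identify each single‑micro‑block factor: by Proposition~\ref{prop:block} together with $\D'_{q_n}$, occurrences of $A_{n,\ell}$ inside a block may be replaced by the $q_n$‑separated events $A_{n,\ell}^{(q_n)}$ of \eqref{eq:Aq} (so that at most one cluster sits in each block), giving $k_n\,\p(\mathbb X_{n,1}\in\mathbb A_\ell)\to\mu(\tilde A_\ell)$ via Corollary~\ref{cor:average-convergence}. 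Using $|\log(1-x)+x|\le x^2$ for small $x\ge0$ to pass from the product to an exponential then yields $\p(N_n(B)=0)\to\exp\!\big(-\sum_\ell\text{Leb}(J_\ell)\mu(\tilde A_\ell)\big)=\e^{-(\text{Leb}\times\mu)(B)}$, by disjointness of the rectangles $J_\ell\times\tilde A_\ell$.

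The hard part will be move \textbf{(iii)} of the avoidance step: carrying the reduction from the block‑membership events $\{\mathbb X_{n,i}\in\mathbb A_\ell\}$ — which are what $N_n$ actually records, and which are shift‑invariant‑set memberships rather than single‑exceedance events — down to the $A^{(q_n)}$‑decorated events on which $\D_{q_n}$ and $\D'_{q_n}$ are phrased, while keeping every error term $o(1)$ after multiplication by $k_n$. This is precisely where one must combine Proposition~\ref{prop:block}, Corollary~\ref{cor:useful-limits} and Corollary~\ref{cor:spectral-independence} with the scale separations $q_n=o(r_n)$ and $k_nt_n=o(n)$. A secondary, more bookkeeping, difficulty is that the non‑local‑compactness of $\tilde l_\infty\setminus\{\tilde\infty\}$ forces us to stay within bounded sets and to use the $w^\#$ framework of Appendix~\ref{appendix:convergence-point-processes} throughout, and to check that the null‑boundary requirement $\mu(\partial\tilde A_\ell)=0$ is compatible with the polar structure $\mu=\theta(\text{Leb}\times\p_{\tilde{\mathbf Q}})\circ\psi$ of Corollary~\ref{cor:average-convergence}; the latter follows from the fact, already used in the proof of Proposition~\ref{prop:piling-property}, that $\p(\|Y_j\|=1)=0$ for every $j\in\Z$, since $\|Y_0\|$ is uniform and $\Theta_j$ is independent of it.
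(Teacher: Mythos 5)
Your proposal is correct and takes essentially the same route as the paper: the reduction to the Kallenberg-type criterion (Proposition~\ref{prop:weak-limit-PP} together with Lemma~\ref{lem:simplified}), the intensity identity via Corollary~\ref{cor:average-convergence}, and the three-move avoidance-probability computation all correspond directly to the paper's argument, with your steps~(i)--(iii) precisely tracking Lemma~\ref{lem:time-gap-1}, condition~$\D_{q_n}$, Proposition~\ref{prop:relation-balls-annuli-general}, and condition~$\D'_{q_n}$ as packaged by the paper into Proposition~\ref{prop:indep-increments}. The only presentational difference is that you inline the content of Proposition~\ref{prop:indep-increments} rather than invoking it as a standalone result.
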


\begin{remark}
\label{rem:earlier-results}
We observe that the point processes convergence stated here corresponds essentially to a transformed version of similar statements in \cite[Theorem~3.6]{BPS18} and \cite[Corollary~7.4.2]{KS20}, but we emphasise that the dependence conditions assumed here are both weaker, which is of crucial importance for the application to stochastic processes arising from dynamical systems. We refer to Corollary~\ref{cor:point-process-heavytail}, in Section~\ref{subsec:regular-variation}, where our point processes, and convergence properties, are transformed back to provide an easier comparison with previous results and also formulae on how to relate the various objects involved.
\end{remark}

In order to prove the theorem, we need essentially to show two things. The first is a sort of \emph{independent increments} property together with convergence in distribution of joint random variables using the avoidance function, \ie by computing the probability of having no extremal occurrences (see the definition of the avoidance function in Appendix~\ref{appendix:convergence-point-processes}). This is done in Proposition~\ref{prop:indep-increments}, whose complete proof is rather lengthy because we are using the very weak mixing assumption $\D_{q_n}$, though a lot of the work for that has been done in previous papers by the authors. Then, in the second step, we need to show that the intensity measures of the processes converge to the right intensity measure. This has actually already been done in Corollary~\ref{cor:average-convergence}. Finally we need to join the pieces using the theory of weak$^\#$ convergence on non locally compact spaces developed in \cite{DV03,DV08}. In fact, we needed to redo one of the results to correct a typo and improve it in order to be able to use the convergence of the intensity measures (see Appendix~\ref{appendix:convergence-point-processes}).

In \cite{FFM20} the existence of a $\sigma$-finite outer measure $\nu$ on $\V^{\N_0,\Z}$ was assumed, so that the following limit exists
\begin{equation}
\label{eq:limit-Anq}
\lim_{n\to\infty} n\p(\A_{n,\ell})=\nu(A_\ell),
\end{equation}
for all $A_\ell\in\mathscr F$ and $\A_{n,\ell}$  given by equation \eqref{eq:rectangles-n-bi}. This outer measure described the piling of points on the multidimensional point processes created by clustering in \cite{FFM20}.  
Note that if we associate $\tilde A_\ell$, $\mathbb A_\ell$ to $A_\ell\in\mathscr F$ as in \eqref{eq:A-tilde} and \eqref{eq:bb-A}, respectively, then using Proposition~\ref{prop:block} and Corollary \ref{cor:average-convergence}, it follows that when we have the existence of a transformed anchored tail process and condition $\D_{q_n}'$ then \eqref{eq:limit-Anq} holds and
\begin{equation}
\nu(A_\ell)=\mu(\tilde A_\ell).
\label{eq:outer-measure}
\end{equation}

We state now the main result that provides independence of disjoint time pieces and convergence of joint distributions by use of the avoidance function. 

\begin{proposition}\label{prop:indep-increments} 
Let $m\in\N$ and for each $\ell=1,\ldots,m$ let $J_\ell,\;A_\ell$ be given, as in  \eqref{eq:rectangles}. For $n\in\N$, consider the respective versions $J_{n,\ell},\;A_{n,\ell}$ given in \eqref{eq:Jnl-def} and \eqref{eq:rectangles-n-bi}. Assume that $\D_{q_n}$ and $\D'_{q_n}$ hold. Also assume that there exists a $\sigma$-finite outer measure $\nu$ such that \eqref{eq:limit-Anq} holds. Then
\[
\lim_{n\to\infty}\p\left(\bigcap_{\ell=1}^m\W_{J_{n,\ell}}\left(A_{n,\ell}\right)\right)=\lim_{n\to\infty}\p\left(\bigcap_{\ell=1}^m\W_{J_{n.\ell}}\left(\A_{n,\ell}\right)\right)=\prod_{\ell=1}^m \e^{-\nu(A_\ell)|J_\ell|}.
\]
\end{proposition}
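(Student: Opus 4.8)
The plan is to prove the two claimed limits separately. The first equality, replacing $\W_{J_{n,\ell}}(A_{n,\ell})$ by $\W_{J_{n,\ell}}(\A_{n,\ell})$, is a purely combinatorial comparison that exploits condition $\D'_{q_n}$: recall that $\A_{n,\ell}=A_{n,\ell}\cap\TT^{-1}(A_{n,\ell}^c)\cap\cdots\cap\TT^{-q_n}(A_{n,\ell}^c)$, so the two events $\W_{J_{n,\ell}}(A_{n,\ell})$ and $\W_{J_{n,\ell}}(\A_{n,\ell})$ differ only on configurations where, within the window $J_{n,\ell}$, an occurrence of $A_{n,\ell}$ is \emph{not} followed by a full run of $q_n$ non-occurrences. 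First I would bound the symmetric difference by a union over the (at most $|J_{n,\ell}|\le (\lfloor k_nb_\ell\rfloor-\lceil k_na_\ell\rceil+3)r_n$) possible starting positions, reduce via stationarity to $r_n$-many copies of $A_{n,\ell}\cap\W^c_{[q_n+1,r_n)}(A_{n,\ell})$ per block, and multiply by the number $\lfloor k_nb_\ell\rfloor-\lceil k_na_\ell\rceil+O(1)\le k_n$ of blocks. The total is then $O\!\bigl(\sum_\ell n\,\p(A_{n,\ell}\cap\W^c_{[q_n+1,r_n)}(A_{n,\ell}))\bigr)\to 0$ by $\D'_{q_n}$, exactly as in the $I\!I(n)$-type estimates in the proof of Proposition~\ref{prop:piling-property}. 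Here the subtlety is the disjointness of the time windows $J_{n,\ell}$: since $a_1<b_1\le a_2<\cdots$, for $n$ large the $J_{n,\ell}$ are genuinely disjoint and the replacements can be carried out one block-family at a time, so no cross terms appear.

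For the second equality I would argue by induction on $m$, peeling off the first (left-most in time) family of blocks. The key tool is $\D_{q_n}$ in the form \eqref{eq:D1}: taking $J_\ell$ and $A_\ell$ as given, \eqref{eq:D1} applied with $\ell=1$ gives
$$
\Bigl|\p\Bigl(\A_{n,1}\cap\bigcap_{i=1}^m\W_{J_{n,i}}(\A_{n,i})\Bigr)-\p(\A_{n,1})\,\p\Bigl(\bigcap_{i=1}^m\W_{J_{n,i}}(\A_{n,i})\Bigr)\Bigr|\le\gamma(n,t),
$$
but what we actually need is to factor $\W_{J_{n,1}}(\A_{n,1})$, not $\A_{n,1}$, off the rest. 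To get there I would decompose $\W_{J_{n,1}}(\A_{n,1})=\bigcup_{j\in J_{n,1}}\TT^{-j}\A_{n,1}\cap(\text{earlier coordinates in }\A_{n,1}^c)$ over the first occurrence time $j$, apply \eqref{eq:D1} at shift $j$ (legitimate because $\min J_{n,2}\ge t$ for $t$ appropriately large, and the mixing rate $\gamma(n,\cdot)$ is monotone in $t$), sum the resulting errors — there are $O(n)$ terms, each of size $\gamma(n,t_n)$ after choosing the gap $t_n$, so the total is $O(n\gamma(n,t_n))\to 0$ — and use $\lim_n\p(\W_{J_{n,1}}(\A_{n,1}))=\lim_n\p(\W_{J_{n,1}}(A_{n,1}))$ from step one together with $k_n\p(\W^c_{r_n}(A_{n,1}))\to\nu(A_1)$. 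The latter is precisely Proposition~\ref{prop:block} combined with \eqref{eq:limit-Anq}: summing $\p(\W^c_{r_n}(A_{n,1}))$ over the roughly $k_n|J_1|$ blocks in $J_{n,1}$ and using $(1-x)\approx e^{-x}$ for $x=O(1/k_n)$ gives $\p(\W_{J_{n,1}}(A_{n,1}))\to e^{-\nu(A_1)|J_1|}$. One then recombines $\W_{J_{n,1}}(\A_{n,1})$ with its complement and iterates on the remaining $m-1$ families, whose windows $J_{n,2},\ldots,J_{n,m}$ are to the right and mutually disjoint.

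The main obstacle I expect is the bookkeeping in the decomposition-over-first-occurrence step: one must verify that the error accumulated from applying $\D_{q_n}$ separately at each of the $\sim|J_1|r_n k_n = O(n)$ possible first-occurrence times really is $o(1)$ and not merely $o(k_n^{-1})$ per term times a diverging count — this forces one to use the block structure so that the mixing inequality \eqref{eq:D1} is invoked only $O(k_n)$ times (once per pair of consecutive block families, at a fixed gap $t_n$) rather than $O(n)$ times, exactly as in \cite{FFM18,FFM20}. A careful reader will also want the boundary/gap truncation handled: replacing $\tilde J_{k_n,\ell}=k_nJ_\ell$ by the slightly enlarged-then-eroded $J_{n,\ell}$ in \eqref{eq:Jnl-def} costs only $O(r_n)$ time units per block-family, hence $O(k_nr_n\,\p(U_n(\tau^*)))=O(n/k_n)\to 0$ in probability terms, which is negligible. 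The remaining steps are the routine application of stationarity, Proposition~\ref{prop:block}, Corollary~\ref{cor:useful-limits}, and the elementary estimate $\prod(1-x_i)\to e^{-\sum x_i}$, so I would present those briefly and concentrate the exposition on the two error-control points above.
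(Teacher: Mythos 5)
Your overall roadmap (first swap $A_{n,\ell}$ for $\A_{n,\ell}$, then factorize across families via blocking and the mixing condition) matches the paper, but there are a few points where your mechanism diverges from what actually carries the argument.

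For the first equality you bring in $\D'_{q_n}$, but the paper's bound (Proposition~\ref{prop:relation-balls-annuli-general}) is simpler and needs neither $\D'$ nor $\D$: since $\A_{n,\ell}\subset A_{n,\ell}$, we have $\W_{J}(A_{n,\ell})\subset\W_J(\A_{n,\ell})$, and the difference event forces the \emph{last} occurrence of $A_{n,\ell}$ in $J_{n,\ell}$ to lie in the final $q_n$ positions of $J_{n,\ell}$ (otherwise that last occurrence would already realize $\A_{n,\ell}$). This gives the bound $q_n\sum_\ell\p(A_{n,\ell})$, which vanishes since $q_n=o(n)$ and $n\p(A_{n,\ell})\to\nu(A_\ell)$. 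No cluster-control is needed here.

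For the second equality your strategy is the right one, but the proposed decomposition over the \emph{first} occurrence time does not mesh with $\D_{q_n}$. After fixing the first hit at time $j$, the constraint $\W_{[0,j)}(\A_{n,1})$ sits to the \emph{left} of the event you want to factor off, and $\D_{q_n}$ as stated only separates an annulus at time $0$ from events entirely in the future of a gap $t$. The paper instead decomposes over the \emph{last} occurrence within a small block (Lemma~\ref{lem:inductive-step-1}): there the leftover constraint $\W_{[j+1,r_n^*)}(A)$ sits \emph{between} the hit and the future block, so it can simply be dropped, and the resulting overcount is exactly of the form $\p(\A\cap\W^c_{[q_n+1,r_n)}(\A))$, which $\D'_{q_n}$ controls. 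With first-occurrence ordering one would need an extra Bonferroni/inclusion–exclusion layer to repair the lower bound, which the paper's choice avoids.

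Finally, the claim that the block structure reduces the number of $\D_{q_n}$-invocations from $O(n)$ to $O(k_n)$ is not what happens. In the paper's proof $\D_{q_n}$ is invoked $r_n^*$ times per small block across $\sim k_n$ blocks, i.e. $O(n)$ times; this is harmless precisely because $\D_{q_n}$ is calibrated so that $n\gamma(n,t_n)\to 0$. The block structure is needed for two different purposes: (i) the gap $t_n$ is inserted only once per block, so the total truncation error is $O(k_n t_n\,\p(\A_{n,\ell}))$, vanishing because $k_nt_n=o(n)$; (ii) the within-block clustering error $r_n^*\p(\A\cap\W_{1,r_n^*}(\A))$ appears once per block and is killed by $\D'_{q_n}$ after multiplying by $k_n$. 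If you rewrite your sketch with the last-occurrence decomposition and the correct accounting of where each of the three error terms in $\Upsilon_{k_n,n,\ell}$ comes from, it aligns with the paper's Appendix~\ref{Appendix:proofs-independence}.
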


The first equality follows from the fact that the non-occurrence of the asymptotically rare event $A_{n,\ell}$  can be replaced by the non-occurrence of the event $\A_{n,\ell}$, up to an asymptotically negligible error. This idea goes back to \cite[Proposition~1]{FFT12} and was further developed in \cite[Proposition 2.7]{FFT15}. We refer to \cite[Proposition~3.2]{FFM20} for a proof. The second equality in Proposition~\ref{prop:indep-increments} follows from minor adjustments to the  argument used to prove \cite[Theorem~3.3]{FFM20}.

We are now ready to prove the weak convergence of the cluster point processes.

\begin{proof}[Proof of Theorem~\ref{thm:complete-convergence}]
By Proposition~\ref{prop:weak-limit-PP} and Lemma~\ref{lem:cdc}, we first need to check that for all bounded sets $B\in \mathcal I$, defined in \eqref{eq:sub-ring-def} we have $
\lim_{n\to\infty}\p(N_n(B)=0)=\p(N(B)=0).
$
Let $B=\cup_{\ell=1}^m J_\ell\times\tilde A_\ell$, where for each $\ell=1,\ldots,m$, we have $J_\ell=[a_\ell,b_\ell)$ and $\tilde A_{\ell}$ is associated to some $A_\ell\in \mathscr F$ as in \eqref{eq:A-tilde}. 

Since $\tilde\pi(\mathbb X_{n,i})\in\tilde A_\ell$ corresponds to the event $\mathbb X_{n,i}\in\mathbb A_\ell=\cup_{i\in\Z}\TT^{-i}(A_\ell)$, then, by definition of the sets $J_{n,\ell}$ and $A_{n,\ell}$ given in \eqref{eq:Jnl-def} and \eqref{eq:rectangles-n-bi}, we have
\begin{align*}
\p(N_n(B)=0)=\p\left(\bigcap_{\ell=1}^m \{N_n(J_\ell\times\tilde A_\ell)=0\}\right)=\p\left(\bigcap_{\ell=1}^m\W_{J_{n,\ell}}\left(A_{n,\ell}\right)\right).
\end{align*}

Now, by definition of the Poisson point process $N$, we have $\p(N(B)=0)=\prod_{\ell=1}^m\e^{-|J_\ell|\mu(\tilde A_\ell)}$. Hence, condition \ref{eq:kallenberg1} of Proposition~\ref{prop:weak-limit-PP} follows from Proposition~\ref{prop:indep-increments} and \eqref{eq:outer-measure}.

In order to check condition \ref{eq:kallenberg2} of Proposition~\ref{prop:weak-limit-PP}, we observe that by stationarity and Corollary~\ref{cor:average-convergence} we have  
\begin{align*}
\E(N_n(B))&=\E\left(\sum_{\ell=1}^m\sum_{i= \lceil k_n a_\ell\rceil}^{\lceil k_n b_\ell\rceil-1}\I_{\{\tilde\pi(\mathbb X_{n,i})\in\tilde A_\ell\}}\right)\sim \sum_{\ell=1}^m|J_\ell| k_n\p\left(\tilde\pi(\mathbb X_{n,i})\in\tilde A_\ell\right)\\
&\xrightarrow[n\to\infty]{} \sum_{\ell=1}^m|J_\ell|\mu(\tilde A_\ell)=\E(N(B)).
\end{align*}
\end{proof}

\subsection{Applications to jointly regularly varying sequences} 
\label{subsec:regular-variation}

In statistics of extremes there is a special interest in the shape of the tail of the distributions and, particularly, heavy tails play a significant role. In this setting, the study of the mean and of the extremes is linked and point processes and regularly varying measures have revealed to be very useful tools (see \cite{R87}, for example). In this heavy tail context, the information regarding clustering is particularly well captured by the tail process introduced in \cite{BS09} and, in fact, the process $(Y_j)_{j\in\Z}$ used to define the transformed anchored tail process, in Section~\ref{subsubsec:piling-process}, can be identified as a transformed version in $l_\infty$ of the original tail process, which lives in $l_0$.

In the study of rare events for dynamical systems, one is not so interested in different tail behaviours but rather on the statistical properties of the system, which are intimately related with quantitative recurrence properties (see \cite{FFT10}). For this reason the particular homeomorphism $f$ establishing the shape of $\tau$ (see Remark~\ref{rem:homeomorphism}) is not as relevant as proving the existence of a limiting law and therefore we tried to establish the definitions and devices in a more general setting so that they are more amenable for application to dynamical systems. The universality of the uniform distribution we get, for example, in Lemma~\ref{lem:Y0} (with no parameter involved as opposed to the $\alpha$ appearing in the tail process version) and the fact that the limiting point process has a Poisson component with $\leb\times\theta\leb$ intensity measure partly motivated introducing the transformed anchored tail process, which is related to the piles of clustering points obtained in the limiting rare events point processes studied in \cite{FFM20}. 

In the stationary heavy tail setting, the existence of the tail process is equivalent to joint regular variation of the process $(\mathbf X_i)_{i\in\Z}$ (see \cite[Theorem~2.1]{BS09}), a notion that we define next.
\begin{definition}
\label{def:jointly-rv}
A $k$-dimensional random vector $\mathbf X$ is said to be \emph{jointly regularly varying},  with index $\alpha>0$, if there exists a sequence of constants $(a_n)_{n\in\N}$ and a random vector $\mathbf \Theta$ with $\p(\|\mathbf \Theta\|=1)=1$, such that  
$$
n\p(\|\mathbf X\|>x a_n, \; \mathbf X/\|\mathbf X\|\in \cdot)\xrightarrow[n\to\infty]{w} x^{-\alpha}\p(\mathbf \Theta\in\cdot).
$$
where we are considering weak convergence of measures on $\mathbb S^{k-1}$, the unit sphere in $\R^k$. An $\R^d$-valued sequence $\mathbf X_0, \mathbf X_1,\ldots$ is said to be \emph{jointly regularly varying}, with index $\alpha>0$, if all the finite-dimensional vectors $(\mathbf X_k,\ldots, \mathbf X_\ell)$, $k\leq \ell\in\N_0$, are jointly regularly varying, with index $\alpha>0$. 
\end{definition}
In \cite{BS09,BPS18,KS20}, for example, the original stochastic process is assumed to be jointly regularly varying. In what follows, we show that, in this setting, we recover the convergence of the point process of clusters stated in these works and give the particular relation between the tail process and the transformed anchored tail process.

For a jointly regularly varying sequence $\mathbf X_0, \mathbf X_1,\ldots$, in particular, condition~\eqref{eq:heavy-normalisation} holds. Hence, taking $\tau=x^{-\alpha}$ and $u_n(\tau)=a_n\tau^{-\frac1\alpha}$, equation \eqref{un} holds. We also have that $u_n^{-1}(z)=z^{-\alpha}a_n^\alpha$. 

\begin{remark}
\label{rem:tail-process-existence}
Observe that joint regular variation of  $(\X_j)_{j\in\Z}$ implies the existence of the sequence $(Y_j)_{j\in\Z}$ assumed in \eqref{Y-def} (see \cite[Theorem~2.1]{BS09}) and also the independence of the respective polar decomposition assumed in \eqref{spectral-process} (see \cite[Theorem~3.1]{BS09}). Moreover, If, instead of  $\D'_{q_n}$, we assume the much stronger conditions considered in \cite[Assumption~1.1]{BPS18} and other previous works, going back to  \cite{S92}, one can show that both \eqref{Y-infinity} and \eqref{positive-EI} also hold. See \cite[Proposition~4.2]{BS09}. 
\end{remark}

In order to make the connection between the tail and the transformed tail processes, we start by defining the map: 
\begin{align}
\label{eq:xi}
\xi \colon \left(\R^d\cup\{\infty\}\right)\setminus\{0\} &\longrightarrow \R^d \nonumber\\
x &\mapsto\begin{cases}
    (\|x\|)^{-\frac1\alpha}\frac{x}{\|x\|}\,,& \text{if } x\neq \infty\\
    0,              & \text{otherwise}
\end{cases},
\end{align}
Then, we define $\Xi \colon l_\infty \longrightarrow l_0$ given by $\Xi((x_j)_j)=(\xi(x_j))_j$. 
Observe that, like $\Proj$ (see Lemma~\ref{lem:Pinv}), the function $\Xi$ is invertible and we may define $\tilde \Xi:\tilde l_\infty\to \tilde l_0$ so that $\tilde \Xi (\tilde \pi(\mathbf x))=\tilde \pi(\Xi(\mathbf x))$.
Note that 
\begin{equation}
\xi\left({u_n^{-1}(\|\X_{j}\|)}\frac{\X_{j}}{\|X_{j}\|}\right)=\frac{\X_j}{a_n}.
\end{equation}
\begin{corollary}
\label{cor:point-process-heavytail}
Let $\X_0, \X_1, \ldots$ be a stationary $\R^d$-valued jointly regularly varying sequence, with tail index $\alpha>0$, satisfying conditions $\D_{q_n}$ and $\D'_{q_n}$ and for which the transformed anchored tail process given in Definition~\ref{def:piling-process} is well defined. Then  the point process 
\begin{equation}
\label{eq:heavy-tail-finite-pp}
N'_n=\sum_{i=1}^\infty \delta_{(i/k_n,\tilde \Xi(\tilde \pi(\mathbb X_{n,i})))}=\sum_{i=1}^\infty \delta_{\left(\frac{i}{k_n},\tilde \pi\left(\frac{\X_{(i-1)r_n}}{a_n},\ldots, \frac{\X_{ir_n-1}}{a_n}\right)\right)}
\end{equation} 
converges weakly in $\mathcal N_{\R_0^+\times\tilde l_0\setminus\{\mathbf 0\}}^\#$ to the Poisson point process $N'$ given by 
\begin{equation}
\label{eq:heavy-tail-limit-pp}
N'=\sum_{i=1}^\infty \delta_{(T_{i}, U_{i}^{-\frac1\alpha}\tilde \Xi(\tilde{\mathbf Q}_i))},
\end{equation}
where $(T_i)_{i\in\N}$, $(U_i)_{i\in\N}$ and $(\tilde{\mathbf Q}_i)_{i\in\N}$ are as in \eqref{eq:limit-process}.
\end{corollary}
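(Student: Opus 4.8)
The plan is to obtain Corollary~\ref{cor:point-process-heavytail} as a transported version of the general complete convergence result, Theorem~\ref{thm:complete-convergence}. First I would check that, in the jointly regularly varying case, the piling process of Definition~\ref{def:piling-process} is well defined. With $u_n(\tau)=a_n\tau^{-1/\alpha}$ and $u_n^{-1}(z)=z^{-\alpha}a_n^\alpha$, a direct computation from \eqref{eq:xi} shows that the $j$-th entry of $\mathbb X_n^{i,j}$ equals $\xi^{-1}(\X_j/a_n)$, so that $\mathbb X_n^{i,j}=\Xi^{-1}((\X_i/a_n,\dots,\X_{j-1}/a_n))$. Applying $\Xi$ (and using that $\xi(\tfrac1\tau\xi^{-1}(z))=\tau^{1/\alpha}z$) the assumption \eqref{Y-def} then repackages, via stationarity, the finite-dimensional convergence of $(\X_j/b_n)_j$ conditioned on $\{\|\X_0\|>b_n\}$ with $b_n=a_n\tau^{-1/\alpha}$, i.e. the existence of the tail process, \cite[Theorem~2.1]{BS09}, whose law is $\mathcal L(\Xi((Y_j)_j))$; transported back through $\Xi$, assumption \eqref{spectral-process} is the independence in its polar decomposition, \cite[Theorem~3.1]{BS09}. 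For \eqref{Y-infinity} and \eqref{positive-EI} I would adapt \cite[Proposition~4.2]{BS09}: the only mixing input there is that no new exceedance appears inside a block once a run of $q_n$ consecutive non-exceedances has occurred, which is exactly what $\D'_{q_n}$ provides, and combined with Lemma~\ref{lem:EI-tail} this gives $\theta=\p(\inf_{j\le -1}\|Y_j\|\ge 1)$, so $\theta>0$ yields \eqref{positive-EI}. Granting this, Theorem~\ref{thm:complete-convergence} gives $N_n\to N$ weakly in $\mathcal N^\#_{\R_0^+\times\tilde l_\infty\setminus\{\tilde\infty\}}$.

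The second part of the plan is to push this convergence forward through $\tilde\Xi$. The key is the factorisation $\xi=g\circ\proj$ on $\dot{\V}$, where $g(0)=0$ and $g(y)=\|y\|^{1/\alpha-1}y$ for $y\neq 0$ is the radial rescaling with $\|g(y)\|=\|y\|^{1/\alpha}$ (checked by comparing norms and directions). Setting $G((y_j)_{j})=(g(y_j))_{j}$ we obtain $\Xi=G\circ\Proj$. Since $g$ is a homeomorphism of $\R^d$ fixing $0$, $G$ is a homeomorphism of $l_0$ preserving the defining condition $\lim_{|j|\to\infty}\|y_j\|=0$, so it descends to a homeomorphism $\tilde G$ of $\tilde l_0\setminus\{\mathbf 0\}$; and, because $\|\tilde G(\tilde\x)\|_\infty=\|\tilde\x\|_\infty^{1/\alpha}$, both $\tilde G$ and $\tilde G^{-1}$ map bounded sets to bounded sets by Remark~\ref{rem:bounded-sets}. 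Composing with $\tilde\Proj$, which by the very definition of the metric on $\tilde l_\infty$ is an isometry of $\tilde l_\infty\setminus\{\tilde\infty\}$ onto $\tilde l_0\setminus\{\mathbf 0\}$, I conclude that $\tilde\Xi=\tilde G\circ\tilde\Proj$ is a homeomorphism $\tilde l_\infty\setminus\{\tilde\infty\}\to\tilde l_0\setminus\{\mathbf 0\}$ which, together with its inverse, preserves boundedness.

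Consequently $\mathrm{id}\times\tilde\Xi$ is a boundedness-preserving homeomorphism of $\R_0^+\times\tilde l_\infty\setminus\{\tilde\infty\}$ onto $\R_0^+\times\tilde l_0\setminus\{\mathbf 0\}$, hence induces a $w^\#$-homeomorphism $\mu\mapsto\mu\circ(\mathrm{id}\times\tilde\Xi)^{-1}$ of the associated spaces of boundedly finite point measures (see Appendix~\ref{appendix:convergence-point-processes}), and the continuous mapping theorem yields $N_n\circ(\mathrm{id}\times\tilde\Xi)^{-1}\to N\circ(\mathrm{id}\times\tilde\Xi)^{-1}$ weakly in $\mathcal N^\#_{\R_0^+\times\tilde l_0\setminus\{\mathbf 0\}}$. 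It then remains to identify the two push-forwards: from $\tilde\Xi\circ\tilde\pi=\tilde\pi\circ\Xi$, the componentwise action of $\Xi$ through $\xi$, and the identity $\xi(u_n^{-1}(\|\X_j\|)\X_j/\|\X_j\|)=\X_j/a_n$, one gets $\tilde\Xi(\tilde\pi(\mathbb X_{n,i}))=\tilde\pi((\X_{(i-1)r_n}/a_n,\dots,\X_{ir_n-1}/a_n))$, so $N_n\circ(\mathrm{id}\times\tilde\Xi)^{-1}=N'_n$; and representing $U_i\tilde{\mathbf Q}_i$ by $(U_iQ_{i,j})_j$ (legitimate since $\inf_j\|Q_{i,j}\|=1$) and using $\xi(U_iQ_{i,j})=U_i^{-1/\alpha}\xi(Q_{i,j})$ gives $\tilde\Xi(U_i\tilde{\mathbf Q}_i)=U_i^{-1/\alpha}\tilde\Xi(\tilde{\mathbf Q}_i)$, so $N\circ(\mathrm{id}\times\tilde\Xi)^{-1}=\sum_i\delta_{(T_i,\,U_i^{-1/\alpha}\tilde\Xi(\tilde{\mathbf Q}_i))}=N'$, as required. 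The step I expect to be most delicate is the verification that $\tilde\Xi$ is a homeomorphism compatible with the bounded-set structure, so that the continuous mapping theorem is legitimate on these non-locally-compact point-measure spaces; factoring $\Xi$ through $\Proj$ is precisely what reduces this to the elementary radial rescaling $G$, the quotient map $\tilde\Proj$ being already an isometry by construction. (The other potential subtlety, well-definedness of the piling process, is essentially a translation of known tail-process results from \cite{BS09} into the present coordinates.)
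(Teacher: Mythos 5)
Your proposal takes essentially the same route as the paper: deduce the well-definedness of the piling process from joint regular variation via the BS09 tail-process theory, then push the convergence of $N_n$ (Theorem~\ref{thm:complete-convergence}) forward through $\Xi^{\#}$ by the continuous mapping theorem. The paper's proof is a single sentence and leaves the continuity and boundedness-compatibility of $\tilde\Xi$ on $\tilde l_\infty\setminus\{\tilde\infty\}$ unverified; your factorisation $\Xi = G\circ\Proj$, with $\tilde\Proj$ an isometry by construction and $G$ an explicit radial homeomorphism of $l_0$ satisfying $\|\tilde G(\tilde\x)\|_\infty = \|\tilde\x\|_\infty^{1/\alpha}$, supplies that missing verification cleanly.
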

This corollary of Theorem~\ref{thm:complete-convergence} follows from a direct application of the CMT for the map $\Xi^{\#}:\mathcal N_{\R_0^+\times\tilde l_\infty\setminus\{\tilde \infty\}}^\#\to\mathcal N_{\R_0^+\times\tilde l_0\setminus\{\mathbf 0\}}^\#$ defined by 
$$\Xi^{\#}\left(\sum_{i=1}^\infty \delta_{(t_i,\tilde{\mathbf x}_i )}\right)=\sum_{i=1}^\infty \delta_{\left(t_i,\tilde \Xi\left(\tilde{\mathbf x}_i \right)\right)}.$$

Observe that the Poisson component of the process $N'$ can be written as $M_\alpha=\sum_{i=1}^\infty\delta_{(T_i,P_i)}=\sum_{i=1}^\infty\delta_{(T_i,U_i^{-1/\alpha})}$, with intensity measure $\leb\times \theta\,\nu_\alpha$, with $\nu_\alpha(y)=\d(-y^{-\alpha})$, while the angular component associated to the tail process, which we denoted earlier by $(\mathcal Q_{j})_{j\in\Z}$, is given by the equation: 
\begin{equation}
\label{eq:Q-relation}
\mathcal Q_j=\xi(Q_j).
\end{equation}

Recall that as observed in Remark~\ref{rem:tail-process-existence}, the existence of transformed anchored tail process and the respective properties are guaranteed by the joint regular variation of the process $\X_0, \X_1, \ldots$ and \cite[Assumption~1.1]{BPS18} (assumption $\mathcal{AC}(r_n,c_n)$ in \cite{KS20}).

\section{Proofs of the dynamical enriched functional limit theorems}
\label{sec:proof-FLT}

The major step to obtain the invariance principles stated in Theorems~\ref{thm:sums-heavy-tails}, \ref{thm:extremal-process} and \ref{thm:record-theorem} for dynamically defined stochastic processes is the complete convergence of point processes stated as follows
\begin{theorem}
\label{thm:point-process-convergence}
Let $T:\mathcal X \to \mathcal X$ be a dynamical system  as described in Section~\ref{subsec:systems} and $\X_0,\X_1,\ldots$ be obtained from such a system as described in \eqref{eq:dynamics-SP} and assume that the transformed anchored tail process given in Definition~\ref{def:piling-process} is well defined.
Consider the point process $N_n$ defined as in \eqref{eq:point-process-def}. Then $N_n$ converges weakly in $\mathcal N_{\R_0^+\times\tilde l_\infty\setminus\{\tilde\infty\}}^\#$ to the Poisson point process $N$ given by \eqref{eq:limit-process}.
\end{theorem}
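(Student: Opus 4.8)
The plan is to obtain this as a corollary of the general complete convergence result, Theorem~\ref{thm:complete-convergence}: it suffices to check that a system as in Section~\ref{subsec:systems} satisfies that theorem's hypotheses, namely that $\X_0,\X_1,\ldots$ is stationary with proper tails, that the piling process of Definition~\ref{def:piling-process} is well defined, and that conditions $\D_{q_n}$ and $\D'_{q_n}$ hold. The first two items are essentially free here. Stationarity is immediate from the $T$-invariance of $\mu$, after identifying $\X_n=\Psi\circ T^n$ with the coordinate process, so that the induced measure $\p$ is $\TT$-invariant. The existence of a normalising family $(u_n)_{n\in\N}$ as in Section~\ref{subsec:normalising}, hence proper tails, is already built into the standing assumption that the piling process is well defined, since assumption \eqref{Y-def} presupposes such a family; in the heavy-tailed case, with $g$ of type $g_2$ and $\mu$ regular near $\mathcal M$, one may simply take $u_n(\tau)=a_n\tau^{-1/\alpha}$ using \eqref{eq:heavy-normalisation}, and the well-definedness of the piling process is assumed throughout. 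So the real content is the verification of the two dependence conditions, which is where the dynamics enters, via the mixing properties recorded at the start of this section and established in detail in Appendix~\ref{Appendix:dyn}.

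For $\D_{q_n}$ the plan is to argue from decay of correlations. First I would observe that in \eqref{eq:D1} the event $\A_{n,\ell}$ is supported on an initial window of length at most $m+q_n+1$, whereas $\bigcap_{i=\ell}^m\W_{J_{n,i}}(\A_{n,i})$ is an avoidance event supported on the later blocks, separated from that window by a gap of length $\ge t$. Then I would approximate the indicator functions of these two events by functions in the regularity class against which the system decorrelates --- H\"older functions in the uniformly expanding, Benedicks--Carleson and Anosov cases, functions of bounded variation for Rychlik-type maps --- apply the decay-of-correlations estimate, and absorb the correlation-decay rate together with the approximation errors into $\gamma(n,t)$. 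Finally I would choose the gap sequence $t_n$, compatibly with \eqref{eq:kn-sequence}, so that $n\gamma(n,t_n)\to0$: for exponentially mixing systems essentially any slowly growing $t_n$ works, while for the Liverani--Saussol--Vaienti maps $T_\gamma$, which have only polynomial decay of correlations, $t_n$ must be taken polynomially large while still obeying $k_nt_n=o(n)$, and it is this balance that forces the restriction $\gamma\in(0,0.289)$.

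For $\D'_{q_n}$ the plan is to exploit that in all of our examples the clustering is produced by the geometry of the maximal set $\mathcal M$. When $\mathcal M=\{\zeta\}$ is a repelling periodic point of prime period $q$ I would take $q_n=q$ for all $n$ and verify the stronger condition $\tilde\D'_{q}$: conditioned on a close return to $\zeta$ at time $0$, the local hyperbolic expansion at $\zeta$ drives the orbit away, so that the probability of a second return to the relevant scale before time $r_n$ is $o(1/n)$. This requires the invariant density to be bounded at $\zeta$, so that the conformal behaviour of $T$ at $\zeta$ is faithfully reflected by $\mu$; this is automatic for Rychlik maps, is obtained from \cite[Section~6]{FFT13} for the Benedicks--Carleson family, and follows from the polynomial estimates adapting \cite[Section~4.2.2]{FFTV16} for $T_\gamma$. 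These computations are elementary in the periodic case and I would again carry them out in Appendix~\ref{Appendix:dyn}. With stationarity, proper tails, the piling process, and $\D_{q_n}$, $\D'_{q_n}$ all in place, Theorem~\ref{thm:complete-convergence} yields the weak convergence of $N_n$ in $\mathcal N_{\R_0^+\times\tilde l_\infty\setminus\{\tilde\infty\}}^\#$ to the Poisson point process $N$ of \eqref{eq:limit-process}.

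I expect the main obstacle to be the verification of $\D_{q_n}$ for the two non-uniformly hyperbolic families, and above all for $T_\gamma$, where only polynomial decorrelation is available so the admissible range of $\gamma$ is exactly what one squeezes out of the competing requirements on $t_n$; the boundedness of the density at the periodic point in the Benedicks--Carleson case, needed for $\D'_{q_n}$, is also genuinely delicate and is the reason \cite[Section~6]{FFT13} is invoked.
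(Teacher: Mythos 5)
Your proposal follows essentially the same route as the paper: reduce to Theorem~\ref{thm:complete-convergence} and verify $\D_{q_n}$ and $\D'_{q_n}$ from decay of correlations together with the local hyperbolicity near the periodic point, deferring the delicate Manneville--Pomeau case to the transfer-operator estimates in Appendix~\ref{Appendix:dyn}. One refinement is worth flagging. The paper's verification of $\D_{q_n}$ exploits decay of correlations of a regular class $\C_1$ (BV, or quasi-H\"older in the higher-dimensional Saussol case) \emph{against} $L^1(\mu)$, so that only the annulus indicator $\I_{A_{n,\ell}^{(q_n)}}$ needs to be regular --- and its $\C_1$-norm is controlled uniformly in $n$ precisely because $q_n=q$ is fixed in the periodic case --- while the later-block avoidance indicator $\I_{\bigcap_{i\ge\ell}\W_{J_{n,i}}(\A_{n,i})}$ is used raw as an $L^1$ (indeed $L^\infty$) function. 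Approximating \emph{both} indicators by regular functions, as your sketch appears to suggest, runs into the obstruction that the later-block event is an intersection over $\sim r_n$ preimage sets whose BV or H\"older norm grows without bound, so the trade-off between approximation error and norm blow-up is not controllable in the naive way. This is why the paper's sufficient conditions are phrased entirely in terms of $\|\I_{A_{n,\ell}^{(q_n)}}\|_{\C_1}$ (and see Remark~\ref{rmk:AFLV} for the relation to H\"older decorrelation). You would most likely sort this out when writing the details, but it is the one place where a literal implementation of the sketch would stall.
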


It is sufficient to show that the systems and the observables that we consider give rise to stochastic processes for which $\D_{q_n}$ and $\D'_{q_n}$ hold, since then the conclusion follows immediately by Theorem~\ref{thm:complete-convergence}.   A property we use to prove these conditions is in the next definition

\begin{definition}
\label{def:DC}
Let $\C_1$ and $C_2$ be Banach spaces of real-valued measurable functions on $\mathcal X$.  Define the \emph{correlation} of non-zero functions $\phi\in \C_1$ and $\psi\in \C_2$ with respect to $\mu$ at time $n\in \N$ by
$$\text{Cor}_\mu(\phi, \psi, n): = \frac1{\|\phi\|_{\C_1} \|\psi\|_{\C_2}} \left|\int\phi(\psi\circ T^n)~d\mu- \int \phi~d\mu\int \psi~d\mu\right|.$$
Then say that the system has \emph{decay of correlations}, with respect to $\mu$, for observables in $\C_1$ \emph{against} observables in $\C_2$ if there exists a rate function $\rho:\N\to [0, \infty)$ with 
$$\lim_{n\to \infty}\rho(n)=0,$$
and for every $\phi\in \C_1$, $\psi\in \C_2$, 
$$\text{Cor}_\mu(\phi, \psi, n)\le \rho(n).$$
\end{definition}

The uniformly expanding systems in Section~\ref{sssec:non-invertible} have decay of correlations against $L^1(\mu)$, i.e., where $\C_2=L^1(\mu)$.  In that setting we also require a suitable space $\C_1$, a key feature being that the characteristic functions on our sets of interest, like the annuli $A^{(j)}$ in \eqref{eq:Aq} do not have large norm.  In fact in the interval setting $\C_2$ will be the $BV$ norm which we recall here.  If $\psi:I \to \R$ is a measurable function on an interval $I$ then its \emph{variation} is defined as
$$\text{Var}(\psi):= \sup\left\{\sum_{i=0}^{n-1}|\psi(x_{i+1})-\psi(x_i)|\right\},$$
where the supremum is taken over all finite ordered sequences $(x_i)_{i=0}^{n-1}$ in $I$.  The $BV$ norm is $\|\psi\|_{BV}:=\sup|\psi|+\text{Var}(\phi)$ and $BV:=\{\psi:I\to \R: |\psi\|_{BV}<\infty\}$.  In Saussol's class of higher-dimensional expanding maps $\C_1$ is a quasi-H\"older norm.

\begin{remark}
While our conditions on the \emph{rate} of decay of correlations here may appear very weak, in fact summable decay of correlations against $L^1(\mu)$ implies exponential decay of correlations for H\"older observables against $L^\infty$, as in \cite[Theorem~B]{AFLV11}.
\label{rmk:AFLV}
\end{remark}

We make a brief list of references where one can find the arguments to prove  $\D_{q_n}$ and $\D'_{q_n}$ for the systems mentioned in Section~\ref{subsec:systems}. For non-invertible systems admitting decay of correlations against $L^1$ and observables with maximal sets $\mathcal M$ consisting on periodic points or a countable number of points in the same orbit, we note that our conditions on the system and the observable can be expressed as requiring, for $A_{n,\ell}^{(q_n)}$, as in \eqref{eq:rectangles-n-bi} and \eqref{eq:Aq},

\begin{enumerate}
\item \label{item:nuh1} $\lim_{n\to \infty} \|\I_{A_{n,\ell}^{(q_n)}}\| n\rho(t_n)=0$ for some sequence $(t_n)_n$ with $t_n=o(n)$,
\item \label{item:nuh2} $\lim_{n\to \infty} \|\I_{A_{n,\ell}^{(q_n)}}\|\sum_{j=q_n}^n\rho(j)=0$.
\end{enumerate}

These conditions can be found in, for example \cite{AFFR17, FFRS20}.   Along with decay of correlations against $L^1(\mu)$, they imply $\D_{q_n}$, $\D'_{q_n}$:  for proofs see those references. See also \cite[Section~4.3]{FFM20}. 

Regarding the Benedicks-Carleson maps equipped with observables maximised at periodic points, the required estimates to satisfy $\D_{q_n}$ can be found in \cite{FF08} and \cite[Sections~5,6]{FFT13}.

For Anosov linear diffeomorphisms on the torus and observables maximised at periodic points, we refer to \cite{CFFH15}. 

For the slowly mixing systems of the Manneville-Pomeau type and observables maximised at periodic points distinct from the indifferent fixed point, we could use a direct approach to prove conditions $\D_{q_n}$ and $\D'_{q_n}$ using the ideas in \cite[Section~4]{FFTV16}, but since decay of correlations is stated for H\"older continuous functions, an approximation to indicator functions is needed which restricts the domain of application to $\gamma\in(0,0.289)$, whereas one would expect the results to hold at least for $\gamma<1/2$ (when one has summable decay of correlations). In order to show that the results hold for observables with a spike at periodic points (or even with a finite number of spikes belonging to the same orbit) for all $\gamma\in(0,1/2)$ and even for  $\gamma\in[1/2,1)$, if we take the observables vanishing at the origin, in Section~\ref{subsec:inducing} we introduce a new point process which incorporates the idea of inducing.

\subsection{Proof of Theorem~\ref{thm:sums-heavy-tails}}
\label{subsec:proof-heavytails}

As a consequence of Theorem~\ref{thm:point-process-convergence}, since in this case we are assuming that $g$ appearing in \eqref{eq:dynamics-SP-norm} is of type $g_2$ and condition \eqref{eq:heavy-normalisation} holds, we obtain, by direct application of the CMT for the map $\Xi^\#$ given in Section~\ref{subsec:regular-variation}, that the convergence of point processes stated in Corollary~\ref{cor:point-process-heavytail} holds. Therefore, we are left to show that such point process convergence implies the convergence in $F'$ of the continuous time process $S_n$ to $\underline V$.

\begin{proposition}
\label{prop:convPP=>convSum}
Let $\X_0, \X_1, \ldots$ be a stationary $\R^d$-valued process for which condition \eqref{eq:heavy-normalisation} holds and moreover $N'_n$ defined in \eqref{eq:heavy-tail-finite-pp} converges weakly$^\#$ in $\mathcal N_{\R_0^+\times\tilde l_0\setminus\{\mathbf 0\}}^\#$ to $N'$ given in  \eqref{eq:heavy-tail-limit-pp}
then, under the same assumptions of Theorem~\ref{thm:sums-heavy-tails}, the conclusion regarding the convergence of $S_n$ to $\underline V$ holds, in $F'$.

\end{proposition}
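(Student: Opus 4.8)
The plan is to deduce the functional convergence $S_n\to\underline V$ in $F'$ from the point process convergence $N'_n\Rightarrow N'$ in $\mathcal N^\#_{\R_0^+\times\tilde l_0\setminus\{\mathbf 0\}}$ (the hypothesis here, supplied in the dynamical setting by Corollary~\ref{cor:point-process-heavytail} and Theorem~\ref{thm:point-process-convergence}) through a continuous-mapping argument, combined with an $\eps$-truncation that absorbs the infinitely many small jumps, in the spirit of the classical passage from cluster point processes to $\alpha$-stable L\'evy limits (cf.\ \cite{T10,BPS18}) but now retaining the excursion data so as to land in the new space $F'$. \textbf{Step 1 (truncated summation functional).} For each $\eps>0$ I would introduce a functional $\Phi_\eps$ defined on configurations $\gamma=\sum_i\delta_{(t_i,\tilde{\mathbf x}_i)}$ (which automatically have, in $[0,1]$, only finitely many atoms with $\|\tilde{\mathbf x}_i\|_\infty>\eps$, since such sets are bounded in $\tilde l_0\setminus\{\mathbf 0\}$): $\Phi_\eps(\gamma)$ is the element of $F'([0,1],\R^d)$ whose c\`adl\`ag part is the pure jump path $x(t)=\sum_{t_i\le t}\ \sum_{j:\|x_{i,j}\|>\eps}x_{i,j}$, using for each $i$ the shift representative of $\tilde{\mathbf x}_i$ in which $j=0$ marks the first coordinate of modulus $>\eps$ (matching the indexing of the piling process), whose discontinuity set is $S^x=\{t_i\le 1:\|\tilde{\mathbf x}_i\|_\infty>\eps\}$, and whose excursion at such a $t_i$ is the staircase $e^{t_i}(u)=x(t_i^-)+\sum_{j\le\lfloor\tan(\pi(u-\frac12))\rfloor,\,\|x_{i,j}\|>\eps}x_{i,j}$, $u\in[0,1]$, so that the $\tan$-reparametrisation exposes, in order, the partial sums of the $>\eps$ coordinates of the cluster. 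By the definition of $N'_n$ in \eqref{eq:heavy-tail-finite-pp}, $\Phi_\eps(N'_n)$ coincides, up to an error controlled in Step~3, with the $F'$-element $\underline S_n^{(\eps)}$ built from the truncated partial sum $S_n^{(\eps)}(t)=a_n^{-1}\sum_{i<\lfloor nt\rfloor}\X_i\I_{\{\|\X_i\|>\eps a_n\}}$ (suitably centred, as in \cite{T10}, when $1\le\alpha<2$); the discrepancy is only the block/gap bookkeeping, which is asymptotically invisible since $r_n=o(n)$, $k_nt_n=o(n)$ and, under $\D'_{q_n}$, the exceedances in a block lie in a window of length $q_n=o(r_n)$, so each cluster collapses to a single point of $[0,1]$ in the limit.

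\textbf{Step 2 (continuity of $\Phi_\eps$ into $F'$ --- the heart of the matter).} I would prove that $\Phi_\eps$ is continuous, for the $w^\#$ topology on its domain and the metric $d_{F'}$ of \eqref{eq:dF-prime}, at every configuration with no atom in $\{\|\tilde{\mathbf x}\|_\infty=\eps\}$ nor at time $t=1$, hence $\p$-a.s.\ at $N'$ (which is a.s.\ simple, with diffuse time coordinates and diffuse $U_i$'s). If $\gamma_n\to\gamma$ in $w^\#$, then in each bounded window the finitely many atoms of $\gamma$ with $\|\tilde{\mathbf x}\|_\infty>\eps$ are matched by atoms of $\gamma_n$ with converging times and converging marks in $\tilde l_0$; within such a mark only finitely many coordinates have modulus $>\eps$, and these depend continuously on the mark, so the jump times, the jump sizes $\sum_{\|x_{i,j}\|>\eps}x_{i,j}$ and the finite staircases all converge. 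For the $\pi_E$-component this yields convergence in $E$ as in \cite[\S4]{BPS18}; for the $\tilde\pi$-component one checks directly from \eqref{eq:projection-Dtilde} that the interval-inserted paths $\Phi_\eps(\gamma_n)^{\tilde D}$ converge to $\Phi_\eps(\gamma)^{\tilde D}$ in $d_{\tilde D}$, a reparametrisation of time absorbing the small mismatches in jump locations and in the lengths of the inserted slots and leaving a uniformly small vertical error. By \eqref{eq:dF-prime} then $\Phi_\eps(\gamma_n)\to\Phi_\eps(\gamma)$ in $F'$, and the continuous mapping theorem gives $\Phi_\eps(N'_n)\Rightarrow\Phi_\eps(N')=:\underline V^{(\eps)}$, the $\eps$-truncated version of $\underline V$. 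When $1\le\alpha<2$, applying the same step to the centred functional pins down the extra linear drift, using \eqref{eq:Q-condition->1} (resp.\ \eqref{eq:Q-condition-=1} for $\alpha=1$) to make the within-cluster centred sums converge and to reconcile the centring of $S_n^{(\eps)}$ with $tc_n$ in the limit, as in \cite{T10}; the within-cluster regularisation produces, for $0<\alpha<1$, the compensating integral appearing in the formula for $V$.

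\textbf{Step 3 (removing the truncation and concluding).} First, $\underline V^{(\eps)}\to\underline V$ in $F'$ $\p$-a.s.\ as $\eps\to0$: the c\`adl\`ag parts converge by the very definition of $\underline V$ in Theorem~\ref{thm:sums-heavy-tails}, and the excursions converge because within each cluster the $\tan$-reparametrised partial sums $\sum_{j\le m}U_i^{-1/\alpha}\mathcal Q_{i,j}$ stabilise, so both the $\pi_E$- and the $\tilde\pi$-components converge. It then remains to show that for every $\delta>0$, $\lim_{\eps\to0}\limsup_{n\to\infty}\p\bigl(d_{F'}(\underline S_n,\underline S_n^{(\eps)})>\delta\bigr)=0$, where $\underline S_n$ is $S_n$ embedded into $F'$ via \eqref{eq:embedding-F} and $\underline S_n^{(\eps)}$ is the object of Step~1; since both the $d_E$- and the $d_{\tilde D}$-distances between these two $F'$-elements are dominated by the maximal fluctuation of the discarded increments (those below level $\eps a_n$), this reduces, when $1\le\alpha<2$, to condition \eqref{eq:small-jumps}, and when $0<\alpha<1$ (where $c_n=0$) to $\lim_{\eps\to0}\limsup_n\p\bigl(\max_{1\le k\le n}\|a_n^{-1}\sum_{j=1}^k\X_j\I_{\{\|\X_j\|\le\eps a_n\}}\|\ge\delta\bigr)=0$, which holds by stationarity and Markov's inequality since $\frac{n}{a_n}\E(\|\X_0\|\I_{\{\|\X_0\|\le\eps a_n\}})\to\frac{\alpha}{1-\alpha}\eps^{1-\alpha}\to0$ by the regular variation \eqref{eq:heavy-normalisation}. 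Combining Step~2 ($\underline S_n^{(\eps)}\Rightarrow\underline V^{(\eps)}$ for each fixed $\eps$) with $\underline V^{(\eps)}\Rightarrow\underline V$ as $\eps\to0$ and with the last estimate, the standard approximation theorem for weak convergence in a metric space yields $\underline S_n\Rightarrow\underline V$ in $F'$. The principal obstacle is Step~2 --- establishing continuity of the summation functional \emph{into $F'$}, and especially the $d_{\tilde D}$-continuity of the interval-insertion map \eqref{eq:projection-Dtilde}, which is genuinely new and rests on the properties of $F'$ developed in Appendix~\ref{sec:completeness-separability}; by comparison the $E$-component is essentially that of \cite{BPS18} and the small-jump bound is as in \cite{T10}.
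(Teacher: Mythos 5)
Your proposal is correct and follows the paper's proof almost step for step: an $\eps$-truncated summation functional into $F'$ whose $w^\#$-continuity is proved separately for the $E$-component (borrowed from \cite{BPS18}) and for the new $\tilde D$-component, a.s.\ convergence of $\underline V^{(\eps)}$ to $\underline V$ as $\eps\to0$ using \eqref{eq:Q-condition->1}/\eqref{eq:Q-condition-=1}, a Karamata/Markov bound (or \eqref{eq:small-jumps} when $1\le\alpha<2$) to control the truncation error, and the standard approximation theorem to conclude. The one place the paper is tidier is the comparison of $\underline S_n$ with the functional of $N'_n$: rather than waving at an asymptotically small block/gap bookkeeping error, its Step~4 observes that the interval-insertion construction \eqref{eq:projection-Dtilde} makes the two $\tilde D$-components literally equal as equivalence classes, since the $\tilde D$ reparametrisation exactly undoes the compression of a block's jumps into a single point-process mark.
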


\begin{proof}

Recall that the convergence in $F'([0,1],\R^d)$ consists of showing that the respective projections into $E([0,1],\R^d)$ and $\tilde D([0,1],\R^d)$ converge. 
The choice of metric in $E([0,1],\R^d)$ (see \eqref{eq:me}) implies that the convergence in this space will follow from the convergence of the coordinate projections, in $E([0,1],\R)$, which follows immediately from \cite[Theorem~4.5]{BPS18}. Hence, we are left to check the convergence of the $\tilde D([0,1],\R^d)$ counterparts, which we prove by splitting the argument into the same steps considered in \cite[Theorem~4.5]{BPS18} so that we can keep track of the required adjustments.

We start by defining a projection  $\Upsilon\colon\mathcal N_{\R_0^+\times\tilde l_0\setminus\{\mathbf 0\}}^\#\to F'([0,1], \R^d)$. Suppose we are given $\gamma=\sum_{i=1}^\infty \delta_{(t_i, \tilde x^i)}\in \mathcal N_{\R_0^+\times\tilde l_0\setminus\{\mathbf 0\}}^\#$.  At time $t_i$ we have $\tilde x^i\in \tilde l_0$: let $(\ldots, x_{-1},x_0,x_1,  \ldots)$ be a representative of this in $l_0$ and define 
$$\mathring{e}_x^{t_i}(t)=\sum_{i=-\infty}^{\left\lfloor \h(t)\right\rfloor}{x_i},\quad x(t)=\sum_{T_i\leq t}\mathring{e}_x^{T_i}(1), \quad S^x=\{t_i\}_i,\quad e_x^{t_i}=x(t_i^-)+\mathring{e}_x^{t_i}.$$
Note that since $e_x^{t_i}$ is really an element of an equivalence class, the particular representative of $\tilde x$ chosen does not matter. Finally, let $\Upsilon(\gamma)=(x,S^x, \{e_x^{t_i}\}_{i})$.

We start assuming $\alpha\in (0, 1)$.

\textbf{Step 1.} For $\eps>0$, we define an $\eps$-truncated projection $\Upsilon^\eps$.  We do this as follows.  For $\tilde x\in \tilde l_0$ set
$$
u^\eps(\tilde x)(t) = \sum_{j=-\infty}^{\left\lfloor\h(t)\right\rfloor} x_j \mathbbm{1}_{\{|x_j|>\eps\}}.
$$
Then for $\gamma = \sum_{i=1}^\infty \delta_{T_i, \tilde x^i}\in \mathcal{N}_{\X}^{\#}$, define $ S^\eps= \{T_i: \|\tilde x_i\|_\infty>\eps\}$ and
$$\Upsilon^\eps(\gamma)=\left(\left\{\sum_{T_i\le t} u^\eps(\tilde x^i)(1)\right\}_{t\in [0, 1]}, \ S^\eps ,  \{u^\eps(\tilde x^s)\}_{s\in S^\eps} \right)$$
(here we understand $\tilde x^s$ for $s=T_i\in S^\eps$ as being $\tilde x^i$).

To show that given  $\gamma_n\rightarrow_{w^\#}\gamma$,  we have  $\tilde \pi (\Upsilon^\eps(\gamma_n)) \to \tilde \pi(\Upsilon^\eps(\gamma))$, we first define a simpler space which is not essential here, but is intended to help the reader's visualisation of the situation.

Given an element $z\in \tilde D$, let $(t_i)_i$ be the set $disc(z)$ of discontinuities of a representative of $z$, which we abuse notation and also call $z$.  Let $v_i(z)=\sup_{t_i\le s_1, s_2<t_{i+1}} \|z(s_1)-z(s_2)\|$ and $j(t_i)=\|z(t_i^-)-z(t_i)\|$.  Then for $\eps>0$ set
$$ \tilde D_\eps:=\{z\in \tilde D: \#disc(z)<\infty, v_i(z)=0 \text{ and } j_i(z)>\eps \text{ for all } i\}.$$

Now notice that since we are based in $\mathcal N_{\R_0^+\times\tilde l_0\setminus\{\mathbf 0\}}^\#$, which only allows a finite number of jumps of norm higher than $\eps$,  each element of $  \{u^\eps(\tilde x^s)\}_{s\in S^\eps} $ lies in $\tilde D_\eps$.  Moreover for $\ux=\Upsilon^\eps(\gamma)$, $\tilde \pi(\ux)=\ux^{\tilde D}\in \tilde D_\eps$.  We can easily see in this simpler space that if $\gamma_n\rightarrow_{w^\#} \gamma$ then for  $\ux_n=\Upsilon^\eps(\gamma_n)$ and  $\ux=\Upsilon^\eps(\gamma)$, we have ${\ux}_n^{\tilde D}\to \ux^{\tilde D}$.

These arguments together complete the proof of Step 1.

\textbf{Step 2.} 
The aim here is to show that $\tilde \pi(\Upsilon^\eps(N'))=\ux_\eps^{\tilde D} \to \tilde \pi(\Upsilon(N'))=\ux^{\tilde D}$ as $\eps\to 0$. 
It is easy to see that
$$d_{\tilde D}(\ux_\eps^{\tilde D}, \ux^{\tilde D}) \le \sum_{i=1}^{\infty}\sum_{j\in \Z} U_{i}^{-\frac1\alpha}\|\xi(Q_{ij})\|\cdot \mathbbm{1}_{\{ U_{i}^{-\frac1\alpha}\|\xi(Q_{ij})\|\le \eps\}}$$
And this is shown to converge to zero almost surely because condition \eqref{eq:Q-condition->1} implies that  $W_i=\sum_{j\in \Z} \|\xi(Q_{ij})\|$ is a.s. finite. Moreover, as noted in \cite[Remark~4.6]{BPS18}, we have 
$$
\sup_{i\in\N}\sum_{j\in\Z} U_i^{-\frac1\alpha}\|\xi(Q_{i,j})\|\I_{U_i^{-\frac1\alpha}\|\xi(Q_{i,j})\|\leq \eps}\to 0, \qquad \text{a.s. as $\eps\to0$}.
$$ 
Note that condition \eqref{eq:Q-condition->1}, which is assumed by hypothesis for $1<\alpha<2$, holds for $0<\alpha<1$, as a byproduct of the convergence of the point processes stated in Theorem~\ref{thm:complete-convergence} 
as observed in \cite[Theorem~2.6]{DH95}. For $\alpha=1$, a similar argument holds by making use of assumption \eqref{eq:Q-condition-=1}.

\textbf{Step 3.} 
This step looks to compare the projection into $\tilde D$ of empirical process $\Upsilon(N_n')$ with its $\eps$-truncated version $\Upsilon(N_n')$. Namely, we set $\ux_{n,\eps}^{\tilde D}:=\tilde \pi(\Upsilon^\eps(N_n'))$, $\ux_n^{\tilde D}:= \tilde \pi(\Upsilon(N'_n))$ and observe that
$$d_{\tilde D}(\ux_{n,\eps}^{\tilde D}, \ux_n^{\tilde D}) \le \sum_{j=1}^{k_nr_n}\frac{\|\X_j\|}{a_n}\I_{\{\|\X_j\|\leq a_n \eps\}}.$$
Then, as in \cite[Proof of Theorem~4.5]{BPS18}, it follows by Markov's inequality and Karamata's Theorem that
\begin{align*}
\limsup_{n\to\infty}&\p(d_{\tilde D}(\ux_{n,\eps}^{\tilde D}, \ux_n^{\tilde D})>\delta)\leq \limsup_{n\to\infty} \frac{k_n r_n}{\delta a_n} \E(\|\X_1\|\I_{\{\|\X_1\|\leq a_n \eps\}})\\
&\leq \lim_{n\to\infty} \frac{n}{\delta a_n}\frac{\alpha a_n\eps\p(\|\X_1\|> a_n \eps)}{1-\alpha}=\frac{\alpha}{\delta(1-\alpha)}\eps^{1-\alpha}\xrightarrow[\eps\to0]{} 0,  
\end{align*}
since we are assuming $\alpha<1$.

\textbf{Step 4.}
Here we wish to compare the $\tilde D$ component of the embedding of $S_n$ into $F'$ with the respective $\tilde D$ projection of $\Upsilon(N_n')$.  But we need only notice that the representations of each of these elements is in fact identical so there is nothing to do: in the $\Upsilon(N_n')$ case all the jumps in a block are compressed into a single sequence, but the $\tilde D$ component undoes this compression, capturing all the jumps as an element of $D([0,1])$, which is equivalent to that obtained from $S_n$.

The proof of the $1\le \alpha<2$ case does not require further arguments, though we do need to employ condition \eqref{eq:small-jumps} in order to bound the error term in the equivalent to step 3 above. 
\end{proof}

\subsection{Proof of Theorem~\ref{thm:extremal-process}}
\label{subsec:proof-extremal}

We split the proof of Theorem~\ref{thm:extremal-process} into several steps given in a series of lemmas.

We start by defining a map $\Upsilon\colon \mathcal N_{\R_0^+\times\tilde l_\infty\setminus\{\tilde\infty\}}^\#\to F'((0,\infty), \R)$, which to each $\gamma\in \mathcal N_{\R_0^+\times\tilde l_\infty\setminus\{\tilde\infty\}}^\#$, such that $\gamma=\sum_{i=1}^\infty \delta_{(t_i, \tilde \x^i)}$, assigns
$$
\Upsilon(\gamma)=\left(h(\gamma),\{t_i\colon\,i\in\N\}, e_{\gamma}^{t_i}\right),
$$
where $h(\gamma)$ is defined for all $t>0$ by
\begin{equation}
\label{eq:running-min}
h(\gamma)(t):=\begin{cases}
\inf\{\|\tilde \Proj(\tilde \x^i)\|^{-1}_\infty\colon\,t_i\leq t\} &\mbox{if } t>\underline{t}\\
\overline{y} &\mbox{if }  t\leq\underline{t}
\end{cases}\end{equation}
with $\underline{t}=\inf\{(t_i)_{i=1}^{\infty}\}$, $\overline{y}=\sup\{\inf\{\|\tilde \Proj(\tilde \x^i)\|^{-1}_\infty\colon\,t_i\leq t\}\colon \,t>\underline{t}\}$ and, for $t\in[0,1]$
$$ 
e_{\gamma}^{t_i}(t)=\min\{h(\gamma)(t_i^-),\inf_{j\leq\left\lfloor\h(t) \right\rfloor} \|x^i_j\|\}.
$$
Note that since we are using $\Proj$ here, we are looking for our observations to go \emph{down} in value, so the objects here are all non-increasing functions.

Let  $\Lambda=\left\{\gamma=\sum_{i=1}^\infty \delta_{(t_i, \tilde \x^i)}\colon t_i\neq t_j,  \forall i\neq j; \gamma((0,a)\times \tilde l_\infty\setminus\{\tilde\infty\})>0, \forall a>0\right\}.$ 
\begin{lemma}
\label{lem:continuous-map}
The map $\Upsilon\colon \mathcal N_{\R_0^+\times\tilde l_\infty\setminus\{\tilde\infty\}}^\#\to F'(\R^+, \R)$ is continuous on $\Lambda$. 
\end{lemma}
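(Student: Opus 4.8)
The plan is to verify the standard criterion for continuity into $F'$: a sequence $\underline x_n\to\underline x$ in $F'((0,\infty),\R)$ precisely when the $\pi_E$-projections converge in $E$ and the $\tilde\pi$-projections converge in $\tilde D$, on every compact subinterval $[t_1,t_2]$ with $t_1,t_2\notin S^x$ (Remark~\ref{rem:non-compact-domain}). So I would fix $\gamma\in\Lambda$ and a sequence $\gamma_n\to_{w^\#}\gamma$ in $\mathcal N_{\R_0^+\times\tilde l_\infty\setminus\{\tilde\infty\}}^\#$, and show separately that $\pi_E(\Upsilon(\gamma_n))\to\pi_E(\Upsilon(\gamma))$ in $E$ and that $\tilde\pi(\Upsilon(\gamma_n))\to\tilde\pi(\Upsilon(\gamma))$ in $\tilde D$. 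For the $E$-component, the running-minimum map $h$ together with the $\{I(s)\}$ built from $\inf_{j\le m}\|x^i_j\|$ is essentially the extremal-process projection already analysed by Resnick-type arguments and in \cite{BPS18}; the key point is that $\gamma\in\Lambda$ rules out ties in the time coordinates and forces $\gamma((0,a)\times\cdot)>0$ for all $a$, so that $h(\gamma)$ is finite, non-increasing and its discontinuity set is exactly $\{t_i\}$, and small perturbations of the atoms (in the $w^\#$ sense) move the completed graph only a little in the Hausdorff metric $m_E$ (or $m_E^*$).

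Concretely, first I would use that $w^\#$-convergence on the (boundedly finite) space $\mathcal N_{\R_0^+\times\tilde l_\infty\setminus\{\tilde\infty\}}^\#$ implies, for any bounded rectangle $R=[0,a]\times\{\tilde x:\|\tilde\Proj(\tilde x)\|_\infty>\epsilon\}$ whose boundary is $\gamma$-null, that $\gamma_n(R)=\gamma(R)$ for all large $n$ and that the finitely many atoms of $\gamma_n$ in $R$ converge (in $(0,a)\times\tilde l_\infty$) to those of $\gamma$. Since $\gamma\in\Lambda$ has distinct time coordinates, for fixed $a$ and all sufficiently small $\epsilon$ the atoms $(t_i,\tilde x^i)$ with $t_i\le a$ and $\|\tilde\Proj(\tilde x^i)\|_\infty>\epsilon$ are a fixed finite set, with locations depending continuously on $\gamma$ in the $w^\#$-topology. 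From this, convergence of $h(\gamma_n)\to h(\gamma)$ in the $M_1/M_2$-type sense underlying $m_E$ follows on any $[t_1,t_2]$ avoiding $S^x=\{t_i\}$, because outside a neighbourhood of the (finitely many relevant) jump times $h$ is locally constant and the jump heights $\|\tilde\Proj(\tilde x^i)\|_\infty^{-1}$ and the decorating intervals $I(t_i)$ (determined by $\inf_{j\le m}\|x^i_j\|$ over finitely many coordinates, each of which converges) converge. This handles the $E$-component; the $\tilde D$-component is handled the same way, parametrising each excursion $e_\gamma^{t_i}$ by the finitely many coordinates that matter above level $\epsilon$ and inserting them as in \eqref{eq:projection-Dtilde}, using that $d_{\tilde D}$ only sees the reparametrisation-invariant content.

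The main obstacle, as usual for these $M_2$/excursion spaces, is the uniformity in $\epsilon$: one must show that the contribution of coordinates $x^i_j$ with small norm (equivalently, atoms with $\|\tilde\Proj(\tilde x^i)\|_\infty$ small, i.e. near $\tilde\infty$) is uniformly negligible, so that the convergence of the $\epsilon$-truncations upgrades to convergence of the full objects. Here, crucially, this is a genuine continuity statement on $\Lambda$ rather than a distributional one, so I would argue deterministically: for $\gamma\in\Lambda$ and a compact time window, boundedly-finiteness of $\gamma$ gives that for every $\eta>0$ there is $\epsilon>0$ with no atom $(t_i,\tilde x^i)$, $t_i\le t_2$, having $\inf_j\|x^i_j\|\in(\epsilon,\eta^{-1})\setminus\{\text{finitely many}\}$ — more precisely the running infimum $h(\gamma)$ is bounded below on $[t_1,t_2]$ and only finitely many $t_i\le t_2$ produce a jump exceeding any fixed threshold — and then $w^\#$-convergence transfers this control to $\gamma_n$ for large $n$. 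Combining the three pieces (finite-$\epsilon$ convergence in $E$, finite-$\epsilon$ convergence in $\tilde D$, and the uniform-in-$\epsilon$ tail bound) via a standard $3\epsilon$-argument on $d_{F'}=d_E+d_{\tilde D}$ yields $\Upsilon(\gamma_n)\to\Upsilon(\gamma)$ in $F'((0,\infty),\R)$, which is the claim.
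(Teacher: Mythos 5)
Your skeleton is the same as the paper's: work on compact windows $[a,b]$ with $a,b\notin S^x$, use $w^\#$-convergence to get shadow atoms of $\gamma_n$ converging to those of $\gamma$ on bounded sets (Resnick's Proposition~3.13 type argument), use $\gamma\in\Lambda$ to kill time-ties and to guarantee finiteness, and then conclude convergence of the $E$- and $\tilde D$-components separately. Where you diverge is the final ``$\epsilon$-truncation $+$ uniformity-in-$\epsilon$'' step, and there are two problems with it.

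First, for \emph{this} projection — a running infimum, not a sum — the truncation is unnecessary. The paper simply fixes $\alpha=h(\gamma)(a)$ and works in $[a,b]\times\tilde B_\alpha$: since $\gamma$ is boundedly finite, only finitely many atoms lie there, and atoms with $\|\tilde\Proj(\tilde x^i)\|_\infty^{-1}\ge\alpha$ can never beat the running minimum on $[a,b]$, so they literally contribute nothing. There is no tail to control. Your $3\epsilon$-argument is a device imported from the partial-sum projection (Section~\ref{subsec:proof-heavytails}) where small-norm contributions really do add up; it is wasted work here.

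Second, the justification of your truncation step contains a sign-confusion that would need fixing even if you kept it. You write that coordinates $x^i_j$ with small norm correspond to ``atoms with $\|\tilde\Proj(\tilde x^i)\|_\infty$ small, i.e.\ near $\tilde\infty$''. But $\|\tilde\Proj(\tilde x^i)\|_\infty = \bigl(\inf_j\|x^i_j\|\bigr)^{-1}$, so small $\inf_j\|x^i_j\|$ gives \emph{large} $\|\tilde\Proj(\tilde x^i)\|_\infty$, i.e.\ far from $\tilde\infty$ — these are precisely the atoms that \emph{matter} for the running infimum, and they form a bounded set, so boundedly-finiteness controls them directly. Conversely, atoms near $\tilde\infty$ have $\inf_j\|x^i_j\|$ large and are irrelevant for $h(\gamma)$.

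A further detail worth attending to (which the paper does and your sketch does not): getting $h(\gamma_n)(a)\to h(\gamma)(a)$ at the left endpoint requires a separate small argument, since the atom realising $h(\gamma)(a)$ may sit strictly before $a$. The paper handles this by applying the shadow-atom argument again on a bounded rectangle $[t_i-\epsilon,a]\times\tilde B_{\alpha+\epsilon}$ with $\gamma$-null boundary.
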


\begin{proof}
Assume that $\gamma_n\rightarrow_{w^\#}\gamma=\sum_{i=1}^\infty \delta_{(t_i, \tilde \x^i)}\in\Lambda$. Let $0<a<b\in \{t_i\colon i\in\N\}^c$,  
we need to verify that
$$
m(\Gamma_{a,b,\ux^E_n},\Gamma_{a,b, \ux^E} )+d_{\tilde D}\left({\underline x_n}_{a,b}^{\tilde D},\underline x_{a,b}^{\tilde D}\right) \to 0,
$$
where $\Gamma_{a,b,\ux^E}$ and ${\underline x}_{a,b}^{\tilde D}$ denote the restrictions to the time interval $[a,b]$ of the respective objects.
Let $\alpha=h(\gamma)(a)\in (0,\infty)$ and  consider the bounded set $\tilde B_\alpha=\{\tilde \x\in \tilde l_\infty \colon \|\tilde \Proj(\tilde \x)\|^{-1}_\infty <\alpha \}$ of $\tilde l_\infty\setminus\{\tilde\infty\}$. There are finitely many $t_i$ such that $(t_i, \tilde \x^i)\in [a,b]\times \tilde B$. 
Let $(t_{i_1}, \tilde \x^{i_1}), \ldots, (t_{i_k}, \tilde \x^{i_k})$ be an enumeration of those points. Then, as in \cite[Proposition~3.13]{R87}, one can show that for $n$ sufficiently large there exists a shadow enumeration $\left(t^{(n)}_{i_1}, \tilde \x^{(n),i_1}\right), \ldots, \left(t^{(n)}_{i_k}, \tilde \x^{(n),i_k}\right)$ of the mass points of $\gamma_n$ in $[a,b]\times \tilde B_\alpha$ so that 
\begin{equation}
\label{eq:point-approx}
t^{(n)}_{i_j}\to t_{i_j}\quad\text{and}\quad\tilde \x^{(n),i_j}\to \tilde \x^{i_j},\quad \text{as $n\to\infty$, for all $j=1,\ldots,k$}
\end{equation}

Let $t_i$ be such that $h(\gamma)(a)=\|\tilde \Proj(\tilde \x^i)\|^{-1}_\infty$. Then applying the same argument to the bounded set $[t_i-\epsilon,a]\times B_{\alpha+\epsilon}$ where $\epsilon$ is chosen so that no mass point of $\gamma$ lies on the border of this set, one obtains that $h(\gamma_n)(a)\to h(\gamma)(a)$, as $n\to\infty$. It follows that 
\begin{equation}
\label{eq:approx-normal}
h(\gamma_n)(t)\xrightarrow[n\to\infty]{} h(\gamma)(t),\text{ $\forall t\in[a,b]\setminus\left(\bigcup_{n\in\N}\left\{t^{(n)}_{i_1},\ldots, t^{(n)}_{i_k}\right\}\cup\{t_{i_1}, \ldots,t_{i_k}\}\right)$}.
\end{equation}
The convergence stated in \eqref{eq:point-approx} regarding the quotient space $\tilde l_\infty\setminus\{\tilde\infty\}$ means that we can take representatives for which the convergence is obtained componentwise, which allows us to choose corresponding representatives on the space of excursions $\tilde D([0,1], \R)$ for which the following holds 
\begin{align}
&e_{\gamma_n}^{t^{(n)}_i}(t)\to e_{\gamma}^{t_i}(t)\quad\text{as $n\to\infty$, for all $t\in [0,1]$, which, in turn, implies}\label{eq:excurs-approx}\\
&\inf\left\{e_{\gamma_n}^{t^{(n)}_i}(t)\colon t\in[0,1]\right\}\xrightarrow[n\to\infty]{} \inf\{e_{\gamma}^{t_i}(t)\colon t\in[0,1]\},\label{eq:excurs-inf-approx}
\end{align}

From \eqref{eq:point-approx}, \eqref{eq:excurs-inf-approx} and \eqref{eq:approx-normal}, it follows that $m(\Gamma_{a,b,\ux^E_n},\Gamma_{a,b, \ux^E} )\xrightarrow[n\to\infty]{}0.$ In a similar way, \eqref{eq:point-approx}, \eqref{eq:excurs-approx} and \eqref{eq:approx-normal} imply that $d_{\tilde D}\left({\underline x_n}_{a,b}^{\tilde D},\underline x_{a,b}^{\tilde D}\right)\xrightarrow[n\to\infty]{}0$.
\end{proof}

\begin{lemma}
\label{lem:finite-time-approx}
We have that $d_{F',\infty}(Z_n, \Upsilon(N_n))\to0$ as $n\to\infty$, in probability, where $N_n$ is defined in \eqref{eq:point-process-def}
\end{lemma}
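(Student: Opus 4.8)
The statement compares two random elements of $F'((0,\infty),\R)$: the embedded extremal process $Z_n$ given by \eqref{eq.Z-process} (viewed in $F'$ via \eqref{eq:embedding-F}) and the image $\Upsilon(N_n)$ of the cluster point process under the map $\Upsilon$ just defined. The plan is to show these agree up to an error vanishing in probability by exhibiting, for each $n$, an explicit reparametrisation under which the two completed graphs and the two $\tilde D$-projections coincide except on a set of blocks whose total contribution is controlled by $\D'_{q_n}$ and the blocking construction. First I would fix a compact time window $[t_1,t_2]$ with $t_1,t_2$ almost surely not hitting the (countable) discontinuity set of the limit — by Remark~\ref{rem:non-compact-domain} it suffices to prove $d_{F'}$-closeness on each such window — and unravel the definitions: $Z_n(t)=u_n^{-1}(M_{\lfloor nt\rfloor})$ is the running minimum (in the $\|\tilde\Proj(\cdot)\|_\infty^{-1}$ coordinate) of the normalised observations $\mathbb X_n^{j}$, while $h(N_n)(t)=\inf\{\|\tilde\Proj(\tilde\pi(\mathbb X_{n,i}))\|_\infty^{-1}\colon i/k_n\le t\}$ is the running minimum of the \emph{block} minima. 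So the first component of $\Upsilon(N_n)$ is exactly $Z_n$ evaluated at the block-grid times $\{ir_n\}$ rather than at $\lfloor nt\rfloor$, and the discrepancy is a time-distortion of magnitude $O(r_n/n)=o(1)$ together with the omission of the $t_n$-gaps and the truncation at $n=k_nr_n$.

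The key steps, in order, would be: (i) record that $\|\tilde\Proj(\tilde\pi(\mathbb X_{n,i}))\|_\infty^{-1}=\min_{(i-1)r_n\le j<ir_n}\|\mathbb X_n^{j}\|$, so $h(N_n)$ is the running minimum over blocks of the same quantities whose running minimum is $u_n^{-1}(M_{\lfloor nt\rfloor})$ — hence on the block-grid the two running minima differ only through (a) the discarded $t_n$ observations between blocks and (b) the final partial block, both negligible; (ii) build an explicit reparametrisation $\lambda_n$ of $[t_1,t_2]$ that maps $ir_n/n$ to $i/k_n$ and interpolates linearly, so $\|\lambda_n - \mathrm{id}\|_\infty = O(\max\{r_n,t_n\}/n)\to 0$, and under this $\lambda_n$ the first component of $Z_n$ matches that of $\Upsilon(N_n)$ up to the ``missing-observation'' error in (i); (iii) handle the excursion/$\tilde D$ part — each discontinuity of $\Upsilon(N_n)$ corresponds to a block in which an exceedance of the relevant threshold occurred, and $\D'_{q_n}$ (via Proposition~\ref{prop:block} and the definition of the piling process) guarantees that within such a block all exceedances lie in a window of length $q_n=o(r_n)$, so after time-rescaling the excursion recorded by $\Upsilon$ and the (trivial, one-step) excursion embedded into $Z_n$ collapse to the same point of $\tilde D$ up to $o(1)$; (iv) combine (ii) and (iii) through the definition \eqref{eq:dF-prime} of $d_{F'}$ (hence $d_{F',\infty}$ via Remark~\ref{rem:non-compact-domain}) and bound the total error in probability using $k_n\p(\W^c_{r_n}(U_n(\tau)))\to\theta\tau$ from Corollary~\ref{cor:useful-limits} together with $k_nq_n\p(U_n(\tau))\to 0$.

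\textbf{Main obstacle.} The delicate point is step (iii): $Z_n$ as embedded in $F'$ carries a \emph{trivial} excursion at each of its jumps (the single-step function from \eqref{eq:embedding-F}), whereas $\Upsilon(N_n)$ records the genuine intra-block oscillation of the running minimum, which may contain several intermediate down-steps before settling. One must therefore argue that, at the level of the $\tilde D$-metric $d_{\tilde D}$, these intra-block excursions are asymptotically invisible — not because the jump structure is trivial, but because after the time-reparametrisation $\lambda_n$ each block occupies a $[0,1]$-interval of vanishing width, so the whole block (excursion included) is swallowed into a single jump of $Z_n$, and the $F'$-distance between ``one jump'' and ``a rapid sequence of jumps compressed into vanishing time'' tends to zero. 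Making this precise requires care with how $\Upsilon$ inserts the $[a_i,a_{i+1}]$ intervals (cf.\ \eqref{eq:projection-Dtilde}): the inserted intervals for blocks \emph{without} an exceedance have vanishing width in the rescaled picture, so they do not interfere, while the finitely many blocks \emph{with} an exceedance are where the real excursion lives and those are precisely the ones that become genuine discontinuities of the limit. The bookkeeping here is essentially the same as in Steps~1 and~4 of the proof of Theorem~\ref{thm:sums-heavy-tails} above, and I would lean on that structure, inserting the extra ingredient that running-minimum functionals are $1$-Lipschitz for the relevant metrics so that controlling the sup-norm discrepancy of the underlying sequences suffices.
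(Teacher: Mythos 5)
Your overall framing is in the right direction — you correctly identify that $h(\Upsilon(N_n))(i/k_n)=Z_n(ir_n/n)$, that the two processes differ by a small time distortion, and that the work splits into the $E$-projection and the $\tilde D$-projection — but step~(iii) of your plan contains a genuine error.  You write that one must show "the $F'$-distance between `one jump' and `a rapid sequence of jumps compressed into vanishing time' tends to zero."  This is precisely the statement that $F'$ is engineered to \emph{refute}: the $\tilde D$-component of $d_{F'}$ compares the jump sequences modulo time-reparametrisation only, so a single jump of height $1$ with trivial excursion and two jumps of height $1/2$ compressed into vanishing time remain at $d_{\tilde D}$-distance at least $1/2$ (and the text after \eqref{eq:dF-prime} spells out exactly this kind of example).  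If your reasoning in step~(iii) were correct, $F'$-convergence would collapse to $E$- (or $M_2$-) convergence and the whole construction of $F'$ would be pointless.

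The correct observation — and the one the paper's proof actually uses — is the opposite: there is no asymptotic smallness to establish for the intra-block excursion, because the excursion $e_{\Upsilon(N_n)}^{i/k_n}$ built into $\Upsilon$ \emph{exactly reproduces} the step-by-step running minimum that $Z_n$ performs inside the block.  Consequently, once the excursion is reinserted into the time line by the projection $\tilde\pi$ from \eqref{eq:projection-Dtilde}, the element $\Upsilon(N_n)^{\tilde D}$ has precisely the same jump sequence as $Z_n^{\tilde D}$ (which is just $Z_n$ itself, since the embedding \eqref{eq:embedding-F} carries trivial excursions), and since $\tilde D$ quotients out time-reparametrisations the $\tilde D$-distance on the core block intervals is \emph{identically zero}, not merely $o(1)$.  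All the probability is concentrated in the boundary bookkeeping (the at most $\max\{k_n,r_n\}$ observations near $a$ and $b$), which is controlled by $\max\{k_n,r_n\}\p(\|\X_0\|>u_n(\tau+\eps))\to 0$ — a cruder estimate than the ones you cite.  In particular, the invocation of $\D'_{q_n}$, Proposition~\ref{prop:block} and the piling process in your step~(iii) is not needed for this lemma and does not appear in the paper's argument.
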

\begin{proof}
The processes $Z_n$ and $\Upsilon(N_n))$ have different internal clocks, but $h(\Upsilon(N_n)))(i/k_n)=Z_n(ir_n/n)$, for all $i\in\N_0$. Observe that  $h(\Upsilon(N_n)))$ is constant between $i/k_n$ and $(i+1)/k_n$, having a possible jump at $(i+1)/k_n$ if $\inf\{u_n^{-1}(\X_{j})\colon\;j=ir_n/n,\ldots, (i+1)r_n/n-1\}<h(\Upsilon(N_n)))(i/k_n)$. Note that when such a jump occurs, $Z_n$ is not necessarily constant between $ir_n/n$ and $(i+1)r_n/n$, namely, it may jump several times corresponding to the several moments $j=ir_n/n,\ldots,(i+1)r_n/n$ at which $u_n^{-1}(\X_{j})$ beats the running minimum until the threshold $\inf\{u_n^{-1}(\X_{j})\colon\;j=ir_n/n,\ldots, (i+1)r_n/n-1\}$ is reached. Nevertheless, these oscillations are recorded by the excursion decorating $\Upsilon(N_n)$ at the discontinuity point $(i+1)/k_n$.   

Note that by definition of $r_n$,
\begin{equation}
\label{eq:time-diff}
\frac i{k_n}\ge \frac{ir_n}{n}\ge  \frac i{k_n}-\frac i n\quad\Rightarrow\quad  \frac i n \ge \frac i{k_n}-\frac{ir_n}{n} \ge 0. 
\end{equation}

Hence, the maximum distance between one instant point of the time interval $[i/k_n,(i+1)/k_n)$ and another from $[ir_n/n,(i+1)r_n/n)$ is at most $\frac{i+1}{n}$.

Let $0<a<b\in\R$, $i^-=\min\{j\in\N_0\colon j/k_n\geq a\}$ and $i^+=\max\{j\in\N_0\colon jr_n/n<b\}$. Note that the range of the graphs $\Gamma_{i^-jr_n/n, i^+jr_n/n, Z_n^E}$ and $\Gamma_{i^-jr_n/n, i^+jr_n/n, \Upsilon(N_n)^E}$ is exactly the same, which means that the Hausdorff distance between the graphs is given by the time component, \ie
$$
m\left(\Gamma_{i^-jr_n/n, i^+jr_n/n, Z_n^E},\Gamma_{i^-jr_n/n, i^+jr_n/n, \Upsilon(N_n)^E}\right)\leq \frac{i+1}{n}\xrightarrow[n\to\infty]{}0.
$$
We only need to worry with the observations corresponding to $j\in\N_0$ such that $a\leq j/n<i^-/k_n$ where the range of the process $Z_n$ may differ from the value of $h(\Upsilon(N_n))(a)=:\tau$, by either being above or below. Note that there are at most $k_n$ such observations. This means that for $|\epsilon|<\tau$, we have that 
$m\left(\Gamma_{a, i^-/k_n, Z_n^E},\Gamma_{a, i^-/k_n, \Upsilon(N_n)^E}\right)>|\epsilon|$ implies that there must be at least one of these possible $k_n$ observations such that $\|\X_j\|>u_n(\tau+\epsilon)$. Since $k_n\p( \|\X_j\|>u_n(\tau+\epsilon))\to 0$, we obtain that $m\left(\Gamma_{a, i^-/k_n, Z_n^E},\Gamma_{a, i^-/k_n, \Upsilon(N_n)^E}\right)$ goes to $0$ in probability. A similar argument applies for the time frame between $i^+jr_n/n$ and $b$. Hence, $m\left(\Gamma_{a, b, Z_n^E},\Gamma_{a, b, \Upsilon(N_n)^E}\right)\xrightarrow[n\to\infty]{}0$, in probability. This takes care of the $E$ component of the distance in $F'$ and we need now to conclude that the same applies to the projections into $\tilde D$.

Let $i^-<i\leq i^+$ and assume that there is a jump of $h(\Upsilon(N_n))$ at $i/k_n$. Recall that while $h(\Upsilon(N_n)))$ is constant between $(i-1)/k_n$ and $i/k_n$, the process $Z_n$ may have oscillations in the corresponding interval $[(i-1)r_n/n, ir_n/n]$. But, by construction, the excursion $e_{\Upsilon(N_n)}^{i/k_n}$ exactly mimics these oscillations of $Z_n$. Observe now that the projection $\Upsilon(N_n)^{\tilde D}$ reconstructs the behaviour of $Z_n$ by incorporating in the time frame the excursions. Then, since the metric $\tilde D$ allows for time deformations, it is clear that the distance between the projections $\Upsilon(N_n)^{\tilde D}$ and $Z_n^{\tilde D}$, when restricted to $[i^-r_n/n, ir_n/n]$ is actually equal to $0$. Again, we are left to analyse the time periods $[a,i^-r_n/n)$ and $[i^+r_n/n,b]$, where some pieces of information may be missing. However, as observed with the projections into $E$, the missing information corresponds to at most $\max\{k_n,r_n\}$ random variables and the probability of these producing oscillations that will not be mimicked is bounded by $\max\{k_n,r_n\}\p( \|\X_j\|>u_n(\tau+\epsilon))\to 0$. Again, we conclude that $d_{\tilde D}\left({\Upsilon(N_n)}_{a,b}^{\tilde D},{Z_n}_{a,b}^{\tilde D}\right) \xrightarrow[n\to\infty]{} 0$, in probability.
\end{proof}
\begin{proof}[Proof of Theorem~\ref{thm:extremal-process}]
By Lemma~\ref{lem:continuous-map}, we may apply the CMT and conclude from Theorem~\ref{thm:point-process-convergence} that $\Upsilon(N_n)$ converges weakly to $\Upsilon(N)$, in $F'((0,+\infty))$. By Lemma~\ref{lem:finite-time-approx} and a Slutsky argument we conclude that $Z_n$ converges weakly to $\Upsilon(N)$, in $F'((0,+\infty))$. Therefore, we are only left to show that $h(\Upsilon(N))$ has the prescribed finite-dimensional distributions of $Z_H$.

For each $\alpha>0$, let $\tilde B_\alpha=\{\tilde \x\in \colon \|\tilde \Proj(\tilde \x)\|^{-1}_\infty <\alpha \}\subset\tilde l_\infty\setminus\{\tilde\infty\}$. For the unidimensional distribution, with $t,y\geq 0$ and $\bar H(\tau)=\e^{-\theta\tau}$, we have
\[\p(h(\Upsilon(N))(t)\geq y)=\p( N([0,t]\times\tilde B_y)=0)=\e^{-\theta t y}=\bar H^t(y).\]

For the bidimensional distribution, with $0\leq t_1<t_2$ and $y_1\geq y_2\geq 0$,
\begin{align*}
\p&\left(h(\Upsilon(N))(t_1)\geq y_1,h(\Upsilon(N))(t_2)\geq y_2\right)=\\
&=\p\left(N([0,t_1]\times B_{y_1})=0,N((t_1,t_2]\times B_{y_2})=0\right)=\bar H^{t_1}(y_1)\bar H^{t_2-t_1}(y_2).
\end{align*}
In case $0\leq y_1<y_2$,
\[\p\left(h(\Upsilon(N))(t_1)\geq y_1,h(\Upsilon(N))(t_2)\geq y_2)=\p(h(\Upsilon(N))(t_2)\geq y_2\right)=\bar H^{t_2}(y_2),\]
so in general
\[\p(h(\Upsilon(N))(t_1)\geq y_1,h(\Upsilon(N))(t_2)\geq y_2)=\bar H^{t_1}(y_1\vee y_2)\bar H^{t_2-t_1}(y_2).\]

By induction we get for the $k$-dimensional distribution
\begin{align*}
\p(h(\Upsilon(N))(t_1)&\geq y_1,\Upsilon(N))(t_2)\geq y_2\ldots, h(\Upsilon(N))(t_k)\geq y_k)=\\
&=\bar H^{t_1}\!\left(\bigvee_{i=1}^k\{y_i\}\!\right)\!\bar H^{t_2-t_1}\!\left(\bigvee_{i=2}^k\{y_i\}\!\right)\!\cdots\!\bar H^{t_k-t_{k-1}}(y_k)\\
&=\p(Z_H(t_1)\geq y_1,\ldots,Z_H(t_k)\geq y_k).
\end{align*}
Regarding the statements about the Markov structure of the process $Z_H$, we give brief indications how to prove them and refer to \cite[Chapter~4.1]{R87} for some useful properties of the theory of Markov processes. 
The fact that $Z_H$ is Markov with the given transition probability follows from the form of the finite-dimensional distributions. Namely, for $0<y<z$,
\begin{align}
\p(Z_H(t+s)\geq y \mid Z_H(s)=z)&=\p(h(\Upsilon(N))(t+s))\geq y \mid h(\Upsilon(N))(s))=z)\nonumber\\
&=\p(N((s,s+t]\times B_{y})=0)=\e^{-\theta t y}.\label{eq:transition}
\end{align}
The exponential holding time parameter at state $z$, which is denoted by $\lambda(z)$, can be derived from
$$
\p(Z_H(t+s)=Z_H(s) \mid Z_H(s)=z)=\p(N((s,s+t]\times B_{z})=0)=\e^{-\theta t z}.
$$
Thus, $\lambda(z)=\theta z.$ The jump distribution $\Pi(z,[0,y))$ can be computed from 
$$
\lim_{t\to0}t^{-1}\p(Z_H(t+s)<y) \mid Z_H(s)=z)=\lambda(z)\Pi(z,[0,y)).
$$
But from \eqref{eq:transition}, for $y<z$, we have $t^{-1}\p(Z_H(t+s)<y) \mid Z_H(s)=z)=t^{-1}(1-\e^{-\theta t y})\xrightarrow[t\to0]{}\theta y$ and therefore $\Pi(z,[0,y))=y/z$.
\end{proof}

\subsection{Proof of Theorem~\ref{thm:record-theorem}}\label{subsec:proof-records}

In order to count the number of records during the clusters we define, for $y\geq 0$ and $\x=(x_j)_j\in l_\infty$, 
\begin{equation}
R^\x(y)=\sum_{j\in\Z}\I_{\{\|x_j\|<(y\wedge \inf _{i<j} \|x_i\|)\}},
\label{eq:R^x}
\end{equation}
which gives the number of record asymptotic frequencies corresponding to the smallest observations in $\x$ that have beaten the  frequency $y$, which must be finite because $\lim_{|j|\to\infty}\|x_j\|=\infty$. For $\gamma=\sum_{i=1}^\infty \delta_{(t_i, \tilde \x^i)}\in \mathcal N_{\R_0^+\times\tilde l_\infty\setminus\{\tilde\infty\}}^\#$, let
$h(\gamma)\in D((0,\infty),\R)$ be defined as in \eqref{eq:running-min} and define the record point process in $ \mathcal N_{(0,+\infty)}^\#$ by
$$
R_\gamma=\sum_{i\in\N} \delta_{t_i} R^{\x^i}(h(\gamma)(t_i^-)), \quad\text{where $\x^i\in l_\infty$ is any chosen representative of $\tilde \x^i$.}
$$
In order to be able to relate to the actual count of the number of records at finite time we consider the empirical process:
$$
N''_n:=\sum_{i=0}^\infty \delta_{\left(\frac i n, u_n^{-1}(\|\X_{i}\|)\frac{\X_{i}}{\|\X_{i}\|}\right)},
$$
which we consider as defined in $\mathcal N_{\R_0^+\times\tilde l_\infty\setminus\{\tilde\infty\}}^\#$ by embedding the second coordinate in $\R^d$ into $l_\infty$ by adding a sequence of $\infty$ before and after that entrance as in \eqref{eq:embedding}. Now, observe that $R_{N''_n}$ coincides with $\mathfrak R_n$ given in \eqref{eq:Record-n-def} and indeed counts the number of records of the process $\X_0,\X_1,\ldots$

Consider the subset of $ \mathcal N_{\R_0^+\times\tilde l_\infty\setminus\{\tilde\infty\}}^\#$ defined by 
$$\Lambda=\left\{\gamma=\sum_{i=1}^\infty \delta_{(t_i, \tilde \x^i)}\colon \; \text{$t_j\neq t_\ell$ and $\|x^i_j\|=\|x^i_\ell\| \Rightarrow j= \ell$ or $\|x^i_j\|=\infty$}\right\}.$$
Since $(0,\infty)$ is locally compact and separable, then  the weak$^\#$ topology in $\mathcal N_{(0,+\infty)}^\#$ coincides with the vague topology and by a trivial adjustment of \cite[Lemma~5.1]{BPS18} one obtains that the map $\gamma\to R_\gamma$ from $\mathcal N_{\R_0^+\times\tilde l_\infty\setminus\{\tilde\infty\}}^\#$ to $\mathcal N_{(0,+\infty)}^\#$ is continuous at every $\gamma\in \Lambda$.

\begin{proposition}
Let $\X_0, \X_1, \ldots$ be a stationary $\R^d$-valued sequence, with tail index $\alpha>0$, for which the point process $N_n$ defined in \eqref{eq:point-process-def} converges weakly in $\mathcal N_{\R_0^+\times\tilde l_\infty\setminus\{\tilde\infty\}}^\#$ to the Poisson point process $N$ given by \eqref{eq:limit-process}. Then, under the same assumptions of Theorem~\ref{thm:record-theorem}, then the conclusion regarding the convergence of $R_{N''_n}$ to $R_{N}$ holds, in  $\mathcal N_{(0,+\infty)}^\#$, and the limit process has the representation given there.
\end{proposition}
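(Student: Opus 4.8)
The plan is to deduce the convergence of $R_{N''_n}$ from the hypothesised convergence $N_n\Rightarrow N$ in three steps: transport the convergence through the record functional $\gamma\mapsto R_\gamma$ to obtain $R_{N_n}\Rightarrow R_N$; check that the block-scale record process $R_{N_n}$ and the genuine record process $R_{N''_n}$ are asymptotically equal in $\mathcal N_{(0,+\infty)}^\#$; and identify the law of $R_N$ as the stated compound Poisson process.

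\emph{Transfer via the Continuous Mapping Theorem.} First I would check that $N\in\Lambda$ almost surely. The times $T_i$ are a.s.\ pairwise distinct, since $\sum_i\delta_{(T_i,U_i)}$ is a Poisson process on $\R_0^+\times\R_0^+$ with the diffuse intensity $\theta\,\leb\times\leb$; and the hypothesis inherited from Theorem~\ref{thm:record-theorem}, $\p(\mbox{all finite }Q_j\mbox{'s are mutually different})=1$, together with the i.i.d.\ nature of the decorations $(\tilde{\mathbf Q}_i)_i$ in \eqref{eq:limit-process}, forces the finite coordinates of each $U_i\mathbf Q_i$ to have pairwise distinct norms; hence $N\in\Lambda$ a.s. By the continuity of $\gamma\mapsto R_\gamma$ on $\Lambda$ (the trivial adaptation of \cite[Lemma~5.1]{BPS18} recalled above) and the Continuous Mapping Theorem,
\[
R_{N_n}=\sum_i\delta_{i/k_n}\,R^{\mathbb X_{n,i}}\!\big(h(N_n)((i/k_n)^-)\big)\ \Longrightarrow\ R_N\qquad\text{in }\mathcal N_{(0,+\infty)}^\#.
\]

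\emph{The true and block record processes agree asymptotically.} Since $(0,\infty)$ is locally compact the weak$^\#$ and vague topologies on $\mathcal N_{(0,+\infty)}^\#$ coincide; let $d$ metrise them. The block decomposition $\mathbb X_{n,1},\mathbb X_{n,2},\dots$ is made of consecutive length-$r_n$ blocks, so $h(N_n)((i/k_n)^-)=\min_{0\le m<(i-1)r_n}u_n^{-1}(\|\X_m\|)$ for $i\ge2$, and unwinding the definition of $R^{\x}$ one sees that the coordinate of $\mathbb X_{n,i}$ at index $\ell\in[(i-1)r_n,ir_n)$ is counted by $R^{\mathbb X_{n,i}}(h(N_n)((i/k_n)^-))$ precisely when $u_n^{-1}(\|\X_\ell\|)<\min_{m<\ell}u_n^{-1}(\|\X_m\|)$, i.e.\ precisely when $\ell$ is a genuine record. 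Thus $R_{N_n}$ places, for every record index $\ell$ in a block $i\ge2$, exactly one atom at $i/k_n$, while $R_{N''_n}=\sum_{\ell:\,\|\X_\ell\|>M_\ell}\delta_{\ell/n}$ places the same atom at $\ell/n$, and $|\ell/n-i/k_n|\le(k_n+r_n)/n\to0$ uniformly in the relevant range; the only discrepancy is the finitely many records of the first block, which sit at times $\le r_n/n\to0$ and so escape $(0,\infty)$. Consequently, for any $0<a<b$ which are a.s.\ not atoms of $R_N$ (automatic, as $(\tau_i)_i$ is a Poisson point process), the restrictions of $R_{N''_n}$ and $R_{N_n}$ to $[a,b]$ have, with probability tending to one, the same total mass with atoms matched up to $o(1)$; hence $d(R_{N''_n},R_{N_n})\to0$ in probability, and combined with the previous step a Slutsky argument yields $R_{N''_n}\Rightarrow R_N$.

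\emph{Identification of the limit.} It remains to recognise $R_N=\sum_i\delta_{T_i}\,R^{U_i\mathbf Q_i}\!\big(h(N)(T_i^-)\big)$ as the asserted compound Poisson process, which I would do following the record calculus for Poisson processes of \cite[Section~5]{BPS18} (see also \cite[Chapter~4.1]{R87}). The running minimum $h(N)(t)=\inf\{U_i:T_i\le t\}$ jumps exactly at the $T_i$ with $U_i<\inf\{U_\ell:T_\ell<T_i\}$; since $\p(h(N)(t)>y)=\e^{-\theta t y}$, the intensity of these record times at $t$ is $\theta\,\E[h(N)(t)]=\theta\cdot(\theta t)^{-1}=t^{-1}$, so $(\tau_i)_i$ is a Poisson point process on $(0,\infty)$ with intensity $x^{-1}dx$; moreover the flatness of the $u$-marginal makes $U_i/h(N)(\tau_i^-)$ uniform on $[0,1]$ at each such jump, these ratios being i.i.d.\ across $i$ and independent of the decorations $\mathbf Q_i$. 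By the homogeneity $R^{U_i\mathbf Q_i}(y)=R^{\mathbf Q_i}(y/U_i)$ (and $R^{\mathbf Q_i}(y)=0$ for $y\le1$, so only record clusters contribute an atom), the mark at $\tau_i$ equals $R^{\mathbf Q_i}(U^{-1})$ with $U$ uniform on $[0,1]$ independent of $\mathbf Q_i$; as the $\mathbf Q_i$ are i.i.d.\ with law \eqref{eq:Z-polar} and independent of the Poisson structure, the marks $\kappa_i$ are i.i.d.\ with the law of $R^{\mathbf Q}(U^{-1})$ and independent of $(\tau_i)_i$. This gives the claimed representation. The main obstacle here — and the reason we lean on \cite[Section~5]{BPS18} — is precisely this record-calculus identification (the $x^{-1}dx$ intensity of the record times and the i.i.d./independence structure of the marks); by contrast the book-keeping of the previous step is routine once one observes that the block decomposition leaves no gaps.
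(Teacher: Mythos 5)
Your proof follows essentially the same approach as the paper's, which defers the convergence part to ``trivial adjustments to the proof of \cite[Theorem~5.2]{BPS18}'' (i.e., exactly your CMT-transfer plus block-vs-empirical comparison) and then concentrates on the representation of $R_N$. Your identification step is structurally identical to the paper's, differing only in that you compute the $x^{-1}\,dx$ intensity of record times directly as $\theta\,\E[h(N)(t)]$ whereas the paper cites \cite[Theorem~5.7]{FFM20}; both routes are valid.
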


Together with Theorem~\ref{thm:complete-convergence}, this concludes the proof of Theorem~\ref{thm:record-theorem}.

We elaborate a bit further on the representation of the limit record point process, which in this case is given in a more natural way through the use of the variable $U$ with a uniform distribution. Namely, the limiting process $R_{N}$ can be written as $\mathfrak R$, in \eqref{eq:Record-def}, where the $\kappa_i$ have the same distribution as the integer-valued random variable 
   $R^{\mathbf Q} (U^{-1})$, defined as in \eqref{eq:R^x}, where $U$ is a uniformly distributed random variable independent of $\mathbf Q=(Q_j)_j$. The rest of the proof of this proposition follows from trivial adjustments to the proof of \cite[Theorem~5.2]{BPS18}.

Let $M=\sum_{i=1}^\infty \delta_{(T_i,U_i)}$ be the Poisson point process on $\R_0^+\times \R_0^+$ defining the point process $N$ in \eqref{eq:limit-process}. Recall that $\|Q_j\|\geq 1$ for all $j$, but there exists one index $j$ for which the equality holds. Let $(\tau_n)_{n\in\Z}$ denote the ordered sequence of record times of $M$ counted by $R_M$. For definiteness, fix a certain $s>0$ and assume that $\tau_1$ is the first record time larger than $s$, \ie $\tau_1=\inf\{\tau_j\colon \tau_j>s\}$, and now denote the respective record frequencies by $Y_n=\inf_{T_i\leq \tau_n} U_i$, with $Y_0=\inf_{T_i\leq s} U_i$. The fact that $\sum_{n\in\Z}\delta_{\tau_n}$ is a Poisson process with intensity $x^{-1}\,dx$ follows for example from \cite[Theorem~5.7]{FFM20}. Note that for all $n$ we have $Y_n/Y_{n-1}<1$ and, in fact, from the jump distribution $\Pi$ computed in Theorem~\ref{thm:extremal-process} we obtain that $(Y_n/Y_{n-1})_{n\in \N}$ is a sequence of independent and uniformly distributed random variables. 

The number of records of $R_{N}$ observed at $\tau_n=T_i$, say, corresponds to the number of $j$'s for which $Y_n\|Q_{i,j}\|<(Y_{n-1}\wedge \inf_{\ell<j}\{Y_n\|Q_{i,\ell}\|\})\Leftrightarrow \|Q_{i,j}\|<(Y_{n-1}/Y_n\wedge \inf_{\ell<j}\{\|Q_{i,\ell}\|\})$, \ie is equal to $R^{\mathbf Q_i}(Y_{n-1}/Y_n)$, where $\mathbf Q_i=(Q_{i,j})_j$ is independent of the sequence $(Y_n/Y_{n-1})_{n\in \N}$ for all $i$. Since $s$ was arbitrary, the given representation of $R_N$ holds.  

\subsection{Point processes encompassing inducing}
\label{subsec:inducing}
Inducing is a very powerful technique used to study statistical properties of non-uniformly hyperbolic systems (see \cite{A20}, for example). 
Let $T\colon\mathcal X\to\mathcal X$ be a dynamical system defined on a Lebesgue space $(\mathcal X, \mathcal B_\mathcal X, m)$. Let $\Delta_0\in \mathcal B_\mathcal X$ be a base set such that $m(\Delta_0)>0$. Let $\mathcal P$ be a countable$\mod0$ partition of $\Delta_0$, such that, for all $\omega\in \mathcal P$, $T|\omega$ is invertible and there exists a return time function $R:\Delta_0\to\N$, constant in each element of $\mathcal P$ and such that $T^{R(\omega)}(\omega)$ is a union elements of $\mathcal P$. We define the induced system $\bar T=T^R\colon \Delta_0\to\Delta_0$ by $\bar T(x)=T^{R(x)}(x)$. It is well known that if $\mu_{0}$ is a $\bar T$-invariant probability measure, then, assuming that $R\in L^1(\mu_0)$ (which we do throughout), its saturation defined by $\mu (A) =\sum_{j=0}^\infty \mu_{0}(\{R>j\}\cap T^{-j}(A))$ is $T$-invariant. Note that $R$ is not necessarily the first return to $\Delta_0$. We consider a measurable observable function $\varphi:\mathcal X\to\R$ and consider the potentials $\bar\varphi, \bar\Phi:\Delta_0\to\R$ defined by 
$$
\bar\varphi(x)=\max_{j=0,\ldots, R(x)-1}|\varphi(T^j(x))|\qquad \bar\Phi(x)=\sum_{j=0,\ldots, R(x)-1}|\varphi(T^j(x))|.
$$
We start by considering the induced stochastic process $X_0, X_1,\ldots$ such that $X_j=\bar\varphi\circ \bar T^j$. Assume the existence of normalising levels $(u_n(\tau))_{n\in\N}$ as in \eqref{un}, \ie such that $\lim_{n\to\infty}n\mu_0(\bar\varphi>u_n(\tau))=\tau$, for all $\tau>0$. Then, for each $n\in\N$, define $F_n:\mathcal X\to \R$ as
$$
F_n(x)=u_n^{-1}(|\varphi(x)|)\frac{\varphi(x)}{|\varphi(x)|}.
$$
Let $d=1$ and define $l_\infty, l_0, \tilde l_\infty, \tilde l_0$ as above. For $\x\in\tilde l_0$, define $|\tilde \x|'_\infty :=\|\x\|_\infty$, where $\x\in l_0$ is such that $\tilde \x=\tilde \pi(\x)$. For $\tilde \x\in\tilde l_\infty$, set $|\tilde \x|_\infty:=|\tilde P(\tilde\x)|'_\infty$. Define the product space
\begin{equation}
{l_\infty}^*=\left\{ \mathbf{\underline x}=(\tilde{\mathbf x}_j)_j\in {\tilde l_\infty}^{\Z}\colon\;\lim_{|j|\to\infty}|\tilde \x_j|_\infty=0\right\}\end{equation}
For 
$\mathbf{\underline x} \in {l_\infty}^*$, we define 
$$
|\mathbf{\underline x}|^*:=\sup_{j\in\Z}|\tilde{\mathbf x}_j|_\infty.
$$
and the complete metric defined on  $l_\infty^*\setminus \{\underline \infty\}$ ($\underline \infty$ denotes the only element of ${l_\infty}^*$ such that $|\underline \infty|^*=0$) given by:
$$
{d}^*(\mathbf{\underline x},\mathbf{\underline y})=\left(\sup_{j\in\Z}\tilde d(\tilde{\textbf x}_j,\tilde{\textbf y}_j)\wedge 1\right)\vee\left|\frac1{|\mathbf{\underline x}|^*}-\frac1{|\mathbf{\underline y}|^*}\right|.
$$ 
We define the quotient spaces $\tilde l_\infty^*$ 
and the respective metric $\tilde {d}^*$, 
 accordingly. We embed the finite product space ${\tilde l_\infty}^{n}$ into the infinite product space ${l_\infty}^*$ simply by adding a sequence of $\tilde \infty$ before and after the $n$ entrances of the elements of ${\tilde l_\infty}^{n}$ and write $\tilde \pi^*((\tilde \x_1,\ldots,\tilde \x_n))\in \tilde l_\infty^*$ for the projection of the natural embedding of $(\tilde \x_1,\ldots,\tilde \x_n)$ into $l_\infty^*$ to the quotient space $\tilde l_\infty^*$. 
 
We define now the new point process $N_n^*$ as a random element of $\mathcal N_{\R_0^+\times\tilde l_\infty^*\setminus \{\underline \infty\}}^\#$ of boundedly finite point measures on $\R_0^+\times\tilde l_\infty^*\setminus \{\underline \infty\}$. 
For $j\in\N_0$, we consider
$$
\tilde{\mathbf X}_j=\tilde\pi\left(F_n(\bar T^{j}(x))),F_n(T(\bar T^{j}(x))), 
\ldots ,F_n(T^{R(\bar T^{j}(x))-1}(\bar T^{j}(x)))\right)\in {\tilde l_\infty}
$$
and define
\begin{equation}
\label{eq:point-process-induced}
N_n^*=\sum_{i=1}^\infty\delta_{\left(i/k_n,\tilde\pi^*\left(\tilde{\mathbf X}_{(i-1)r_n},\ldots,\tilde{\mathbf X}_{ir_n-1} \right)\right)}.
\end{equation}
Note that these point processes have in the time direction the information gathered by the induced dynamics $\bar T$ (as with $N_n$ defined in \eqref{eq:point-process-def}) and, at each recorded time event, $\tilde{\mathbf X}_j$ includes the information regarding the excursion performed during the induced time by the original dynamics $T$.
In order to obtain the convergence of these point processes, we need to adapt the previous conditions and definitions, which assumed that $\V=\R^d$ to the present situation where $\V=\tilde l_\infty$. In particular, for the definition of the transformed anchored tail process, we assume the existence of a process $(\tilde {\mathbf Y}_j)_{j\in\Z}\in l_\infty^*$ satisfying the following assumptions: 
$$\mathcal L\left(\frac1\tau \left(\tilde{\mathbf X}_{r_n+s},\ldots,\tilde{\mathbf X}_{r_n+t} \right)\;\middle\vert\; |\tilde \X_{r_n}|_{\infty}>\tau^{-1} \right)\xrightarrow[n\to\infty]{}\mathcal L\left((\tilde {\mathbf Y}_j)_{j=s,\dots,t}\right),$$ 
for all $s<t\in\Z$ and all $\tau>0$. Note that 
\begin{align*}
&|\tilde \X_{r_n}|_{\infty}>\tau^{-1}  
\Leftrightarrow  \max_{j=0,\ldots, R(\bar T^{r_n}(x))-1}1/u_n^{-1}(|\varphi(T^j(\bar T^{r_n}(x)))|)>\tau^{-1} \\
& \Leftrightarrow  \min_{j=0,\ldots, R(\bar T^{r_n}(x))-1}u_n^{-1}(|\varphi(T^j(\bar T^{r_n}(x)))|)<\tau
\Leftrightarrow \bar\varphi(\bar T^{r_n}(x))>u_n(\tau).
\end{align*}
The transformed anchored tail process is then defined by
\begin{equation}
\label{eq:piling-induced}
\mathcal L\left((\tilde{\textbf{Z}}_j)_{j\in\Z} \right)=\mathcal L\left((\tilde{\textbf Y}_j)_{j\in\Z}\;\middle\vert\; \sup_{j\leq -1}|\tilde{\textbf Y}_j|_\infty\leq 1\right),
\end{equation}
whose spectral decomposition is given by
\begin{equation}
\label{eq:Z-polar-special}
L_Z= \left(\sup_{j
\in\Z}|\tilde{\mathbf Z}_j|_\infty\right)^{-1}, \qquad \tilde{\textbf Q}_j=\frac{\tilde{\mathbf Z}_j}{L_Z}, \quad\text{for all}\quad j\in\Z.
\end{equation}
Let $\underline{\textbf Q}=(\tilde{\textbf Q}_j)_{j\in\Z}$, which can be seen as a random element in $\mathbb S^*=\{\underline \x\in \tilde l_\infty^*\colon |\underline \x|^*=1\}$ (and thus of the form $(\textbf Q_{j, \kappa})_{j\in \Z, \kappa\in \Z}$).
Observe also that conditions $\D_{q_n}$, $\D'_{q_n}$ also need to be adapted. Namely, the sets $H_j\in \mathcal F_\V$ in the definition of $\mathscr F$ in \eqref{eq:A-field} have to be taken as sets of the form $\tilde H_j$ as defined in \eqref{eq:A-tilde}.
\begin{proposition}
\label{prop:point-process-convergence-induced}
Let $T:\mathcal X \to \mathcal X$ be a dynamical system  and consider an observable function $\varphi\colon\mathcal X\to\R$. We assume that $T$  admits an induced system $\bar T:\Delta_0\to\Delta_0$ as described above and such that $\bar T$ is uniformly expanding. We assume that 
the adapted conditions $\D_{q_n}$, $\D'_{q_n}$ are satisfied and, moreover, the transformed anchored tail process given in \eqref{eq:piling-induced} is well defined. Consider the $N_n^*$ defined as in \eqref{eq:point-process-induced}, then $N_n^*$ converges weakly in $\mathcal N_{\R_0^+\times\tilde l_\infty^*\setminus \{\underline \infty\}}^\#$ to the Poisson point process $N^*$, which can be written as:
\begin{equation}
\label{eq:limit-process-induced}
N^*=\sum_{i=1}^\infty \delta_{(T_{i}, U_{i}\underline{\textbf Q}_i)},
\end{equation} 
where $(T_{i})_{i\in\N}$ and  $(U_{i})_{i\in\N}$ are such that $\sum_{i=1}^\infty\delta_{(T_i,U_i)}$ is a bidimensional  Poisson point process on $\R_0^+\times \R_0^+$ with intensity measure $\leb\times\theta\,\leb$ and  $(\underline{\textbf Q}_i)_{i\in\N}$ is an \iid  sequence of random elements in $\mathbb S^*$ such that each $\underline{\textbf Q}_i$ has a distribution given by \eqref{eq:Z-polar-special}. All the sequences $(T_{i})_{i\in\N}$, $(U_{i})_{i\in\N}$ and $(\underline{\textbf Q}_i)_{i\in\N}$ are mutually independent. 

\end{proposition}

The proof of this result follows as in the proof of Theorem~\ref{thm:point-process-convergence} with the necessary straightforward adjustments. 

\subsubsection{Functional limit theorem with clustering through inducing}

The point process convergence given in Proposition~\ref{prop:point-process-convergence-induced} allows to easily obtain the conclusions of Theorems~\ref{thm:extremal-process} and \ref{thm:record-theorem} as they were obtained in Sections~\ref{subsec:proof-extremal} and \ref{subsec:proof-records} from the convergence of the point processes stated in Theorem~\ref{thm:point-process-convergence}. 

However, in order to obtain the conclusions of Theorem~\ref{thm:sums-heavy-tails} from the convergence of the point processes, as we did in Proposition~\ref{prop:convPP=>convSum}, one needs to add an extra condition to ensure that projection of the information registered during the inducing period does not pile up to create discontinuities in the sum. Essentially, we need to forbid the accumulation of many very small contributions which add up to have a significant impact on the sums: the sums should be mostly influenced by heavy tailed observations corresponding to entrances of the orbit near the set $\mathcal M$, where $\varphi$ is maximised.  Namely, we assume that, for every $j\in\Z$,
\begin{equation}
\label{eq:condition-induced}
\mathcal L\left(\tfrac{\bar\Phi\circ \bar T^{r_n+j}}{\tau^{-1/\alpha}a_n} \middle\vert \left\{\bar\varphi\circ\bar T^{r_n}>u_n(\tau)\right\},\bigcap_{i=1}^{q_n} \left\{\bar\varphi\circ \bar T^{r_n-i}\leq u_n(\tau)\right\}\right)
\to\mathcal L\left( \Sigma\left(\tilde \Xi(\tilde{\mathbf{Z}}_j)\right)\right)
\end{equation}
where $a_n$ is as in \eqref{eq:heavy-normalisation} and the function $\Sigma\colon \tilde l_0\to\R$, to each $\x\in\tilde l_0$, assigns $\Sigma(\tilde \x):=\sum_{j\in\Z}x_j$, where $\x=(\ldots,x_{-1},x_0,x_1,\ldots)\in l_0$ is such that $\tilde \x=\tilde \pi(\x)$.
This condition guarantees some sort of tightness so that the aggregate effect of the excursions performed during the induced periods is completely captured by the sum of the entrances of each component, $\tilde{\mathbf{Z}}_j$, of the transformed anchored tail process.

In order to illustrate the adjustments needed to be performed to obtain the conclusions of Theorems~\ref{thm:sums-heavy-tails}, \ref{thm:extremal-process} and \ref{thm:record-theorem} from the point process convergence stated in Proposition~\ref{prop:point-process-convergence-induced}, we will consider the most complicated case regarding the functional limit for sums.   The excursions here have two levels: one corresponding to the clustering observed for the induced system (see the middle term of \eqref{eq:excursion-induced} below) and the digression performed during the induced period (see the last term of \eqref{eq:excursion-induced}): our time parametrisation $\h(t)$ thus runs through the contributions from the induced observable, which are added when $\h(t)$ is an integer, and the excursions during the inducing time, which are added when $\h(t)$ is between integers.

\begin{theorem}
\label{prop:convPP=>convSum-induced}
Let $T:\mathcal X \to \mathcal X$ be a dynamical system  as in Proposition~\ref{prop:point-process-convergence-induced} such that $N_n^*$,  defined as in \eqref{eq:point-process-induced}, converges weakly in $\mathcal N_{\R_0^+\times\tilde l_\infty^*\setminus \{\underline \infty\}}^\#$ to the Poisson point process $N^*$, described in \eqref{eq:limit-process-induced}. Let $\mathcal M\subset \mathcal X$ be such that $\mu(\mathcal M)=0$ and $\varphi\colon\mathcal X\to\R$ be such that $\varphi(x)=g(\dist(x,\mathcal M))$, where $g$ is of type $g_2$ and condition \eqref{eq:heavy-normalisation} holds, with $0<\alpha<1$. Assume also that condition \eqref{eq:condition-induced} holds. Then $S_n$ defined in \eqref{eq:sum} converges, in $F'$, to $\underline V:=(V, disc(V),\{e_{V}^s\}_{s\in disc(V)})$, where $V$ is an $\alpha$-stable L\'evy process on $[0,1]$ which can be written as
\begin{align*}
V(t)&=\sum_{T_i\leq \mu(\Delta_0) t}\sum_{j\in\Z} U_{i}^{-\frac1\alpha}\Sigma(\tilde{\mathcal{Q}}_{i,j})
\end{align*}
and the excursions  can be represented for $t\in[0,1]$ by
\begin{equation}
\label{eq:excursion-induced}
e_{V}^{T_i}(t)=V(T_i^-)+U_{i}^{-\frac1\alpha}\sum_{j<\left\lfloor \h(t)\right\rfloor}\Sigma(\tilde{\mathcal{Q}}_{i,j})+U_{i}^{-\frac1\alpha}\sum_{\kappa\leq\left\lfloor \h\left(\h(t)-\left \lfloor \h(t)\right\rfloor \right)\right\rfloor}{\mathcal{Q}}_{i,\left\lfloor \h(t)\right\rfloor,\kappa}, 
\end{equation}
where $({\mathcal{Q}}_{i,j,\kappa})_{i\in\N, j\in\Z,\kappa\in\Z}$ is a representative of  $\tilde{\mathcal{Q}}_{i,j}=\tilde \Xi (\tilde Q_{i,j})$ and $(\tilde Q_{i,j})_{i\in\N, j\in\Z}$, $(T_i)_{i\in\N}$, $(U_i)_{i\in\N}$ are as in Proposition~\ref{prop:point-process-convergence-induced}.
\end{theorem} 

\begin{remark}
This result can be generalised to include the cases $1\leq \alpha\leq 2$ and $d>1$ as we did earlier.
\end{remark}

\begin{proof}
We start by defining a projection  $\Upsilon\colon\mathcal N_{\R_0^+\times\tilde l_\infty^*\setminus \{\underline \infty\}}^\#\to F'([0,1], \R^d)$. Suppose we are given $\gamma=\sum_{i=1}^\infty \delta_{(t_i, \underline\x_i)}\in \mathcal N_{\R_0^+\times\tilde l_\infty^*\setminus \{\underline \infty\}}^\#$.  At time $t_i$ we have $\underline\x_i \in \tilde l_\infty^*$: let $(\ldots, \tilde \x_{i,-1},\tilde \x_{i,0},\tilde \x_{i,1},  \ldots)$ be a representative of this in $l_\infty^*$ and for each $i,j\in\Z$ let  $(\ldots, x_{i,j,-1},x_{i,j,0},x_{i,j,1},  \ldots)$ be a representative of $\tilde \x_{i,j}$ in $l_\infty$. Define 
$$\mathring{e}_x^{t_i}(t)=\sum_{j=-\infty}^{\left\lfloor \h(t)\right\rfloor-1}{\Sigma(\tilde\Xi(\tilde\x_{i,j}))}+\sum_{\kappa\leq\left\lfloor \h\left((\h(t)-\left \lfloor \h(t)\right\rfloor)-\frac12\right)\right\rfloor}{\xi(x_{i,\left\lfloor h(t)\right\rfloor,\kappa})}, \; x(t)=\sum_{t_i\leq t}\mathring{e}_x^{t_i}(1)$$
and $S^x=\{t_i\}_i.$ Finally, set $e_x^{t_i}=x(t_i^-)+\mathring{e}_x^{t_i}$ and let $\Upsilon(\gamma)=(x,S^x, \{e_x^{t_i}\}_{i})$.  
Following the same steps as in the proof of Proposition~\ref{prop:convPP=>convSum} and using condition \eqref{eq:condition-induced} to guarantee non-degeneracy of the limits of the sums calculated during the induced excursions we obtain that 
$\Upsilon(N_n^*)=\underline{\bar V}_n=(\bar V_n, \text{disc}(\bar V_n), \{e^s_{\bar V_n}\})$ converges in $F'$ to $\Upsilon(N^*)=\underline{\bar V}=(\bar V, \text{disc}(\bar V), \{e^s_{\bar V}\})$, where
$$
\bar V(t)=\sum_{T_i\leq t}\sum_{j\in\Z} U_{i}^{-\frac1\alpha}\Sigma(\tilde{\mathcal{Q}}_{i,j}),
$$
and the excursions (defined in $\tilde D$, which allows time deformation) are as in \eqref{eq:excursion-induced}.

We define the $n$-the return time $R_n\colon\Delta_0\to\N$ by $R_n(x)=\sum_{j=0}^{n-1}R\circ \bar T^j(x)$ and the occupation times $N_n\colon \Delta_0\to\N$ by $N_n(x)=\max\{j\in\N_0\colon \;R_j\leq n\}$. By the ergodic theorem, we have that $R_n/n\to\int R \;d\mu_0=\mu(\Delta_0)^{-1}$, a.e. As a consequence, we obtain a strong law for the renewal process, which implies that $N_n/n\to\mu(\Delta_0)$, a.e.  and, in fact, we have $\sup_{t\in[0,1]}|N_{\lfloor tn\rfloor}/n-t\mu(\Delta_0)|\xrightarrow[n\to\infty]{} 0$, a.e. 

We produce a time change to the process $\underline{\bar V}_n$ in order to approximate $S_n$. Namely, we consider the sequence  random elements of $F'$ denoted by $\underline V_n=(V_n, \text{disc}(V_n), \{e^s_{V_n}\})$, where $V_n(t)=\bar V_n(\mu(\Delta_0) t)$, $s\in \text{disc}(V_n)$ iff $\mu(\Delta_0)s\in\text{disc}(\bar V_n)$ and $e^s_{V_n}=e^{\mu(\Delta_0)s}_{\bar V_n}$. Note that clearly, $\underline V_n$ converges in $F'$ to $\underline V$ given in the statement of the proposition.

By a Slutsky argument, the conclusion follows once we prove that 
$$
d_{F'}(S_n, \underline V_n)\xrightarrow[n\to\infty]{}0\quad \text{in probability}.
$$
Note that the excursions of $\underline V_n$ keep track of all the oscillations of $S_n$, except for a possible discrepancy near $t=1$, where there may be a lack or excess of data. Away from $t=1$, for $n$ large enough, since $\tilde D$ allows for time deformation, the distance in $F'$ comes from the projection into the $E$ component. But since the excursions keep all the information, then the range of values in vertical direction will coincide and the distance between the graphs will result from the time component. The time correction needed to align the graphs comes from two sources. One is deterministic and results from the fact that the clock of $S_n$ moves at steps $1/n$ while the clock of $\underline{\bar V_n}$ has a step size of $1/k_n$. This means we need to match intervals $[ir_n/n,(i+1)r_n/n)$ and $[i/k_n,(i+1)/k_n)$, whose time gap is bounded by $2k_n/n$. In fact, $ir_n/n\leq i/k_n$ and $i/k_n-ir_n/n\leq i/k_n\left(1-\lfloor n/k_n\rfloor/(n/k_n)\right)\leq k_n/n.$ The second is random and depends on the distance between $N_{\lfloor nt\rfloor}/n$ and $\mu(\Delta_0)t$, which converges uniformly in $t$ to 0, $\mu$-a.e. Near $t=1$, we have to consider also possible mismatches resulting from the fact that $n-k_nr_n=\O(1/k_n)$, which means that, besides the time contributions for $d_{F'}$ already considered earlier, we need to consider differences in the vertical range that are bounded by $\sum_{j=k_nr_n+1}^n \frac{\bar\Phi(\bar T^{j}(x))}{a_n}$, which converges to $0$ in probability. 
\end{proof}

\subsubsection{Application to the  Manneville-Pomeau case}
We apply these tools to the LSV maps $T_\gamma$ defined in \eqref{eq:LSV}. Recall that, for $\gamma\in(0,1)$, the map $T_\gamma$ has an invariant measure $\mu_\gamma$ absolutely continuous with respect to Lebesgue such that its density $h_\gamma=\frac{d\mu_\gamma}{dx}$ is Lipschitz on any interval of the form $(\eps,1]$ and $\lim_{x\to0}\frac{h(x)}{x^{-\gamma}}=C_0>0$ (see \cite{H04}, for example). Consider the canonical inducing domain $\Delta_0=(1/2,1]$ and define $R:\Delta_0\to\N$ as the first return time, \ie $R(x)=\inf\{j\in\N\colon\; T_\gamma^j(x)\in\Delta_0\}$. As before we denote by $\mu_0$ the restriction of $\mu_\gamma$ to $\Delta_0$. The induced map $\bar T_\gamma\colon \Delta_0\to\Delta_0$ given by $\bar T_\gamma=T_\gamma^{R(x)}(x)$ qualifies as a Rychlik map and therefore it has exponential decay of correlations of BV functions against $L^1$, so conditions $\D_{q_n}$, $\D'_{q_n}$ can be shown to hold by adapting the argument used in the proof of \cite[Theroem~4.3]{FFM20}. With the application to heavy tailed sums in mind, for definiteness we assume that $\varphi\colon [0,1]\to\R$ is such that $\varphi(x)=|x-\zeta|^{-1/\alpha}$, with $0<\alpha<1$ and $\zeta$ the period two point belonging to $\Delta_0$. 
The fact that the inducing map has such nice mixing properties allows us to apply Proposition~\ref{prop:point-process-convergence-induced}, where $\underline {\mathbf Q_i}=(\ldots,\tilde\infty, \tilde\infty,\tilde{ \mathbf K}_0, \tilde{ \mathbf K}_1, \tilde{ \mathbf K}_2,\ldots)$, $\tilde{ \mathbf K}_j=(\ldots,\infty,\infty,\chi^j,\infty,\infty,\ldots)$ and $\chi=DT_\gamma^2(\zeta)$. The computation of this transformed anchored tail process follows the exact same argument used in Appendix~\ref{subappendix:systems-perioidc-points}. The fact that all entries but one are $\infty$ in $\tilde{ \mathbf K}_j$ results from the canonical embedding and from the fact that during the induced time the orbits are always outside $\Delta_0$, which means that they are at fixed distance from $\zeta$ and therefore the normalisation dictates that $\infty$ should appear for the corresponding limits (see also Appendix~\ref{subappendix:systems-perioidc-points} for an argument on how the normalisation leads to $\infty$ for points at a fixed distance from $\mathcal M$).

We focus now on condition  \eqref{eq:condition-induced}. Notice that if $|\varphi(0)>0$, then, for an high value $u$, the set 
$$\left\{x\in\Delta_0\colon\;\sum_{i=0}^{R(x)-1}|\varphi(T^i(x))|>u\right\}=A_u\cup B_u$$  is the union of two intervals. One corresponding to a small neighbourhood of $\zeta$, $A_u=(\zeta-\delta_u,\zeta+\delta^u)$, where the value of $\varphi$ is already sufficiently high and the other corresponds to an interval of type $B_u=(1/2,1/2+\eps_u)$ because $T_\gamma(1/2)=0$ and since $DT_\gamma(0)=1$ this means that the orbit will linger for a very long time near $0$ so that the sum $\sum_{i=0}^{R(x)-1}|\varphi(T^i(x))|$ will add up $|\varphi(0)|$ so many times that it will ultimately exceed the level $u$. The shape of the observable near $\zeta$ dictates that $\mu_0(A_u)=\O(u^{-\alpha})$. We note that $\mu_0$ is equivalent to Lebesgue measure on $\Delta_0$, in the sense that its density is bounded above and away from 0 (see \cite[Lemma~2.3]{LSV99}). Using the same computation as in the proof of Theorem~1.3 of \cite{G04a}, one obtains that $\mu_0(B_u)=\O(u^{-1/\gamma})$. Now, since $\alpha<1<1/\gamma$, then clearly $\mu_0(B_u)=\oo(\mu_0(A_u))$ and therefore condition \eqref{eq:condition-induced} is satisfied.

Note that we could consider $1<\alpha<2$, but in order to guarantee that \eqref{eq:condition-induced} still holds when $\gamma\in(1/2,1)$, we would need the extra assumptions: $\varphi(0)=0$ and $\varphi(x)<C x^\beta$ for some $\beta >\gamma-1/2$. These would still guarantee that  $\mu_0(B_u)=\oo(\mu_0(A_u))$. (See proof of Theorem~1.3 of \cite{G04a}).

As in Example~\ref{example:oscillatory}, we can also consider an oscillatory behaviour creating overshooting by considering $\mathcal M=\{\zeta,T_\gamma(\zeta(\}$ and the corresponding observable $\psi(x)=|x-\zeta|^{-1/\alpha}-|x-T_\gamma(\zeta)|^{-1/\alpha}$, where $\zeta$ is as above. For simplicity, we assume that $0<\alpha<1$, so that in a similar way we have that condition \eqref{eq:condition-induced} is easily satisfied. In this case, to describe $\underline {\mathbf Q_i}$, we consider the random variable $E_i$ taking values in $\{-1,1\}$ and such that
$$
\p(E_i=1)=\lim_{u\to\infty}\frac{\mu_\gamma(\{|\psi|>u\}\cap\Delta_0)}{\mu_\gamma(\{|\psi|>u\})}.
$$
In a similar way to the computation of the transformed anchored tail process in Appendix~\ref{subappendix:overshooting} we obtain that $\underline {\mathbf Q_i}=(\ldots,\tilde\infty, \tilde\infty,\tilde{ \mathbf K}_0(E_i), \tilde{ \mathbf K}_1(E_i), \tilde{ \mathbf K}_2(E_i),\ldots)$, where 
\begin{align*}
\tilde{ \mathbf K}_0(-1)&=(\ldots,\infty,\infty,-1,\infty,\infty,\ldots)\\ 
\tilde{ \mathbf K}_j(-1)&=(\ldots,\infty,\infty,(-1)^{2j}\chi_{-1}^j\chi_1^{j-1},(-1)^{2j+1}\chi_{-1}^j\chi_1^{j},\infty,\infty,\ldots)\\
\tilde{ \mathbf K}_0(1)&=(\ldots,\infty,\infty,1,\chi_1,\infty,\infty,\ldots)\\ 
\tilde{ \mathbf K}_j(1)&=(\ldots,\infty,\infty,(-1)^{2j-1}\chi_{-1}^j\chi_1^{j},(-1)^{2j}\chi_{-1}^j\chi_1^{j+1},\infty,\infty,\ldots)
\end{align*}
with $\chi_1=DT_\gamma(\zeta)$ and $\chi_{-1}=DT_\gamma(T_\gamma(\zeta))$. The resulting excursion is:
$$
e^{T_i}_V(t)=V(T_i^-)+U_i^{-1/\alpha}\sum_{0\leq j\leq\left\lfloor\h(t) \right\rfloor} E_i(-1)^j \left(\chi_{-1}^{|\lfloor j/2\rfloor-(E_i+1)/2|_+}\chi_1^{|\lfloor j/2\rfloor+(E_i-1)/2|_+}\right)^{-1/\alpha},$$
where $t\in[0,1]$ and $|\cdot|_+=\max\{0,\cdot\}$.
\appendix

\section{Completeness and separability of the space $F'$} 
\label{sec:completeness-separability}

As in Whitt's space $E$, there are two natural metrics to use for the $E$ component of our space $F'$.  These are denoted $m_E$ and $m_E^*$ (see \eqref{eq:me} and \eqref{eq:me*}).  We keep the same metric on the $\tilde D$ part.

\begin{lemma}
$D$ with the sup norm is complete.
\label{lem:Dsupcomp}
\end{lemma}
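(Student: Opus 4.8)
The plan is to recall the standard argument that the space $D=D([0,1],\R^d)$ of c\`adl\`ag functions, when endowed with the \emph{uniform} (supremum) norm $\|x\|=\sup_{t\in[0,1]}\|x(t)\|$, is a complete metric space; completeness in this setting is elementary and does not require any of the subtleties of the Skorohod metrics. The key point is that a uniform limit of c\`adl\`ag functions is again c\`adl\`ag.

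First I would take a Cauchy sequence $(x_n)_n$ in $(D,\|\cdot\|)$. Since for each fixed $t\in[0,1]$ we have $\|x_n(t)-x_m(t)\|\le \|x_n-x_m\|$, the sequence $(x_n(t))_n$ is Cauchy in $\R^d$, which is complete, so it converges to some limit which we call $x(t)$. This defines a function $x:[0,1]\to\R^d$. Next I would show that the convergence $x_n\to x$ is uniform: given $\eps>0$, pick $N$ so that $\|x_n-x_m\|<\eps$ for all $n,m\ge N$; letting $m\to\infty$ in $\|x_n(t)-x_m(t)\|<\eps$ gives $\|x_n(t)-x(t)\|\le\eps$ for all $t$ and all $n\ge N$, i.e. $\|x_n-x\|\le\eps$. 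Hence $x_n\to x$ uniformly.

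It then remains to check that $x\in D$, i.e. that $x$ is right-continuous with left-hand limits. This is the only step with any content, and it is the classical fact that a uniform limit of c\`adl\`ag functions is c\`adl\`ag. For right-continuity at $t$: given $\eps>0$, choose $n$ with $\|x_n-x\|<\eps/3$, then use right-continuity of $x_n$ at $t$ to find $\delta>0$ with $\|x_n(s)-x_n(t)\|<\eps/3$ for $t\le s<t+\delta$; the triangle inequality gives $\|x(s)-x(t)\|<\eps$ on that interval. For the existence of left limits at $t$: for $s<t$ close to $t$, write $\|x(s)-x(s')\|\le \|x(s)-x_n(s)\|+\|x_n(s)-x_n(s')\|+\|x_n(s')-x(s')\|$; choosing $n$ with $\|x_n-x\|<\eps/3$ and then using that $x_n(s^-)$ exists (so $\|x_n(s)-x_n(s')\|<\eps/3$ for $s,s'$ in a small left-neighbourhood of $t$) shows $(x(s))_{s\uparrow t}$ is Cauchy, hence convergent. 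Therefore $x\in D$ and $(D,\|\cdot\|)$ is complete.

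The main (and really only) obstacle is the verification that the limit function $x$ is c\`adl\`ag; everything else is the routine interchange-of-limits argument that works in any space of bounded functions valued in a complete space. Since this is entirely standard I would keep the write-up brief, citing the triangle-inequality estimates above.
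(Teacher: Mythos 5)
Your proof is correct, and it takes a cleaner and more complete route than the paper's own argument. The paper instead observes that $\bigcup_n disc(x_n)$ is countable, that each discontinuity of the limit $x$ arises as an (eventually constant) limit of discontinuities of the $x_n$, and hence that $disc(x)$ is countable; it then asserts that right-continuity of $x$ follows from right-continuity of the $x_n$, and leaves the existence of left limits implicit. Your write-up dispenses with the countability-of-discontinuities observation (which is a consequence of being c\`adl\`ag, not an ingredient in verifying it) and directly runs the standard $\eps/3$ argument twice: once to get right-continuity, and once to show that $(x(s))_{s\uparrow t}$ is Cauchy so that the left limit exists. The second check is precisely the point the paper elides, so your version is the more rigorous of the two. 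One small stylistic point: you devote a paragraph to establishing that the pointwise limit is in fact a uniform limit; this is the usual completeness argument for $(B([0,1],\R^d),\|\cdot\|_\infty)$ and can be cited or stated in a line, as the only step with any content (as you say yourself) is verifying that the limit is c\`adl\`ag.
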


\begin{proof}
Let $x_n\in D$ define a Cauchy sequence w.r.t. $\|\cdot\|_\infty$.  Then set $x(t)=\lim_{n\to\infty}x_n(t)$.  Note that $\cup_n disc(x_n)$ is an at most countable set. 
Observe also that in this metric (unlike $M_1$ say), each discontinuity $t$ of $x$ must 
correspond to a limit of discontinuities $(t_n)_n$ of $(x_n)_n$ (indeed we can take a sequence $(t_n)_n$ which is eventually constant).  So $x$ must have at most countably many discontinuities $disc(x)$.  The fact that $x$ is continuous on the right of the discontinuities follow from the continuity of $x_n$ on the right.  Hence $x\in D$.
\end{proof}

\begin{lemma}
$\tilde D$ is separable and complete.
\label{lem:tDsepcomp}
\end{lemma}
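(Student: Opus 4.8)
The claim is that $\tilde D = D([0,1],\R^d)/{\sim}$, with the metric $d_{\tilde D}([x],[y]) = \inf_\lambda \|x\circ\lambda - y\|$, is separable and complete. I would treat the two properties separately, and I expect \emph{completeness} to be the main obstacle, since the metric is an infimum over reparametrisations and one must control the reparametrisations along a Cauchy sequence.

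\textbf{Separability.} This should be the easy half. The plan is to exhibit a countable dense set directly. Consider piecewise-constant (c\`adl\`ag) functions that take finitely many values, all lying in $\Q^d$, with finitely many jumps, all at rational times; call this countable family $\mathcal{S}$. Given $x\in D$ and $\eps>0$, uniform approximation on $[0,1]$ by c\`adl\`ag step functions (a standard fact for $D$: since $x$ has left and right limits everywhere, for any $\eps$ there are finitely many points where the oscillation exceeds $\eps$, and between them $x$ is within $\eps$ of a constant) gives a step function $s$ with finitely many rational jump times and rational values such that $\|x - s\|_\infty < \eps$; a small additional reparametrisation argument moves irrational jump times to rational ones at the cost of a reparametrisation whose image under $d_{\tilde D}$ is $0$ anyway (since $d_{\tilde D}$ is invariant under composing the representative with an increasing bijection). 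Hence $d_{\tilde D}([x],[s]) \le \|x\circ\mathrm{id} - s\|_\infty < \eps$ after adjusting, so $\mathcal{S}$ (viewed in $\tilde D$) is dense. I would also remark that $d_{\tilde D}$ is genuinely a metric on equivalence classes — symmetry and the triangle inequality follow from composing reparametrisations, and $d_{\tilde D}([x],[y])=0$ forces $[x]=[y]$ because a uniform limit of reparametrised copies of $x$ that equals $y$ forces $y$ to be a reparametrisation of $x$ (this uses that $x$ and $y$ have the same ordered sequence of values/jumps).

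\textbf{Completeness.} This is where the real work lies. Let $([x_n])_n$ be Cauchy in $d_{\tilde D}$. Pass to a subsequence with $d_{\tilde D}([x_{n}],[x_{n+1}]) < 2^{-n}$, so there are increasing bijections $\mu_n:[0,1]\to[0,1]$ with $\|x_n\circ\mu_n - x_{n+1}\|_\infty < 2^{-n}$. The idea is to fix the representative $x_1$ and build compatible reparametrisations: set $\lambda_1 = \mathrm{id}$ and $\lambda_{n+1} = \mu_1\circ\mu_2\circ\cdots\circ\mu_n$ (suitably interpreted so that $x_1\circ\lambda_n$ is being compared with $x_n$ up to a reparametrisation of $x_n$). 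Then $\|x_n\circ(\text{appropriate composite}) - x_{n+1}\circ(\text{appropriate composite})\|_\infty < 2^{-n}$, so the functions $y_n := x_n$ read through these composites form a Cauchy sequence in $(D,\|\cdot\|_\infty)$. By Lemma~\ref{lem:Dsupcomp}, $D$ with the sup norm is complete, so $y_n \to y$ uniformly for some $y\in D$. Then $[x_n]\to[y]$ in $\tilde D$ since $d_{\tilde D}([x_n],[y]) \le \|y_n - y\|_\infty \to 0$. The delicate point — and the main obstacle — is that a composite of infinitely many increasing bijections of $[0,1]$ need not converge to an increasing \emph{bijection}: the limit may be only a continuous nondecreasing surjection (it can have flat stretches), or the inverses may degenerate. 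The standard remedy, which I would follow, is the symmetric trick of distributing the reparametrisation: at each stage apply a reparametrisation $\alpha_n$ to $x_n$ and $\beta_n$ to $x_{n+1}$ each chosen close to the identity (with $\|\alpha_n - \mathrm{id}\|_\infty$, $\|\beta_n - \mathrm{id}\|_\infty$ summably small, which is possible since $d_{\tilde D}([x_n],[x_{n+1}])$ is summably small along the subsequence and near-optimal reparametrisations can be taken uniformly close to $\mathrm{id}$), so that the \emph{total} reparametrisation applied to each fixed $x_n$ is a uniform limit of bijections that stays within a summable distance of $\mathrm{id}$ and hence converges to a genuine increasing bijection. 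Once the representatives $y_n = x_n\circ(\text{total reparam})$ are honest c\`adl\`ag functions forming a sup-norm Cauchy sequence, Lemma~\ref{lem:Dsupcomp} finishes the argument, and a Cauchy sequence with a convergent subsequence converges.

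\textbf{What I expect to be hardest.} Making the reparametrisation composition argument rigorous — precisely, showing one can choose the $\mu_n$ (or the symmetric pair $\alpha_n,\beta_n$) close enough to the identity that their infinite composition converges to an increasing \emph{bijection} rather than merely a nondecreasing surjection, and that the resulting representatives are still genuine elements of $D$. The key quantitative input is that near-optimal reparametrisations for a pair at $d_{\tilde D}$-distance $\delta$ can be taken with $\|\lambda - \mathrm{id}\|_\infty = O(\delta)$ on the relevant scale (or at least $o(1)$ with summable control after passing to a fast subsequence); once that is in hand, the rest is the standard telescoping-plus-completeness-of-$(D,\|\cdot\|_\infty)$ pattern already used in Lemma~\ref{lem:Dsupcomp}.
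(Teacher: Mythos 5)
Your separability argument is exactly the paper's (rational step functions; the extra step of moving jump times to rational positions via a reparametrisation is harmless but not needed, since $d_{\tilde D}$ already quotients by reparametrisations), and your completeness argument is the paper's as well, up to the point where you obtain a sup‑norm Cauchy sequence of representatives and invoke Lemma~\ref{lem:Dsupcomp}. The paper's proof of completeness is simply the one‑liner ``there are $y_n\in[x_n]$ such that $(y_n)_n$ is Cauchy in $(D,\|\cdot\|_\infty)$,'' and you correctly reach that conclusion.

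Where you go astray is in then manufacturing an obstacle — ``a composite of infinitely many increasing bijections need not converge to a bijection'' — that does not actually arise. Pick $\mu_n$ with $\|x_n\circ\mu_n - x_{n+1}\|_\infty < 2^{-n}$, set $\nu_1 = \mathrm{id}$ and $\nu_{n+1} = \mu_n^{-1}\circ\nu_n = \mu_n^{-1}\circ\cdots\circ\mu_1^{-1}$, and put $y_n := x_n\circ\nu_n$. Each $\nu_n$ is a \emph{finite} composition of continuous strictly increasing bijections of $[0,1]$, hence itself such a bijection, so $y_n$ is an honest element of $D$ in the class $[x_n]$. Since the sup norm is invariant under precomposition by a bijection, $\|y_n - y_{n+1}\|_\infty = \|x_n\circ\nu_n\circ\nu_{n+1}^{-1} - x_{n+1}\|_\infty = \|x_n\circ\mu_n - x_{n+1}\|_\infty < 2^{-n}$, so $(y_n)_n$ is Cauchy and the proof closes as you wrote. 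At no stage does one form an infinite composition; the sequence $(\nu_n)_n$ is free to drift without bound, because it is $(y_n)_n$ that must converge, not $(\nu_n)_n$.

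Moreover, the fix you propose rests on a claim that is false for this metric. Unlike $J_1$, the metric $d_{\tilde D}([x],[y]) = \inf_\lambda\|x\circ\lambda - y\|_\infty$ does not penalize how far $\lambda$ is from $\mathrm{id}$, so near‑optimal reparametrisations need not be close to the identity even when $d_{\tilde D}$ is small. For example, with $d=1$ take $x = \I_{[1/2,1]}$ and $y = \I_{[\eps,1]}$: these are in the same class, so $d_{\tilde D}([x],[y])=0$, yet any $\lambda$ with $x\circ\lambda = y$ must send $\eps$ to $1/2$, hence $\|\lambda - \mathrm{id}\|_\infty \ge 1/2 - \eps$, which does not tend to $0$. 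So ``near‑optimal reparametrisations can be taken uniformly close to $\mathrm{id}$'' is unavailable here — fortunately, as explained above, the argument never needs it.
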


\begin{proof}
Completeness follows since $D$ with the sup norm is complete: if $[x_n]\in \tilde D$ gives a Cauchy sequence then there are $y_n\in [x_n]$ such that $(y_n)_n$ is Cauchy in $D$ with the sup norm.  Then the limit $y$, which exists as in Lemma~\ref{lem:Dsupcomp}, defines a class in $\tilde D$.

Separability follows since the set of piecewise constant $\Q$-valued functions in $D$ with rational discontinuity points (the discontinuity points can be any countable set here) is countable and defines a set of equivalence classes which is dense in $\tilde D$.
\end{proof}

As shown in \cite[Theorem 15.4.3]{W02}, $(E, m_E)$ is separable, and in \cite[Example 15.4.2]{W02} this space is not complete.   On the other hand for $(E,m_E^*)$ is complete, but not separable.

Assume we are dealing with real-valued functions rather than $\R^d$-valued for $d\ge 2$.

\begin{proposition}
\begin{enumerate}
\item[(a)] $F'$ with the $m_E$-metric on the $E$-component is not complete, but is separable.

\item[(b)] $F'$ with the $m_E^*$-metric on the $E$-component is complete, but is not separable.
\end{enumerate}
\label{prop:compsep}
\end{proposition}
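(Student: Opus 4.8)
The plan is to reduce everything to facts already established for the individual components: $\tilde D$ (with the metric $d_{\tilde D}$) is complete and separable by Lemma~\ref{lem:tDsepcomp}, and for Whitt's space $E$ one has that $(E,m_E)$ is separable but not complete while $(E,m_E^*)$ is complete but not separable (the cited results from \cite{W02}). Since by \eqref{eq:dF-prime} the metric on $F'$ is a \emph{sum} $d_{F'}(\ux,\uy)=d_E(\pi_E(\ux),\pi_E(\uy))+d_{\tilde D}(\tilde\pi(\ux),\tilde\pi(\uy))$, a sequence is Cauchy in $F'$ iff both projected sequences are Cauchy, and the key issue is whether a compatible pair of limits $(\ux^E,\ux^{\tilde D})$ actually arises from a single element $\ux=(x,S^x,\{e_x^s\}_{s\in S^x})\in F'$.

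For separability in part (a): I would exhibit a countable dense set. Take $x$ to be piecewise-constant $\Q$-valued càdlàg with rational discontinuities, $S^x=disc(x)$ finite, and each excursion $e_x^s$ a representative from the countable dense set in $\tilde D$ produced in Lemma~\ref{lem:tDsepcomp} (constrained so that $e_x^s(0)=x(s^-)$, $e_x^s(1)=x(s)$ — one can always adjust a dense-set element by a rational reparametrisation / endpoint correction). Given an arbitrary $\ux\in F'$ and $\epsilon>0$: by \eqref{eq:finitebigjumps}/the $m_E$-separability argument in \cite{W02} only finitely many discontinuities matter up to $m_E$-error $\epsilon$, and simultaneously one can $d_{\tilde D}$-approximate the relevant (finitely many) excursions within $\epsilon$; the remaining (small) excursions contribute little to $d_{\tilde D}(\tilde\pi(\ux),\cdot)$ because their inserted intervals have small total length and small diameter. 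Summing the two errors gives density. For non-completeness of (a), I would simply \emph{transport} Whitt's counterexample \cite[Example~15.4.2]{W02}: take a Cauchy sequence $\ux_n\in F'$ whose $E$-components form the non-convergent $m_E$-Cauchy sequence in $E$, with trivial (e.g. step-function) excursions so the $\tilde D$-components converge; then $(\ux_n)_n$ is $d_{F'}$-Cauchy but its only possible limit would force the $E$-component to converge in $(E,m_E)$, a contradiction.

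For part (b), completeness: let $\ux_n=(x_n,S^{x_n},\{e_{x_n}^s\})$ be $d_{F'}$-Cauchy. Then $\pi_E(\ux_n)$ is $m_E^*$-Cauchy, hence converges in $(E,m_E^*)$ to some $\ux^E=(x,S^x,\{I(s)\})$, and $\tilde\pi(\ux_n)=\ux_n^{\tilde D}$ is $d_{\tilde D}$-Cauchy, hence converges in $\tilde D$ to some $z\in\tilde D$. I must reconstruct $\ux\in F'$ with $\pi_E(\ux)=\ux^E$ and $\tilde\pi(\ux)=z$. The first component $x$ and the jump set $S^x\supseteq disc(x)$ come from $\ux^E$. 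The excursions $e_x^s$ must be extracted from $z$: the definition \eqref{eq:projection-Dtilde} shows that $\tilde\pi$ inserts, at each $s_i\in S^x$, a rescaled copy of $e_x^{s_i}$ on the interval $[c_i,d_i]$; so, working with a suitable representative of $z$, one reads off $e_x^{s_i}$ by restricting to that interval and reparametrising back to $[0,1]$, checking that the resulting element of $\tilde D$ has the correct endpoints $x(s_i^-),x(s_i)$ (which it does, because $\ux_n^{\tilde D}\to z$ forces these endpoint matchings in the limit, the endpoints being continuity points of the representatives). One then verifies $d_{F'}(\ux_n,\ux)\to0$ componentwise. \textbf{This reconstruction is the main obstacle:} one has to be careful that the ``time-slicing'' implicit in $\tilde\pi$ is stable under $d_{\tilde D}$-limits — i.e. that the inserted intervals $[c_i,d_i]$ associated to a persistent discontinuity $s_i$ do not get annihilated in the limit (they don't, since $s_i\in disc(x)$ gives a genuine jump, so $d(I(s_i))>0$, so the excursion has positive ``range'' and cannot be reparametrised away), and that the countable bookkeeping of $S^x$ (which may be a complicated countable set with limit points) is consistent between the $E$-limit and the $\tilde D$-limit. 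For non-separability of (b): inherit it from $(E,m_E^*)$ — Whitt's uncountable $m_E^*$-separated family of excursion configurations (e.g. discontinuities at all points of an uncountable family of subsets of $[0,1]$, or a one-parameter family of excursion diameters) embeds into $F'$ via the canonical inclusion, and $d_{F'}\ge d_E\ge$ (the relevant $m_E^*$-distance) keeps the family $d_{F'}$-separated, precluding a countable dense set.
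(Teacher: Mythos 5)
Your treatment of separability in (a) and non-separability in (b) matches the paper's proof: approximate by rational piecewise-constant data for the first, and inherit an uncountable $m_E^*$-separated family from $E$ (or equivalently from $D$) for the second.

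However, your argument for \emph{non-completeness} in part (a) contains a genuine gap, and it is precisely the point the authors flag. You propose to ``transport'' Whitt's Example 15.4.2 by decorating its $E$-components with trivial step-function excursions and asserting that ``the $\tilde D$-components converge.'' This assertion is false. The $m_E$-metric is a Hausdorff distance on graphs, and in Whitt's example the graphs converge in that weak sense while the underlying functions still have large, mutually unmatched jumps between consecutive terms. The $d_{\tilde D}$-metric, on the other hand, is a $J_1$-type uniform-up-to-reparametrisation metric; it does \emph{not} forgive unmatched jumps the way Hausdorff distance does. So the resulting sequence in $F'$ is $m_E$-Cauchy in its $E$-projection but \emph{not} $d_{\tilde D}$-Cauchy in its $\tilde D$-projection, hence not $d_{F'}$-Cauchy, and you never produce the Cauchy-without-limit sequence you need. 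The paper has to engineer a new example: a sequence $x_n$ built from increasingly dense families of short intervals, modulated by a damping function $\phi_n$ that forces the heights to go to $0$ near $t=0$, so that adjacent terms \emph{do} have uniformly matched jumps (making both projections Cauchy) while the limit graph would still have to be the whole square $[0,1]^2$ (so no limit exists in $F'$). That damping is the crucial new idea absent from your sketch.

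On (b) completeness: your approach (reconstruct $\ux$ from the two projected limits $\ux^E$ and $z\in\tilde D$, then invert the time-slicing of $\tilde\pi$) can be made to work, but it is heavier than the paper's and the obstacles you yourself flag (stability of the inserted intervals $[c_i,d_i]$ under $\tilde D$-limits, bookkeeping of a countable $S^x$ with accumulation points) are not resolved in your sketch. The paper instead uses the key structural fact that $m_E^*$ is a \emph{uniform} metric on the set-valued function $\hat x(t)$: an $m_E^*$-Cauchy sequence gives uniform convergence of $\hat x_n(t)$ for each $t$, and at each $s\in S^x$ the corresponding excursions $e^s_{x_n}$ are then $d_{\tilde D}$-Cauchy in $\tilde D$, which is complete by Lemma~\ref{lem:tDsepcomp}; one reads off the limit excursion pointwise in $s$ without having to invert the global map $\tilde\pi$ at all. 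This is both shorter and avoids the issues you raised.
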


\begin{proof}
\textbf{(a)}
\cite[Example 15.4.2]{W02} would not give a convergent sequence in $F'$ since we would not be able to get convergent excursions: there are large jumps in adjacent terms of the sequence which cannot be matched up.  Hence we need an alternative example for non-completeness.

For each $n$ we define 
$$\phi_n(x) =\begin{cases} nx &\text{ if } x\in [0, \frac1n)\\
 1 &\text{ if } x\in \left[ \frac1n, 1\right]\\
\end{cases}
$$
Also let, for each $k\in \N$, $x_{k, 1}, \ldots, x_{k, n+k}$ be the set of $n+k$ points in $[\frac1{2^k}, \frac1{2^{k-1}}]$ so that  these points, along with the boundary points, are equidistributed through this interval.  Define $\eps_{n, k}=\frac{1}{n2^{k+2}}$.  Then define
$$x_n=\sum_{k=1}^\infty\sum_{i=1}^{n+k} \phi_n\cdot \mathbbm{1}_{\left[\left. x_{k, i}-\eps_{n, k}, \right. x_{k, i}+\eps_{n, k}\right)}.$$
Then $S_n=\{x_{k, i}\pm \eps_{n, k}\}_{k, n}$ and the excursions are for example $\phi_n(x_{k, i}-\eps_{n, k})\cdot \mathbbm{1}_{[1/2, 1]}$ at the points $x_{k, i}-\eps_{n, k}$ and  $\phi_n(x_{k, i}+\eps_{n, k})\cdot\mathbbm{1}_{[0, 1/2)}$ at the points $x_{k, i}+\eps_{n, k}$, producing an element $\ux_n\in F'$ (note the damping effect of $\phi_n$ ensuring that \eqref{eq:finitebigjumps} holds).
Then $(\ux_n)_n$ is Cauchy with the $m$-metric, but this sequence does not have a limit in $F'$ (eg the graph of the limit would have to be $[0, 1]\times[0,1]$ as in \cite[Example 15.4.2]{W02}).  Hence the space is not complete.

To show separability, we use the usual approximation by objects with rational coordinates.  As shown in \cite[Theorem 15.4.3]{W02}, the $E$-part of the space is separable.  We then use Lemma~\ref{lem:tDsepcomp} to give separability of the excursion part.

\textbf{(b)} Completeness follows from the $E$-component being complete as in \cite[Section 15.4]{W02}, which is the same type of argument as completeness in $D$ (see Lemma~\ref{lem:Dsupcomp}) and $\tilde D$ since this is uniform convergence.  The latter facts give convergence to a limit in $\tilde D$ for the excursions also.

Non-separability of $E$ (with $m^*$) implies the non-separability of $F'$: in fact this is the same type of argument as for $D$.  Note that for non-separability it is sufficient, for a given positive distance, to find an uncountable collection of elements of the set all at least that distance apart.  So for example for $D$, $\{\mathbbm{1}_{[a, 1]}\}_{a\in [0, 1)}$ are all distance 1 apart in the $m^*$ metric.
\end{proof}

\begin{remark}
\cite[Example 15.4.2]{W02} would not give a convergent sequence in $F'$ since we would not be able to get convergent excursions.  Hence we needed an alternative example above for non-completeness. \end{remark}

\begin{remark}
We recall that as described in \cite[Section~11.5.2]{W02} for the space $D$, the hierarchy of the Skorohod's topologies  dictates that convergence in $J_1$ implies convergence in $M_1$, which in turn implies convergence in $M_2$. In the richer space $E$, after embedding, the $M_2$ metric actually corresponds to the $m_E$ metric. Moreover, as observed earlier, convergence in $F'$ implies convergence both in $E$ and $F$, when endowed with the corresponding metrics.
\end{remark}

\section{Weak and weak$^\#$ convergence}

\label{sec:app_weak_conv}

The purpose of this section is to review notions of convergence of measures on the metric spaces $\tilde l_\infty$ and $\tilde l_0$. We are particularly interested in weak convergence of probability measures and, since we want to consider point processes which are random elements corresponding to boundedly finite measures, we are also interested in weak$^\#$ convergence. Recall that $\tilde l_\infty$ and $\tilde l_0$ are complete, separable, metric spaces, which are not locally compact and therefore vague convergence is not useful here. We remark that weak$^\#$ convergence is equivalent to vague convergence when the ambient space is locally compact (see \cite[Appendix~A2.6]{DV03}).

The Portmanteau Theorem (see for example \cite[Theorem~2.1]{B99}) is the following.

\begin{theorem}
Given a metric space, the following conditions are equivalent to weak convergence of Borel probability measures.
\begin{enumerate}
\item \label{item:weak-conv} $\lim_{n\to\infty}\int f d\p_n=\int f d\p$, for all bounded, uniformly continuous real $f$;

\item \label{item:weak-closed} $\limsup_{n\to\infty} \p_n(F)\leq\p(F)$ for all closed set $F$;

\item \label{item:open} $\liminf_{n\to\infty} \p_n(E)\geq\p(E)$ for all open set $E$;

\item \label{item:weak-continuity} $\lim_{n\to\infty} \p_n(A)=\p(A)$ for all set $A$ such that $\p(\partial A)=0$.
 
\end{enumerate}
\label{thm:portmanteau}
\end{theorem}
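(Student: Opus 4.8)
The plan is to prove all four equivalences by a single cyclic chain,
\[
\text{weak convergence}\ \Longrightarrow\ \eqref{item:weak-conv}\ \Longrightarrow\ \eqref{item:weak-closed}\ \Longrightarrow\ \eqref{item:open}\ \Longrightarrow\ \eqref{item:weak-continuity}\ \Longrightarrow\ \text{weak convergence},
\]
which, since the ambient spaces $\tilde l_\infty$ and $\tilde l_0$ are metric, is just the classical Portmanteau argument; alternatively one could simply quote \cite[Theorem~2.1]{B99}. The first implication is trivial, a bounded uniformly continuous function being in particular bounded and continuous, so the defining property of weak convergence applies to it.

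For \eqref{item:weak-conv}$\Rightarrow$\eqref{item:weak-closed} I would use the standard Lipschitz approximation of indicators of closed sets: given a closed $F$ and $\eps>0$, the function $f_\eps(x)=(1-\eps^{-1}\dist(x,F))\vee 0$ is $(1/\eps)$-Lipschitz, hence bounded and uniformly continuous, satisfies $\mathbbm{1}_F\le f_\eps\le 1$, and decreases pointwise to $\mathbbm{1}_F$ as $\eps\downarrow0$ because $F$ is closed. Thus $\limsup_n\p_n(F)\le\limsup_n\int f_\eps\,d\p_n=\int f_\eps\,d\p$ for every $\eps$, and letting $\eps\downarrow0$ with monotone convergence gives $\limsup_n\p_n(F)\le\p(F)$. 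The passage \eqref{item:weak-closed}$\Leftrightarrow$\eqref{item:open} is pure complementation ($E$ open iff $E^c$ closed, $\p_n(E)=1-\p_n(E^c)$), so \eqref{item:weak-closed} and \eqref{item:open} hold simultaneously. Then \eqref{item:open} together with \eqref{item:weak-closed} gives \eqref{item:weak-continuity} by sandwiching: if $\p(\partial A)=0$ then $\p(A^\circ)=\p(\bar A)=\p(A)$, and
\[
\p(A)=\p(A^\circ)\le\liminf_n\p_n(A^\circ)\le\liminf_n\p_n(A)\le\limsup_n\p_n(A)\le\limsup_n\p_n(\bar A)\le\p(\bar A)=\p(A).
\]

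To close the cycle, for \eqref{item:weak-continuity}$\Rightarrow$ weak convergence I would take a bounded continuous $f$, reduce to $0\le f\le 1$ by an affine change of variable, and use the layer-cake identity $\int f\,d\p_n=\int_0^1\p_n(\{f>t\})\,dt$, and likewise for $\p$. Since $f$ is continuous, $\partial\{f>t\}\subset\{f=t\}$, and the set of levels $t\in(0,1)$ with $\p(\{f=t\})>0$ is at most countable, hence Lebesgue-null; for every other $t$, condition \eqref{item:weak-continuity} gives $\p_n(\{f>t\})\to\p(\{f>t\})$, and since these quantities lie in $[0,1]$, bounded convergence yields $\int f\,d\p_n\to\int f\,d\p$, i.e.\ weak convergence.

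I do not anticipate a real obstacle: the only points needing a moment's care are that the truncated distance functions $f_\eps$ are genuinely uniformly continuous (being Lipschitz, this holds in any metric space) and that $\{t:\p(\{f=t\})>0\}$ is countable (the atom masses sum to at most $1$). Neither the non-local-compactness nor the infinite dimensionality of $\tilde l_\infty$ and $\tilde l_0$ plays any role, since the argument uses only their metric and measure-theoretic structure.
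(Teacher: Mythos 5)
Your proof is correct and complete; each link in the cycle weak convergence $\Rightarrow$ \eqref{item:weak-conv} $\Rightarrow$ \eqref{item:weak-closed} $\Leftrightarrow$ \eqref{item:open} $\Rightarrow$ \eqref{item:weak-continuity} $\Rightarrow$ weak convergence is established exactly as in the standard textbook treatment (Lipschitz approximation of the closed-set indicator, complementation, sandwiching of $A^\circ\subset A\subset\bar A$, and the layer-cake/level-set argument with the observation that at most countably many levels are atoms). The paper itself supplies no proof for this statement: it is quoted verbatim as the classical Portmanteau theorem with a citation to \cite[Theorem~2.1]{B99}, and your argument is precisely the proof found there, so there is nothing to compare beyond noting that you reconstructed the cited argument rather than merely invoking it.
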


A set $A$ such that $\p(\partial A)=0$ is called a $\p$-continuity set. A class $\mathscr U$ of sets is called \emph{a convergence determining class} 
if the convergence $\lim_{n\to\infty} \p_n(A)=\p(A)$ for all $\p$-continuity sets $A\in\mathscr U$ implies the weak convergence of $\p_n$ to $\p$.

\begin{lemma}
\label{lem:cdc}
The class of sets $\tilde{\mathscr J}$ as in \eqref{eq:A-tilde} is a convergence determining class for weak convergence on the metric space $\tilde l_\infty\setminus\{\tilde\infty\}.$
\end{lemma}
\begin{proof}
We start by proving a claim.

\begin{claim} For all $\tilde\x\in \tilde l_\infty\setminus\{\tilde\infty\}$ and all $\varepsilon>0$, there exists a $\p$-continuity set $\tilde A\in \tilde{\mathscr J}$ such that $\tilde\x\in\mathring{\tilde A}\subset\tilde A\subset B(\tilde\x,\varepsilon)$.
\end{claim}

\begin{proof}[Proof of Claim]
Let  $\z,\w\in l_0\setminus\{\mathbf 0\}$. Since $\left|\|\z\|_\infty-\|\w\|_\infty\right|\leq \|\z-\w\|_\infty$ and $h(x)=\frac 1x$ is continuous on $\R\setminus\{0\}$, then there exists $\delta>0$ such that $\|\z-\w\|_\infty<\delta$ implies that $d'(\z,\w)<\varepsilon$, where $d'$ is the metric on $l_0\setminus\{\mathbf 0\}$ defined in \eqref{eq:d'-metric}. 

Let $\z=\Proj(\x)\in l_0\setminus\{\mathbf 0\}$ and let $k\in\N$ be such that $\|z_j\|<\delta$ for all $|j|\geq k$. Define 
$$B_{\rho}=\bigcap_{|j|<k}\TT^{-j}([{z_j}_1-\rho,{z_j}_1+\rho)\times\ldots\times[{z_j}_d-\rho,{z_j}_d+\rho)),$$ 
for some $\rho<\delta/\sqrt d$.  Let $A_\rho=\Proj^{-1}(B_\rho)$ and let $\tilde A_\rho\in \tilde{\mathscr J}$ be associated to $A_\rho$ as in \eqref{eq:A-tilde}. We choose $\rho$ so that $\tilde A_\rho$ is a $\p$-continuity set. We can always choose such $\rho$ because $\partial \tilde A_\rho\subset \tilde \pi\left(\cup_{i\in\Z}(\TT^{-i}(\cup_{|j|<k}\cup_{m=1}^d\{\w: |{w_j}_m-{z_j}_m|=\rho\}))\right)$, which means that each $\partial \tilde A_\rho$ can intersect at most countably many other such sets.  Therefore, there is an uncountable number of disjoint sets $\partial \tilde A_\rho$, for $0<\rho<\delta/\sqrt d$, and since there cannot be an uncountable number of them with positive probability, at least one of them must have 0 probability, which means one of the $\tilde A_\rho$ is a $\p$-continuity set.

We claim that $\tilde A_\rho\subset B(\tilde \x, \varepsilon)$. To see this, let $\w\in l_\infty$ be such that $\tilde \w=\tilde \pi(\w)\in \tilde A_\rho$. By definition of $\tilde A_\rho$ we must have $\TT^\ell(\w)\in A_\rho$, for some $\ell\in\Z$. Hence, $\Proj(\TT^\ell(\w))=\TT^\ell(\Proj(\w))\in B_\rho$, which means that $\|\Proj(\x)- \TT^\ell(\Proj(\w))\|_\infty <\delta$ and therefore $d'(\Proj(\x), \TT^\ell(\Proj(\w)))<\varepsilon$. It follows that, by definition of the metric $\tilde d'$ on $\tilde l_0$, given in \eqref{eq:metric-d-prime-tilde}, we have $\tilde d'(\tilde \Proj(\tilde \x), \tilde \Proj(\tilde \w))<\varepsilon$ and hence
$\tilde d(\tilde \x, \tilde \w)<\varepsilon$.
\end{proof}

Let $E$ be an open set.  By the claim, for each $\tilde\x\in E$ there exists a $\p$-continuity set $\tilde A\in\tilde{\mathscr J}$ such that $\tilde \x\in \mathring{\tilde A}\subset \tilde A\subset E$. This means we have an open cover of $E$ by sets $\mathring{\tilde A}$, where $\tilde A\in\tilde{\mathscr J}$ is a $\p$-continuity set. Since on a separable metric space (such as $\tilde l_\infty\setminus\{\tilde\infty\}$) each open cover admits a countable subcover (see \cite[Appendix~M3]{B99}, for example)then there exists a sequence $(\tilde A_i)_{i\in\N}$ such that each $\tilde A_i\in\tilde{\mathscr J}$ is a $\p$-continuity set, $\tilde A_i\subset E$ and, moreover, $E\subset \cup_{i\in\N} \mathring{\tilde A}_i$, which means that $E=\cup_{i\in\N} \tilde A_i$.

We are assuming that $\lim_{n\to\infty}\p_n(\tilde A)=\p(\tilde A)$ for all $\p$-continuity sets $\tilde A\in\tilde{\mathscr J}$. Since   $\tilde{\mathscr J}$ is closed for finite unions and $\partial (\tilde A\cup\tilde B)\subset \partial \tilde A\cup\tilde B$ (see \cite[Proposition~A1.2.I]{DV03}), we have that $\cup_{i=1}^m\tilde A_i\in\tilde{\mathscr J}$ is a $\p$-continuity set and therefore
$$
\lim_{n\to\infty}\p_n(\cup_{i=1}^m\tilde A_i)=\p(\cup_{i=1}^m\tilde A_i).
$$

Since $E=\cup_{i\in\N} \tilde A_i$, for $\varepsilon>0$, let $m\in\N$ be such that $\p(\cup_{i=1}^m\tilde A_i)>\p(E)-\varepsilon$. Then,
$$
\p(E)-\varepsilon<\p(\cup_{i=1}^m\tilde A_i)=\lim_{n\to\infty}\p_n(\cup_{i=1}^m\tilde A_i)\leq \liminf_{n\to\infty} \p_n(E).
$$
Since the last inequalities hold for all $\varepsilon>0$ then 
$$
\p(E)\leq \liminf_{n\to\infty} \p_n(E), \quad \text{for all $E$ open}
$$
and therefore by Theorem~\ref{thm:portmanteau}\eqref{item:open}, $\p_n$ converges weakly to $\p$.
\end{proof}

In order to study point processes, we need to consider $\sigma$-finite measures taking finite measures on bounded sets. Namely, we define:
\begin{definition}
A Borel measure $\mu$ on a complete, separable, metric space is boundedly finite if $\mu(A)<\infty$ for every bounded Borel set $A$.
\end{definition}
Let $\mathcal X$ denote a complete, separable, metric space such as $\tilde l_\infty\setminus\{\tilde\infty\}$. We denote by $\mathcal M_{\mathcal X}^\#$ the space of boundedly finite Borel measures on $\mathcal X$. Following \cite{DV03, DV08}, we consider a notion of convergence in $\mathcal M_{\mathcal X}^\#$ called the \emph{weak hash} or \emph{weak$^\#$} convergence, denoted by $\mu_k\rightarrow_{w^\#}\mu$ which can be defined by any of the  following equivalent conditions (see \cite[Proposition~A2.6.II]{DV03}):
\begin{enumerate}[label=(\roman*)]

\item \label{item:weak-hash} $\lim_{k\to\infty}\int f d\mu_k=\int f d\mu$ for all bounded continuous functions $f$ defined on $\mathcal X$ and vanishing outside a bounded set;

\item \label{item:weak-hash-weak}There exists an increasing sequence of bounded open sets $B_n$ converging to $\mathcal X$ such that if $\mu_k^{(n)}$ and $\mu^{(n)}$ denote the restrictions of the measures $\mu_k$ and $\mu$ to $B_n$, respectively, then $\mu_k^{(n)}$ converges weakly to $\mu^{(n)}$, as $k\to\infty$, for all $n\in\N$. Note that $\mu_k^{(n)}$ and $\mu^{(n)}$ are not necessarily probability measures, so when we say that there is weak convergence we mean that either \eqref{item:weak-conv} and \eqref{item:weak-continuity} from Theorem~\ref{thm:portmanteau} apply or \eqref{item:weak-closed} and \eqref{item:open} apply with the extra assumption that $\lim_{k\to\infty}\mu_k^{(n)}(B_n)=\mu^{(n)}(B_n)$, for all $n\in\N$.

\item \label{item:weak-hash-continuity} $\lim_{k\to\infty} \mu_k(A)=\mu(A)$ for all bounded Borelean $A$ such that $\mu(\partial A)=0$.

\end{enumerate}

Similarly to Lemma~\ref{lem:cdc}, we show that in order to prove weak$^\#$ convergence on $\tilde l_\infty\setminus\{\tilde\infty\}$,  we only need to check $\lim_{k\to\infty} \mu_k(\tilde A)=\mu(\tilde A)$ for all bounded, $\mu$-continuity sets $\tilde A\in\tilde{\mathscr J}$, as in property \ref{item:weak-hash-continuity}.
\begin{lemma}
\label{lem:cdc-hash}
The class of bounded sets in $\tilde{\mathscr J}$ is a convergence determining class for weak$^\#$ convergence on the metric space $\tilde l_\infty\setminus\{\tilde\infty\}.$
\end{lemma}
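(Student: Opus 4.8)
The plan is to deduce the statement from characterisation \ref{item:weak-hash-continuity} of $w^\#$-convergence, running the weak-convergence argument of Lemma~\ref{lem:cdc} locally on an exhausting sequence of bounded open sets of the kind appearing in characterisation \ref{item:weak-hash-weak}. The natural choice is $B_n:=\tilde U(\rho_n)\in\tilde{\mathscr J}$, where $\tilde U(\tau)$ is the element of $\tilde{\mathscr J}$ associated, as in \eqref{eq:A-tilde}, with the set $U(\tau)\in\mathscr F$ from \eqref{eq:U}. Unwinding the definitions one checks $\tilde U(\tau)=\{\tilde\x\in\tilde l_\infty\setminus\{\tilde\infty\}\colon \inf_{j\in\Z}\|x_j\|<\tau\}=\{\tilde\x\colon \|\tilde\Proj(\tilde\x)\|_\infty^{-1}<\tau\}$, which is open (since $\tilde\x\mapsto\|\tilde\Proj(\tilde\x)\|_\infty^{-1}$ is continuous, as already used in Corollary~\ref{cor:spectral-independence}), bounded (Remark~\ref{rem:bounded-sets}), and increases to $\tilde l_\infty\setminus\{\tilde\infty\}$ as $\rho_n\to\infty$. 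The boundaries $\partial\tilde U(\rho)$ for distinct $\rho$ are pairwise disjoint and, for $\rho$ in any bounded interval, contained in a single bounded set and hence of finite $\mu$-measure; so one may choose $\rho_n\to\infty$ with each $B_n$ a $\mu$-continuity set, and then the hypothesis gives $\mu_k(B_n)\to\mu(B_n)$. This is exactly the mass-preservation ingredient $\mu_k^{(n)}(B_n)\to\mu^{(n)}(B_n)$ that distinguishes $w^\#$ from ordinary weak convergence.

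First I would note that the Claim inside the proof of Lemma~\ref{lem:cdc} holds verbatim with the probability measure $\p$ replaced by the boundedly finite $\mu$: the only use of finiteness there is that uncountably many pairwise disjoint boundary sets cannot all carry positive mass, and here those boundary sets lie in a fixed bounded set, hence in a set of finite $\mu$-measure. Consequently, for every bounded open $E\subset\tilde l_\infty\setminus\{\tilde\infty\}$ one obtains, exactly as in Lemma~\ref{lem:cdc}, a countable cover $E=\bigcup_i\tilde A_i$ with each $\tilde A_i\in\tilde{\mathscr J}$ a bounded $\mu$-continuity set contained in $E$; since $\tilde{\mathscr J}$ is closed under finite unions with $\partial(\tilde A\cup\tilde B)\subset\partial\tilde A\cup\partial\tilde B$, the hypothesis yields $\mu_k(\bigcup_{i\le m}\tilde A_i)\to\mu(\bigcup_{i\le m}\tilde A_i)$, whence
$$\liminf_{k\to\infty}\mu_k(E)\ \ge\ \mu\Big(\bigcup_{i\le m}\tilde A_i\Big)\ \xrightarrow[m\to\infty]{}\ \mu(E),$$
the last step by monotone convergence using $\mu(E)<\infty$.

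It remains to upgrade this lower bound on bounded open sets to $\mu_k(A)\to\mu(A)$ for an arbitrary bounded Borel $\mu$-continuity set $A$, which by \ref{item:weak-hash-continuity} finishes the proof. Since $A$ — and hence $\bar A$ — is bounded, fix $n$ with $\bar A\subset B_n$. Applying the displayed inequality to the open set $\mathring A$ gives $\liminf_k\mu_k(A)\ge\liminf_k\mu_k(\mathring A)\ge\mu(\mathring A)=\mu(A)$. Applying it to the bounded open set $B_n\setminus\bar A$ and combining with $\mu_k(\bar A)=\mu_k(B_n)-\mu_k(B_n\setminus\bar A)$ and $\mu_k(B_n)\to\mu(B_n)$ gives
$$\limsup_{k\to\infty}\mu_k(A)\ \le\ \limsup_{k\to\infty}\mu_k(\bar A)\ \le\ \mu(B_n)-\mu(B_n\setminus\bar A)\ =\ \mu(\bar A)\ =\ \mu(A),$$
all sets involved having finite $\mu$-measure. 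Combining the two bounds yields $\mu_k(A)\to\mu(A)$.

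The main obstacle, and the only genuinely new point relative to Lemma~\ref{lem:cdc}, is producing a cofinal family of bounded $\mu$-continuity sets \emph{inside} $\tilde{\mathscr J}$ that exhaust the space — that is, recognising $B_n=\tilde U(\rho_n)\in\tilde{\mathscr J}$ and making the $\mu$-continuity selection of the $\rho_n$; once this is available, the rest is the covering argument of Lemma~\ref{lem:cdc} run on each $B_n$ together with the elementary closed-set complement trick above.
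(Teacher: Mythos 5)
Your proof is correct, and it follows essentially the same strategy as the paper's: construct an exhausting sequence of bounded open $\mu$-continuity sets in $\tilde{\mathscr J}$, adapt the open-cover argument of Lemma~\ref{lem:cdc} to the boundedly-finite setting (using that the relevant boundaries sit in a fixed bounded set to run the ``uncountably many disjoint sets of positive measure'' argument), and then leverage total-mass convergence $\mu_k(B_n)\to\mu(B_n)$. The differences are cosmetic rather than structural. First, the paper exhausts the space with $\tilde B_{a_n}$ built from the annular condition $1/a<\|x_0\|<a$ rather than your $\tilde U(\rho_n)$ built from $\|x_0\|<\rho$; both are bounded (by Remark~\ref{rem:bounded-sets}), open, and eventually absorb every point, so this choice is immaterial. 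Second, the paper closes the argument by invoking characterisation \ref{item:weak-hash-weak} — weak convergence of the restrictions $\mu_k^{(n)}\to\mu^{(n)}$ on each $B_n$, fed back into the machinery of Lemma~\ref{lem:cdc} — whereas you go directly for characterisation \ref{item:weak-hash-continuity} and verify $\mu_k(A)\to\mu(A)$ on every bounded Borel $\mu$-continuity set via the lower bound on $\mathring A$ and the complement trick with $B_n\setminus\bar A$. Your route is in fact a bit more self-contained: it avoids any ambiguity in applying Lemma~\ref{lem:cdc} (which is stated for probability measures) to restrictions that need not have total mass one, supplying instead the precise limsup/liminf computation that the total-mass hypothesis buys. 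The net content of the two proofs is the same.
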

\begin{proof}
Suppose $\mu_k\rightarrow_{w^\#}\mu$.  
For all $a>1$, let $\mathscr F \ni B_a:=\{\x\in \dot{\V}^{\Z}\colon\; 1/a<\|x_0\|<a\}$ and $\tilde B_a\in \tilde{\mathscr J}$ be associated to $B_a$ as in \eqref{eq:A-tilde}. Observe that $\tilde d(\tilde \x, \tilde\w)\leq a-1/a$, for all $\tilde x,\tilde w \in \tilde B_a$.  Since there are uncountably many sets of the form $B_a$ and $\partial B_a\subset\{\x\in \dot{\V}^{\Z}\colon\; 1/a=\|x_0\|\;\text{or}\;\|x_0\|=a\}$, then we can find a strictly increasing, diverging sequence $(a_n)_{n\in\N}$, such that $B_n:=B_{a_n}$ is such that $\mu(\partial \tilde B_n)=0$, for all $n\in\N$.  
 
Hence, $(B_n)_{n\in\N}$ is an increasing sequence of bounded, open, $\mu$-continuity sets converging to $\tilde l_\infty\setminus\{\tilde\infty\}.$ Note that by hypothesis we are assuming that 
 $\lim_{k\to\infty} \mu_k(\tilde A)=\mu(\tilde A)$ for all bounded, $\mu$-continuity sets $\tilde A\in\tilde{\mathscr J}$ and therefore that applies to all the $\tilde B_n$, namely, $\lim_{k\to\infty}\mu_k(\tilde B_n)=\mu(B_n)$. Therefore, we can apply \eqref{item:open} to prove weak convergence of $\mu_k^{(n)}$ (the restriction of $\mu_k$ to $B_n$) to $\mu^{(n)}$ (the restriction of $\mu$ to $B_n$), which was the main tool used in the proof of Lemma~\ref{lem:cdc}. It follows then, by assumption and Lemma~\ref{lem:cdc} that $\mu_k^{(n)}$ converges weakly to $\mu^{(n)}$, for all $n\in\N$ and, by \ref{item:weak-hash-weak}, we obtain the weak$^\#$ convergence of $\mu_k$ to $\mu$.\end{proof}

\section{Convergence of point processes on non locally compact spaces}
\label{appendix:convergence-point-processes}

We closely follow  \cite[Appendix~A2.6]{DV03} and \cite[Sections~9 and 11]{DV08}. As before, let $\mathcal X$ be a complete separable metric space. The notion of weak$^\#$ convergence is metrizable, in the sense that $\mathcal M_{\mathcal X}^\#$ admits a metric generating what the so-called $w^\#$-topology so that the weak$^\#$ convergence corresponds to convergence in the $w^\#$-topology. Denote by $\mathcal B(\mathcal M_{\mathcal X}^\#)$ the corresponding  Borel $\sigma$-algebra. A \emph{random measure} is a random element in $(\mathcal M_{\mathcal X}^\#,\mathcal B(\mathcal M_{\mathcal X}^\#))$. A point process $N$ is an integer-valued random measure. Let $\mathcal N_{\mathcal X}^\#$ denote the space of boundedly finite integer-valued measures. We have that $\mathcal N_{\mathcal X}^\#$ is a closed subset of $\mathcal M_{\mathcal X}^\#$ (\cite[Proposition~9.1.V]{DV08}) and let $\mathcal B(\mathcal N_{\mathcal X}^\#)$ denote the corresponding $\sigma$-algebra for the $w^\#$-topology. We remark that $\mu\in\mathcal N_{\mathcal X}^\#$ has the following form (\cite[Proposition~9.1.III]{DV08}):
\begin{equation}
\label{eq:point-measure}
\mu=\sum_{i\in\N} k_i\delta_{x_i}, \quad \text{where $\delta_{x_i}$ is the Dirac measure at distinct $x_i\in\mathcal X$, and $k_i\in\N$}.
\end{equation}
 For any $\mu\in\mathcal N_{\mathcal X}^\#$ given by \eqref{eq:point-measure}, we define its \emph{support counting measure} $\mu^*$ as:
\begin{equation}
\label{eq:simple-point-measure}
\mu^*=\sum_{i\in\N} \delta_{x_i}.
\end{equation}
The boundedly finite measure $\mu\in\mathcal N_{\mathcal X}^\#$ is simple if and only if $\mu=\mu^*$, which is equivalent to say that $k_i=1$ for all $i\in\N$.

Now, we give formally now the definition of point process. 
\begin{definition}
\label{def:point-process}
A point process $N$ on state space $\mathcal X$ is measurable mapping from a probability space $(\Omega, \mathcal B, \p)$ into $(\mathcal N_{\mathcal X}^\#,\mathcal B(\mathcal N_{\mathcal X}^\#))$. A point process $N$ is said to be \emph{simple} if $\p(N(\{x\})>1)=0$, for all $x\in\mathcal X$. To each point process $N$ on state space $\mathcal X$ we denote by $N^*$ the corresponding \emph{support point process} obtained from $N$ as in \eqref{eq:simple-point-measure}.
\end{definition}
 
Let $(N_n)_{n\in\N}$ be a sequence of point processes and $N$ another point process, all with state space $\mathcal X$. We say that $N_n$ converges weakly to $N$, when the respective distributions $P_n$, defined by $P_n(A):=\p(N_n\in A)$, for all $A\in \mathcal B(\mathcal N_{\mathcal X}^\#))$, converge weakly (in the sense of weak convergence of probability measures on the metric space $\mathcal N_{\mathcal X}^\#$) to the distribution $P$ associated to $N$.

Tightness is a very useful property which gives relative compactness and ultimately weak convergence. We state two conditions that are necessary and sufficient to show that a family of probability measures $\{P_t,\;t\in\mathcal T\}$ is \emph{uniformly tight}, see \cite[Proposition~11.1.VI]{DV08}. Given any closed sphere $S\subset \mathcal X$ and any $\epsilon,\delta>$, there exists a real number $M$ and a compact set $C\subset S$ such that for $t\in\mathcal T$,
\begin{align}
P_t(N_t(S)>M)&<\epsilon,\label{eq:tight1}\\
P_t(N_t(S\setminus C)>\delta)&<\epsilon\label{eq:tight2}.
\end{align}
We remark that if $\mathcal X$ was locally compact, \eqref{eq:tight2} would be unnecessary.   
 
Based on these criteria for tightness, in \cite[Proposition~11.1.VII]{DV08} it is shown that weak convergence of $N_n$ to $N$ follows from the convergence:
$$
(N_n(A_1),\ldots, N_n(A_k))\xrightarrow[]{}(N(A_1),\ldots, N(A_k)),
$$ 
of joint distributions as random vectors in $\R^k$, for all finite collections $\{A_1,\ldots, A_k\}$ of bounded continuity sets $A_i\in\mathcal B_{\mathcal X}$, for all $i=1,\ldots,k$ and all $k\in\N$. Here, \emph{continuity set} means that $N(\partial A_i)=0$ a.s. 

When the limiting point process is simple, a simpler criterion for convergence can be used. 
\begin{proposition}
\label{prop:weak-limit-PP}
Let $(N_n)_{n\in\N}$  be a sequence of point processes on the state space $\mathcal X$ and let $N$ be simple point process on the same state space. Let $\mathcal R$ be a covering dissecting ring\footnote{A dissecting ring is a ring generated by finite unions and intersections of the elements of a dissecting system, which consists on a nested sequence of finite partitions of the whole space that eventually separate points. See \cite[Appendices~A1.6 and A2.1]{DV03} for further details.} of continuity sets of $\mathcal X$. Then, $N_n$ converges weakly to $N$ if the two following conditions hold:
\begin{enumerate}[label=(\roman*)]

\item $\lim_{n\to\infty}\p(N_n(A)=0)=\p(N(A)=0)$ for all bounded $A\in\mathcal R$; \label{eq:kallenberg1}

\item \label{eq:orderiless} for all bounded $A\in\mathcal R$ and a nested sequence of partitions $\mathcal T_r=\{A_{ri}\colon\; i=1,\ldots, k_r\}$ of $A$ by sets of $\mathcal R$ that ultimately separate the points of $A$,
$$
\inf_{\mathcal T_r}\limsup_{n\to\infty} \sum_{i=1}^{k_r} \p(N_n(A_{ri})\geq 2)=0.
$$
\end{enumerate}
Alternatively, we may replace condition \ref{eq:orderiless} by the following:
\begin{enumerate}[label=(\Roman*)]

\setcounter{enumi}{1}

\item \label{eq:kallenberg2} $\lim_{n\to\infty}\E(N_n(A))=\E(N(A))$, for all bounded $A\in\mathcal R$.

\end{enumerate}

\end{proposition}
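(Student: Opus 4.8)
The plan is to follow the classical proof of Kallenberg's convergence criterion, as in \cite[Sections~9.2 and~11.1]{DV08}, adapting it to a state space $\mathcal X$ that need not be locally compact and to the use of the intensity condition \ref{eq:kallenberg2}. First I would dispose of the alternative form by showing that \ref{eq:kallenberg1} and \ref{eq:kallenberg2}, together with simplicity of $N$, imply \ref{eq:orderiless}: for a partition $\mathcal T_r=\{A_{ri}\}$ of a bounded $A\in\mathcal R$ into cells of $\mathcal R$ (hence continuity sets) one has $\E(N_n(A_{ri}))\ge\p(N_n(A_{ri})\ge1)+\p(N_n(A_{ri})\ge2)$, so $\sum_i\p(N_n(A_{ri})\ge2)\le\E(N_n(A))-\sum_i\p(N_n(A_{ri})\ge1)$; letting $n\to\infty$ over the finitely many cells and invoking \ref{eq:kallenberg1} and \ref{eq:kallenberg2} the bound tends to $\E(N(A))-\sum_i\p(N(A_{ri})\ge1)$, and since $N$ is simple, $\sum_i\I_{\{N(A_{ri})\ge1\}}\uparrow N(A)$ along a refining dissecting sequence, so $\sum_i\p(N(A_{ri})\ge1)\uparrow\E(N(A))<\infty$ by monotone convergence; hence $\inf_{\mathcal T_r}\limsup_n\sum_i\p(N_n(A_{ri})\ge2)=0$, and it remains to treat conditions \ref{eq:kallenberg1} and \ref{eq:orderiless}.

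Next I would establish uniform tightness of the laws $P_n$ of $N_n$, i.e. conditions \eqref{eq:tight1} and \eqref{eq:tight2}. For \eqref{eq:tight1}: given a closed ball $S$ and $\epsilon>0$, use \ref{eq:orderiless} to choose a finite partition $\mathcal T_r=\{A_{ri}\}_{i=1}^{k_r}$ of $S$ into continuity sets of $\mathcal R$ with $\limsup_n\sum_i\p(N_n(A_{ri})\ge2)<\epsilon$; then for any integer $M>k_r$ the event $\{N_n(S)\ge M\}$ forces some cell to contain at least two points, so $\p(N_n(S)\ge M)\le\sum_i\p(N_n(A_{ri})\ge2)<\epsilon$ for all large $n$, the finitely many remaining $n$ being handled via $N_n(S)<\infty$ a.s. For \eqref{eq:tight2} --- which is vacuous in the locally compact setting and is the crux here --- I would use that $P$, a Borel probability on the Polish space $\mathcal N_{\mathcal X}^\#$, is tight, so by the compactness characterisation in $\mathcal N_{\mathcal X}^\#$ (\cite[Proposition~11.1.VI]{DV08}) there is, for each $\epsilon,\delta>0$, a compact $C\subset S$ with $\p(N(S\setminus C)>\delta/3)<\epsilon/3$; covering $S\setminus C$ by countably many continuity sets $G_1,G_2,\dots\in\mathcal R$ of small total void size, applying \ref{eq:kallenberg1} to finitely many of the $G_j$ and the preceding total-mass estimate to the remainder, one transfers the bound to $N_n$ and obtains $\limsup_n\p(N_n(S\setminus C)>\delta)<\epsilon$.

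Third, with $\{P_n\}$ uniformly tight, hence relatively compact in the $w^\#$-topology, I would identify every weak subsequential limit $Q$ with $P$. One checks that $Q$ is simple by passing \ref{eq:orderiless} to the limit, as in \cite[Section~11.1]{DV08}. Since the law of a simple point process is determined by its avoidance function on a covering dissecting ring of continuity sets (Kallenberg uniqueness, \cite[Section~9.2]{DV08}), it suffices to show $Q(\{\mu\colon\mu(A)=0\})=\p(N(A)=0)$ for every bounded $A\in\mathcal R$. Along a subsequence with $P_{n'}\rightarrow_{w^\#}Q$, \ref{eq:kallenberg1} gives $P_{n'}(\{\mu(A)=0\})\to\p(N(A)=0)$; moreover, because the measures here are integer valued, $\{\mu\colon\mu(F)=0\}$ is open for bounded closed $F$ and closed for open $F$ (if $\mu_k\rightarrow_{w^\#}\mu$ with $\mu(F)=0$, $F$ bounded closed, then $\mu_k(F)\to0$, so $\mu_k(F)=0$ eventually). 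Sandwiching $A$ between $A^\circ$ and $\overline A$, taken in (or approximated within) $\mathcal R$, the Portmanteau theorem yields $Q(\{\mu(\overline A)=0\})\le\p(N(A)=0)\le Q(\{\mu(A^\circ)=0\})$; squeezing $\partial A$ from outside by open continuity sets $B\downarrow\partial A$ in $\mathcal R$ and using \ref{eq:kallenberg1} once more shows $Q(\mu(\partial A)\ge1)=0$, so the sandwich collapses to $Q(\{\mu(A)=0\})=\p(N(A)=0)$. Hence $Q=P$, every subsequential limit is $P$, and $N_n$ converges weakly to $N$, as claimed (cf.\ \cite[Proposition~11.1.VII]{DV08}).

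The step I expect to be the main obstacle is the second one, specifically the verification of \eqref{eq:tight2}: in a non-locally compact $\mathcal X$ bounded balls cannot be exhausted by compacts, so one must propagate the tightness of the limiting process $N$ through the convergence of the avoidance probabilities in \ref{eq:kallenberg1} and of the intensities in \ref{eq:kallenberg2} on the dissecting ring $\mathcal R$ --- precisely the place where a corrected and strengthened form of the classical tightness lemma of \cite{DV08} is required in order to bring condition \ref{eq:kallenberg2} to bear.
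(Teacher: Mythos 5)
Your proposal is correct and, in its core structure (R\'enyi--M\"onch uniqueness for simple point processes, uniform tightness via \eqref{eq:tight1}--\eqref{eq:tight2}, identification of subsequential limits), parallels the paper's proof. The one genuinely different idea is your preliminary reduction of condition~\ref{eq:kallenberg2} to condition~\ref{eq:orderiless}: bounding
$\sum_i\p(N_n(A_{ri})\ge2)\le\E(N_n(A))-\sum_i\p(N_n(A_{ri})\ge1)$,
passing to $n\to\infty$ via~\ref{eq:kallenberg1} and~\ref{eq:kallenberg2}, and then letting the partition refine so that simplicity of $N$ and monotone convergence force $\sum_i\p(N(A_{ri})\ge1)\uparrow\E(N(A))$. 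This makes the whole proposition under~\ref{eq:kallenberg2} a corollary of the proposition under~\ref{eq:orderiless}, which is a clean and modular observation. The paper instead invokes~\ref{eq:kallenberg2} directly at the two places where the extra hypothesis is needed: for \eqref{eq:tight1} it uses Chebyshev's bound $\p(N_n(S)>M)\le\E(N_n(S))/M$, and for the simplicity of the subsequential limit $\bar N$ it argues $\E(\bar N^*(A))\le\E(\bar N(A))=\lim_n\E(N_n(A))=\E(N(A))=\E(\bar N^*(A))$, whence $\bar N=\bar N^*$. Both routes work; yours trades a slightly longer preamble for not having to re-run the tightness and simplicity checks twice. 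On the other hand, your treatment of \eqref{eq:tight2} (covering $S\setminus C$ by countably many small sets of $\mathcal R$ and splitting into a finite head and a small tail) is more elaborate than the paper's, which simply chooses $C$ with $\p(N(S\setminus C)=0)>1-\epsilon/2$ and applies~\ref{eq:kallenberg1}; and your sandwiching argument for the avoidance function of the subsequential limit, while correct in spirit, is vague about whether $A^\circ,\overline A$ can be taken or approximated within $\mathcal R$ — a detail the paper also glosses over, relying instead on the Portmanteau-style statement in \cite[Proposition~11.1.VII]{DV08}. These are expository rather than substantive differences.
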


\begin{remark}
This proposition is very similar to \cite[Proposition~11.1.IX]{DV08}. However, note that the corresponding statement to our condition \ref{eq:orderiless}, namely \cite[equation (11.1.4)]{DV08},  is incorrectly stated there. Essentially, this condition is supposed to require that there is no accumulation of mass points as the point processes approach the limiting point process. This is related to a property called \emph{ordinary} in  \cite[equation (11.1.4)]{DV08}, which is then used in the proof of \cite[Proposition~11.1.IX]{DV08}. When one observes the condition giving the notion of ordinary point process one realises the need to use the infimum over all partitions, as we did in \ref{eq:orderiless}, which contrasts with the use of the supremum over all partitions used in \cite[equation (11.1.4)]{DV08}. 

For completeness we redo the proof of \cite[Proposition~11.1.IX]{DV08}, correcting this typo. We also extend the result by showing that condition \ref{eq:kallenberg2} can be used to replace condition \ref{eq:orderiless} when trying to prove weak convergence to a simple point process.

\end{remark}

\begin{proof}
We closely follow the proof of \cite[Proposition~11.1.IX]{DV08}.
At the core of the proof this proposition is a result attributed to R\'enyi and M\"onch, see \cite[Theorem~9.2.XII]{DV08}, which states that the distribution of a simple point process on a complete separable metric spaces $\mathcal X$ is determined by the values of the avoidance function $P_0$ on the bounded sets of a dissecting ring $\mathcal R$ for $\mathcal X$, where $P_0$ is defined by:
$$
P_0(A)=\p(N(A)=0),\quad \text{for $A\in \mathcal R$}.
$$
We are then left to show that the family $\{P_n\colon n\geq n_0\}$ is uniformly tight and that the limit of any convergent subsequent is a simple point process, where $P_n$ is the distribution of the point process $N_n$.

Assuming that \ref{eq:orderiless} holds, let $A$ be a closed ball in $\mathcal R$. Observing that $\{N_n(A)>k_r\}$ implies $\{N_n(A_{ri})\geq 2\}\}$ for at least one $i$, then
$$
\sum_{i=1}^{k_r}\p(N_n(A_{ri})\geq 2)\geq \p(N_n(A)\geq k_r).
$$
Note that the sum on the left is non-increasing with $r$. Hence, given $\epsilon>0$, by \ref{eq:orderiless}, there is an $r_0$ such that  $\forall r\geq r_0$, we have $\limsup_{n\to\infty} \sum_{i=1}^{k_r} \p(N_n(A_{ri})\geq 2)<\epsilon.$ In particular, there exists $n_0\in\N$ such that for all $n>n_0$, we have $\p(N_n(A)\geq k_{r_0})\leq \sum_{i=1}^{k_{r_0}} \p(N_n(A_{r_0i})\geq 2)<\epsilon$. Therefore, choosing $M>k_{r_0}$ large enough, we have  $\p(N_n(A)\geq M)<\epsilon$, for all $n\in\N$, which means that \ref{eq:tight1} holds.

Now, we verify that the same happens when we assume \ref{eq:kallenberg2} instead of \ref{eq:orderiless}. Using Chebyshev's inequality, we obtain for all $n\in\N$
$$
\p(N_n(S)>M)\leq\frac{ \E(N_n(S))}{M}.
$$
By \ref{eq:kallenberg2}, there exists $K\in\N$ such that $\E(N_n(S))<K$, for all $n\in\N$. Therefore, taking $M$ sufficiently large so that $K/M<\epsilon$, we obtain that $\p(N_n(S)>M)<\epsilon$, for all $n\in\N$ and, therefore, \ref{eq:tight1} holds.

We next show that \ref{eq:kallenberg1} implies \ref{eq:tight2}, which can be restated here in the following form: given $\epsilon>0$. there exists a compact set $C$ such that $\p(N_n(S\setminus C)=0)>1-\epsilon$, for all $n\in\N$. We choose $C$ so that for the limit distribution (which corresponds to that of a simple point process), we have
$$
\p(N(S\setminus C)=0)>1-\epsilon/2.
$$  
From \ref{eq:kallenberg1}, we have that $\lim_{n\to\infty}\p(N_n(S\setminus C)=0)=\p(N(S\setminus C)=0)$, from which we obtain that $\p(N_n(S\setminus C)=0)>1-\epsilon$, for all $n$ sufficiently large and, by taking a larger $C$ if necessary, for all $n\in\N$, which means \ref{eq:tight2} holds.

Since, both conditions \ref{eq:tight1} and \ref{eq:tight2} hold then $N_n$ admits a weakly convergent subsequence, say $(N_{n_\ell})_{\ell\in\N}$, which converges weakly to $\bar N$. By   \ref{eq:kallenberg1}, we have
$$
\p(N(A)=0)=\p(\bar N=0),
$$
but we still do not know if $\bar N$ is simple, hence this only gives us that $N$ and $\bar N^*$ have the same distribution, where $\bar N^*$ is the support point process associated to $\bar N$.

Hence, we are left to prove that both \ref{eq:orderiless} and \ref{eq:kallenberg2} imply that $\bar N$ is simple, \ie $\bar N=\bar N^*$, a.s.

Note that for all $r$, we have
$$
\sum_{i=1}^{k_r} \p(\bar N(A_{ri})\geq 2)=\lim_{\ell\to\infty}\sum_{i=1}^{k_r} \p( N_{n_\ell}(A_{ri})\geq 2).
$$ 
Assuming \ref{eq:orderiless}, we obtain 
$$
\inf_{\mathcal T_r}\sum_{i=1}^{k_r} \p(\bar N(A_{ri})\geq 2)
\leq \inf_{\mathcal T_r}\limsup_{n\to\infty}\sum_{i=1}^{k_r} \p( N_{n}(A_{ri})\geq 2)=0.
$$
This means that the point process $\bar N$ is ordinary, as defined in \cite[Definition~9.3.XI]{DV08}, and, therefore, by \cite[Proposition~9.3.XII]{DV08}, $\bar N$ is simple. 

Now, assuming  \ref{eq:kallenberg2} instead, we have for all bounded $A\in \mathcal R$,
$$
\E(N(A))=\E(\bar N^*(A))\leq \E(\bar N(A))=\lim_{\ell\to\infty}\E(N_{n_\ell}(A))
=\E(N(A)),
$$
which means that $\E(\bar N^*(A))= \E(\bar N(A))$ for all bounded $A\in \mathcal R$, which implies that $\bar N$ must be simple.
\end{proof}

\begin{remark}
\label{rem:kallenberg2}
In fact, since given any sequence of positive random variables $(X_n)_{n\in\N}$ converging in distribution to $X$, we have $\liminf_{n\to\infty} \E(X_n)\geq \E(X)$, then we can strengthen Proposition~\ref{prop:weak-limit-PP} by replacing \ref{eq:kallenberg2} by the condition $\limsup_{n\to\infty}\E(N_n(A))\leq \E(N(A))$, which actually only need verifying for all bounded sets in semi-ring generating $\mathcal R$.

\end{remark}

We consider now the particular case where $\mathcal X=\R_0^+\times \tilde l_\infty\setminus\{\tilde\infty\}$. In this space, we consider the ring $\mathcal R$ and its subclass of sets $\mathcal I$ defined by:  
\begin{align}
\mathcal R:&=\left\{\bigcup_{\ell=1}^m J_\ell\times \tilde A_\ell\colon\; m\in\N,\; J_\ell=[a_\ell,b_\ell),\; \text{and $\tilde A_\ell\in \tilde{\mathscr R}$} \right\};\label{eq:ring-def}\\
\mathcal I:&=\left\{\bigcup_{\ell=1}^m J_\ell\times \tilde A_\ell\colon\; m\in\N,\; J_\ell=[a_\ell,b_\ell),\; \text{and $\tilde A_\ell\in \tilde{\mathscr J}$} \right\}.\label{eq:sub-ring-def}
\end{align}
\begin{lemma}
\label{lem:simplified}
The conclusion of Proposition~\ref{prop:weak-limit-PP} holds if the conditions 
are verified only for all bounded sets of $\mathcal I$.
\end{lemma}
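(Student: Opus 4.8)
The plan is to deduce the hypotheses of Proposition~\ref{prop:weak-limit-PP}, stated there for a covering dissecting ring of continuity sets of $\mathcal X=\R_0^+\times\tilde l_\infty\setminus\{\tilde\infty\}$, from their restriction to the subclass $\mathcal I$. Two structural facts drive the argument. First, $\mathcal R$ is exactly the ring generated by $\mathcal I$: on the $\R_0^+$ factor one has the (ring of) half-open intervals, and on the $\tilde l_\infty\setminus\{\tilde\infty\}$ factor $\tilde{\mathscr R}$ is the ring generated by $\tilde{\mathscr J}$, so closing $\mathcal I$ under finite unions and differences produces precisely $\mathcal R$. Second, by Lemma~\ref{lem:cdc} (resp.\ Lemma~\ref{lem:cdc-hash}) the class $\tilde{\mathscr J}$ (resp.\ its bounded subclass) is a convergence determining class on $\tilde l_\infty\setminus\{\tilde\infty\}$; moreover the claim proved inside Lemma~\ref{lem:cdc} shows every point has arbitrarily small $\tilde{\mathscr J}$-neighbourhoods which are $\p$-continuity sets, and Lemma~\ref{lem:cdc-hash} supplies an increasing sequence $(\tilde B_n)_n$ of bounded open $\tilde{\mathscr J}$-sets with $\bigcup_n\tilde B_n=\tilde l_\infty\setminus\{\tilde\infty\}$.

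Using these, one first fixes a convenient covering dissecting ring $\mathcal R'\subseteq\mathcal R$: on $\R_0^+$ the usual nested dyadic partitions together with a terminal ray, and on $\tilde l_\infty\setminus\{\tilde\infty\}$ the nested finite partitions obtained by intersecting, inside each $\tilde B_n$, finitely many of the small neighbourhoods of Lemma~\ref{lem:cdc} together with their complements; these eventually separate points and their cells lie in $\tilde{\mathscr R}$. As in Lemmas~\ref{lem:cdc} and \ref{lem:cdc-hash}, at each stage the radii of the generating $\tilde{\mathscr J}$-sets are chosen to avoid the at most countably many values for which a boundary would be charged, so that every member of $\mathcal R'$ is a continuity set of $N$ (and the countably many further bad radii for a given subsequential limit are excluded in the same way).

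Next one transfers the conditions. The intensities $\lambda_n=\E(N_n(\cdot))$ and $\lambda=\E(N(\cdot))$ are boundedly finite, and condition \ref{eq:kallenberg2} on $\mathcal I$ reads $\lambda_n(J\times\tilde A)\to\lambda(J\times\tilde A)$ for bounded $\lambda$-continuity sets with $\tilde A\in\tilde{\mathscr J}$; combining Lemma~\ref{lem:cdc-hash} with the locally compact factor $\R_0^+$ upgrades this to $\lambda_n\to\lambda$ in the $w^\#$ topology, hence $\lambda_n(A)\to\lambda(A)$ for every bounded $\lambda$-continuity set, in particular for all bounded $A\in\mathcal R'$, which is \ref{eq:kallenberg2} on $\mathcal R'$. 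Tightness then follows exactly as in the proof of Proposition~\ref{prop:weak-limit-PP}: a closed ball $S\subset\mathcal X$ has bounded projections, so $S\subseteq B$ for some bounded $B\in\mathcal I$, and Chebyshev's inequality with $\sup_n\lambda_n(B)<\infty$ gives \ref{eq:tight1}; for \ref{eq:tight2} one picks a compact $C\subseteq S$ with $\p(N(S\setminus C)=0)$ close to $1$, covers $S\setminus C$ by a bounded $B'\in\mathcal I$ avoiding $C$, and uses condition \ref{eq:kallenberg1} on $B'$ to bound $\p(N_n(S\setminus C)=0)$ below uniformly in $n$. (The argument with \ref{eq:orderiless} in place of \ref{eq:kallenberg2} is identical.) Finally, for any weak subsequential limit $\bar N$ of $(N_n)_n$, conditions \ref{eq:kallenberg1} and \ref{eq:kallenberg2} on $\mathcal I$ give $\p(\bar N(A)=0)=\p(N(A)=0)$ and $\E(\bar N(A))=\E(N(A))$ for bounded $\mathcal I$-continuity sets $A$; the moment identity, applied to $\bar N$ via the step above, forces $\bar N=\bar N^*$ simple, and extending the avoidance identity from the convergence determining class $\mathcal I$ to the dissecting ring $\mathcal R'$ lets us invoke R\'enyi--M\"onch (\cite[Theorem~9.2.XII]{DV08}) to conclude $\bar N\stackrel{d}{=}N$; hence $N_n$ converges weakly to $N$.

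The main obstacle is this last step: passing from the non-ring class $\mathcal I$ to an honest dissecting ring in the non-locally-compact space $\tilde l_\infty\setminus\{\tilde\infty\}$, and the parallel point in \ref{eq:tight2}, where $S\setminus C$ need not be totally bounded so that a finite $\mathcal I$-cover is not immediate. Both are resolved by localising inside a fixed bounded $\tilde B_n$ of Lemma~\ref{lem:cdc-hash}, where the small-neighbourhood claim of Lemma~\ref{lem:cdc} and the arguments of Proposition~\ref{prop:weak-limit-PP} and of Lemma~\ref{lem:cdc-hash} apply verbatim; the remaining interval bookkeeping on the $\R_0^+$ factor and the choice of continuity radii are routine.
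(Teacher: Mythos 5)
Your route is structurally different from the paper's: rather than reducing the hypotheses on $\mathcal R$ to hypotheses on $\mathcal I$ once and then invoking Proposition~\ref{prop:weak-limit-PP} as a black box, you re-run the proof of Proposition~\ref{prop:weak-limit-PP} from scratch, aiming to derive tightness, simplicity of subsequential limits, and identification via R\'enyi--M\"onch directly from the $\mathcal I$-restricted hypotheses. Several of your intermediate steps are sound (the observation that $\mathcal R$ is the ring generated by $\mathcal I$; the upgrade of condition~\ref{eq:kallenberg2} to $w^\#$-convergence of intensities via Lemma~\ref{lem:cdc-hash}; the tightness bounds via dominating $\mathcal I$-sets). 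But your argument has a genuine gap precisely at the step you yourself flag as the main obstacle.

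The gap is the passage from agreement of avoidance functions on $\mathcal I$ to agreement on a dissecting ring, which you need before R\'enyi--M\"onch can be applied. You write that this is ``resolved by localising inside a fixed bounded $\tilde B_n$,'' but localisation does not address the issue. The problem is algebraic, not one of bounded finiteness: $\mathcal I$ is closed under finite unions (because $\tilde{\mathscr J}$ is), but it is not closed under intersections or differences, so it is not a ring; and the cells of any nested partition built from $\tilde{\mathscr J}$-neighbourhoods and their complements lie in $\tilde{\mathscr R}$, not $\tilde{\mathscr J}$. Restricting to a bounded $\tilde B_n$ does not change this. To express the avoidance probability of a ring element in terms of avoidance probabilities over $\mathcal I$-sets you need an \emph{algebraic identity}, which is exactly what the paper supplies: because $\tilde{\mathscr J}$ is closed under unions, the inclusion--exclusion formula for the events $\{N(J\times \tilde B)=0\}$, $\{N(J\times \tilde D)=0\}$ and $\{N(J\times(\tilde B\cup\tilde D))=0\}$ rewrites the avoidance probability of a set built from intersections or differences of $\tilde{\mathscr J}$-sets as a signed combination of avoidance probabilities over $\mathcal I$-sets only. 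Since this identity holds simultaneously for $N_n$, for $N$, and for any subsequential limit $\bar N$, it transfers both the convergence in condition~\ref{eq:kallenberg1} and the avoidance agreement between $\bar N^*$ and $N$ from $\mathcal I$ to all of $\mathcal R$. Without this (or some substitute for it), the sentence ``extending the avoidance identity \dots\ lets us invoke R\'enyi--M\"onch'' is exactly the unproven assertion your proof is supposed to establish. In particular, note that knowing $\p(\bar N(A)=0)=\p(N(A)=0)$ for $A\in\mathcal I$ does \emph{not} a priori pin down $\bar N^*$: two simple point processes with the same avoidance function on a non-ring class $\mathcal I$ need not be equal in law, which is why the closure under ring operations is the crux.

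So: your framework is workable, but you must insert the inclusion--exclusion reduction as a lemma before the R\'enyi--M\"onch step, and also use it (not localisation) to transfer condition~\ref{eq:kallenberg1} in the tightness argument for the $\tilde B'\in\mathcal I$ covering $S\setminus C$. Once that identity is in hand, it is in fact cleaner to follow the paper and transfer both conditions directly to $\mathcal R$ and then apply Proposition~\ref{prop:weak-limit-PP} verbatim, which avoids rebuilding the dissecting ring, tightness, and subsequence arguments by hand.
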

See related results in \cite{K17} (for example Theorem~2.2).
\begin{proof}
We focus on the condition \ref{eq:kallenberg1}, which is the strongest. The others follow easily as one can already guess from Remark~\ref{rem:kallenberg2}. 
The statement follows from the fact that $\tilde{\mathscr J}$ is closed for unions and then using the inclusion exclusion formula, one can write the probability involving any set $\tilde A_\ell\in \tilde{\mathscr R}$ using the probability of events involving only elements of $ \tilde{\mathscr J}$. 

For example, assume that $\tilde A_1=\tilde B_1\cap \tilde D_1$, where $\tilde B_1, \tilde D_1\in\tilde{\mathscr J}$. Observe that 
\begin{align*}
\p(N(\cup_{\ell=1}^m J_\ell\times \tilde A_\ell)&=0)=\p\left(N(J_1\times (\tilde B_1\cap\tilde D_1))=0, \;N(\cup_{\ell=1}^m J_\ell\times \tilde A_\ell)=0\right)\\
=&\p\left(N(J_1\times \tilde B_1)=0,\;N(\cup_{\ell=1}^m J_\ell\times \tilde A_\ell)=0\right)\\
&+\p\left(N(J_1\times \tilde D_1)=0,\;N(\cup_{\ell=1}^m J_\ell\times \tilde A_\ell)=0\right)\\
&-\p\left(N(J_1\times (\tilde B_1\cup\tilde D_1))=0,\;N(\cup_{\ell=1}^m J_\ell\times \tilde A_\ell)=0\right),
\end{align*}
which means that all events on the 3 last terms correspond to the value of the avoidance function over sets of  $\mathcal I$, as we wanted.
In the same way, if $\tilde A_1=\tilde B_1\setminus \tilde D_1$, for $\tilde B_1, \tilde D_1\in\tilde{\mathscr J}$, for example, we could use the fact that $\p(N(\tilde A_1)=0)=\p(N(\tilde B_1\cup \tilde D_1)=0)-\p(N(\tilde D_1)=0)$ and obtain a similar formula for the avoidance function using only sets in $\mathcal I$ as we did above. 

Hence, we have just shown that we can handle unions, intersections and exclusions of sets of $ \tilde{\mathscr J}$.
Noting that $\tilde A_\ell\in \tilde{\mathscr R}$ can always be written by using a finite number of these set operations involving elements of $ \tilde{\mathscr J}$, the conclusion follows. 
\end{proof}

\section{Computation of the transformed anchored tail processes for the dynamical examples}
\label{Appendix:dyn}

Section~\ref{subsec:systems} gave concrete dynamical examples where the theory in this paper holds.  As noted in the discussion after Theorem~\ref{thm:point-process-convergence}, previous work shows that many of the required conditions hold.  In this section we fill in the missing proofs by showing that the transformed anchored tail process is well-defined for observables whose norm is maximised at periodic points.  Note that the existence of the limit in \eqref{eq:EI} follows from a simpler version of the ideas here.

\subsection{Applications to observables maximised at hyperbolic periodic points for general 1D systems}
\label{subappendix:systems-perioidc-points} 
Recall that we are dealing with an acip $\mu$ and that for our periodic point $\zeta$ we are always assuming that the density exists and is bounded, say $\frac{d\mu}{\text{Leb}}=D\in (0, \infty)$. For definiteness assume that we are dealing with a non-invertible map, the observable is as in \eqref{eq:dynamics-particular-observable}, $\mathcal M=\{\zeta\}$ and $\zeta$ is a repelling fixed point (therefore $p=1$). In this case the process $(Y_j)_{j\in \Z}$ is such that, for some $s\in \N_0$, we have $Y_j=\infty$, for all $j\leq -s$ and $Y_j=U\cdot \|(DT_\zeta)^{j}(\Theta)\|^d\frac{(DT_\zeta)^{j}(\Theta)}{\|(DT_\zeta)^{j}(\Theta)\|}$, for all $j>-s$,
where $U$ is a uniformly distributed random variable on the interval $[0,1]$ independent of $\Theta$, which has a uniform distribution on $\mathbb S^{d-1}$ and $(DT)^i_\zeta$ denotes the $i$-fold product of the derivative of $T$ at $\zeta$ (which is invertible since we are assuming that $\zeta$ is repelling).

This clearly satisfies \eqref{spectral-process}--\eqref{positive-EI} for the transformed anchored tail process, so we need to show \eqref{Y-def}, i.e., 
$$\mathcal L\left(\frac1\tau \mathbb X_n^{r_n+s, r_n+t}\;\middle\vert\; \|\X_{r_n}\|>u_n(\tau) \right)\xrightarrow[n\to\infty]{}\mathcal L\left((Y_j)_{j=s,\dots,t}\right), $$
 for all  $s<t\in\Z \text{ and all }\tau>0$.

Without loss of generality we assume that $g$ is positive in a neighbourhood of $\zeta$, so $\|\X_{r_n}(x)\|>u_n(\tau)$ can be written $g \left(\dist(T^{r_n}(x), \zeta)\right)> u_n(\tau)$.  From \eqref{un} we see that asymptotically $u_n(\tau)\sim g\left(\left(\frac{\tau}{Cn}\right)^{1/d}\right)$, where $C=D L_\zeta$ for $L_\zeta =\lim_{r\to 0}\frac{\text{Leb}(B_{r}(\zeta))}{r}$ (this exists in the cases considered here).  In this section we will assume that this is an equality since all our estimates are asymptotic in $n$, so similarly $u_n^{-1}(z)=Cn\left(g^{-1}(z)\right)^d$,  where we will also assume this is well-defined.  Since we are assuming that $\|\X_{r_n}(x)\|>u_n(\tau)$, let $v<1$ be such that $\|\X_{r_n}(x)\|=u_n(v\tau)$, which translates as $\dist(T^{r_n}(x), \zeta)=\left(\frac{v\tau}{Cn}\right)^{1/d}$.  Using the linearisation domain around the fixed point $\zeta$ from Hartman-Grobman theory, $\dist(T^{r_n+k}(x), \zeta)\sim \left(\frac{v\tau}{Cn}\right)^{1/d}\|(DT_\zeta)^k(w)\|$, where $w=\frac{\Phi^{-1}_\zeta(T^{r_n}(x))}{\|\Phi^{-1}_\zeta(T^{r_n}(x))\|}$ and $\Phi_\zeta$ is as in \eqref{eq:dynamics-particular-observable}. It follows that 
$$\X_{r_n+k}(x) \sim g\left(\left(\frac{v\tau}{Cn}\right)^{1/d}\|(DT_\zeta)^k(w)\|\right)\frac{(DT_\zeta)^k(w)}{\|(DT_\zeta)^k(w)\|}, $$ from which we find
$$\frac{u_n^{-1}\left(\|\X_{r_n+k}(x)\| \right)}\tau\frac{\X_{r_n+k}(x)}{\|\X_{r_n+k}(x)\|} \sim v \|(DT_\zeta)^k(w)\|^d\frac{(DT_\zeta)^k(w)}{\|(DT_\zeta)^k(w)\|}.$$
All of these asymptotics become equalities as $n\to \infty$. The fact that $T^{r_n}(x)$ is chosen according to the invariant measure $\mu$ on the small neighbourhood $B_{g^{-1}(u_n(\tau))}(\zeta)=\{\|X_{r_n}(x)\|>u_n(\tau)\}$ of $\zeta$ and the fact that $\mu$ behaves asymptotically like Lebesgue measure on small neighbourhoods of $\zeta$ give that the finite $Y_j$s are indeed of the form required.  The infinite terms appear because, due to the repelling nature of $\zeta$, the entrance of the orbit of $x$ at time $r_n$ in $B_{g^{-1}(u_n(\tau))}(\zeta)$ can only be preceded by a finite number, $s\in\N_0$, of consecutive hits to $B_{g^{-1}(u_n(\tau))}(\zeta)$. But then $T^{r_n-s-1}(x)$, for example, must belong to neighbourhood of $T^{-1}(\zeta)\setminus\{\zeta\}$, which are at a fixed distance from $\zeta$: let $c=\dist(\zeta, T^{-1}(\zeta)\setminus\{\zeta\})>0$.  Then for $n$ sufficiently large we have $\dist(T^{r_n-s-1}(x),\zeta)>c/2$ and hence $u_n^{-1}(\|X_{r_n-s-1}\|)\sim Cn (c/2)^d$, which clearly diverges to $\infty$ as $n\to\infty$.

In the invertible case, for the toral diffeomorphisms considered, we have $T^{-1}(\zeta)=\zeta$ and hence for all $j\in\Z$, we have $Y_j=U\cdot \|(DT_\zeta)^{j}(\Theta)\|^d\frac{(DT_\zeta)^{j}(\Theta)}{\|(DT_\zeta)^{j}(\Theta)\|}$, where $U$ and $\Theta$ are as above.

We observe that if instead $\zeta$ were a periodic point of period $q>1$ then elements of $Y$ would be the same as for the fixed point case, but with $q-1$ $\infty$ terms between the entries $qk$ and $q(k+1)$, for all $k\in\Z$.   

Now that we understand $(Y_j)_j$ it is easy to see that the condition 
$$ \p ( \mbox{all finite }\allowbreak Q_{j} \allowbreak \mbox{'s are mutually different}) = 1$$ in Theorem~\ref{thm:record-theorem} is also trivially satisfied in these examples as it can be interpreted as $\zeta$ being hyperbolic.  

In the one-dimensional case, concerning most of the examples given in the noninvertible scenario in Section~\ref{subsec:systems}, if we consider $\zeta$ as a repelling fixed point such that $|DT_\zeta|=\chi>1$ then, in this case, $(Y_j)_j$ assumes the simpler form:
$$
(\ldots,\,\infty,\,\infty,\,U\chi^{-s},\,\ldots, U\chi^{-1},\, U,\, U\chi,\, U\chi^2,\ldots) 
$$

We remark that before anchoring the process to obtain $(Z_j)_j$, we do not know if the extreme observation $\{\|X_{r_n}(x)\|>u_n(\tau)\}$, corresponding to the entrance of the orbit in $B_{g^{-1}(u_n(\tau))}(\zeta)$ at time $r_n$, is the first cluster observation and $s$ is precisely accounting for the preceding exceedances in such cluster. 

Finally, to obtain the transformed anchored tail process we need to condition the $Y_j$ on the event $ \inf_{j\leq -1}\|Y_j\|\geq 1$, which is to say that in the computation of the $Y_j$, the exceedance occurring at time $r_n$ is indeed the \emph{first} exceedance of the cluster, which means $s=0$ and therefore $(Z_j)_j$ is of the form 
$$
(\ldots,\,\infty,\,\infty, U,\, U\chi,\, U\chi^2,\ldots) 
$$

\subsection{Particular application with overshooting}
\label{subappendix:overshooting}
We consider now the application in Example~\ref{example:oscillatory}. We start by noting that $\{|\psi(x)|>u\}=B_{1/8}(\eps_u)\cup B_{3/8}(\tilde \eps_u)$, where $\eps_u,\tilde\eps_u\sim u^{-1/2}$, which means that $\mu(|\psi(x)|>u)\sim4 u^{-1/2}$. Having \eqref{un} in mind, we take $u_n(\tau)=\frac{16n^2}{\tau^2}$ and consequently $u_n^{-1}(z)=4nz^{-1/2}$ (note that, as we did earlier, we write equalities since all our estimates are asymptotic in $n$).  If $x\in B_{1/8}(\eps_{u_n(\tau)})$ (recall that $\eps_{u_n(\tau)}\sim\tau/4n$) then $u_n^{-1}(|\psi(x)|)\sim 4n |x-1/8|$, while if $x\in B_{3/8}(\eps_{u_n(\tau)})$ then $u_n^{-1}(|\psi(x)|)\sim 4n |x-3/8|$. For simplicity, let $0<v<1$ be such that $|X_{r_n}|=u_n(v\tau)$, which translates as $|T^{r_n}(x)-1/8|\sim v\tau/4n$ or $|T^{r_n}(x)-3/8|\sim v\tau/4n$. Now, recalling that $\{1/8,3/8\}$ corresponds to a period two orbit, $T$ triples distances and $\psi$ has opposite signs on the neighbourhood of these points, then 
$$\frac{u_n^{-1}\left(|X_{r_n+k}(x)| \right)}\tau\frac{X_{r_n+k}(x)}{|X_{r_n+k}(x)|} \sim v (-1)^s(-3)^k,$$
where $s=0$ if $T^{r_n}(x)\in B_{1/8}(\eps_{u_n(\tau)})$ and $s=1$ if $T^{r_n}(x)\in B_{3/8}(\tilde \eps_{u_n(\tau)})$.
All of these asymptotics become equalities as $n\to \infty$. The fact that $T^{r_n}(x)$ is chosen according to the invariant measure $\mu=\text{Leb}$ on the set $B_{1/8}(\eps_{u_n(\tau)})\cup B_{3/8}(\tilde \eps_{u_n(\tau)})$ and $\lim_{n\to\infty}\frac{\mu(B_{1/8}(\eps_{u_n(\tau)}))}{\mu(|X_0|>u_n(\tau))}=\frac12=\lim_{n\to\infty}\frac{\mu(B_{3/8}(\tilde \eps_{u_n(\tau)}))}{\mu(|X_0|>u_n(\tau))}$ gives that $(Z_j)_{j\in\Z}$ is indeed of the form required, namely,  
$$
(\ldots,\infty,\infty, U E, U(-3) E, U(-3)^2 E, \ldots),
$$
where $U$ in uniformly distributed on $[0,1]$ and $\p(E=1)=1/2=\p(E=-1)$.

\end{document}